\numberwithin{equation}{section}
\numberwithin{figure}{section}
\newtheorem{theorem}{Theorem}[section]
\newtheorem{remark}{Remark}[section]
\newtheorem{definition}{Definition}[section]
\newtheorem{lemma}[theorem]{Lemma}
\newtheorem{corollary}[theorem]{Corollary}
\newtheorem{pro}[theorem]{Proposition}
\def\be{\begin{eqnarray}}
	\def\ee{\end{eqnarray}}
\def\ba{\begin{aligned}}
	\def\ea{\end{aligned}}
\def\bay{\begin{array}}
	\def\eay{\end{array}}
\def\bca{\begin{cases}}
	\def\eca{\end{cases}}
\def\p{\partial}
\def\lab{\label}
\def\bt{\begin{theorem}}
	\def\et{\end{theorem}}
\def\bc{\begin{corollary}}
	\def\ec{\end{corollary}}
\def\bl{\begin{lemma}}
	\def\el{\end{lemma}}
\def\bp{\begin{proposition}}
	\def\ep{\end{proposition}}
\def\br{\begin{remark}}
	\def\er{\end{remark}}
\def\bd{\begin{definition}}
	\def\ed{\end{definition}}
\def\bpf{\begin{proof}}
	\def\epf{\end{proof}}
\def\bex{\begin{example}}
	\def\eex{\end{example}}
\def\bq{\begin{question}}
	\def\eq{\end{question}}
\def\bas{\begin{assumption}}
	\def\eas{\end{assumption}}
\def\ber{\begin{exercise}}
	\def\eer{\end{exercise}}
\def\bu{\mathbf{u}}
\def\be{\mathbf{e}}
\def\mfc{\mathfrak{c}}
\def\mfu{\mathfrak{u}}
\def\mfg{\mathfrak{g}}
\def\XXint#1#2#3{{\setbox0=\hbox{$#1{#2#3}{\int}$ }
		\vcenter{\hbox{$#2#3$ }}\kern-.6\wd0}}
\begin{document}
	
	\title[Steady Euler flows with stagnation points]{Analysis on the steady Euler flows with stagnation points in an infinitely long nozzle}
	
	\author{Congming Li}
	\address{School of Mathematical Sciences, CMA-Shanghai, Shanghai Jiao Tong University, Shanghai 200240, China.}
	\email{congming.li@sjtu.edu.cn}
	\author{Yingshu L\"{u}}
	\address{Institute of Natural Sciences, CMA-Shanghai, Shanghai Jiao Tong University, Shanghai 200240, China.}
	\email{yingshulv@sjtu.edu.cn}

 \author{Henrik Shahgholian}
 \address{Department of Mathematics, Royal Institute of Technology, Stockholm, Sweden}
 \email{henriksh@kth.se}
 
	\author{Chunjing Xie}
	\address{School of Mathematical Sciences, Institute of Natural Sciences, Ministry of Education Key Laboratory of Scientific and Engineering Computing, CMA-Shanghai, Shanghai Jiao Tong University, Shanghai 200240, China.}
	\email{cjxie@sjtu.edu.cn}
	
	\keywords{Liouville type theorem, steady Euler system, Poiseuille flows, stagnation, energy minimizer, free boundaries, regularity}
	
	\subjclass[2010]{
		35Q31,	35B53,	35J61, 35R35, 35J20, 76B03}
	\date{}

	\begin{abstract}
		A recent prominent result asserts that steady incompressible Euler flows strictly away from stagnation in a two-dimensional infinitely long strip must be shear flows. On the other hand, flows with stagnation points, very challenging in analysis, are interesting and important phenomenon in fluids. In this paper, we not only prove the uniqueness and existence of steady flows with stagnation points, but also obtain the regularity of the boundary of stagnation set, which is a class of obstacle type free boundary.
		
		First, we prove a global uniqueness theorem for steady Euler system with Poiseuille flows as upstream far field state in an infinitely long strip. Due to the appearance of stagnation points, the nonlinearity of the semilinear equation for the stream function becomes non-Lipschitz. This creates a challenging analysis problem since many classical analysis methods do not apply directly. Second, the existence of steady incompressible Euler flows, tending to Poiseuille flows in the upstream, are established in an infinitely long nozzle via variational approach. A very interesting phenomenon  is the  regularity of  the boundary of non-stagnant region, which  can be regarded as an obstacle type free boundary and is proved to be globally $C^1$.  
	Finally, the existence of stagnation region is proved as long as the nozzle is wider than the width of the nozzle at upstream where the flows tend to Poiseuille flows.

	\end{abstract}

	\maketitle

 \tableofcontents

	\section{Introduction and main results}

 \subsection{Background}
	An interesting problem in mathematical theory of fluid dynamics is the well-posedness for the flows in an infinitely long nozzle. Lots of efforts have been made on this study, see \cite{alber92, HN2,Bers,CDX,CX1,DXX,DXY,DD,DL, LS, Amick1, AmickF,XX1,XX2,XX3,CHWX,tx09} and references therein for the study on flows in various settings. These studies rely on the analysis for the solutions  in a nozzle domain, which is also an interesting topic in analysis of PDEs.
	We will first present the main problem and results of this paper. The related results in the literature will be discussed in detail later.

	Consider the steady incompressible Euler system
	\begin{equation}\lab{ieuler}
		\left\{
		\begin{aligned}
			&({\bf u}\cdot\nabla){\bf u} + \nabla p =0,\\
			&\text{div }{\bf u}=0
		\end{aligned}
		\right.
	\end{equation}
	in a two-dimensional infinitely long nozzle $\Omega$, where
	${\bf u}=(u_1, u_2)$ and  $p$ are the velocity and pressure of the flows, respectively.
	The boundary condition for the Euler flows is the so called slip boundary condition, i.e.,
	\begin{equation}\label{slip1}
		{\bf u}\cdot {\bf n}=0\quad  \text{on} \,\, \partial{\Omega}.
	\end{equation}

	In the special case that the domain $\Omega$ is the flat strip $\hat\Omega=\mathbb{R}\times(0,1)$, it was proved in \cite{HN} that if the steady incompressible Euler flows stay strictly away from stagnation and satisfy the slip condition \eqref{slip1} in a two-dimensional infinitely long strip, then the flows must be shear flows, i.e., the velocity field is of the form ${\bf u}=(u_1(x_2), 0)$. More precisely,
	\begin{theorem}(\hspace{1sp}\cite[Theorem 1.1]{HN})\label{thm0}
		Suppose that the flow $\bf u$ solving \eqref{ieuler} is defined in the closed strip $\overline {\hat \Omega}$ with $u_2=0$ on $\partial \hat{\Omega}$,  and $\inf_{\hat\Omega}|{\bf u}|>0$. Then $\bf u$ is a shear flow, that is,
		\begin{eqnarray*}
			{\bf u}(x_1,x_2)=(u_1(x_2),0) \ \ \text{in} \ \ \overline {\hat \Omega}.
		\end{eqnarray*}
	\end{theorem}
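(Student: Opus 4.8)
The plan is to pass to the stream function and reduce the rigidity statement to a Liouville-type property for a linear elliptic equation posed on the strip. Since $\operatorname{div}\mathbf{u}=0$ on the simply connected domain $\hat\Omega$, I would introduce the stream function $\psi$ with $\mathbf{u}=\nabla^\perp\psi=(\partial_{x_2}\psi,\,-\partial_{x_1}\psi)$, so that $|\nabla\psi|=|\mathbf{u}|\ge c>0$ by the no-stagnation hypothesis. The slip condition $u_2=-\partial_{x_1}\psi=0$ on $\partial\hat\Omega$ shows that $\psi$ is constant along each boundary line, and I would normalize $\psi=0$ on $\{x_2=0\}$ and $\psi=m$ on $\{x_2=1\}$, where $m$ is the (constant) flux. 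Taking the curl of the momentum equation yields the vorticity transport $\mathbf{u}\cdot\nabla\omega=0$ with $\omega=-\Delta\psi$, so $\omega$ is constant along streamlines, i.e. along level sets of $\psi$. Because $\nabla\psi$ never vanishes, this dependence can be made quantitative: there is a $C^1$ function $f$ with $\Delta\psi=f(\psi)$ on $\overline{\hat\Omega}$, the regularity coming from the identity $f'(\psi)=-\,\nabla\omega\cdot\nabla\psi/|\nabla\psi|^2$ together with the smoothness of $\mathbf{u}$.

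The next step, which I expect to be \emph{the main obstacle}, is to establish that $u_1=\partial_{x_2}\psi$ has a constant sign; after normalization $u_1>0$, so $\psi$ is strictly increasing in $x_2$ and each level set $\{\psi=c\}$, $c\in(0,m)$, is a single curve realized as a graph over the $x_1$-axis. This is what makes $f$ genuinely single-valued and, more importantly, provides a strictly positive solution of the linearized equation used below. The difficulty is to exclude that an interior streamline turns vertical. I would exploit that, under no stagnation, streamlines are properly embedded regular curves that can neither terminate nor close up inside the bounded-width strip, and combine this with boundedness of $\mathbf{u}$ and the horizontal boundary streamlines, via sliding/maximum-principle arguments controlling the behaviour of streamlines as $x_1\to\pm\infty$.

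Granting the monotonicity, the rigidity follows from a clean linearized argument. Differentiating $\Delta\psi=f(\psi)$ in $x_1$ and in $x_2$ shows that both $v:=\partial_{x_1}\psi=-u_2$ and $\phi:=\partial_{x_2}\psi=u_1$ solve the \emph{same} linear equation $\Delta h=f'(\psi)\,h$, where $v=0$ on $\partial\hat\Omega$ while $\phi>0$ throughout. Setting $\sigma:=v/\phi$, a direct computation using that $v$ and $\phi$ satisfy the same equation gives the divergence-form identity with no zeroth-order term,
\[
\operatorname{div}\!\left(\phi^2\,\nabla\sigma\right)=\phi\,\Delta v-v\,\Delta\phi=0,\qquad \sigma=0\ \text{ on }\ \partial\hat\Omega .
\]
This is the mechanism by which the positive solution $\phi$ "absorbs" the indefinite potential $f'(\psi)$ and turns the problem into one that a maximum principle or energy estimate can handle.

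Finally I would close with an energy estimate adapted to the unbounded direction. Testing the identity against $\sigma\chi^2$, where $\chi=\chi(x_1)$ equals $1$ on $|x_1|\le R$ and is supported in $|x_1|\le 2R$ with $|\nabla\chi|\le C/R$, the Cauchy--Schwarz inequality yields $\int \phi^2|\nabla\sigma|^2\chi^2\le C\int \phi^2\sigma^2|\nabla\chi|^2$. The key point is that $\phi^2\sigma^2=v^2=u_2^2$ is bounded, so, because the strip has finite width, the right-hand side is $O(1/R)\to 0$ as $R\to\infty$. Hence $\nabla\sigma\equiv 0$, and the boundary condition forces $\sigma\equiv 0$, i.e. $u_2\equiv 0$. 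Incompressibility then gives $\partial_{x_1}u_1=0$, so $u_1=u_1(x_2)$ and $\mathbf{u}=(u_1(x_2),0)$, a shear flow. It is worth stressing that this last step is comparatively soft once the positive solution $\phi=u_1$ is in hand; the genuine difficulty of the theorem is concentrated in proving the unidirectionality of the flow in the second step.
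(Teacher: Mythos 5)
The first thing to say is that the paper does not prove Theorem \ref{thm0}: it is quoted from \cite{HN}, and the closest in-paper analogue, Proposition \ref{prop2}, takes the monotonicity $\partial_{x_2}\psi\geq 0$ as a \emph{hypothesis} and then runs the sliding method. Measured against the actual proof in \cite{HN}, your endgame is a genuinely different and correct mechanism: setting $\sigma=v/\phi$ with $v=-u_2$, $\phi=u_1$, the identity $\mathrm{div}(\phi^2\nabla\sigma)=\phi\Delta v-v\Delta\phi=0$ is right, the boundary term in the integration by parts vanishes because $u_2=0$ on $\partial\hat\Omega$, and the Caccioppoli estimate closes because $\phi^2\sigma^2=u_2^2$ is bounded and the strip has only linear volume growth. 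This is the Berestycki--Caffarelli--Nirenberg/Ambrosio--Cabr\'e stability argument in place of moving planes; it buys a soft variational finish, at the price of needing $f\in C^1$ (hence $\mathbf{u}\in C^2$, which \cite{HN} do assume) and bounded $u_2$. Note, however, that this route would not survive in the setting the present paper actually cares about (Proposition \ref{propthm1}), where $f$ is only $C^{1/2}$ and $f'(\psi)$ blows up at the walls, so the linearized equation $\Delta h=f'(\psi)h$ is not available there.

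The genuine gap is precisely at the step you label ``the main obstacle'' and then do not carry out: deriving from $\inf_{\hat\Omega}|\mathbf{u}|>0$ that $u_1$ has a fixed sign \emph{and} that the vorticity is a globally single-valued function of $\psi$. Your sketch (``streamlines can neither terminate nor close up \dots\ sliding/maximum-principle arguments'') does not contain the operative idea. Non-termination and non-closing (Poincar\'e--Bendixson plus the index argument) only make each streamline unbounded; they do not exclude U-turn streamlines escaping to $x_1=-\infty$ in both time directions, nor level sets $\{\psi=c\}$ with several connected components carrying different values of $\omega$ --- in which case no function $f$ with $\Delta\psi=f(\psi)$ exists and there is nothing to linearize. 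The mechanism in \cite{HN} is not a maximum principle but a quantitative study of the orthogonal trajectories $\dot\xi=\nabla\psi(\xi)$: along such a trajectory $\psi$ increases at rate at least $\left(\inf|\mathbf{u}|\right)^2>0$, so every trajectory crosses from $\Gamma_0$ to $\Gamma_1$ in finite time, which forces the level sets of $\psi$ to be connected graphs over the $x_1$-axis and $u_1=\partial_{x_2}\psi$ to be bounded away from zero. Until something of this kind is supplied, both the nonlinearity $f$ and the positive solution $\phi$ on which your divergence identity rests are unavailable, so the proposal as written does not prove the theorem.
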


	This remarkable result in \cite{HN} did not study the flows with stagnation points.
However, many interesting physical shear flows such as the Couette flows ($u_1(x_2)=x_2$) and Poiseuille flows ($u_1(x_2)=x_2(1-x_2)$) contain the stagnation points where the flow speed is zero  (\hspace{1sp}\cite{LL}). In fact, the Poiseuille flow is not only the solution of the Euler system in a strip, but also the solution of the Navier-Stokes system in a strip with no-slip boundary condition. The Poiseuille flows play a significant role in understanding the hydrodynamic stability of shear flows in the fluid dynamics, see \cite{DR}.
Furthermore, It should be noted that the solutions studied in \cite{HN} should be  classical one, i.e., ${\bf u}\in C^{1}(\bar{\Omega})$. However, when the stagnation points appear, usually the velocity field can only belong to $C^{0,1}(\bar{\Omega})$ (cf. Theorem \ref{thm2}) so that one has to take care of particular regularity issue of the problem.
	
	An interesting issue is when one can completely determine the Euler flows in a strip via boundary conditions.
	
	One of the important observations is that two-dimensional steady incompressible Euler system \eqref{ieuler} is in fact a hyperbolic-elliptic coupled system. Therefore, in order to completely determine the Euler flows in a nozzle, one has to prescribe a boundary condition for the hyperbolic mode.
	On the other hand, if the asymptotic behavior of the flows at the upstream is compatible with the steady Euler system together with the slip condition \eqref{slip1}, then the flows should tend to shear flows, i.e.,
	${\bf u}(x_1,x_2)\to(\hat{u}_1(x_2),0)$ {as} $x_1\to -\infty$
	for some $\hat{u}_1$.
	Note that any solution ${\bf u}$ of the Euler system in a strip $\hat\Omega$ supplemented with the slip condition \eqref{slip1} must satisfy
	\begin{equation}
		\int_0^1 u_1(x_1, x_2)dx_2 =Q
	\end{equation}
	for some constant $Q$ which is called the flux of the flow. It is easy to see that a shear flow is always a solution of the Euler system \eqref{ieuler}.  A particular shear flow, the Poiseuille flow in $\hat{\Omega}$ associated with the flux $Q$, is  of the form
	\begin{equation}\label{defPflow}
		\bu =\bar{\bu}(x_2):= (6Qx_2(1-x_2), 0),
	\end{equation}
which will be considered in this paper.

 \subsection{Main Results}
	In this paper, we prove not only a Liouville type theorem for Poiseuille flows of the incompressible steady Euler system in an infinitely long strip, but also the uniqueness of solutions in a general nozzle as long as the horizontal velocity is positive inside the nozzle. Furthermore, we investigate the existence  of steady incompressible Euler flows in an infinitely long nozzle with Poiseuille flows in the upstream via variational approach, and also characterize the boundary of stagnation region.

	We consider the steady incompressible Euler system in the following domain $\Omega$:
	\begin{equation}\label{defOmega}
		\Omega=\{(x_1,x_2): h_0(x_1)<x_2<h_1(x_1), -\infty<x_1<+\infty\},
	\end{equation}
	where $h_i\in C^{2, \alpha}(\mathbb{R})$ ($i=0$, $1$) for some $\alpha\in (0,1)$ satisfying $h_1(x_1)>h_0(x_1)$ for $x_1 \in \mathbb{R}$, and there exist a positive constant $\bar{L}$ and a negative constant $\underline{L}$ such that
	\begin{equation}\label{flatcondition}
		(h_0(x_1), h_1(x_1))=(0,1) \quad \text{for}\,\, x_1\leq \underline{L}\quad \text{and}\quad  (h_0(x_1), h_1(x_1))=(a,b) \,\,\ \text{for}\,\, x_1\geq \bar{L},
	\end{equation}
 	where $0\leq a<b\leq 1$ are two constants.

Our first main result can be stated as follows. 
	\begin{theorem}\label{thmunique}
		Let {${\bf u}\in C^2(\Omega)\cap C^{0,1}({\bar{\Omega}})$} be a solution of the steady incompressible Euler system \eqref{ieuler} in a general nozzle ${\Omega}$ supplemented with the slip condition \eqref{slip1}. If ${\bf u}$ satisfies ${u_1>0 \ \text{in}\,\, {\Omega}}$ and the asymptotic behavior \begin{equation}\label{farbehavior}
			u_1(x_1,x_2)\to \bar u_1(x_2)=6Qx_2(1-x_2) \quad \text{as}\,\, x_1\to -\infty \,\, \text{uniformly for}\,\, x_2\in [0,1],
		\end{equation}
  then there is at most one solution ${\bf u}$.
	\end{theorem}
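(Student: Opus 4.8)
The plan is to reduce the Euler system to a single scalar problem through the stream function and then to remove the troublesome nonlinearity by a partial hodograph transform. Since $\operatorname{div}\mathbf{u}=0$ in the simply connected domain $\Omega$, I would introduce the stream function $\psi$ with $\mathbf{u}=(\psi_{x_2},-\psi_{x_1})$. The slip condition \eqref{slip1} makes each wall a streamline, so $\psi$ is constant there; I normalize $\psi=0$ on $\{x_2=h_0(x_1)\}$, and then $\psi=Q$ on $\{x_2=h_1(x_1)\}$ because the flux is fixed at $Q$ by the upstream profile. The vorticity $\om=-\Delta\psi$ is transported, $\mathbf{u}\cdot\nabla\om=0$, hence is constant along streamlines. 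The positivity $u_1>0$ is used here in an essential way: it guarantees that every streamline extends back to $x_1=-\infty$, where the flow is Poiseuille, so the vorticity carried by $\{\psi=s\}$ equals its upstream value and one obtains a single relation $\om=w(\psi)$ with $w$ explicitly determined by $\bar\psi(x_2)=Q(3x_2^2-2x_2^3)$ and $\bar\om(x_2)=6Q(2x_2-1)$. Thus $\psi$ solves $-\Delta\psi=w(\psi)$ in $\Omega$, with $\psi=0,Q$ on the two walls and $\psi\to\bar\psi$ as $x_1\to-\infty$.

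The difficulty is that this nonlinearity is genuinely non-Lipschitz and of the unfavorable sign. Indeed $w'(\psi)=\bar\om'(x_2)/\bar\psi'(x_2)=2/(x_2(1-x_2))\to+\infty$ as $\psi\to0^+,Q^-$, and $w$ is increasing, so the naive energy identity or maximum-principle comparison for two solutions yields no coercivity. To circumvent this I would exploit $u_1>0$ and pass to the von Mises coordinates $(s,t)=(x_1,\psi)$, writing the streamline height $x_2=\eta(s,t)$; this change of variables is admissible precisely because $\psi_{x_2}=u_1>0$. A direct computation turns $-\Delta\psi=w(\psi)$ into the divergence-form equation
\[
\partial_s\!\left(\frac{\eta_s}{\eta_t}\right)-\partial_t\!\left(\frac{1+\eta_s^2}{2\eta_t^2}\right)=w(t)
\]
on the \emph{fixed} strip $\mathbb{R}\times(0,Q)$, with $\eta(s,0)=h_0(s)$, $\eta(s,Q)=h_1(s)$, and $\eta\to\bar\eta$ as $s\to-\infty$. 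The crucial gain is that the whole nonlinearity now appears on the right-hand side as a function of the independent variable $t$ alone.

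Given two solutions, their heights $\eta_1,\eta_2$ satisfy this equation with the identical right-hand side $w(t)$ and identical boundary and upstream data. Subtracting and applying the mean value theorem to the flux-type terms, the difference $\zeta=\eta_1-\eta_2$ solves a linear equation $\operatorname{div}(A\nabla\zeta)=0$ in which the non-Lipschitz term has cancelled and there is no zeroth-order term; the symmetric matrix $A$, evaluated at an intermediate state, has $\det A=\eta_t^{-4}>0$ and positive trace, hence is positive definite wherever $u_1>0$. Multiplying by $\zeta$ and integrating over the strip would then give $\int A\nabla\zeta\cdot\nabla\zeta=0$, forcing $\nabla\zeta\equiv0$ and $\zeta\equiv0$; translating back yields $\psi_1=\psi_2$ and $\mathbf{u}_1=\mathbf{u}_2$.

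The hard part will be the two places to which this clean cancellation defers the analysis, rather than the algebra itself. First, $A$ degenerates as $t\to0,Q$ because $\eta_t=1/u_1\to\infty$ at the walls — this is exactly the stagnation/non-Lipschitz difficulty resurfacing as boundary degeneracy — so the integration by parts and the vanishing of the wall boundary terms will require weighted estimates and sharp control of $\eta$ up to $t=0,Q$. Second, the strip is unbounded in $s$: upstream the boundary contribution vanishes from the asymptotics $\zeta\to0$, but downstream no limit is prescribed, so one must run the energy identity with a cutoff in $s$ and prove that the flux through $s=R$ tends to $0$ as $R\to\infty$, using boundedness of the solutions together with the spectral gap of the Dirichlet Laplacian on $(0,Q)$, in the spirit of a Phragm\'en--Lindel\"of argument. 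Securing these boundary and far-field controls is where the genuine work lies.
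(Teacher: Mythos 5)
Your route is genuinely different from the paper's. The paper reduces to the stream function (Proposition \ref{propstream}), proves a Liouville theorem in the strip by the sliding method (Proposition \ref{prop2}), derives the far-field behavior and strict positivity $\partial_{x_2}\psi>0$ (Lemmas \ref{lemmaasymptotic}--\ref{lemmapositive}), and then compares two solutions by vertical translations plus the strong maximum principle, applied only at points where $0<\psi<Q$ so that $f$ is locally Lipschitz (Proposition \ref{propunique}). You instead pass to von Mises variables $(s,t)=(x_1,\psi)$ so that the non-Lipschitz nonlinearity becomes a function of the independent variable $t$ and cancels upon subtracting two solutions, after which a linear divergence-form energy identity should close. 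The cancellation is a real observation: the pointwise Jacobian of the flux map $(p,q)\mapsto(p/q,\,-(1+p^2)/(2q^2))$ has determinant $q^{-4}>0$ and positive trace for $q>0$, so the mean-value matrix $A$ is positive definite in the open strip, and the downstream end can plausibly be handled by a Saint--Venant/hole-filling inequality $E(R)^2\le CE'(R)$ for bounded $\zeta$ rather than by the spectral gap of the Laplacian (which does not apply to your degenerate operator as stated). If it closed, your argument would bypass the Liouville theorem and the downstream asymptotics entirely, which is an attractive simplification.

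However, there is a genuine gap at the walls, and it is not a deferrable technicality: it is the stagnation difficulty reappearing in new clothes. The hypotheses allow $u_1>0$ on $\Gamma_0$ at finite $x_1$ for one competitor solution and $u_1=0$ there for another; then $\eta_1-h_0\sim t$ while $\eta_2-h_0\sim\sqrt{t/(3Q)}$ near $t=0$, so $\zeta_t\sim t^{-1/2}$ and $\eta_{2,t}\notin L^2$ near the wall. Whether $A\nabla\zeta\cdot\nabla\zeta$ is then locally integrable up to $t=0$, and whether the boundary flux $\zeta\,(A\nabla\zeta)\cdot{\bf e}_t$ vanishes there, depends on a precise two-sided expansion of each $\psi_i$ near the walls (equivalently, on showing $\partial_{x_2x_2}\psi=6Q$ on $\Gamma_0$ and controlling the error uniformly in $x_1$, including on the curved portions of a general nozzle). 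Without such an expansion the energy $E(R)$ may fail to be finite for any $R$, in which case the Saint--Venant argument gives nothing; and establishing the expansion is essentially the same boundary analysis the paper performs in Step~1 of the proof of Proposition \ref{prop2} (estimates \eqref{par3}--\eqref{par2.1}). A complete proof along your lines must therefore first prove these wall asymptotics for an arbitrary solution satisfying only the stated hypotheses, then verify integrability of the energy density, the vanishing of the wall boundary terms, and a uniform bound on $\int_0^Q|A|\,dt$ over cross-sections for the downstream flux estimate. As written, the proposal asserts rather than supplies exactly these steps.
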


 The proof of Theorem \ref{thmunique} relies on the following Liouville type theorem for steady Euler flows in a strip, which can be regarded as a special case of Theorem \ref{thmunique}.

	\begin{pro}	\label{propthm1}
 A particular case of Theorem \ref{thmunique} gives the Liouville type theorem for Poiseuille flows in a strip. More precisely, let $\hat{\Omega}=\mathbb{R}\times (0,1)$,
		given any solution {${\bf u}\in C^2(\hat\Omega)\cap C^{0,1}(\overline{\hat\Omega})$} of the steady incompressible Euler system \eqref{ieuler} in $\hat\Omega$ supplemented with the slip condition \eqref{slip1}, if ${\bf u}$ satisfies
		\begin{equation}\label{nostagnation}
			u_1>0\quad \text{in}\,\, \hat\Omega
		\end{equation}
		and \eqref{farbehavior},
		then ${\bf u}=\bar{\bf u}$ with $\bar{\bf u}= (6Qx_2(1-x_2), 0)$ defined in  \eqref{defPflow}.  \end{pro}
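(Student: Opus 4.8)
The plan is to reduce the system to a semilinear elliptic equation for the stream function and then exploit the existence of a positive solution of the associated linearized operator to force the flow to be shear. Since $\mathrm{div}\,\bu=0$ on the simply connected strip, I introduce the stream function $\psi$ with $u_1=\p_{x_2}\psi$ and $u_2=-\p_{x_1}\psi$. The slip condition makes $\psi$ constant on each wall, and after normalizing and using the flux relation one has $\psi=0$ on $\{x_2=0\}$ and $\psi=Q$ on $\{x_2=1\}$; the hypothesis $u_1>0$ shows $\psi$ is strictly increasing in $x_2$, so $\psi\in[0,Q]$ and every level set $\{\psi=c\}$ is a graph over the $x_1$-axis. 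From \eqref{farbehavior} I would first record that $\psi(x_1,\cdot)\to\bar\psi(\cdot)$ as $x_1\to-\infty$, where $\bar\psi(x_2)=Qx_2^2(3-2x_2)$ is the Poiseuille stream function.

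Next I would derive the vorticity--stream relation. Taking the curl of the momentum equation gives the transport identity $\bu\cdot\na\om=0$ for the scalar vorticity $\om=-\De\psi$, so $\om$ is constant along streamlines. Because $u_1>0$, each streamline extends back to $x_1=-\infty$, where the flow is Poiseuille; hence $\om=f(\psi)$ with $f:=\bar\om\circ\bar\psi^{-1}$ determined by the upstream profile, and $\psi$ solves the semilinear Dirichlet problem $-\De\psi=f(\psi)$ on the strip with the boundary values above. The profile $\bar\psi$ solves exactly the same problem, so the task is to show $\psi\equiv\bar\psi$. Here the advertised difficulty is explicit: a short computation gives $f'(\psi)=2/\big(x_2(1-x_2)\big)$ along $\psi=\bar\psi(x_2)$, which blows up as $x_2\to0,1$, so $f$ is not Lipschitz near the walls and the standard Gronwall/eigenvalue uniqueness arguments are unavailable.

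To circumvent this I would differentiate the equation rather than subtract solutions. Differentiating $-\De\psi=f(\psi)$ in $x_1$ and in $x_2$ shows that both $\phi:=\p_{x_1}\psi=-u_2$ and $u_1=\p_{x_2}\psi$ solve the \emph{same} linearized equation $L\var:=-\De\var-f'(\psi)\var=0$, and crucially $u_1>0$ is a positive solution of $L\var=0$ in the interior. This is exactly the structure that restores a maximum principle despite the bad sign of the zeroth-order term: setting $\si:=\phi/u_1$, the identities $L\phi=Lu_1=0$ yield the pure divergence-form equation $\mathrm{div}\,(u_1^2\na\si)=0$, which obeys the maximum principle with no zeroth-order term. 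Since $\psi$ is constant on each wall, $\phi=0$ there, and the upstream convergence forces $\phi\to0$ as $x_1\to-\infty$; the maximum principle for $\si$ then forces $\phi$ to be a constant multiple of $u_1$, and the vanishing of $\phi$ at $-\infty$ (where $u_1\to\bar u_1\not\equiv0$) makes that constant zero. Hence $u_2\equiv0$, the flow is shear, and an $x_1$-independent shear flow matching Poiseuille at $-\infty$ must equal $\bar\bu$ everywhere, which is the claim.

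The main obstacle is precisely the degenerate/non-Lipschitz behavior at the walls together with the unboundedness of the domain, entering at two points. First, to assert $\om=f(\psi)$ globally one needs more than the hypothesized convergence of $u_1$: one must upgrade \eqref{farbehavior} to convergence of the full velocity and its first derivatives, hence of $\om$, as $x_1\to-\infty$, which I would obtain from interior elliptic estimates for $-\De\psi=f(\psi)$ together with the uniform bound $\psi\in[0,Q]$. Second, the ratio $\si=\phi/u_1$ must be controlled where $u_1$ degenerates at the walls and as $x_1\to+\infty$; establishing boundedness of $\si$ near the walls (an indeterminate $0/0$) and ruling out growth at $+\infty$ by a Phragm\'en--Lindel\"of argument adapted to $\mathrm{div}\,(u_1^2\na\,\cdot)=0$ is the technical heart of the proof.
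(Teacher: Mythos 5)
Your reduction to the stream function and the vorticity--stream relation follows the paper, but from there you diverge: the paper proves the one-dimensionality of $\psi$ by the sliding method (comparing $\psi(\cdot+\tau\nu)$ with $\psi$ for $\nu_2>0$ and showing the critical shift is zero, which is Proposition \ref{prop2}), whereas you propose the Berestycki--Caffarelli--Nirenberg quotient device: both $\phi=\partial_{x_1}\psi$ and $u_1=\partial_{x_2}\psi$ solve the linearized equation, $u_1>0$, and $\sigma=\phi/u_1$ solves $\mathrm{div}(u_1^2\nabla\sigma)=0$. That strategy is a reasonable alternative in this setting, but as written it has a genuine gap precisely where the work lies. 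From the divergence-form equation you conclude that ``the maximum principle for $\sigma$ then forces $\phi$ to be a constant multiple of $u_1$.'' A maximum principle alone cannot give this on an unbounded strip: you need a Liouville theorem for $\mathrm{div}(u_1^2\nabla\sigma)=0$ (Caccioppoli inequality plus an energy-growth bound, or a Phragm\'en--Lindel\"of argument), and this requires (i) control of the flux $u_1^2\partial_\nu\sigma=u_1\partial_\nu\phi-\phi\partial_\nu u_1$ on the walls, where $u_1$ may degenerate linearly (the paper's estimate \eqref{par2.0} only gives $\partial_{x_2}\psi\geq 5Qx_2$ from below) and $\sigma$ is an indeterminate $0/0$, and (ii) control of $\sigma$ as $x_1\to+\infty$, where the hypotheses give \emph{no} information whatsoever about the flow. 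You explicitly defer both points as ``the technical heart of the proof,'' which means the proof is not actually given; the paper's sliding argument avoids this entirely by never dividing by $u_1$ and by handling the downstream direction through the compactness Case (iii) in the proof of Proposition \ref{prop2}.

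A secondary but real problem is your derivation of the nonlinearity $f$. You propose to upgrade \eqref{farbehavior} to convergence of $\omega$ at $x_1\to-\infty$ ``from interior elliptic estimates for $-\Delta\psi=f(\psi)$,'' but the identity $\Delta\psi=f(\psi)$ with the specific Poiseuille-generated $f$ is exactly what this convergence is needed to establish, so the argument as stated is circular. The paper instead first gets $\omega=f(\psi)$ for \emph{some} $f$ from the streamline argument (using $u_1>0$), and identifies $f$ by a weak-compactness argument on translates $\omega^{(k)}$, using only $u_1^{(k)}\to\bar u_1$ in $L^2$ and the $L^\infty$ bound on $\omega$ coming from ${\bf u}\in C^{0,1}(\overline{\hat\Omega})$; some such detour is unavoidable and should be supplied. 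By contrast, your final step is fine and even slightly cleaner than the paper's: once $\partial_{x_1}\psi\equiv 0$, the profile $\psi(x_2)$ is pinned to $\bar\varphi_1$ directly by the upstream limit, without invoking the ODE uniqueness of Section \ref{secODE}.
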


There are several remarks in order.

	\begin{remark}
		As mentioned before,
		steady incompressible Euler flows in $\hat\Omega$ strictly away from stagnation must be shear flows (\hspace{1sp}\cite{HN}). Theorem \ref{thmunique} deals with the steady incompressible Euler flows with stagnation, and in this situation, we overcome the challenge that the nonlinear term of semilinear equation for the stream function is not Lipschitz continuous.
	\end{remark}

	\begin{remark}
		One can use the method developed in this paper to prove similar Liouville type theorem for a more general class of flows, for example, $\bar{u}_1(x_2)=x_2^2$ for which the associated elliptic equation has stronger singularity,  the similar Liouville type theorem can also be proved (\hspace{1sp}\cite{GX}).
	\end{remark}

	In the following, we derive an  existence theory for  solutions of  the steady incompressible Euler system in a general two-dimensional nozzle.

	\begin{theorem}\label{thm2}
		For given $\Omega$, there exists a solution ${\bf u}\in C^{0,1}(\bar{\Omega})$	of the steady incompressible Euler system  \eqref{ieuler} supplemented with the slip condition \eqref{slip1}
		and far field condition \begin{equation}\label{farbehavior1}
		u_1(x_1, x_2)\rightarrow \bar u_1(x_2)=6Qx_2(1-x_2) \quad \text{as}\,\, x_1\to -\infty \quad  \text{for} \ x_2\in (0,1).
   	\end{equation}
	Furthermore, the following properties hold.
  \begin{enumerate}
      \item The solutions satisfy the following asymptotic behavior:
  \begin{equation}\label{basicppt}
			{\bf u}\to \bar{\bf u} \quad \text{as}\,\, x_1\to-\infty\quad \text{and}\,\, \ \ {\bf u}\to \bar{\bf u}_{b-a}=(\bar{u}_{1,b-a}(x_2-a), 0) \quad \text{as}\,\, x_1\to \infty,
		\end{equation}
		where $\bar{u}_{1,b-a}$ is uniquely determined by $\bar{u}_1$ and $b-a$, and is given in Lemma \ref{lem3.4}.
\item There exist $\tilde{h}_0$, $\tilde{h}_1\in C^1(\mathbb{R})$ satisfying $h_0(x_1)\leq \tilde{h}_0(x_1)< \tilde{h}_1(x_1)\leq h_1(x_1)$ for all $x_1\in \mathbb{R}$ (\text{cf. Figure \ref{0-1}}) such that
\begin{equation}\label{positiveu1}
			u_1(x_1, x_2)>0\,\, \text{for}\,\, (x_1,x_2)\in \tilde{\Omega}:=
   \{(x_1,x_2): \tilde{h}_0(x_1)<x_2<\tilde{h}_1(x_1), x_1\in \mathbb{R}\} 
		\end{equation}
and
\begin{equation}
{\bf u}\equiv 0 \quad \text{in}\,\,\Omega\setminus \overline{\tilde{\Omega}}.
\end{equation}

\item  If $\tilde{h}_i(x_1)\neq h_i(x_1)$ for some $x_1\in \mathbb{R}$ and $i=0$ or $1$ so that the stagnation region is not an empty set,  then there exist $\delta>0$ and $\beta \in (0,1)$ such that $\tilde{h}_i\in C^{1, \beta}(x_1-\delta, x_1+\delta)$.

\item If $h_1(x_1)=1-h_0(x_1)$ (i.e., the nozzle $\Omega$ is symmetric with respect to $x_2=1/2$, cf. Figure \ref{0-2}(a)) or $h_1(x_1)\equiv 1$ for $x_1\in \mathbb{R}$ (cf. Figure \ref{0-2}(b)), then $\tilde{h}_0 \geq 0$. Consequently, there must be non-emtpy  stagnation region provided that ${h}_0(x_1)< 0$ for some $x_1\in \mathbb{R}$.
	\end{enumerate}	
 
	\end{theorem}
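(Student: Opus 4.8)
The plan is to pass to the stream function and solve an obstacle-type variational problem on truncated nozzles, then send the truncation length to infinity. Writing $\bu=(\p_{x_2}\psi,-\p_{x_1}\psi)$ makes incompressibility automatic, and the momentum equations force the vorticity $\om=-\De\psi$ to be transported along streamlines, hence $\om=f(\psi)$ for a profile $f$ fixed by the upstream data. Since the upstream stream function $\bar\psi(x_2)=Q(3x_2^2-2x_2^3)$ is strictly monotone, inverting it gives $f$ explicitly; near $\psi=0$ one has $\bar\psi\sim 3Qx_2^2$, so $u_1=\bar\psi'\sim\sqrt{12Q\psi}$ and $f(\psi)+6Q\sim\sqrt{\psi}$, i.e. $f$ is only H\"older (not Lipschitz) at the origin. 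The governing problem is therefore the semilinear equation $-\De\psi=f(\psi)$ in $\{\psi>0\}\cap\{\psi<Q\}$, together with the complementarity conditions $\na\psi=0$ on the free boundaries of the coincidence sets $\{\psi=0\}$ and $\{\psi=Q\}$, subject to $\psi=0$ on the lower wall and $\psi=Q$ on the upper wall as dictated by the flux normalization.

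First I would introduce the energy. With $F(s)=\int_0^s f(t)\,dt$, consider
\[
J(\psi)=\int_\Om\left(\f12|\na\psi|^2-F(\psi)\right)dx
\]
over the admissible class enforcing the double constraint $0\le\psi\le Q$ and the prescribed wall, inlet and outlet values. To handle the infinite nozzle I truncate to $\Om_L=\Om\cap\{|x_1|<L\}$, imposing $\psi=\bar\psi$ at $x_1=-L$ and $\psi=\bar\psi_{b-a}$ at $x_1=L$, where the downstream profile $\bar u_{1,b-a}$ is the one supplied by Lemma~\ref{lem3.4}. Since $\psi$ is bounded and $F$ is continuous on $[0,Q]$, the functional is coercive and weakly lower semicontinuous, so the direct method yields a minimizer $\psi_L$; its Euler--Lagrange relation is the variational inequality encoding $-\De\psi_L=f(\psi_L)$ where $0<\psi_L<Q$ together with the obstacle conditions where $\psi_L\in\{0,Q\}$.

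Next I would extract regularity and $L$-uniform bounds. Because $f(\psi_L)$ stays bounded, obstacle-problem theory adapted to a non-Lipschitz right-hand side gives $\psi_L\in C^{1,1}_{loc}$ with $|\na\psi_L|$ vanishing continuously along the free boundary, which is precisely what produces $\bu\in C^{0,1}$ and connects the square-root behavior to the borderline regularity asserted. Comparison arguments and energy estimates uniform in $L$ then permit passage to the limit $L\to\infty$, giving a solution on all of $\Om$. The two-sided asymptotics of part~(1) follow from a translation/compactness argument: the shifts $\psi(\cdot+s,\cdot)$ converge as $s\to\pm\infty$ to one-dimensional profiles solving the corresponding ODE obstacle problem, forced upstream to equal $\bar\psi$ and downstream to equal the unique profile of Lemma~\ref{lem3.4}.

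For the free-boundary structure I would exploit monotonicity: establishing $\p_{x_2}\psi\ge0$ (strict where $0<\psi<Q$) shows that $\{\psi=0\}$ is a subgraph and $\{\psi=Q\}$ a supergraph, producing the graph functions $\tilde h_0<\tilde h_1$ and the representation in part~(2), while the continuous vanishing of $\na\psi$ together with a non-degeneracy estimate yields their global $C^1$ regularity; the finer obstacle analysis then upgrades this locally to the $C^{1,\beta}$ claim of part~(3). The symmetry statement~(4) I would obtain by reflection and uniqueness: when $\Om$ is symmetric about $x_2=1/2$ (or $h_1\equiv1$), comparison forces $\psi$ to respect the symmetry, pinning $\tilde h_0\ge0$ and hence forcing a nonempty stagnation set wherever $h_0<0$. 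I expect the principal obstacle throughout to be the non-Lipschitz nonlinearity at $\psi=0$: it blocks the standard free-boundary bootstrap and the usual non-degeneracy and comparison tools, so each of these must be re-established by hand using the precise $\sqrt{\psi}$ asymptotics of $f$, which is exactly why only $C^{1,\beta}$ rather than smooth free boundaries can be claimed.
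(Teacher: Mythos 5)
Your skeleton is the same as the paper's (stream function; obstacle-type minimization of $\int\frac{|\nabla\psi|^2}{2}+F(\psi)$ on truncated nozzles with the one-dimensional profiles as lateral data; vertical monotonicity; limit of the truncations), but two of the steps, as you describe them, would not go through. The first is the far-field behavior in part (1). A translation/compactness argument only shows that the shifts converge to a monotone solution of the obstacle problem in the flat strip; nothing in that argument ``forces'' the limit to be $\bar\varphi_1$. The limit could just as well be $\bar\varphi_d(\cdot-c)$ for some $d<1$, i.e.\ a stagnation layer could form at upstream infinity, and the Liouville theorem (Proposition \ref{prop2}) cannot exclude this because it only applies where $0<\psi<Q$. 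The paper closes this hole with an energy accounting that your proposal has no substitute for: the normalized energy $\zeta(N)=\mathcal{E}_N(\psi_N)-N\bar{\mathcal{J}}_1-N\bar{\mathcal{J}}_{b-a}$ is monotone and bounded (Lemma \ref{lem6.9}), the one-dimensional energies satisfy the strict inequality $\bar{\mathcal{J}}_d>\bar{\mathcal{J}}_1$ for $d<1$ (Lemma \ref{lem3.6}), and any persistent $L^\infty$-deviation from $\bar\varphi_1$ costs a fixed amount of energy per unit slab (Lemma \ref{lem3.9}), so infinitely many such slabs would make the normalized energy diverge. A related, smaller issue: the monotonicity $\partial_{x_2}\psi\ge 0$ cannot be obtained by a bare sliding/maximum-principle argument, since $f$ is only $C^{1/2}$ at $0$ and $Q$ and the equation degenerates on the coincidence sets; the paper replaces the maximum principle by an energy comparison between the minimizer and its vertical shift on each component where the ordering fails (Lemma \ref{mon}).

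The second genuine gap is part (4). Reflection plus uniqueness of the minimizer gives $\psi(x_1,x_2)=Q-\psi(x_1,1-x_2)$ in the symmetric case, but this symmetry is perfectly compatible with $\tilde h_0<0$ (a symmetric flow occupying the whole widened nozzle), so it pins nothing; and in the case $h_1\equiv 1$ there is no symmetry to exploit at all. The mechanism that actually works is a pointwise comparison $\psi_N\le\varphi_1$, where $\varphi_1$ is the one-dimensional profile extended by $0$ below $x_2=0$ and by $Q$ above $x_2=1$. This is proved by showing that replacing $\psi_N$ with $\min\{\varphi_1,\psi_N\}$ does not increase the energy, which in turn rests on Corollary \ref{cor3.9}: $\bar\varphi_1$ minimizes the one-dimensional energy on \emph{every} subinterval with its own boundary values. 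Once $\psi_N\le\varphi_1$, the vanishing of $\varphi_1$ for $x_2\le 0$ forces $\psi_N=0$ there, hence $\tilde h_0\ge 0$ and a nonempty stagnation region wherever $h_0<0$. Without this comparison your argument for (4) does not produce the conclusion.
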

 \begin{figure}[h]
     \centering

 
\tikzset{
pattern size/.store in=\mcSize, 
pattern size = 5pt,
pattern thickness/.store in=\mcThickness, 
pattern thickness = 0.3pt,
pattern radius/.store in=\mcRadius, 
pattern radius = 1pt}
\makeatletter
\pgfutil@ifundefined{pgf@pattern@name@_uhhzgtmf4}{
\pgfdeclarepatternformonly[\mcThickness,\mcSize]{_uhhzgtmf4}
{\pgfqpoint{0pt}{0pt}}
{\pgfpoint{\mcSize+\mcThickness}{\mcSize+\mcThickness}}
{\pgfpoint{\mcSize}{\mcSize}}
{
\pgfsetcolor{\tikz@pattern@color}
\pgfsetlinewidth{\mcThickness}
\pgfpathmoveto{\pgfqpoint{0pt}{0pt}}
\pgfpathlineto{\pgfpoint{\mcSize+\mcThickness}{\mcSize+\mcThickness}}
\pgfusepath{stroke}
}}
\makeatother

 
\tikzset{
pattern size/.store in=\mcSize, 
pattern size = 5pt,
pattern thickness/.store in=\mcThickness, 
pattern thickness = 0.3pt,
pattern radius/.store in=\mcRadius, 
pattern radius = 1pt}
\makeatletter
\pgfutil@ifundefined{pgf@pattern@name@_lrepviptv}{
\pgfdeclarepatternformonly[\mcThickness,\mcSize]{_lrepviptv}
{\pgfqpoint{0pt}{0pt}}
{\pgfpoint{\mcSize+\mcThickness}{\mcSize+\mcThickness}}
{\pgfpoint{\mcSize}{\mcSize}}
{
\pgfsetcolor{\tikz@pattern@color}
\pgfsetlinewidth{\mcThickness}
\pgfpathmoveto{\pgfqpoint{0pt}{0pt}}
\pgfpathlineto{\pgfpoint{\mcSize+\mcThickness}{\mcSize+\mcThickness}}
\pgfusepath{stroke}
}}
\makeatother

 
\tikzset{
pattern size/.store in=\mcSize, 
pattern size = 5pt,
pattern thickness/.store in=\mcThickness, 
pattern thickness = 0.3pt,
pattern radius/.store in=\mcRadius, 
pattern radius = 1pt}
\makeatletter
\pgfutil@ifundefined{pgf@pattern@name@_q88t825dt}{
\pgfdeclarepatternformonly[\mcThickness,\mcSize]{_q88t825dt}
{\pgfqpoint{0pt}{0pt}}
{\pgfpoint{\mcSize+\mcThickness}{\mcSize+\mcThickness}}
{\pgfpoint{\mcSize}{\mcSize}}
{
\pgfsetcolor{\tikz@pattern@color}
\pgfsetlinewidth{\mcThickness}
\pgfpathmoveto{\pgfqpoint{0pt}{0pt}}
\pgfpathlineto{\pgfpoint{\mcSize+\mcThickness}{\mcSize+\mcThickness}}
\pgfusepath{stroke}
}}
\makeatother

 
\tikzset{
pattern size/.store in=\mcSize, 
pattern size = 5pt,
pattern thickness/.store in=\mcThickness, 
pattern thickness = 0.3pt,
pattern radius/.store in=\mcRadius, 
pattern radius = 1pt}
\makeatletter
\pgfutil@ifundefined{pgf@pattern@name@_19r642nby}{
\pgfdeclarepatternformonly[\mcThickness,\mcSize]{_19r642nby}
{\pgfqpoint{0pt}{0pt}}
{\pgfpoint{\mcSize+\mcThickness}{\mcSize+\mcThickness}}
{\pgfpoint{\mcSize}{\mcSize}}
{
\pgfsetcolor{\tikz@pattern@color}
\pgfsetlinewidth{\mcThickness}
\pgfpathmoveto{\pgfqpoint{0pt}{0pt}}
\pgfpathlineto{\pgfpoint{\mcSize+\mcThickness}{\mcSize+\mcThickness}}
\pgfusepath{stroke}
}}
\makeatother
\tikzset{every picture/.style={line width=0.75pt}} 

\begin{tikzpicture}[x=0.75pt,y=0.75pt,yscale=-1,xscale=1]

\draw  [dash pattern={on 4.5pt off 4.5pt}]  (81.33,91) -- (582.33,92) ;
\draw  [dash pattern={on 4.5pt off 4.5pt}]  (81.33,200) -- (582.33,201) ;
\draw [line width=1.5]    (150.33,91) .. controls (174.67,91) and (204.33,87) .. (225.64,68.87) .. controls (246.96,50.75) and (258.33,66) .. (296.33,100) .. controls (334.33,134) and (363.33,74) .. (393.33,68) .. controls (423.33,62) and (431.33,85) .. (457.33,110) .. controls (483.33,135) and (499.33,128) .. (525.33,130) ;
\draw [line width=1.5]    (154,200) .. controls (215.33,200) and (216.33,242) .. (249.33,244) .. controls (282.33,246) and (323.33,157) .. (379.33,179) .. controls (435.33,201) and (449.33,178) .. (530.33,180) ;
\draw [line width=1.5]    (81.33,91) -- (150.33,91) ;
\draw [line width=1.5]    (525.33,130) -- (579.33,130) ;
\draw [line width=1.5]    (81.33,200) -- (154,200) ;
\draw [line width=1.5]    (530.33,180) -- (579.33,180) ;
\draw [color={rgb, 255:red, 208; green, 2; blue, 27 }  ,draw opacity=1 ][line width=1.5]    (150.33,91) .. controls (175.33,92.8) and (200.33,86) .. (206.33,99) .. controls (212.33,112) and (222.31,86.51) .. (232.33,83) .. controls (242.36,79.49) and (248.32,106.09) .. (260.83,89.44) .. controls (273.33,72.8) and (291.33,97.55) .. (296.33,100) .. controls (301.33,102.45) and (303.33,111.45) .. (325.33,110) .. controls (347.33,108.55) and (359.73,78.8) .. (370.33,100) .. controls (380.93,121.2) and (384.93,93.05) .. (398.33,87) .. controls (411.73,80.95) and (411.33,112) .. (422.33,108) .. controls (433.33,104) and (432.4,86) .. (457.33,110) .. controls (482.27,134) and (495.87,128.4) .. (525.33,130) ;
\draw [color={rgb, 255:red, 208; green, 2; blue, 27 }  ,draw opacity=1 ][line width=1.5]    (81.33,91) -- (150.33,91) ;
\draw [color={rgb, 255:red, 208; green, 2; blue, 27 }  ,draw opacity=1 ][line width=1.5]    (525.33,130) -- (579.33,130) ;
\draw [color={rgb, 255:red, 208; green, 2; blue, 27 }  ,draw opacity=1 ][line width=1.5]    (154,200) .. controls (210.33,202) and (204.33,156) .. (234.33,184) .. controls (264.33,212) and (274.04,146.86) .. (290.19,182.43) .. controls (306.33,218) and (336.33,158) .. (379.33,179) .. controls (422.33,200) and (462.33,178) .. (530.33,180) ;
\draw [color={rgb, 255:red, 208; green, 2; blue, 27 }  ,draw opacity=1 ][line width=1.5]    (81.33,200) -- (154,200) ;
\draw [color={rgb, 255:red, 208; green, 2; blue, 27 }  ,draw opacity=1 ][line width=1.5]    (530.33,180) -- (579.33,180) ;
\draw [draw opacity=0][pattern=_uhhzgtmf4,pattern size=14.475000000000001pt,pattern thickness=0.75pt,pattern radius=0pt, pattern color={rgb, 255:red, 7; green, 62; blue, 192}]   (183,87) .. controls (196.13,106.8) and (226.93,47.6) .. (256.53,68.4) .. controls (286.13,89.2) and (262.33,87) .. (254.33,90) .. controls (246.33,93) and (240.33,81) .. (232.33,83) .. controls (224.33,85) and (216.42,105.52) .. (210.33,105.56) .. controls (204.24,105.59) and (223,95.11) .. (190.67,89.56) ;
\draw [draw opacity=0][pattern=_lrepviptv,pattern size=15.149999999999999pt,pattern thickness=0.75pt,pattern radius=0pt, pattern color={rgb, 255:red, 7; green, 62; blue, 192}]   (361.33,90.67) .. controls (378.33,96.56) and (373.68,103.46) .. (379.67,102.89) .. controls (385.65,102.32) and (395.67,79.56) .. (403.33,85.89) .. controls (411,92.22) and (414.67,96.22) .. (417,103.89) .. controls (419.33,111.56) and (438,101.56) .. (439,96.22) .. controls (440,90.89) and (419.33,68.89) .. (401.67,68.22) .. controls (384,67.56) and (367.33,90.67) .. (361.33,90.67) -- cycle ;
\draw [draw opacity=0][pattern=_q88t825dt,pattern size=15.524999999999999pt,pattern thickness=0.75pt,pattern radius=0pt, pattern color={rgb, 255:red, 7; green, 62; blue, 192}]   (167,201) .. controls (178.53,208.8) and (216.13,167.2) .. (226.93,181.2) .. controls (237.73,195.2) and (240.93,190) .. (251.73,192.8) .. controls (262.53,195.6) and (276.35,169.86) .. (279.73,172) .. controls (283.12,174.14) and (285.88,179.82) .. (290.53,185.78) .. controls (295.19,191.73) and (302.61,200.45) .. (300.33,205) .. controls (298.06,209.55) and (292.53,234) .. (255.33,240) .. controls (218.13,246) and (208.07,190.2) .. (173.33,203.6) ;
\draw  [pattern=_19r642nby,pattern size=8.399999999999999pt,pattern thickness=0.75pt,pattern radius=0pt, pattern color={rgb, 255:red, 7; green, 62; blue, 192}] (434,239) -- (462.33,239) -- (462.33,252) -- (434,252) -- cycle ;

\draw (90.33,205.4) node [anchor=north west][inner sep=0.75pt]  [font=\footnotesize]  {$x_{2} =0$};
\draw (90.33,72.4) node [anchor=north west][inner sep=0.75pt]  [font=\footnotesize]  {$x_{2} =1$};
\draw (537.33,162.4) node [anchor=north west][inner sep=0.75pt]  [font=\footnotesize]  {$x_{2} =a$};
\draw (537.33,136.4) node [anchor=north west][inner sep=0.75pt]  [font=\footnotesize]  {$x_{2} =b$};
\draw (283,232.4) node [anchor=north west][inner sep=0.75pt]    {$\Gamma _{0}$};
\draw (265,46.4) node [anchor=north west][inner sep=0.75pt]    {$\Gamma _{1}$};
\draw (165,145.4) node [anchor=north west][inner sep=0.75pt]  [font=\small,color={rgb, 255:red, 208; green, 2; blue, 27 }  ,opacity=1 ]  {$x_{2} =\tilde{h}_{0}( x_{1})$};
\draw (330,106.4) node [anchor=north west][inner sep=0.75pt]  [font=\small,color={rgb, 255:red, 208; green, 2; blue, 27 }  ,opacity=1 ]  {$x_{2} =\tilde{h}_{1}( x_{1})$};
\draw (472.33,239) node [anchor=north west][inner sep=0.75pt]  [font=\small] [align=left] {Stagnation region};

\end{tikzpicture}

     	\caption{The free boundaries}\label{0-1}
 \end{figure}
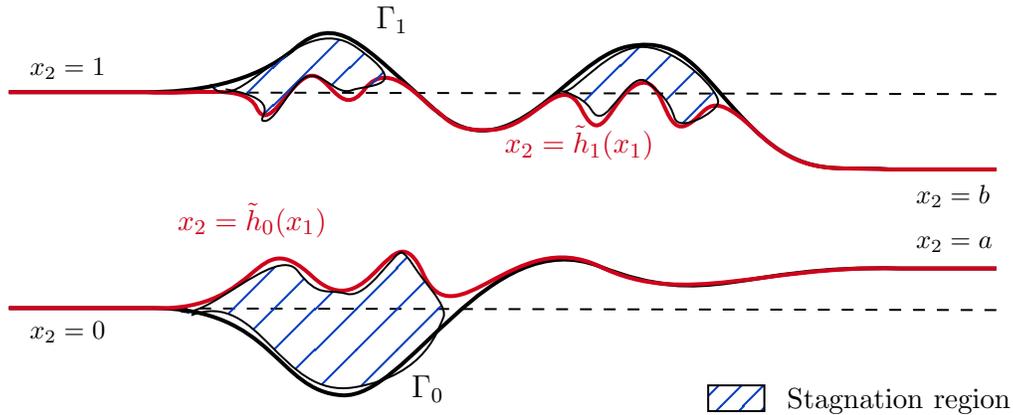
 \hspace{10mm}
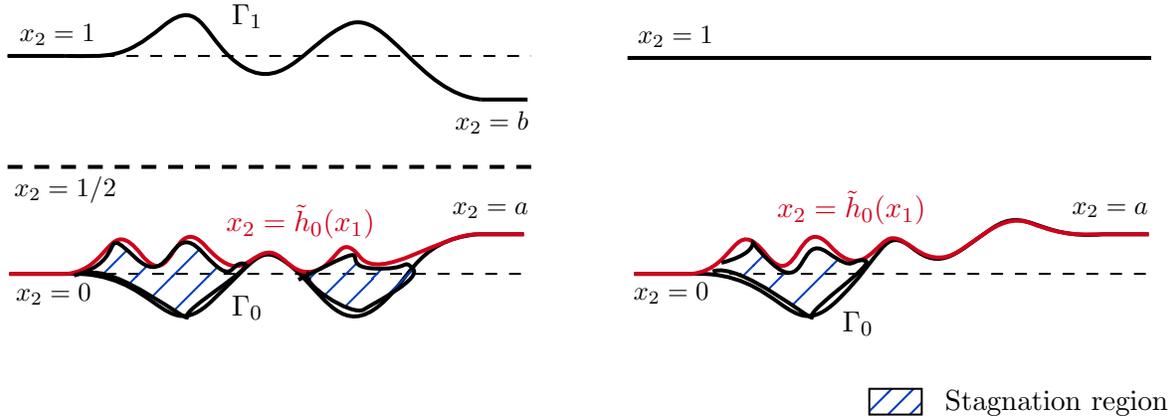
\begin{figure}[h]
				\centering

 
\tikzset{
pattern size/.store in=\mcSize, 
pattern size = 5pt,
pattern thickness/.store in=\mcThickness, 
pattern thickness = 0.3pt,
pattern radius/.store in=\mcRadius, 
pattern radius = 1pt}
\makeatletter
\pgfutil@ifundefined{pgf@pattern@name@_cjgs56owu}{
\pgfdeclarepatternformonly[\mcThickness,\mcSize]{_cjgs56owu}
{\pgfqpoint{0pt}{0pt}}
{\pgfpoint{\mcSize+\mcThickness}{\mcSize+\mcThickness}}
{\pgfpoint{\mcSize}{\mcSize}}
{
\pgfsetcolor{\tikz@pattern@color}
\pgfsetlinewidth{\mcThickness}
\pgfpathmoveto{\pgfqpoint{0pt}{0pt}}
\pgfpathlineto{\pgfpoint{\mcSize+\mcThickness}{\mcSize+\mcThickness}}
\pgfusepath{stroke}
}}
\makeatother

 
\tikzset{
pattern size/.store in=\mcSize, 
pattern size = 5pt,
pattern thickness/.store in=\mcThickness, 
pattern thickness = 0.3pt,
pattern radius/.store in=\mcRadius, 
pattern radius = 1pt}
\makeatletter
\pgfutil@ifundefined{pgf@pattern@name@_vzclbegd2}{
\pgfdeclarepatternformonly[\mcThickness,\mcSize]{_vzclbegd2}
{\pgfqpoint{0pt}{0pt}}
{\pgfpoint{\mcSize+\mcThickness}{\mcSize+\mcThickness}}
{\pgfpoint{\mcSize}{\mcSize}}
{
\pgfsetcolor{\tikz@pattern@color}
\pgfsetlinewidth{\mcThickness}
\pgfpathmoveto{\pgfqpoint{0pt}{0pt}}
\pgfpathlineto{\pgfpoint{\mcSize+\mcThickness}{\mcSize+\mcThickness}}
\pgfusepath{stroke}
}}
\makeatother

 
\tikzset{
pattern size/.store in=\mcSize, 
pattern size = 5pt,
pattern thickness/.store in=\mcThickness, 
pattern thickness = 0.3pt,
pattern radius/.store in=\mcRadius, 
pattern radius = 1pt}
\makeatletter
\pgfutil@ifundefined{pgf@pattern@name@_uygvbw85a}{
\pgfdeclarepatternformonly[\mcThickness,\mcSize]{_uygvbw85a}
{\pgfqpoint{0pt}{0pt}}
{\pgfpoint{\mcSize+\mcThickness}{\mcSize+\mcThickness}}
{\pgfpoint{\mcSize}{\mcSize}}
{
\pgfsetcolor{\tikz@pattern@color}
\pgfsetlinewidth{\mcThickness}
\pgfpathmoveto{\pgfqpoint{0pt}{0pt}}
\pgfpathlineto{\pgfpoint{\mcSize+\mcThickness}{\mcSize+\mcThickness}}
\pgfusepath{stroke}
}}
\makeatother

 
\tikzset{
pattern size/.store in=\mcSize, 
pattern size = 5pt,
pattern thickness/.store in=\mcThickness, 
pattern thickness = 0.3pt,
pattern radius/.store in=\mcRadius, 
pattern radius = 1pt}
\makeatletter
\pgfutil@ifundefined{pgf@pattern@name@_opmnb4xae}{
\pgfdeclarepatternformonly[\mcThickness,\mcSize]{_opmnb4xae}
{\pgfqpoint{0pt}{0pt}}
{\pgfpoint{\mcSize+\mcThickness}{\mcSize+\mcThickness}}
{\pgfpoint{\mcSize}{\mcSize}}
{
\pgfsetcolor{\tikz@pattern@color}
\pgfsetlinewidth{\mcThickness}
\pgfpathmoveto{\pgfqpoint{0pt}{0pt}}
\pgfpathlineto{\pgfpoint{\mcSize+\mcThickness}{\mcSize+\mcThickness}}
\pgfusepath{stroke}
}}
\makeatother
\tikzset{every picture/.style={line width=0.75pt}} 

\begin{tikzpicture}[x=0.7pt,y=0.75pt,yscale=-1,xscale=1]

\draw [line width=0.75]  [dash pattern={on 4.5pt off 4.5pt}]  (20.33,70) -- (303.33,70) ;
\draw [line width=0.75]  [dash pattern={on 4.5pt off 4.5pt}]  (21.33,180) -- (303.33,180) ;
\draw [line width=1.5]    (51.33,70) .. controls (75.67,70) and (82.33,72) .. (104.33,55) .. controls (126.33,38) and (126.33,64) .. (148.33,76) .. controls (170.33,88) and (183.33,62) .. (204.33,54) .. controls (225.33,46) and (242.33,89) .. (276.33,92) ;
\draw [line width=1.5]    (20.33,70) -- (51.33,70) ;
\draw [line width=1.5]    (276.33,92) -- (301.33,92) ;
\draw [line width=1.5]    (21.33,180) -- (52.33,180) ;
\draw  [pattern=_cjgs56owu,pattern size=8.399999999999999pt,pattern thickness=0.75pt,pattern radius=0pt, pattern color={rgb, 255:red, 7; green, 62; blue, 192}] (486,238) -- (514.33,238) -- (514.33,251) -- (486,251) -- cycle ;
\draw [line width=1.5]    (52.33,180) .. controls (101.33,178) and (104.33,217) .. (130.33,194) .. controls (156.33,171) and (157.33,164) .. (173.33,176) .. controls (189.33,188) and (199.33,210) .. (218.33,198) .. controls (237.33,186) and (239.33,163) .. (274.33,160) ;
\draw [line width=1.5]    (274.33,160) -- (299.33,160) ;
\draw [line width=1.5]  [dash pattern={on 5.63pt off 4.5pt}]  (20.33,126) -- (303.33,126) ;
\draw [color={rgb, 255:red, 208; green, 2; blue, 27 }  ,draw opacity=1 ][line width=1.5]    (52.33,180) .. controls (76.6,177.67) and (74.86,149.33) .. (92.6,170.17) .. controls (110.33,191) and (109.33,147.5) .. (127.33,166) .. controls (145.33,184.5) and (149.09,172.4) .. (159.09,169.6) .. controls (169.09,166.8) and (172.36,181.33) .. (185.6,178.67) .. controls (198.83,176) and (199.33,158) .. (210.65,171.29) .. controls (221.96,184.58) and (255.09,159.6) .. (274.33,160) ;
\draw [color={rgb, 255:red, 208; green, 2; blue, 27 }  ,draw opacity=1 ][line width=1.5]    (274.33,160) -- (299.33,160) ;
\draw [color={rgb, 255:red, 208; green, 2; blue, 27 }  ,draw opacity=1 ][line width=1.5]    (21.33,180) -- (33.63,180) -- (52.33,180) ;
\draw [line width=1.5]    (355.33,71) -- (638.33,71) ;
\draw [line width=0.75]  [dash pattern={on 4.5pt off 4.5pt}]  (359,180) -- (641,180) ;
\draw [line width=1.5]    (359,180) -- (390,180) ;
\draw [line width=1.5]    (390,180) .. controls (439,178) and (442,217) .. (468,194) .. controls (494,171) and (493.33,155) .. (509.33,167) .. controls (525.33,179) and (533.33,169) .. (552.33,157) .. controls (571.33,145) and (577,163) .. (612,160) ;
\draw [line width=1.5]    (612,160) -- (637,160) ;
\draw [color={rgb, 255:red, 208; green, 2; blue, 27 }  ,draw opacity=1 ][line width=1.5]    (390,180) .. controls (414.27,177.67) and (412.53,149.33) .. (430.27,170.17) .. controls (448,191) and (447,147.5) .. (465,166) .. controls (483,184.5) and (489.85,162.4) .. (498.33,162) .. controls (506.82,161.6) and (512.6,173.2) .. (526.07,171.6) .. controls (539.54,170) and (554.68,147.6) .. (573.27,154.4) .. controls (591.85,161.2) and (600.47,160) .. (612,160) ;
\draw [color={rgb, 255:red, 208; green, 2; blue, 27 }  ,draw opacity=1 ][line width=1.5]    (612,160) -- (637,160) ;
\draw [color={rgb, 255:red, 208; green, 2; blue, 27 }  ,draw opacity=1 ][line width=1.5]    (359,180) -- (371.3,180) -- (390,180) ;
\draw [draw opacity=0][pattern=_vzclbegd2,pattern size=13.575000000000001pt,pattern thickness=0.75pt,pattern radius=0pt, pattern color={rgb, 255:red, 7; green, 62; blue, 192}][line width=1.5]    (56.29,180.8) .. controls (86.67,173.47) and (68.83,153.67) .. (89.21,171.67) .. controls (109.6,189.67) and (107.71,150.67) .. (125.71,169.17) .. controls (143.71,187.67) and (139.77,173.8) .. (147.89,174.8) .. controls (156,175.8) and (115.21,197.67) .. (117.21,201.67) .. controls (119.21,205.67) and (77.97,174.67) .. (63.09,180) ;
\draw [draw opacity=0][pattern=_uygvbw85a,pattern size=14.774999999999999pt,pattern thickness=0.75pt,pattern radius=0pt, pattern color={rgb, 255:red, 7; green, 62; blue, 192}][line width=1.5]    (406.13,174.4) .. controls (436.51,167.07) and (412.15,156) .. (432.53,174) .. controls (452.92,192) and (447.43,158.73) .. (463.38,169.17) .. controls (479.33,179.6) and (475.22,172.2) .. (483.33,173.2) .. controls (491.45,174.2) and (452.88,197.67) .. (454.88,201.67) .. controls (456.88,205.67) and (416.21,173.47) .. (401.33,178.8) ;
\draw [draw opacity=0][pattern=_opmnb4xae,pattern size=11.850000000000001pt,pattern thickness=0.75pt,pattern radius=0pt, pattern color={rgb, 255:red, 7; green, 62; blue, 192}][line width=1.5]    (177.49,183) .. controls (226.43,159.6) and (196.03,176) .. (217.23,177.2) .. controls (238.43,178.4) and (233.63,172) .. (239.23,177.2) .. controls (244.83,182.4) and (219.22,194.27) .. (212.83,198) .. controls (206.43,201.73) and (176.79,184.03) .. (183.23,179.2) ;

\draw (23.33,183.4) node [anchor=north west][inner sep=0.75pt]  [font=\footnotesize]  {$x_{2} =0$};
\draw (25.33,52.4) node [anchor=north west][inner sep=0.75pt]  [font=\footnotesize]  {$x_{2} =1$};
\draw (259.13,142.8) node [anchor=north west][inner sep=0.75pt]  [font=\footnotesize]  {$x_{2} =a$};
\draw (260.53,96.4) node [anchor=north west][inner sep=0.75pt]  [font=\footnotesize]  {$x_{2} =b$};
\draw (140,189.4) node [anchor=north west][inner sep=0.75pt]  [font=\small]  {$\Gamma _{0}$};
\draw (140,42.4) node [anchor=north west][inner sep=0.75pt]  [font=\small]  {$\Gamma _{1}$};
\draw (137,143.4) node [anchor=north west][inner sep=0.75pt]  [font=\small,color={rgb, 255:red, 208; green, 2; blue, 27 }  ,opacity=1 ]  {$x_{2} =\tilde{h}_{0}( x_{1})$};
\draw (525.33,237) node [anchor=north west][inner sep=0.75pt]  [font=\small] [align=left] {Stagnation region};
\draw (22.33,129.4) node [anchor=north west][inner sep=0.75pt]  [font=\footnotesize]  {$x_{2} =1/2$};
\draw (359.33,53.4) node [anchor=north west][inner sep=0.75pt]  [font=\footnotesize]  {$x_{2} =1$};
\draw (357,183) node [anchor=north west][inner sep=0.75pt]  [font=\footnotesize]  {$x_{2} =0$};
\draw (594,142.4) node [anchor=north west][inner sep=0.75pt]  [font=\footnotesize]  {$x_{2} =a$};
\draw (470,197.4) node [anchor=north west][inner sep=0.75pt]  [font=\small]  {$\Gamma _{0}$};
\draw (434.67,136.4) node [anchor=north west][inner sep=0.75pt]  [font=\small,color={rgb, 255:red, 208; green, 2; blue, 27 }  ,opacity=1 ]  {$x_{2} =\tilde{h}_{0}( x_{1})$};

\end{tikzpicture}

    \caption{Existence of stagnation points}\label{0-2}
			\end{figure}

We have the following remarks on Theorem \ref{thm2}.

	\begin{remark}
		Note that $\bar{u}_1'(0)>0$ and $\bar{u}_1'(1)<0$. This violates the conditions for the asymptotic horizontal velocity of the steady compressible flows studied in \cite{XX3, CHWX} and induces many new difficulties in studying the associated problem for the stream function.
	\end{remark}

	\begin{remark}
		The methods developed in this paper, together with techniques in \cite{CDXX, HN2, GX}, should also help to study the flows in a half-plane or past a wall with stagnation points at far field.
	\end{remark}
	
	\begin{remark}
 We would like to highlight the observation that the flows analyzed in previous studies \cite{DX, CDXX, HN3} exhibit points of stagnation. However, it's worth noting that the far field  states of the flows investigated in \cite{DX, CDXX} remain distinctively distant from stagnation points. To the authors' current understanding, the outcomes derived in this paper are believed to be pioneering outcomes addressing flows characterized by remote states encompassing stagnation points.

	\end{remark}

 \begin{remark}
     The rate of convergence of flows at far field is studied in \cite{LLWX}.
 \end{remark}

  \subsection{Related results}
		Here we give more detailed discussions on the related results for steady flows in nozzles and the major technical points for the analysis.
		The incompressible flows can be regarded as zero Mach number limit for the compressible flows. Bers proposed the problem on the existence of compressible irrotational subsonic flows in an infinitely long two-dimensional nozzle in \cite{Bers}. The behavior of subsonic solutions in nozzles was first studied by Gilbarg in \cite{Gilbarg}. In the last two decades, there are many important progress on the well-posedness theory of subsonic flows in nozzles. When the flows are subsonic, the well-posedness theory for irrotational flows in two and three-dimensional infinitely long nozzles were established in \cite{XX1,XX2,DXY}. The existence of weak solutions for subsonic-sonic flows was established in \cite{XX1,XX2, HWW}, etc.
		
		When the flows have non-zero vorticity, the compressible Euler system in subsonic region is a hyperbolic-elliptic coupled system. After introducing the stream function formulation, the steady compressible Euler system is equivalent to a single second order equation with a source term which represents memory effect for the hyperbolic mode (cf. \cite{XX3}) as long as the velocity field converges at far field. Furthermore, the well-posedness for subsonic flows with small vorticity was first obtained in \cite{XX3}.  This approach was later used to study subsonic flows with non-zero vorticity in nozzles in various settings, such as the flows in periodic nozzles or in axially symmetric nozzles, the non-isentropic flows, and also the limit of subsonic flows, etc, see \cite{CX1, CDXX, CJ, DD, CDX, DL,CHWX, CHW} and references therein.
		
		 \subsection{Key ideas of  proofs}
		
		Now we give the key ideas for the proof of the main results in this paper. The steady incompressible Euler system is reduced into a single second order semilinear elliptic equation for the stream function, and the uniqueness of solutions in an infinite strip  is equivalent to the Liouville type theorem for the semilinear equation in the associated domains. The research in this direction was initiated by Berestycki, Caffarelli, and Nirenberg (cf. \cite{BN,BCN0,BCN} and references therein) when the nonlinearity is Lipschitz continuous.
		These ideas were used in \cite{HN} to study the Liouville type theorem for steady Euler flows in a strip strictly away from stagnation where the associated elliptic equation has Lipschitiz nonlinearity.	
		When the nonlinearity is not Lipschitz continuous, the Liouville type theorem for semilinear elliptic equations was investigated in \cite{Farina1,Farina2} when the nonliearity satisfies some structural conditions. However, the nonlinearity studied in this paper is not Lipschitz continuous and does not satisfy the structural conditions in \cite{Farina1, Farina2}.

		One of the key observations in the proof of Theorem \ref{thmunique} is based on the Liouville type theorem for solutions in a strip (cf. Propositions \ref{propthm1} and \ref{prop2}). In order to prove the Liouville type property of solutions in a strip,  the structure of equation is used to show the positivity of horizontal velocity near the boundary so that the method of moving plane can be initiated.
		Furthermore, some new and interesting properties of one-dimensional solutions, in particular,  the uniqueness and other monotonic properties of the solutions, are obtained although the associated ODE also has non-Lipschitz nonlinearity. This help show that one-dimensional solutions must be Poiseuille flows.

		For the steady incompressible Euler flows in a general infinitely long nozzle, we use the variational structure of the equation for the stream function to prove the existence of solutions in each bounded truncated domains.
		The nonnegativity of the horizontal velocity and the uniqueness of energy minimizers are proved by the shifting method for the solutions and comparison for the associated energies. 
		Finally, the far field behavior of the solutions is proved via the monotonicity of the normalized energy and the uniform estimate for the energy minimizers.
	The key issue for the free boundary is that it is a obstacle type free boundary. The regularity of this free boundary is obtained by combining the analysis for singularity of obstacle free boundary by Caffarelli, et al and the moving plane method. The existence of free boundary is a consequence of the analysis for the energy minimizer of the shear flows.	

   \subsection{Organization of the paper}
		The organization for the   paper is as follows. In Section \ref{secstream}, the stream function formulation is introduced and the equivalence between the incompressible Euler system and stream function formulation is proved.
		In Section \ref{secODE}, we give the detailed analysis for the one-dimensional solutions of the semilinear equation  in the strip, which correspond to the shear flows in an infinitely long strip.
		In Section \ref{secunique}, the Liouville type theorem for solutions of semilinear elliptic equation is established first.  Next, the uniqueness of solutions with positive horizontal velocity is obtained with the aid of Liouville type theorem for the solutions in a strip.
		The existence and various kinds of fine properties for the energy minimizers of the variational problem associated with the stream function are established in Section \ref{secminimizer} via the delicate analysis. In particular, the existence of solutions is obtained and Theorem \ref{thm2} is proved.

\subsection{Notation}

Some notations used in this paper are collected in the following.

\begin{tabular}{ l l  }
 ${\bf u}$ &\qquad  velocity  \medskip  \\
 $\Omega$  &\qquad  The nozzle \quad $\{(x_1,x_2): h_0(x_1)<x_2<h_1(x_1), -\infty<x_1<+\infty\}$,   \medskip \\
 $\Omega_N$ &\qquad $\Omega\cap\{(x_1,x_2): |x_1|<N\}$    \medskip \\
$Q$   &\qquad The flux of the flow\quad  $Q=\int_0^1 u_1(x_1, x_2)dx_2$     \medskip \\
$\tilde{h}_0(x_1)$   &\qquad $\inf\{x_2: |{\bf u}|(x_1,x_2)>0, x_2\in [h_0(x_1), h_1(x_1)] \} $   \medskip \\
$ \tilde{h}_1(x_1)  $  &\qquad $\sup \{x_2: |{\bf u}|(x_1,x_2)>0, x_2\in [h_0(x_1), h_1(x_1)] \}    $  \medskip \\
$\tilde{\Gamma}_{0, N}$ &\qquad $\partial \tilde{\Omega}_N \cap \{x: \psi(x)=0\} $   \medskip \\
$\tilde{\Gamma}_{1, N} $  &\qquad   $\partial \tilde{\Omega}_N \cap \{x: \psi(x)=Q\}$  \medskip \\
$\check{\Gamma}_{i, N}  $ &\qquad    $\tilde{\Gamma}_{i,N}\setminus \Gamma_i $\medskip \\
$\bar{u}_1(x_2)$ &\qquad  $6Qx_2(1-x_2)$\\
$\kappa (\psi) $   &\qquad defined implicitly by  $\psi= \int_0^{\kappa(\psi)} \bar{u}_1(s)ds$ satisfying  
  $\kappa(0)=0$ and $\kappa(Q)=1$.   \medskip \\  
$f(\psi)$&\qquad   $6Q(1-2\kappa(\psi))$  \medskip \\
&\qquad     \medskip \\
\end{tabular}

		\section{Stream function formulation}\label{secstream}
		In this section, we introduce the stream function to reduce the steady incompressible Euler system in the region without stagnation point into a single second order semilinear elliptic equation. One of the crucial points is to determine the exact nonlinearity of the equation.
		
		Let {${\bf u}\in C^2(\Omega)\cap C^{0,1}(\bar\Omega)$} be a strong solution of the Euler system \eqref{ieuler}.
		It follows from the divergence free condition of the velocity field that there exists a stream function $\psi$ satisfying
		\begin{equation}\label{defpsi}
			\partial_{x_2}\psi =u_1\quad\text{and}\quad \partial_{x_1} \psi=-u_2.
		\end{equation}
		Clearly, if ${\bf u}$ satisfies the slip condition \eqref{slip1}, then $\psi$ must be constants on $\Gamma_i$ ($i=0$, $1$), where
		\[
		\Gamma_i=\{(x_1,x_2): x_2=h_i(x_1),\, -\infty<x_1<\infty\}.
		\]
		Without loss of generality, assume that
		\begin{equation}\label{psiGam0}
			\psi=0\quad \text{on}\,\, \Gamma_0.
		\end{equation}
		The straightforward calculations for the incompressible Euler system \eqref{ieuler} yield
		\begin{eqnarray}\label{pro}
			\nabla^\perp\psi\cdot \nabla \omega=	{\bf u}\cdot \nabla  \omega=0,
		\end{eqnarray}
		where $\omega=\partial_{x_2} u_1-\partial_{x_1} u_2$ is called the vorticity of the flows.
		
		First, we have the following elementary lemma.
		\begin{pro}\label{propstream}
			Suppose that ${\bf u}\in C^2(\Omega)\cap C^{0,1}(\bar{\Omega})$ is a  solution of the steady Euler system \eqref{ieuler} supplemented with the slip condition \eqref{slip1}. If ${\bf u}$ satisfies \begin{equation}\label{basicppt1}
				\int_{h_0(x_1)}^{h_1(x_1)} u_1(x_1,x_2)dx_2=Q \quad\text{and}\,\,\  u_1(x_1, x_2)>0\,\, \text{for all}\,\,  (x_1,x_2)\in\Omega,
			\end{equation}
   and
			\begin{equation}\label{con}
				{u}_1(x_1,x_2)\to \bar{u}_1(x_2)=6Qx_2(1-x_2)\,\, \text{as}\,\, x_1\to -\infty
			\end{equation}
			uniformly with respect to $x_2\in[0,1]$,
			then the stream function  $\psi$ satisfies 
			\begin{equation}\label{psibound}
				0 < \psi < Q\quad \text{in}\,\, \Omega
			\end{equation}
 and  the equation
			\begin{equation}\label{streameq}
				\Delta \psi =f(\psi),
			\end{equation}
			where  $f$  is given by
			\begin{equation}\label{formf}
				f(\psi)=6Q(1-2\kappa(\psi))
			\end{equation}
			with $\kappa(\psi)$ satisfying
			\[
			\psi=\int_0^{\kappa(\psi)} \bar{u}_1(s)ds.
			\]
   So that $\kappa(0)=0$ and $\kappa(Q)=1$.
			Furthermore, $f\in C^{1/2}([0, Q])\cap C^\infty((0,Q))$ and this is optimal regularity for $f$ on $[0, Q]$.
		\end{pro}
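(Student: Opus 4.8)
The plan is to construct the stream function explicitly, read off the bounds \eqref{psibound} from the sign of $u_1$ together with the flux, derive the semilinear equation \eqref{streameq} from the vorticity transport identity \eqref{pro}, and finally analyze the regularity of $f$ by inverting the explicit primitive of $\bar u_1$. First I would fix $\psi$ by \eqref{defpsi} with the normalization $\psi=0$ on $\Gamma_0$. Since $\partial_{x_2}\psi=u_1>0$ in $\Omega$, for each fixed $x_1$ the map $x_2\mapsto\psi(x_1,x_2)$ is strictly increasing, so $\psi>0$ in the interior; integrating $\partial_{x_2}\psi=u_1$ across a vertical section and using the flux in \eqref{basicppt1} gives $\psi=Q$ on $\Gamma_1$, whence $\psi<Q$ in the interior, which is \eqref{psibound}. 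The same monotonicity shows, via the implicit function theorem (as $\partial_{x_2}\psi\neq0$), that every level set $\{\psi=c\}$ with $c\in(0,Q)$ is the graph of a smooth function $x_2=g_c(x_1)$ defined for all $x_1\in\mathbb{R}$; these are the streamlines, and each of them reaches the far field $x_1\to-\infty$.

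For the equation, note $\omega=\Delta\psi$ from \eqref{defpsi}, while \eqref{pro} says $\omega$ is constant along streamlines, i.e. along the connected graphs $\{\psi=c\}$. Because $\nabla\psi\neq0$, this forces $\omega$ to be a function of $\psi$ alone, $\omega=F(\psi)$; equivalently, one checks that the Bernoulli function $B=\tfrac12|\mathbf{u}|^2+p$ satisfies $\nabla B=\omega\,\nabla\psi$, so $B=B(\psi)$ and $F=B'$. To identify $F$ I would use that each streamline reaches $x_1\to-\infty$, where $\mathbf{u}\to\bar{\mathbf{u}}$ from \eqref{defPflow} and \eqref{con}: for the Poiseuille profile the vorticity is $\bar u_1'(x_2)=6Q(1-2x_2)$ and the stream value is $\bar\psi(x_2)=\int_0^{x_2}\bar u_1(s)\,ds$, so the vorticity expressed through $\psi$ is exactly $6Q(1-2\kappa(\psi))$ with $\kappa=\bar\psi^{-1}$, i.e. $F=f$ as in \eqref{formf}. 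Hence $\Delta\psi=f(\psi)$.

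For the regularity of $f$ I would work with the explicit primitive $\psi=\int_0^{\kappa}\bar u_1(s)\,ds=3Q\kappa^2-2Q\kappa^3=:P(\kappa)$. Since $P'(\kappa)=\bar u_1(\kappa)=6Q\kappa(1-\kappa)>0$ on $(0,1)$, the map $P$ is a smooth strictly increasing bijection from $[0,1]$ onto $[0,Q]$, so $\kappa=P^{-1}\in C^\infty((0,Q))$ and $f$ is $C^\infty$ there. At the endpoints $P'$ vanishes to exactly first order ($P''(0)=6Q$, $P''(1)=-6Q$), giving $P(\kappa)\sim 3Q\kappa^2$ as $\kappa\to0^+$ and $Q-P(\kappa)\sim 3Q(1-\kappa)^2$ as $\kappa\to1^-$; inverting yields $\kappa(\psi)\sim(\psi/3Q)^{1/2}$ and $1-\kappa(\psi)\sim((Q-\psi)/3Q)^{1/2}$, a genuine square-root behavior. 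Thus $f$ is Hölder$-1/2$ up to the endpoints, while $f'(\psi)=-12Q/P'(\kappa)$ blows up like $\psi^{-1/2}$ near $0$ and $(Q-\psi)^{-1/2}$ near $Q$, so $f\in C^{1/2}([0,Q])$ and this exponent is optimal.

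The step I expect to be the main obstacle is the \emph{global} identification $\omega=f(\psi)$. Locally this is immediate once $\nabla\psi\neq0$, but promoting it to a single function on all of $\Omega$ and pinning it down by the upstream profile requires the connectedness of the level sets $\{\psi=c\}$ and the fact that each reaches $x_1=-\infty$, together with a transfer of the $C^0$ far-field convergence \eqref{con} of $u_1$ to the vorticity-stream relation. The Bernoulli formulation $\nabla B=\omega\,\nabla\psi$ is attractive precisely because it reduces this to the constancy of $B$ along streamlines, sidestepping the need to assume convergence of the derivatives of $\mathbf{u}$ at the upstream far field directly.
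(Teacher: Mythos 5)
Your construction of $\psi$, the derivation of \eqref{psibound}, the local/global reduction $\omega=F(\psi)$ via the streamline structure, and the endpoint analysis of $\kappa=P^{-1}$ giving $f\in C^{1/2}([0,Q])\cap C^\infty((0,Q))$ with optimality are all sound and essentially parallel the paper's Steps 1, 2 and the regularity discussion. The genuine gap is in the identification of $F$ with $f$ at the upstream. The hypothesis \eqref{con} gives only \emph{uniform $C^0$ convergence of $u_1$}; it says nothing about convergence of $\nabla u_1$, of $u_2$, or of the pressure. Your primary route -- ``at $x_1\to-\infty$ the vorticity is $\bar u_1'(x_2)$'' -- therefore assumes convergence of the vorticity $\omega=\partial_{x_2}u_1-\partial_{x_1}u_2$, which does not follow from $C^0$ convergence of $u_1$ alone. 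Your fallback via the Bernoulli function does not sidestep this: $B=\tfrac12|\mathbf u|^2+p$ involves $u_2$ and $p$, whose upstream limits are likewise not hypothesized, so $B$ cannot be evaluated at $x_1=-\infty$ from the given data either. In both versions you are using information about first derivatives (or the pressure) of $\mathbf u$ in the far field that the proposition does not provide.

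The paper closes exactly this gap with a weak-convergence argument that you would need to supply. From the divergence-free condition one has the pointwise identity $\partial_{x_2}\omega=\Delta u_1$. Setting $\mathbf u^{(k)}(x)=\mathbf u(x_1-k,x_2)$, the hypothesis \eqref{con} gives $u_1^{(k)}\to\bar u_1$ in $L^2$, hence $\Delta u_1^{(k)}\to\bar u_1''$ in $H^{-2}$; meanwhile the assumption $\mathbf u\in C^{0,1}(\bar\Omega)$ gives a uniform $L^\infty$ bound on $\omega^{(k)}$, so along a subsequence $\omega^{(k)}\rightharpoonup\bar\omega$ and $\partial_{x_2}\bar\omega=\bar u_1''$. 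This determines $\bar\omega$ only up to a function of $x_1$; the paper then uses the weak limit $\bar u_2$ of $u_2^{(k)}$, showing $\partial^2_{x_1x_2}\bar u_2=0$ together with the boundary condition to conclude $\partial_{x_1}\bar u_2=0$ and hence $\bar\omega=\bar u_1'(x_2)=6Q(1-2x_2)$. Only after this is the conservation of $\omega$ and $\psi$ along streamlines invoked to get $\omega=6Q(1-2\kappa(\psi))$ globally. A secondary, smaller point: your claim that each level set $\{\psi=c\}$ is a graph defined for \emph{all} $x_1\in\mathbb R$ needs the divergence-theorem/flux argument (as in the paper's Step 2) to rule out the streamline terminating on $\Gamma_0\cup\Gamma_1$; local nonvanishing of $\partial_{x_2}\psi$ alone does not give global existence of the graph.
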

		\begin{proof}
  The proof is divided into three steps.
			
			\emph{Step 1. Local determination.}	Without loss of generality, assume that $u_1(\bar{x}) \neq 0$. It follows from the implicit function theorem that there exist a $\delta>0$ and a function $\xi$ such that
			\[
			x_2=\xi(x_1, \psi) \ \ \text{in}\ B_\delta(\bar{x}).
			\]
			Thus $\omega=\omega(x_1, \xi(x_1, \psi)):=f(x_1, \psi)$. The straightforward computations give
			\[
			\frac{\partial f}{\partial x_1}=\frac{\partial \omega}{\partial x_1}+\frac{\partial \omega}{\partial x_2}\frac{\partial \xi}{\partial x_1}= \frac{\partial \omega}{\partial x_1}+\frac{\partial \omega}{\partial x_2}(-\frac{\partial \psi}{\partial x_1}/\frac{\partial\psi}{\partial x_2})=\frac{\nabla^\perp \psi \cdot \nabla \omega} {\partial_{x_2}\psi}=0,
			\]
 	where the equation \eqref{pro} has been used.
			Therefore, $f(x_1, \psi)=f(\psi)$. Hence there exists a function $f$ such that $\omega=f(\psi)$ holds in $B_\delta(\bar{x})$.
			
			\emph{Step 2. Global determination.} If $u_1(x)>0$ in $\Omega$, then at each point $x^*\in \Omega$, there exists at most one streamline which is defined as follows,
			\begin{equation}
				\left\{
				\begin{aligned}
					& \frac{dx_1}{ds}=u_1(x_1,x_2),\\
					& \frac{dx_2}{ds}=u_2(x_1,x_2),\\
					&(x_1,x_2)(0)=x^*=(x_1^*, x_2^*).
				\end{aligned}
				\right.
			\end{equation}
   
			Furthermore, the streamline $\Gamma$ passing through $x^*$ cannot intersect with the nozzle boundary.  Indeed, if this fails, we may apply the divergence theorem in the region $D$ bounded by $\Gamma$, $\Gamma_0$, and the line segment $\{(x_1^*, x_2)\in\Omega: h_0(x_1^*)<x_2<h_1(x_1^*)\}$ to arrive at  
			\[
			\int_{h_0(x_1^*)}^{x_2^*} u_1(x_1^*, s)ds=0.
			\]
			This contradicts with \eqref{basicppt1}. Therefore, each streamline can only travel from left to right all the way to the downstream as $x_1\to +\infty$. 
   
   Hence it follows from \eqref{pro} that there exists a function $f$ such that for all $x\in \Omega$, one has $\omega(x)=f(\psi(x))$, and also \eqref{psibound} must hold.

   \emph{Step 3. Explicit form of $f$}.
    For any {${\bf u}\in C^2(\Omega)\cap C^{0,1}(\bar\Omega)$},
			it follows from the divergence free condition that \begin{equation}\label{partial2omega}
				\partial_{x_2}\omega=\partial_{x_2}(\partial_{x_2}u_1-\partial_{x_1}u_2)=\Delta u_1.
			\end{equation}
			For any $k\in \mathbb{N}$, define ${\bf u}^{(k)}(x)={\bf u}(x_1-k, x_2)$. Then \eqref{con} implies that $u_1^{(k)}\to \bar{u}_1$ in $L^2((0,1)\times(0,1))$ as $k \to \infty$. Therefore, one has
			\begin{equation}\label{weak}
				\Delta u_1^{(k)}\to \bar{u}_1'' \quad \text{in}\,\, H^{-2}((0,1)\times(0,1)) \ \ \text{as} \ k\to \infty.
			\end{equation}
			Note that $\omega^{(k)}=\partial_{x_2}u_1^{(k)}-\partial_{x_1}u_2^{(k)}$ satisfies
			\[
			\|\omega^{(k)}\|_{L^\infty((0,1)\times (0,1))}\leq C,
			\]
			where $C$ is a uniform constant independent of $k$. Therefore, there is a subsequence of $\{\omega^{(k)}\}$, which is still labeled by $\{\omega^{(k)}\}$, such that
			$\omega^{(k)} \rightharpoonup \bar \omega$ in $L^2((0,1)\times(0,1))$ for some $\bar\omega\in L^2((0,1)\times (0,1))$ as $k \to \infty$.
			Hence one has
			\begin{equation}\label{weak1}
				\partial_{x_2} \omega^{(k)}\rightharpoonup \partial_{x_2}\bar\omega \ \text{ in} \  H^{-1}((0,1)\times (0,1)) \ \ \text{as} \ k\to \infty.
			\end{equation}
			By the uniqueness of weak limit, combining \eqref{partial2omega}--\eqref{weak1}, one arrives at
			\begin{eqnarray}\label{limit}
				\partial_{x_2}\bar{\omega} =\bar{u}_1''.
			\end{eqnarray}
			Similarly, there exists a function $\bar{u}_2\in L^2((0,1)\times (0,1))$ such that  $u_2^{(k)} \rightharpoonup \bar{u}_2$  in $L^2((0,1)\times(0,1))$. It follows from \eqref{partial2omega} that one has
			\[
			\partial_{x_2}\bar{\omega}=\bar{u}_1''-\partial^2_{x_1x_2}\bar{u}_2.
			\]
			Therefore, from \eqref{limit}, one can obtain that $\partial^2_{x_1x_2}\bar{u}_2\equiv 0$. Hence one has $\bar{u}_2=\bar{\varsigma}_1(x_1)+\bar{\varsigma}_2(x_2)$ for some functions $\bar{\varsigma}_1$ and $\bar{\varsigma}_2$. Note that $\bar{u}_2(x_1, 0)=\bar{u}_2(x_1, 1)=0$. Hence $\bar{\varsigma}_1\equiv C_1$ and $\bar{u}_2=\bar{\varsigma}_2(x_2)+C_1$ for some constant $C_1$. By definition, one has
			\begin{equation}\label{defbaromega}
				\bar{\omega} =\partial_{x_2} \bar{u}_1 -\partial_{x_1}\bar{u}_2=\bar{u}_1'(x_2)=6Q(1-2x_2).
			\end{equation}
			
			Since ${u}_1(x_1,x_2)\to \bar{u}_1(x_2)$ as $x_1\to -\infty$, one has
			\[
			\psi(x_1,x_2)=\int_{h_0(x_1)}^{x_2} u(x_1, s)ds\to \bar{\psi}(x_2) :=\int_0^{x_2} \bar{u}_1(s)ds\quad \text{as}\,\, x_1\to -\infty.
			\]
			The straightforward computations give
			\begin{equation}\label{psi}
				\bar{\psi}(x_2)=Q(3x_2^2-2x_2^3),
			\end{equation}
			which is a strictly increasing function for $x_2\in [0, 1]$, so
			$x_2$ can be written as a function of $\bar{\psi}$. We denote this function by $x_2=\kappa(\bar{\psi})$. Clearly, it follows from \eqref{psi} that $\kappa\in C^{1/2}([0, Q])\cap C^\infty((0,Q))$, $\kappa(0)=0$ and $\kappa$ is an increasing function which maps  $[0,Q]$ to $[0,1]$.
			
			It follows from \eqref{defpsi} and \eqref{pro} that both stream function and vorticity are conserved along each streamline. Hence one has
			\begin{equation}\label{omega}
\omega(x)=\bar\omega(\kappa(\psi(x)))=6Q(1-2\kappa({\psi}(x))),
			\end{equation}
			where the explicit form of $\bar\omega$ in \eqref{defbaromega} has been used.
			Furthermore, by the definition of $\omega$ and $\psi$, one can see that $\omega =\Delta \psi$.
			This, together with \eqref{omega}, implies that \eqref{streameq} holds in $\Omega$ with $f$ defined in \eqref{formf}.
			
			The proof of the proposition is completed.
		\end{proof}

		\section{Analysis on the shear flows}\label{secODE}

  The objective of this section is to examine the one-dimensional solutions of equation \eqref{streameq}, which represent shear flows. The analysis  will contribute to fully understanding the Euler flows within this region. Indeed, for any value of $d \in (0,1]$, we will investigate the existence, uniqueness, and characterizations of solutions to problem 		\begin{equation}\label{pb1.2}
			\left\{
			\begin{aligned}
				&{\psi}''(x_2)=f(\psi(x_2)),\ \ \ x_2\in (0,d),\\
				&{\psi}(0)=0, \ \ {\psi}(d)=Q.
			\end{aligned}
			\right.
		\end{equation}
		In particular when $d=1$ in    problem \eqref{pb1.2} 
		for   one-dimensional solution of Euler system \eqref{ieuler} in $\hat{\Omega}$ is studied in detail. The study of boundary value problem \eqref{pb1.2} closely relates to the following initial value problem
		\begin{equation}\label{pbphic}
			\left\{
			\begin{aligned}
				&{\psi}''(x_2)=f(\psi(x_2)),\ \ \\
				&{\psi}(0)=0, \ \ {\psi}'(0)=c
			\end{aligned}
			\right.
		\end{equation}
		for some $c\geq 0$. Furthermore, the solution of \eqref{pb1.2} is also an energy minimizer of the energy functional
		\begin{equation}\label{1Denergy}
			{\mathcal{J}}_d (\psi) =\int_0^d \left(\frac{|\psi'(s)|^2}{2}+F(\psi(s))\right)ds
		\end{equation}
		over
		\begin{equation}\label{defUd}
			{\mathcal{U}}_d =\{\psi\in H^1(0,d):\psi(0)=0, \psi(d)=Q\},
		\end{equation}
		where $F$ is defined in \eqref{defF}. We can summarize our main results on shear flows and their properties as follows.
		\begin{theorem}\label{thmshear}
			For any $d\in (0,1]$, we have the following results.
			\begin{enumerate}
				\item 	There exists a unique solution $\bar{\varphi}_d\in C^2((0,d))\cap C^1([0,d])$ of \eqref{pb1.2}. Furthermore, $\bar{\varphi}_d'(x_2)>0$ for $x_2\in (0,d)$.
				\item For any $c\geq 0$, the problem \eqref{pbphic} has a unique solution $\phi_c$. Furthermore, for any $d\in (0, 1]$, there is a unique $c=\mathfrak{c}(d)$ such that the solution of \eqref{pb1.2} is exactly the solution of \eqref{pbphic} with $c=\mathfrak{c}(d)$ which  is monotone decreasing with respect to $d$ and satisfies $\mathfrak{c}(1)=0$.
				\item $\bar{\varphi}_d$ is the unique energy minimizer of the energy functional $\mathcal{J}_d$ defined in \eqref{1Denergy} over $\mathcal{U}_d$. Furthermore, if $0<d_1<d_2\leq 1$, then $\mathcal{J}_{d_1}(\bar{\varphi}_{d_1})>\mathcal{J}_{d_2}(\bar{\varphi}_{d_2})$ and $\bar{\varphi}_{d_1}(x_2)>\bar{\varphi}_{d_2}(x_2)$ for $x_2\in (0, d_1)$.
				\item For any two solutions $\phi_{c_i}$ of \eqref{pbphic} with $c=c_i$ ($i=1$, $2$),  even with shift, they have at most one intersection.
			\end{enumerate}
		\end{theorem}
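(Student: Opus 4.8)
The plan is to exploit the fact that \eqref{pbphic} is autonomous and therefore admits the first integral (the ODE energy)
\begin{equation*}
\frac{(\psi')^2}{2}-F(\psi)=\text{const},\qquad F'=f,
\end{equation*}
which is the device that bypasses the non-Lipschitz character of $f$. First I would record the shape of $f$ and $F$ coming from \eqref{formf}: since $\kappa$ increases from $0$ to $1$, the function $f$ is strictly decreasing with $f(0)=6Q>0$, $f(Q/2)=0$, $f(Q)=-6Q<0$; hence $F$, normalized by $F(0)=0$, is strictly positive on $(0,Q)$, vanishes at both endpoints, and satisfies $F(\psi)\sim 6Q\,\psi$ near $0$ and $F(\psi)\sim 6Q(Q-\psi)$ near $Q$. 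The crucial consequence is that $1/\sqrt{c^2+2F(\psi)}$ is integrable on $[0,Q]$ even when $c=0$.

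For part (2), I would note that every solution of \eqref{pbphic} has $\psi''(0)=f(0)=6Q>0$, so it leaves the origin increasingly; on the increasing branch the first integral gives $\psi'=\sqrt{c^2+2F(\psi)}>0$, a separable first–order equation whose inverse
\begin{equation*}
x_2=\int_0^{\psi}\frac{ds}{\sqrt{c^2+2F(s)}}
\end{equation*}
is well defined thanks to the integrability above. This furnishes both existence and uniqueness of $\phi_c$, the non-Lipschitz point $\psi=0$ being rendered harmless by the convergent integral. Setting $T(c)=\int_0^Q ds/\sqrt{c^2+2F(s)}$ for the ``time'' to reach $Q$, one checks that $T$ is continuous, strictly decreasing, $T(c)\to 0$ as $c\to\infty$, and $T(0)=1$ (the last identity by the change of variables $s=\bar\psi(x_2)$ against the Poiseuille profile). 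Inverting $T$ yields a unique $\mathfrak{c}(d)=T^{-1}(d)$, strictly decreasing with $\mathfrak{c}(1)=0$.

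Parts (1) and (3) then follow. The restriction $\phi_{\mathfrak{c}(d)}|_{[0,d]}$ solves \eqref{pb1.2} and is strictly increasing, giving existence in (1). For uniqueness I would show any solution of \eqref{pb1.2} is monotone on $[0,d]$: using the first integral, an interior critical point forces the energy constant to equal $0$ (because $F\ge 0$ and $\psi$ attains the value $0$ at $x_2=0$), hence $c=0$; but the $c=0$ orbit first revisits the levels $Q$ and $0$ only at $x_2=1$ and $x_2=2$, both outside $(0,d)$ for $d\le 1$, so no interior critical point can occur. Monotonicity together with the first integral then identifies the solution with $\phi_{\mathfrak{c}(d)}$. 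For (3), $\mathcal{J}_d$ is coercive and weakly lower semicontinuous on $\mathcal{U}_d$ once $F$ is continued by $0$ outside $[0,Q]$, so a minimizer exists; truncation confines it to $[0,Q]$, it solves the Euler--Lagrange problem \eqref{pb1.2}, and by the uniqueness just proved it equals $\bar\varphi_d$. The spatial comparison $\bar\varphi_{d_1}>\bar\varphi_{d_2}$ on $(0,d_1)$ follows from $\mathfrak{c}(d_1)>\mathfrak{c}(d_2)$ and the fact that $\int_0^\psi ds/\sqrt{c^2+2F}$ decreases in $c$ at each level $\psi$. For the energy ordering I would insert the first integral into \eqref{1Denergy} and change variables to obtain
\begin{equation*}
\mathcal{J}_d(\bar\varphi_d)=\int_0^Q\frac{\tfrac12 c^2+2F(\psi)}{\sqrt{c^2+2F(\psi)}}\,d\psi,\qquad c=\mathfrak{c}(d),
\end{equation*}
whose integrand is strictly increasing in $c^2$, so $\mathcal{J}_d(\bar\varphi_d)$ strictly increases as $c$ grows, i.e.\ strictly decreases in $d$; this is exactly $\mathcal{J}_{d_1}(\bar\varphi_{d_1})>\mathcal{J}_{d_2}(\bar\varphi_{d_2})$.

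Finally, for part (4) I would parametrize two increasing solutions by their height $\psi$ and compare the abscissae $X_i(\psi)=\int_0^\psi ds/\sqrt{c_i^2+2F(s)}$, allowing a horizontal shift by a constant in one of them. Their difference has derivative $1/\sqrt{c_1^2+2F}-1/\sqrt{c_2^2+2F}$, which keeps a fixed sign when $c_1\ne c_2$ (and shifted copies of a single strictly monotone profile plainly meet at most once when $c_1=c_2$); hence the abscissa difference is strictly monotone in $\psi$ and vanishes at most once, giving at most one intersection. The main obstacle throughout is precisely the loss of Lipschitz regularity of $f$ at $\psi=0$ and $\psi=Q$, which destroys the usual Picard--Lindel\"of uniqueness; the first integral converts the problem into a separable first-order equation with an \emph{integrable} singularity, and the delicate point is ruling out spurious non-monotone solutions of \eqref{pb1.2}, which is where the hypothesis $d\le 1$ (through $T(0)=1$) enters decisively.
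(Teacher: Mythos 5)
Your quadrature approach via the first integral is sound and genuinely different from the paper's for parts (1), (2) and (4). The paper proves local well-posedness of \eqref{pbphic} by a contraction mapping applied to the rescaled unknowns $\phi_c/x_2$ (for $c>0$) and $\phi_0/x_2^2$ (for $c=0$), exploiting only $\kappa\in C^{1/2}$, and proves part (4) by an energy-gluing argument (Lemma \ref{lem3.7}): if two shifted profiles met twice, swapping them between the intersections would produce an energy minimizer that fails to be twice differentiable at the gluing points. Your height parametrization $X_i(\psi)=\int_0^\psi ds/\sqrt{c_i^2+2F(s)}$ turns both the Cauchy uniqueness and the one-intersection property into one-line monotonicity statements, and your closed formula for $\mathcal{J}_d(\bar\varphi_d)$ replaces the paper's first-variation computation for the strict energy ordering. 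These parts are correct and arguably cleaner; the integrability of $1/\sqrt{2F}$ near $0$ and $Q$ (from $F(t)\sim 6Qt$ and $F(t)\sim 6Q(Q-t)$), together with $\psi''(0)=f(0)>0$ forcing the orbit off the degenerate level, is exactly the right substitute for the missing Lipschitz bound.

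The gap is in part (3), at the sentence ``truncation confines it to $[0,Q]$, it solves the Euler--Lagrange problem \eqref{pb1.2}.'' After truncation the minimization is an obstacle-type problem: the minimizer satisfies $\psi''=f(\psi)$ only on the open set $\{0<\psi^*<Q\}$, and since $F$ vanishes identically on $(-\infty,0]\cup[Q,\infty)$, flat pieces $\psi^*\equiv 0$ near $x_2=0$ and $\psi^*\equiv Q$ near $x_2=d$ cost nothing in either term of $\mathcal{J}_d$ and are not excluded by any soft argument. This is precisely where the paper works hardest (Lemma \ref{lem3.6}, Steps 2--3): it first shows the minimizer is monotone by a cut-off replacement, so that the coincidence sets are two end intervals $[0,d_*]$ and $[d^*,d]$ and the minimizer equals a translate of $\bar\varphi_{\bar d}$ with $\bar d=d^*-d_*$ in between, and then rules out $\bar d<d$ by a boundary variation at $\bar d$ using $\bar\varphi_{\bar d}'(\bar d)>0$ and $f(Q)<0$. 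Your explicit energy formula already yields the strict inequality $\bar{\mathcal J}_{\bar d}>\bar{\mathcal J}_{d}$ for $\bar d<d$ and could substitute for the paper's Step 3, but the structural step is still missing: without first proving the minimizer is monotone (or otherwise controlling the set $\{0<\psi^*<Q\}$, which a priori could have many components with boundary values in $\{0,Q\}$), you cannot assert it is a translate of some $\bar\varphi_{\bar d}$, and the identification with $\bar\varphi_d$ does not follow.
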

		
		The more precise explanation for part (4) of Theorem \ref{thmshear} is given in Lemma \ref{lem3.7}. In other words, for any $c_1>c_2\geq 0$ and $s\geq 0$, then $\phi_{c_1}(\cdot-s)$ and $\phi_{c_2}$ cannot have two intersection points in the region where both functions belong to $(0, Q)$.
		
		In fact, for the problem \eqref{pb1.2}, if $f$ is Lipschitz continuous, the uniqueness is established in \cite{BCN} as long as $0<\psi(x_2)<Q$ for $x_2\in (0,1)$. The {non-Lipschitzian} nonlinear term usually yields the non-uniqueness of solutions for Cauchy problem of ODEs. However, we prove the uniqueness for global problem \eqref{pb1.2} even if the nonlinear function $f$ is not Lipschitz.

		The rest of this section devotes to the proof of Theorem \ref{thmshear}.
		Here are the brief descriptions for our key ideas and major observations for solutions of the problem \eqref{pb1.2}. First, any classical solution $\psi\in C^2((0,d)) \cap C^1([0,d])$ of
		\begin{equation}\label{ODE}
			\psi''=f(\psi)
		\end{equation}
		satisfying
		\begin{equation}\label{psi0Q}
			\psi(0)=0
			\quad \text{and}\quad
			0<\psi(x_2)< Q\quad \text{for}\,\, \, x_2\in (0,d)
		\end{equation}
		must be monotone increasing with respect to $x_2$ in $(0,d)$. Second, the solutions of Cauchy problem \eqref{pbphic}
		with $c\geq 0$ must be locally unique via contraction mapping theorem and also monotone with respect to $c$. This gives a one-to-one correspondence between the solutions of \eqref{pbphic} and that of \eqref{pb1.2}. Third, the solution of \eqref{pb1.2} is the unique energy minimizer of the energy functional \eqref{1Denergy} and the associated energy is monotone decreasing with respect to the length of the interval (cf. Lemma \ref{lem3.6}). Finally, using the minimum energy property of $\phi_c$, we prove that even with shift, the ODE solutions of \eqref{pbphic} have at most one intersection. This also implies that the solution of \eqref{pb1.2} is also an energy minimizer in any interval $(d', d'')\subset (0,d)$ with the same boundary values.

		\subsection{Preliminaries and monotone properties of solutions}
		
		Let
		\[
		\hat{f}(t)=\left\{
		\begin{aligned}
			&	f(t),\quad &&\text{if}\,\, t\in [0, Q],\\
			&	0,\quad  &&\text{if}\,\, t\notin [0, Q]
		\end{aligned}
		\right.
		\]
		and
		let $F$ be defined as follows
		\begin{equation}\label{def}
			F(t)=\int_0^t\hat{f}(s)ds.
		\end{equation}
		Clearly,	$F(t)$ is a Lipschitz continuous function although $\hat{f}(t)$ is discontinuous at $t=0$ and $t=Q$. Furthermore, $F(t)=0$ for $t\leq 0$,  and
		\begin{equation}\label{defF}
			F(t)=\int_0^t f(s)ds =\int_0^t 6Q(1-2\kappa(s))ds\quad \text{for}\,\, t\in (0, Q).
		\end{equation}
		Recall that
		\[
		\psi=\int_0^{\kappa(\psi)} \bar{u}_1(s)ds.
		\]
		Note that $\kappa$ is an increasing function. Hence $F$ is a concave function on $(0, Q)$.
		Finally, note that the function $\bar{\varphi}_1$, the inverse function of $\kappa$, satisfies
		\begin{equation}\label{defbarvarphi11}
			\bar{\varphi}_1(t)=\int_0^t \bar{u}_1(s)ds=Q(3t^2-2t^3)
			\quad \text{and}\quad
			\frac{d\bar{\varphi}_1}{dt}=\bar{u}_1(t).
		\end{equation}
		Thus
		\[
		F(t)=\int_0^{\kappa(t)} 6Q(1-2\tau)\bar{u}_1(\tau)d\tau=\frac{(\bar{u}_1)^2(\kappa(t))}{2}.
		\]
		Therefore, we have $F(Q)=0$ and thus $F(t)=0$ for $t\geq Q$ (See Figure \ref{3-0}).		
		
		\begin{figure}
			[h]
			\centering

			\tikzset{every picture/.style={line width=0.75pt}}

\tikzset{every picture/.style={line width=0.75pt}} 

\begin{tikzpicture}[x=0.75pt,y=0.75pt,yscale=-1,xscale=1]

\draw [line width=1.5]    (179.83,219) -- (211.5,219) -- (315.08,219) -- (420.58,219) -- (464.67,219) ;
\draw [shift={(467.67,219)}, rotate = 180] [color={rgb, 255:red, 0; green, 0; blue, 0 }  ][line width=1.5]    (14.21,-4.28) .. controls (9.04,-1.82) and (4.3,-0.39) .. (0,0) .. controls (4.3,0.39) and (9.04,1.82) .. (14.21,4.28)   ;
\draw [line width=1.5]    (211.41,241.99) -- (211.5,219) -- (211.5,31.25) ;
\draw [shift={(211.5,28.25)}, rotate = 90] [color={rgb, 255:red, 0; green, 0; blue, 0 }  ][line width=1.5]    (14.21,-4.28) .. controls (9.04,-1.82) and (4.3,-0.39) .. (0,0) .. controls (4.3,0.39) and (9.04,1.82) .. (14.21,4.28)   ;
\draw  [line width=1.5]  (420.58,219) .. controls (350.71,17.27) and (281.02,17.33) .. (211.5,219.18) ;
\draw  [dash pattern={on 4.5pt off 4.5pt}]  (315.91,67.75) -- (315.08,219) ;
\draw [line width=0.75]  [dash pattern={on 4.5pt off 4.5pt}]  (211.53,67.75) -- (420.28,67.75) ;
\draw  [dash pattern={on 4.5pt off 4.5pt}]  (420.28,67.75) -- (420.58,219) ;

\draw (462.75,222.9) node [anchor=north west][inner sep=0.75pt]  [font=\footnotesize]  {$t$};
\draw (216.83,22.4) node [anchor=north west][inner sep=0.75pt]  [font=\footnotesize]  {$F( t)$};
\draw (213.5,222.4) node [anchor=north west][inner sep=0.75pt]  [font=\footnotesize]  {$O$};
\draw (310.33,222.4) node [anchor=north west][inner sep=0.75pt]  [font=\scriptsize]  {$\frac{Q}{2}$};
\draw (414.33,222.4) node [anchor=north west][inner sep=0.75pt]  [font=\scriptsize]  {$Q$};
\draw (181.33,55.4) node [anchor=north west][inner sep=0.75pt]  [font=\scriptsize]  {$\frac{9}{8} Q^{2}$};

\end{tikzpicture}

			\caption{The concave function $F(t)$}\label{3-0}
		\end{figure}
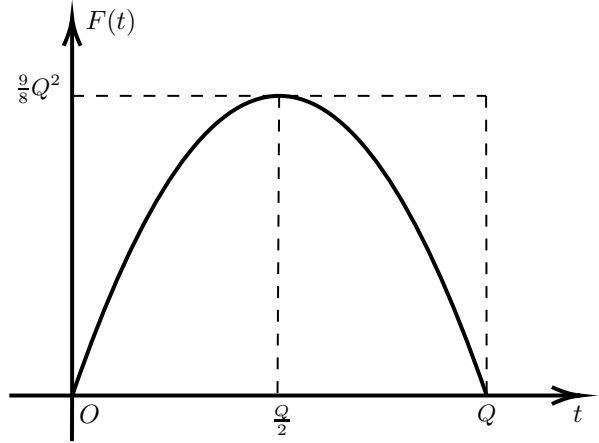
		
		We first prove the monotonicity of solutions to the problem \eqref{pb1.2}.
		\begin{lemma}\label{lem3.1}
			Let {$\psi\in C^2((0,d))\cap C^1([0,d])$}  be a solution of \eqref{ODE}. If $\psi$ satisfies  \eqref{psi0Q},
			then $\psi'(x_2)>0$ for $x_2\in (0,d)$.
		\end{lemma}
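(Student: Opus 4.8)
The plan is to exploit the fact that \eqref{ODE} is an autonomous second-order equation and therefore admits a first integral, a conserved ``energy.'' Multiplying \eqref{ODE} by $\psi'$ and using that $F'=f$ on $(0,Q)$ (with $F$ from \eqref{defF}), I would show that
\[
E(x_2):=\frac{|\psi'(x_2)|^2}{2}-F(\psi(x_2))
\]
is constant on $(0,d)$, since its derivative equals $\psi'(\psi''-f(\psi))=0$ there. Because $\psi\in C^1([0,d])$ and $F$ is continuous, $E$ extends continuously to $[0,d]$ and takes this same constant value on the closed interval.

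The crucial structural input is the sign of $F$. Using the closed-form identity $F(t)=\tfrac12\big(\bar{u}_1(\kappa(t))\big)^2$ recorded just above the lemma, together with $\bar{u}_1(s)=6Qs(1-s)>0$ for $s\in(0,1)$ and the fact that $\kappa$ maps $(0,Q)$ onto $(0,1)$, one gets $F(t)>0$ for every $t\in(0,Q)$, while $F(0)=0$. Evaluating the conserved quantity at the left endpoint then gives
\[
E=\frac{|\psi'(0)|^2}{2}-F(0)=\frac{|\psi'(0)|^2}{2}\ \ge\ 0 .
\]

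Now suppose, for contradiction, that $\psi'(x_0)=0$ at some interior point $x_0\in(0,d)$. By hypothesis \eqref{psi0Q} we have $\psi(x_0)\in(0,Q)$, so $F(\psi(x_0))>0$, whence $E=-F(\psi(x_0))<0$, contradicting $E\ge 0$. Therefore $\psi'$ cannot vanish on $(0,d)$. Since $\psi\in C^1$ and $(0,d)$ is connected, $\psi'$ has constant sign there; if that sign were negative, $\psi$ would be strictly decreasing and, together with $\psi(0)=0$, would force $\psi<0$ on $(0,d)$, contradicting $\psi>0$. Hence $\psi'>0$ on $(0,d)$, as claimed.

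The only delicate points are minor bookkeeping: justifying the energy identity up to the boundary despite $\psi$ being merely $C^1$ at the endpoints (handled by continuity, as the identity holds on the open interval and both sides extend continuously), and confirming the strict positivity of $F$ on $(0,Q)$ (handled by the explicit formula for $F$). I emphasize that this argument uses neither the Lipschitz continuity of $f$ nor uniqueness for the Cauchy problem, which is exactly why it survives the non-Lipschitz nonlinearity; accordingly I expect no substantive obstacle beyond these routine verifications.
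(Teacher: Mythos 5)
Your proposal is correct and follows essentially the same route as the paper: both rest on the first integral $(\psi')^2=(\psi'(0))^2+2F(\psi)$ together with the strict positivity of $F$ on $(0,Q)$ to rule out interior zeros of $\psi'$. The only (cosmetic) difference is how the sign is fixed --- the paper uses $\psi''(0)=f(0)=6Q>0$ to get $\psi'>0$ just to the right of $0$, whereas you argue that a negative sign would force $\psi<0$; both are valid.
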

		\begin{proof}
			If $\psi$ satisfies \eqref{ODE}, then one has
			\begin{equation}\label{integration}
				\left(\frac{(\psi')^2}{2}\right)' =(F(\psi))'.
			\end{equation}
			Integrating \eqref{integration} from $0$ to $x_2$ yields
			\begin{equation}\label{derivativeest}
				(\psi'(x_2))^2 =(\psi'(0))^2 +2F(\psi(x_2)).
			\end{equation}
			Since $\psi(x_2)\geq 0$ for $x_2\geq 0$ and $\psi(0)=0$, one derives $\psi'(0)\geq 0$. Note that $\psi''(0)=f(\psi(0))>0$, hence there exists a $\delta>0$ such that $\psi'(x_2)>0$ for $x_2\in (0, \delta)$. This, together with \eqref{derivativeest} and the property that $F(\psi)>0$ for $\psi\in (0, Q)$, yields that   $\psi'(x_2)>0$ for $x_2\in (0,d)$. Hence the proof of the lemma is completed.
		\end{proof}
		
		\begin{remark}
			If a solution of \eqref{ODE}
			satisfies \eqref{psi0Q}, then later we will see $d\in (0,1]$.
		\end{remark}
		
		Next, we prove that the solutions of \eqref{pbphic} are monotone with respect to $c$.
		\begin{lemma}\label{lem3.2}
			Let  $c_1> c_2\geq 0$. Suppose that there exist $d_i\in (0, 1]$ ($i=1$, $2$) and
			$\phi_{c_i} \in C^2((0,d_i))\cap C^1([0,d_i])$ satisfying \eqref{pbphic} with $c=c_i$. Let $d=\min\{d_1, d_2\}$. If
			\begin{equation}
				0<\phi_{c_i}(x_2)<Q\quad \text{for}\,\, x_2\in (0, {d}),
			\end{equation}
			then one has
			\begin{equation}
				\phi_{c_1}(x_2)>\phi_{c_2}(x_2) \quad \text{for}\,\, x_2\in (0, d).
			\end{equation}
		\end{lemma}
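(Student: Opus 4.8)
The plan is to avoid any appeal to classical ODE comparison theorems (which require a Lipschitz nonlinearity and are therefore unavailable here) and instead exploit the \emph{first integral} of the autonomous gradient system, which turns the comparison of two solutions into a comparison of their speeds at any common value. Concretely, multiplying the equation in \eqref{pbphic} by $\phi_{c_i}'$ and integrating from $0$ to $x_2$, exactly as in the derivation of \eqref{derivativeest}, and using $\phi_{c_i}(0)=0$, $\phi_{c_i}'(0)=c_i$ together with $F(0)=0$, I would record the energy identity
\begin{equation}\label{lem32energy}
\big(\phi_{c_i}'(x_2)\big)^2=c_i^2+2F(\phi_{c_i}(x_2)),\qquad x_2\in[0,d],\ i=1,2.
\end{equation}
Since each $\phi_{c_i}$ satisfies \eqref{ODE} and \eqref{psi0Q} on $(0,d)$ with $d\le 1$, Lemma \ref{lem3.1} applies and gives $\phi_{c_i}'>0$ on $(0,d)$; this positivity of both slopes is what will later let me pass from an inequality between squared derivatives to an inequality between the derivatives themselves.

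Next I would set $g:=\phi_{c_1}-\phi_{c_2}$, so that $g(0)=0$ and $g'(0)=c_1-c_2>0$; hence $g>0$ on a right neighborhood of $0$. Suppose, for contradiction, that the desired ordering fails somewhere in $(0,d)$, and let
\[
x^{*}:=\inf\{x_2\in(0,d): g(x_2)\le 0\}.
\]
By the initial positivity $x^{*}>0$, and since the defining set is contained in $(0,d)$ we have $x^{*}<d$. By continuity $g>0$ on $(0,x^{*})$ and $g(x^{*})=0$; consequently the one-sided difference quotients force $g'(x^{*})\le 0$, i.e.
\begin{equation}\label{lem32slope}
\phi_{c_1}'(x^{*})\le \phi_{c_2}'(x^{*}).
\end{equation}

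The crux is then to contradict \eqref{lem32slope}. At $x^{*}$ the two solutions share the common value $v:=\phi_{c_1}(x^{*})=\phi_{c_2}(x^{*})\in(0,Q)$, so subtracting the two instances of \eqref{lem32energy} and noting that the $F(v)$ terms cancel gives
\[
\big(\phi_{c_1}'(x^{*})\big)^2-\big(\phi_{c_2}'(x^{*})\big)^2=c_1^2-c_2^2>0.
\]
Because $\phi_{c_1}'(x^{*})>0$ and $\phi_{c_2}'(x^{*})>0$ by Lemma \ref{lem3.1}, this yields $\phi_{c_1}'(x^{*})>\phi_{c_2}'(x^{*})$, contradicting \eqref{lem32slope}. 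Therefore no such $x^{*}$ exists and $g>0$ throughout $(0,d)$, which is the assertion. The only delicate point is the rigorous handling of the \emph{first-crossing} argument together with the need to keep both slopes strictly positive; this is precisely where the monotonicity from Lemma \ref{lem3.1} and the special gradient structure encoded in \eqref{lem32energy} do the essential work, bypassing the failure of the usual Lipschitz-based comparison principle.
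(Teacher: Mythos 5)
Your proof is correct and follows essentially the same route as the paper: both arguments locate the first point where the ordering could fail, observe that the derivatives must satisfy $\phi_{c_1}'\le\phi_{c_2}'$ there, and then contradict this via the first integral $(\phi_{c_i}')^2=c_i^2+2F(\phi_{c_i})$ together with the strict positivity of both slopes from Lemma \ref{lem3.1}. The only cosmetic difference is that you take the infimum of the failure set while the paper takes the supremum of the set where the strict inequality persists; the substance is identical.
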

		\begin{proof}
			Since $c_1>c_2$, there exists a $\delta>0$ such that $\phi_{c_1}(x_2)>\phi_{c_2}(x_2)$ for $x_2\in (0,\delta)$. Let
			\[
			\bar{x}_2 = \sup\{x_2\in(0,d]:\ \phi_{c_1}(s)>\phi_{c_2}(s) \,\, \text{for }s\in (0,x_2) \}.
			\]
			If $\bar{x}_2<d$, then ${\phi}_{c_1}(\bar{x}_2)=\phi_{c_2}(\bar{x}_2)$ and ${\phi}'_{c_1}(\bar{x}_2)\leq \phi_{c_2}'(\bar{x}_2)$.
			Multiplying the ODE in \eqref{pbphic} with $\psi'$ and integrating by parts yield
			\begin{equation}
				\begin{aligned}
					\frac{({\psi}')^2}{2}(\bar{x}_2)-\frac{({\psi}')^2}{2}(0)= &\int_0^{\bar{x}_2}{{\psi}'(s)}{{\psi}''(s)}ds=6Q\int_0^{\bar{x}_2}(1-2\kappa(\psi(s))){\psi}'(s)ds\\
					= &F(\psi)(\bar{x}_2)-F(\psi)(0),
				\end{aligned}
			\end{equation}
			where $F(\psi)(x)$ is defined in \eqref{defF}. Thus one has
			\[
			\frac{({\phi}'_{c_i})^2}{2}(\bar{x}_2)-\frac{({\phi}'_{c_i})^2}{2}(0)=F(\phi_{c_i}(\bar{x}_2))-F(\phi_{c_i}(0))\quad \text{for}\,\, i=1,\,2.
			\]
			This implies that
			\begin{eqnarray}
				\frac{({\phi}'_{c_1})^2}{2}(\bar{x}_2)-\frac{(\phi_{c_2}')^2}{2}(\bar{x}_2)=\frac{c_1^2}{2}-\frac{c_2^2}{2}>0.
			\end{eqnarray}
			It follows from Lemma \ref{lem3.1} that ${\phi}'_{c_i}(\bar{x}_2)> 0$ ($i=1$, $2$) that
			\begin{eqnarray}
				{\phi}'_{c_1}(\bar{x}_2)>\phi_{c_2}'(\bar{x}_2).
			\end{eqnarray}
			This leads to a contradiction. Thus,  $\phi_{c_1}(x_2)>\phi_{c_2}(x_2)$ for $x_2\in (0, d)$.
		\end{proof}
		
		\begin{remark}
			Under the assumption of Lemma \ref{lem3.2}, we can prove that $d_1<d_2$ by the argument in the proof of Lemma \ref{lem3.4}.
		\end{remark}

		\subsection{Uniqueness and existence of solutions}
		Now we prove the local existence and uniqueness of the solutions for problem \eqref{pbphic}.

		\begin{lemma}\label{lem3.3}
			For any $c\geq 0$,
			there exists a $\delta>0$ and a function $\phi_c\in C^2((0,\delta))\cap C^{1}([0,\delta])$ such that $\phi_c$  solves \eqref{pbphic} for  $x_2\in [0,\delta]$.
		\end{lemma}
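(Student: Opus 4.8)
The plan is to avoid the standard Picard iteration, which fails here because $f$ is only $C^{1/2}$ near $\psi=0$ (its derivative blows up like $\psi^{-1/2}$), and instead to build the solution by hand through the first integral of the autonomous equation \eqref{pbphic}. I would begin from two elementary facts established earlier: $f$ is bounded on $[0,Q]$, with $|f|\le 6Q$, and its antiderivative $F$ of \eqref{defF} is Lipschitz, concave, with $F(0)=0$, $F>0$ on $(0,Q)$, and $F'=f$ on $(0,Q)$.

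The first step is the energy reduction. Multiplying \eqref{pbphic} by $\psi'$ and integrating exactly as in \eqref{derivativeest}, any solution must satisfy $(\psi')^2=c^2+2F(\psi)$. Selecting the increasing branch (heuristically justified since $\psi''(0)=f(0)=6Q>0$ forces $\psi$ upward off the initial point) suggests the candidate inverse function
\[
G(\psi)=\int_0^\psi\frac{d\eta}{\sqrt{c^2+2F(\eta)}},\qquad \psi\in[0,\psi_0],
\]
for some small $\psi_0\in(0,Q)$ to be fixed. The crucial point is to verify that $G$ is well defined even in the degenerate case $c=0$: using the local expansion $\kappa(\eta)\sim\sqrt{\eta/(3Q)}$ near $\eta=0$ one gets $F(\eta)=\tfrac12(\bar u_1)^2(\kappa(\eta))\sim 6Q\,\eta$, so for $c=0$ the integrand behaves like $(2\sqrt{3Q})^{-1}\eta^{-1/2}$, which is integrable, while for $c>0$ it is bounded by $1/c$. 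Hence $G\in C([0,\psi_0])$ is strictly increasing with $G(0)=0$, and $G\in C^1((0,\psi_0])$ with $G'=(c^2+2F)^{-1/2}>0$.

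I would then set $\delta:=G(\psi_0)$, define $\phi_c:=G^{-1}\colon[0,\delta]\to[0,\psi_0]$, and verify that it solves \eqref{pbphic}. Differentiating the identity $x_2=G(\phi_c(x_2))$ gives $\phi_c'=\sqrt{c^2+2F(\phi_c)}>0$ on $(0,\delta)$, whence $\phi_c$ extends continuously to $0$ with $\phi_c(0)=0$ and $\phi_c'(0^+)=\sqrt{c^2+2F(0)}=c$, so $\phi_c\in C^1([0,\delta])$. For $x_2\in(0,\delta)$ the values $\phi_c(x_2)$ lie in $(0,Q)$, where $F$ is smooth with $F'=f$; differentiating once more yields $\phi_c''=F'(\phi_c)=f(\phi_c)$, and a bootstrap gives $\phi_c\in C^2((0,\delta))$. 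This produces the required local solution.

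The main obstacle is precisely the degenerate case $c=0$, where $\psi'(0)=0$ sits exactly at the square-root singularity of $f$ and the naive first-order reduction $\psi'=\sqrt{2F(\psi)}$ even admits the spurious solution $\psi\equiv0$ (which violates \eqref{pbphic} since $f(0)\ne0$). The inversion-of-$G$ construction sidesteps this by directly producing the nontrivial increasing branch and checking it against the original second-order equation. As an alternative route more in line with a fixed-point philosophy, one may regularize $f$ by Lipschitz functions $f_\epsilon$, solve the corresponding problems by contraction mapping, and pass to the limit using the uniform bound $\|\psi_\epsilon''\|_{\infty}\le 6Q$ together with Arzel\`a--Ascoli; boundedness of $f$ then guarantees the limit solves \eqref{pbphic}.
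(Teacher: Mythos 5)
Your proof is correct, and it takes a genuinely different route from the paper's. The paper rewrites \eqref{pbphic} as the integral equation $\phi_c(x_2)=cx_2+\int_0^{x_2}(x_2-s)\,6Q(1-2\kappa(\phi_c(s)))\,ds$ and runs a contraction-mapping argument on weighted function spaces, with two separate cases: for $c>0$ it works with $w=\phi_c/x_2$ and for $c=0$ with $\bar w=\phi_0/x_2^2$, in each case using $\kappa\in C^{1/2}([0,Q])$ to show the integral operator contracts for small $\delta$. Your quadrature construction via $G(\psi)=\int_0^\psi(c^2+2F(\eta))^{-1/2}\,d\eta$ instead exploits the autonomous structure, treats both cases in one stroke (the only delicacy being the integrable $\eta^{-1/2}$ singularity when $c=0$, which you handle correctly via the asymptotics $F(\eta)\sim 6Q\eta$), and your verification that $G^{-1}$ is a $C^2((0,\delta))\cap C^1([0,\delta])$ solution of the second-order problem is complete. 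What the paper's approach buys in addition is local \emph{uniqueness} of the solution, which, although absent from the literal statement of the lemma, is precisely what is invoked in the corollary that follows it and in Theorem~\ref{thmshear}(2). Your method can deliver the same conclusion: for any solution, $\psi''=f(\psi)\to 6Q>0$ as $x_2\to 0^+$, so $\psi'>0$ and $\psi>0$ on a punctured right neighborhood of $0$; hence $\psi$ satisfies the positive branch $\psi'=\sqrt{c^2+2F(\psi)}$ of the first integral \eqref{derivativeest}, and separation of variables forces $\psi=G^{-1}$. You gesture at exactly this when you call the branch selection ``heuristically justified''; to support the downstream uses of the lemma you should promote that remark to the short uniqueness argument just described.
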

		\begin{proof} For any solution $\phi_c$ of the problem \eqref{pbphic}, one has
			\begin{equation*}
				\begin{aligned}
					\phi_c(x_2)=&\int_0^{x_2}\phi_c'(\tau)d\tau
					= cx_2+\int_0^{x_2}\int_0^\tau f(\phi_c(s))ds d\tau\\
					=& cx_2+\int_0^{x_2}(x_2-s)6Q(1-2\kappa(\phi_c(s)))ds.
				\end{aligned}
			\end{equation*}
			Hence solving \eqref{pbphic} is equivalent to find a fixed point of the integral equation
			\begin{equation}\label{inteq}
				\phi_c(x_2)=cx_2+\int_0^{x_2}(x_2-s)6Q(1-2\kappa(\phi_c(s)))ds.
			\end{equation}
			It is clear to see that the behavior of solutions for \eqref{inteq} at $x_2=0$ are different for the case $c>0$ and $c=0$, we prove the local existence and uniqueness of \eqref{pbphic} in  two different cases.
			
			{\it Case 1. $c>0$.} Let $w(x_2):=\frac{\phi_c(x_2)}{x_2}$. If $\phi_c$ solves the integral equation \eqref{inteq}, then
			$w$ satisfies
			\begin{eqnarray*}
				w(x_2)=c+\frac{1}{x_2}\int_{0}^{x_2}6Q(x_2-s)(1-2\kappa(s w(s)))ds.
			\end{eqnarray*}
			Let $X=\{w: w\in C([0, \delta]), \frac{c}{2}\leq w(x_2) \leq 2c \,\, \text{for}\,\, x_2\in [0, \delta]\}$ where $\delta>0$ is to be determined. For any $w\in X$, define
			\begin{eqnarray}
				\mathcal{S}w(x_2)=\frac{1}{x_2}\int^{x_2}_06Q(x_2-s)(1-2\kappa(sw(s)))ds+c.
			\end{eqnarray}
			One sees that for any $w\in X$, $\mathcal{S}w \in X$ if $\delta\in (0, \delta_1]$ for some $\delta_1>0$.
			We claim that $\mathcal{S}$ is a contraction map from $X$ to itself when $\delta>0$ is sufficiently small. For any $w_1, w_2\in X$,
			\begin{equation*}
				\begin{aligned}
					&\mathcal{S}w_1(x_2)-\mathcal{S}{w}_2(x_2)=\frac{1}{x_2}\int_{0}^{x_2}12Q(x_2-s)(\kappa(sw_2(s))-\kappa(s{w_1}(s)))ds\\
					=\,\,&\frac{12Q}{x_2}\int_{0}^{x_2}s(x_2-s)\left(\int_0^1{\kappa}'\Big(s\big(w_1(s)+\theta(w_2-w_1)(s)\big)\Big)d\theta\right)({w}_2-w_1)(s)ds.
				\end{aligned}	
			\end{equation*}
			Therefore,
			\begin{equation*}
				\begin{aligned}
					&\|\mathcal{S}w_1-\mathcal{S}{w}_2\|_{C([0,\delta])}\\
					\leq&\left\|\frac{12Q}{x_2}\int_{0}^{x_2}s(x_2-s)\left(\int_0^1{\kappa}'(s(w_1(s)+\theta(w_2-w_1)(s)))d\theta\right)ds\right\|_{C([0,\delta])}\|w_1-{w}_2\|_{C([0,\delta])}\\
					\leq&C\left\|\frac{1}{x_2}\int_{0}^{x_2}\sqrt{s}(x_2-s)ds\right\|_{C([0,\delta])}\|w_1-{w}_2\|_{C([0,\delta])}\\
					\leq&C{\delta}^{\frac{3}{2}}\|w_1-{w}_2\|_{C([0,\delta])},
				\end{aligned}
			\end{equation*}
			where the constant $C$ depends on $c$ and we have used the fact that $\kappa({\psi})\in C^{\frac{1}{2}}([0,Q])\cap C^{\infty}((0,Q))$.
			Hence $\mathcal{S}$ is a contraction map from $X$ to itself if $\delta$ is small enough. Therefore, there is a unique fixed point of the map $\mathcal{S}$ on $X$.

			{\it Case 2. $c=0$.} Denote $\bar{w}(x_2):=\phi_0(x_2)/x_2^2$.
			Then one has
			\begin{eqnarray*}
				\bar{w}(x_2)=\frac{1}{x_2^2}\int_{0}^{x_2}6Q(x_2-s)(1-2\kappa(s^2\bar{w}(s)))ds.
			\end{eqnarray*}
			
			Let $\tilde{X}=\{w: w\in C([0, \delta]), 2Q\leq w(x_2) \leq 4Q \,\, \text{for}\,\, x_2\in [0, \delta]\}$ where $\delta>0$ is to be determined. For any $\bar{w}\in \tilde{X}$, define
			\begin{eqnarray}
				\bar{\mathcal{S}}\bar{w}(x_2)=\frac{1}{x_2^2}\int^{x_2}_06Q(x_2-s)(1-2\kappa(s^2\bar{w}(s)))ds.
			\end{eqnarray}
			The straightforward computations show that for any $\bar{w}\in \tilde{X}$, $\bar{\mathcal{S}}\bar{w} \in \tilde{X}$.
			We claim that $\bar{\mathcal{S}}$ is a contraction map from $\tilde{X}$ to itself when $\delta>0$ is sufficiently small. For any $\bar{w}_1, {\bar{w}}_2\in\tilde{X}$,
			\begin{equation*}
				\begin{aligned}
					&\bar{\mathcal{S}}\bar{w}_1(x_2)-\bar{\mathcal{S}}\bar{w}_2(x_2)=\frac{1}{x_2^2}\int_{0}^{x_2}12 Q(x_2-s)\Big(\kappa(s^2\bar{w}_2(s))-\kappa(s^2\bar{w}_1(s))\Big)ds\\
					=\,\,&\frac{12Q}{x_2^2}\int_{0}^{x_2}s^2(x_2-s)\left(\int_0^1{\kappa}'\Big(s^2\big(\bar{w}_1+\theta(\bar{w}_2-\bar{w}_1)(s)\big)\Big)d\theta\right)(\bar{w}_2-\bar{w}_1)(s)ds.
				\end{aligned}
			\end{equation*}
			Therefore,
			\begin{equation*}
				\begin{aligned}
					&\|\bar{\mathcal{S}}\bar{w}_1-\bar{\mathcal{S}}\bar{w}_2\|_{C([0,\delta])}\\
					\leq&\left\|\frac{12Q}{x_2^2}\int_{0}^{x_2}s^2(x_2-s)\left(\int_0^1{\kappa}'(s^2(\bar{w}_1+\theta(\bar{w}_2-\bar{w}_1)(s)))d\theta\right)ds\right\|_{C([0,\delta])}\|\bar{w}_1-\bar{w}_2\|_{C([0,\delta])}\\
					\leq&C\left\|\frac{1}{x_2^2}\int_{0}^{x_2}s(x_2-s)ds
					\right\|_{C([0,\delta])}\|\bar{w}_1-\bar{w}_2\|_{C([0,\delta])}
					\\                                   \leq&C\delta\|\bar{w}_1-\bar{w}_2\|_{C([0,\delta])},
				\end{aligned}
			\end{equation*}
			where the constant $C$ depends on $Q$ and we have used the fact that $\kappa\in C^{\frac{1}{2}}([0,Q])\cap C^{\infty}((0,Q))$.
			Hence $\bar{\mathcal{S}}$ is a contraction map from $\tilde{X}$ to itself when $\delta$ is sufficiently small. Therefore, there is a unique fixed point of the map $\bar{\mathcal{S}}$ on $\tilde{X}$.
			
			This finishes the proof of the lemma.
		\end{proof}
		Similarly, one can prove that the solution of the problem
		\begin{equation}
			\left\{
			\begin{aligned}
				&{\psi}''(x_2)=f(\psi(x_2)),\ \ \\
				&{\psi}(d)=Q, \ \ {\psi}'(d)=c\geq 0
			\end{aligned}
			\right.
		\end{equation}
		is unique in an interval $[d-\delta, d]$ for some $\delta>0$.

		In the following, we prove the uniqueness of the boundary value problem 
		\begin{equation}
	\label{eeeq}		\left\{
			\begin{aligned}
				&{\psi}''(x_2)=f(\psi(x_2)),\ \ x_2\in (0,1), \\
				&{\psi}(0)=0, \ \ {\psi}(1)=Q.
			\end{aligned}
			\right.
		\end{equation}
		\begin{corollary}
			Let {$\psi\in C^2((0,1))\cap C^1([0,1])$ }be a solution of \eqref{eeeq}. If $\psi$ satisfies
			\begin{equation}\label{uniquecondition}
				0<\psi(x_2)< Q\quad \text{for}\,\, \, x_2\in (0,1),
			\end{equation}
			then
			$$\psi(x_2)=\bar{\varphi}_1(x_2),$$
			where $\bar{\varphi}_1$ is defined in \eqref{defbarvarphi11}.
		\end{corollary}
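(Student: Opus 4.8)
The plan is to reduce the boundary value problem \eqref{eeeq} to the initial value problem \eqref{pbphic} and to pin down the only admissible initial slope. First I would invoke Lemma \ref{lem3.1} to conclude that $\psi'(x_2)>0$ on $(0,1)$, so that $c:=\psi'(0)\geq 0$ is well defined and $\psi$ is a solution of \eqref{pbphic} for this value of $c$. On the other hand, a direct computation from \eqref{defbarvarphi11} shows that $\bar\varphi_1$ solves \eqref{pbphic} with $c=0$: indeed $\bar\varphi_1'(0)=\bar{u}_1(0)=0$ and $\bar\varphi_1''(t)=6Q(1-2t)=6Q(1-2\kappa(\bar\varphi_1(t)))=f(\bar\varphi_1(t))$, while $\bar\varphi_1(1)=Q$, so $\bar\varphi_1$ is itself an admissible competitor. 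Finally I would exploit the first-integral identity \eqref{derivativeest}, namely $(\psi'(x_2))^2=c^2+2F(\psi(x_2))$: letting $x_2\to 1^-$, where $\psi\to Q$ and $F(Q)=0$, and using that $\psi'\geq 0$, this gives $\psi'(1)=c$.

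The heart of the argument is to rule out $c>0$. Suppose $c>0$. Then $\psi$ and $\bar\varphi_1$ both take values in $(0,Q)$ on $(0,1)$, so Lemma \ref{lem3.2} applies with $c_1=c>c_2=0$ and $d=1$, yielding $\psi(x_2)>\bar\varphi_1(x_2)$ for all $x_2\in(0,1)$. Setting $g:=\psi-\bar\varphi_1$, I would then observe that $g>0$ on $(0,1)$ while $g(1)=\psi(1)-\bar\varphi_1(1)=Q-Q=0$; since $g\in C^1([0,1])$, the difference quotient $\tfrac{-g(1-h)}{h}$ (which is negative for small $h>0$) forces the one-sided derivative to satisfy $g'(1)\leq 0$, i.e. $\psi'(1)\leq\bar\varphi_1'(1)=\bar{u}_1(1)=0$. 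This contradicts $\psi'(1)=c>0$, so $c=0$.

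With $c=0$ in hand, $\psi$ and $\bar\varphi_1$ are two solutions of \eqref{pbphic} with the same initial data $\psi(0)=0$, $\psi'(0)=0$. Lemma \ref{lem3.3} provides a unique such solution on a small interval $[0,\delta]$, so $\psi\equiv\bar\varphi_1$ there; on the remaining interval where both stay in $(0,Q)$ the nonlinearity $f$ is smooth, hence locally Lipschitz, and standard ODE uniqueness propagates the coincidence to all of $[0,1]$. This gives $\psi=\bar\varphi_1$, as claimed.

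The delicate point throughout is that $f$ fails to be Lipschitz exactly at the endpoints $\psi=0$ and $\psi=Q$, which is where classical Cauchy--Lipschitz theory breaks down; I expect this degeneracy to be the main obstacle. It is circumvented in two ways: the first-integral identity \eqref{derivativeest} is insensitive to the regularity of $f$ and directly supplies the boundary slope $\psi'(1)=c$, while the tailored local well-posedness of Lemma \ref{lem3.3} handles uniqueness at the singular endpoint for $c=0$, sidestepping any generic uniqueness theorem. The one genuinely clever step is translating the \emph{interior} ordering $\psi>\bar\varphi_1$ coming from Lemma \ref{lem3.2} into the \emph{boundary} derivative inequality $\psi'(1)\leq\bar\varphi_1'(1)=0$, which is what collides with the energy identity to close the argument.
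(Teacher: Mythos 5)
Your proof is correct. Its backbone---local uniqueness of the Cauchy problem \eqref{pbphic} with $c=0$ at the degenerate endpoint via Lemma \ref{lem3.3}, followed by standard uniqueness propagation on the interior where $f$ is locally Lipschitz---is exactly the paper's argument for this corollary. Where you go beyond the paper is in ruling out $\psi'(0)=c>0$: the paper's own proof never addresses this step and tacitly identifies $\psi$ with the solution of \eqref{pbphic} for $c=0$, so your chain (the first integral \eqref{derivativeest} giving $\psi'(1)=c$, the comparison $\psi>\bar\varphi_1$ on $(0,1)$ from Lemma \ref{lem3.2}, and the one-sided derivative inequality $(\psi-\bar\varphi_1)'(1)\le 0$ at the common endpoint $x_2=1$) supplies a piece of the argument that the paper leaves implicit. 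A slightly shorter alternative for that step, closer in spirit to Lemma \ref{lem3.4}, is quadrature: from $(\psi')^2=c^2+2F(\psi)=c^2+\bar{u}_1^2(\kappa(\psi))$ and $\psi'>0$ one obtains $1=\int_0^1 \bar{u}_1(s)\,\bigl(c^2+\bar{u}_1^2(s)\bigr)^{-1/2}\,ds$, which is precisely \eqref{ODElifespan} with $d=1$, and the integrand is strictly below $1$ on $(0,1)$ unless $c=0$. Either route closes the gap; yours is self-contained and complete.
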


		\begin{proof} 
			Clearly, $\bar{\varphi}_1$  satisfies \eqref{pb1.2} with $d=1$ and also the problem \eqref{pbphic} with $c=0$. It follows from Lemma \ref{lem3.3} that there exists a $\delta>0$ such that $\bar{\varphi}_1$ is the unique solution of  \eqref{pbphic} with $c=0$ on $[0,\delta]$.
			Since $\psi\in (0, Q)$ for $x_2\in (0, 1)$,  $\kappa({\psi})$ and $f(\psi)$ are Lipschitz continuous on any region away from the boundaries $x_2=0$ and $x_2=1$. It follows from standard ODE theory that the solution is unique when $f(\psi)$ is Lipschitz continuous. Thus, the uniqueness of the solution holds until $x_2=1$.
			This completes the proof of the corollary.
		\end{proof}
		
		\begin{remark}
			Note that the solutions we are interested should satisfy  \eqref{uniquecondition}. This condition plays a crucial role in proving the uniqueness. However, in general, there might be a different solution  of two-points boundary value problem such as \eqref{defbarvarphi11} for ODE, which does not satisfy the condition \eqref{uniquecondition}.
		\end{remark}
		
		Now we are going to study the existence of solution for the problem \eqref{pb1.2}.
		\begin{lemma}\label{lem3.4}
			For any $d\in (0,1]$, there exists a unique solution $\bar{\varphi}_d$ of the problem
			\eqref{pb1.2}.
		\end{lemma}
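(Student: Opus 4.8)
The plan is to establish both existence and uniqueness by a shooting argument in the initial slope $c$, converting the boundary value problem into the study of a single monotone ``time map'' built from the first integral \eqref{derivativeest}. Throughout I restrict, as is natural for \eqref{pb1.2}, to solutions satisfying $0<\psi<Q$ on $(0,d)$, since $f$ is prescribed only on $[0,Q]$; the remark following the corollary above shows that uniqueness may genuinely fail once this range constraint is dropped. By Lemma \ref{lem3.3} the initial value problem \eqref{pbphic} has, for every $c\ge 0$, a unique local solution $\phi_c$; as long as $\phi_c$ stays in $(0,Q)$ the nonlinearity $f$ is locally Lipschitz, so $\phi_c$ extends uniquely, and by Lemma \ref{lem3.1} it is strictly increasing there. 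Let $T(c)$ denote the first value of $x_2$ at which $\phi_c$ attains the value $Q$.

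First I would derive a closed form for $T(c)$. Multiplying the equation by $\phi_c'$ and integrating as in \eqref{derivativeest} gives $(\phi_c'(x_2))^2=c^2+2F(\phi_c(x_2))$, and since $\phi_c'>0$ on the range we obtain $\phi_c'=\sqrt{c^2+2F(\phi_c)}$. Separating variables and integrating from $0$ to $T(c)$ yields
\[
T(c)=\int_0^Q\frac{d\tau}{\sqrt{c^2+2F(\tau)}}.
\]
Using $F(\tau)=\tfrac12\bar u_1^2(\kappa(\tau))$ from \eqref{defF} together with $\bar u_1(t)=6Qt(1-t)$ and the behaviour of $\kappa$ near the endpoints, one checks $F(\tau)\sim 6Q\,\tau$ as $\tau\to 0^+$ and $F(\tau)\sim 6Q\,(Q-\tau)$ as $\tau\to Q^-$; hence both singularities of the integrand are of the integrable type $\tau^{-1/2}$, so $T(c)<\infty$ for every $c\ge 0$ and $\phi_c$ does reach $Q$ in finite ``time.'' The representation makes the two decisive properties transparent: for $c_1>c_2\ge 0$ the integrand is strictly smaller pointwise, so $T$ is strictly decreasing, while dominated convergence (with the integrable majorant $1/\sqrt{2F}$) shows $T$ is continuous. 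Finally $T(0)=1$, because by \eqref{defbarvarphi11} the function $\bar\varphi_1$ solves \eqref{pbphic} with $c=0$ and attains $Q$ at $x_2=1$, whereas $T(c)\le Q/c\to 0$ as $c\to\infty$.

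Existence and uniqueness then follow at once. Since $T\colon[0,\infty)\to(0,1]$ is a continuous strictly decreasing bijection, for each $d\in(0,1]$ there is exactly one $c=\mathfrak{c}(d)$ with $T(c)=d$, and $\bar\varphi_d:=\phi_{\mathfrak{c}(d)}\big|_{[0,d]}$ solves \eqref{pb1.2}; the first integral gives $(\bar\varphi_d'(d))^2=c^2+2F(Q)=c^2$, so $\bar\varphi_d\in C^2((0,d))\cap C^1([0,d])$ with $\bar\varphi_d'>0$ on $(0,d)$ by Lemma \ref{lem3.1}. For uniqueness, any solution $\psi$ of \eqref{pb1.2} in the admissible class is strictly increasing (Lemma \ref{lem3.1}), hence coincides near $0$ with $\phi_{\psi'(0)}$ by the uniqueness in Lemma \ref{lem3.3} and, by interior Lipschitz continuity of $f$, with it on all of $[0,d)$; its hitting time equals $d$, so $T(\psi'(0))=d$, and the injectivity of $T$ forces $\psi'(0)=\mathfrak{c}(d)$, i.e. $\psi=\bar\varphi_d$. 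Equivalently, two increasing solutions with distinct initial slopes would be strictly ordered on $(0,d)$ by Lemma \ref{lem3.2}, contradicting their common endpoint value $Q$.

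The main obstacle is the non-Lipschitz character of $f$ at the two endpoints $\psi=0$ and $\psi=Q$ (recall that $\kappa\in C^{1/2}$ only), which blocks any direct appeal to Picard--Lindel\"{o}f and to continuous dependence on data precisely where $\phi_c$ meets the boundary values. This is exactly what Lemma \ref{lem3.3} overcomes at the singular level $c=0$, and what the energy identity circumvents globally: it replaces the delicate endpoint ODE analysis by the elementary integrability of $1/\sqrt{2F}$, which holds because $F$ vanishes only linearly at both ends. Verifying these endpoint asymptotics of $F$, and thereby the finiteness and continuity of $T$, is the single quantitative point that requires care.
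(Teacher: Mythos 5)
Your proof is correct and is essentially the paper's own argument: the substitution $\tau=\bar\varphi_1(x_2)$ turns your time map $T(c)=\int_0^Q\big(c^2+2F(\tau)\big)^{-1/2}d\tau$ into exactly the identity \eqref{ODElifespan} that the paper uses to define $\mathfrak{c}(d)$, and both proofs then read the solution off from the first integral. You supply somewhat more detail than the paper on why the time map is finite, continuous, and strictly decreasing onto $(0,1]$, and you make the uniqueness step (via Lemma \ref{lem3.3} and interior Lipschitz continuation, or Lemma \ref{lem3.2}) explicit, but the underlying mechanism is the same.
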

		\begin{proof}
			For any $d\in (0, 1]$, there exists a unique $\mathfrak{c}(d)\geq 0$ such that
			\begin{equation}\label{ODElifespan}
				d= \int_0^1 \frac{\bar{u}_1(x_2)}{\sqrt{\bar{u}_1^2(x_2)+\mfc^2(d)}} dx_2.
			\end{equation}
			Define
			\[
			\theta_d(x_2)=\int_0^{x_2}\frac{\bar{u}_1(s)}{\sqrt{\bar{u}_1^2(s)+\mfc^2(d)}}ds
			\]
			and
			\[
			\bar{u}_{1,d}(\theta_d(x_2))= \sqrt{\bar{u}_1^2(x_2)+\mfc^2(d)}.
			\]
			Hence $\theta_d$ is a bijective map from $[0,1]$ to  $[0,d]$.
			Furthermore, the straightforward computations yield
			\begin{equation}\label{ODEmass}
				\int_0^{\theta_d(x_2)}\bar{u}_{1,d}(s)ds=	\int_0^{x_2} \bar{u}_1(s)ds
			\end{equation}
			and
			\begin{equation}\label{ODEvorticity}
				\bar{u}_{1,d}'(\theta_d(x_2))	=\bar{u}_{1}'(x_2).
			\end{equation}
			One can check that
			\begin{equation}\label{defvarphid}
   \bar{\varphi}_d(x_2)=\int_0^{x_2}\bar{u}_{1,d}(s)ds
			\end{equation}
			satisfies the problem \eqref{pb1.2}.
			This finishes the proof of the lemma.
		\end{proof}
		
		\begin{remark}
			Note that $f(\psi)=-f(Q-\psi)$. Hence if $\psi$ is a solution, then $Q-\psi(d-x_2)$ is also a solution of \eqref{pb1.2}. Therefore, by the uniqueness, $\psi$ satisfies
			\[
			\psi(x_2)=Q-\psi(d-x_2).
			\]
		\end{remark}
		
		\begin{remark}
			Lemma \ref{lem3.4} implies that there is a one-to-one correspondence between solutions of boundary value problem \eqref{pb1.2} and solutions of Cauchy problem \eqref{pbphic} with some uniquely determined $c$.
		\end{remark}
		\subsection{Variational formulation}
		In this subsection, we show that the one-dimensional solution of the problem \eqref{pb1.2} is in fact a minimizer of energy functional \eqref{1Denergy} over $\mathcal{U}_d$ defined in \eqref{defUd}.
		In the following, we prove the existence and  some interesting properties of the energy minimizer for the functional ${\mathcal{J}}_d (\psi)$. These properties play an important role in understanding shear flows. Furthermore, the ideas to prove these properties also motivate the proof for properties for the energy minimizers in two-dimensional setting in Section \ref{secminimizer}.

		{\begin{lemma}\label{lem3.6}
				The solution $\bar{\varphi}_d$ obtained in Lemma \ref{lem3.4} is the unique minimizer for energy functional ${\mathcal{J}}_d$ over ${\mathcal{U}}_d$, i.e.,
				\begin{equation}\label{defbarJd}
					{\mathcal{J}}_d(\bar\varphi_d)= \bar{\mathcal{J}}_d:=	\inf_{\psi\in {\mathcal{U}}_d} {\mathcal{J}}_d (\psi) .
				\end{equation}
				Furthermore, if $\tilde{d}\in (d, 1]$, then one has
				\[
				\bar{\mathcal{J}}_d>\bar{\mathcal{J}}_{\tilde{d}}.
				\]	
		\end{lemma}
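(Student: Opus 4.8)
The plan is to obtain the minimizer by the direct method, confine it to $[0,Q]$, and then identify it with $\bar\varphi_d$ by exploiting the one-dimensional structure recorded in Lemmas~\ref{lem3.1}--\ref{lem3.4} together with a monotonicity computation that I would carry out first. Since $0\le F\le\tfrac98 Q^2$, the functional is coercive on $\mathcal U_d$: from $\mathcal J_d(\psi)\ge\tfrac12\int_0^d|\psi'|^2$ and $\psi(0)=0$ the Poincar\'e inequality controls $\|\psi\|_{H^1}$, so a minimizing sequence is bounded in $H^1(0,d)$; along a weakly convergent subsequence the Dirichlet part is weakly lower semicontinuous, while the potential part converges by the compact embedding $H^1(0,d)\hookrightarrow C([0,d])$ and continuity of $F$. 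This yields a minimizer $\psi\in\mathcal U_d$. Replacing $\psi$ by $\min\{\max\{\psi,0\},Q\}$ preserves the boundary values, does not increase the Dirichlet energy, and leaves $F(\psi)$ unchanged (as $F\equiv0$ outside $[0,Q]$); hence every minimizer satisfies $0\le\psi\le Q$.

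The crux, which I would establish \emph{independently} of the uniqueness, is that $\ell\mapsto\mathcal J_\ell(\bar\varphi_\ell)$ is strictly decreasing and strictly positive on $(0,1]$. Using the first integral $\tfrac12(\bar\varphi_\ell')^2-F(\bar\varphi_\ell)\equiv\tfrac12\mfc(\ell)^2$ (which holds because $\bar\varphi_\ell'(0)=\mfc(\ell)$ by the formulas in Lemma~\ref{lem3.4} and $F(0)=0$) together with $\bar\varphi_\ell'>0$, the substitution $t=\bar\varphi_\ell(x)$ gives, with $c:=\mfc(\ell)$,
\[
\mathcal J_\ell(\bar\varphi_\ell)=\int_0^Q\frac{2F(t)+\tfrac12 c^2}{\sqrt{2F(t)+c^2}}\,dt,\qquad \ell=\int_0^Q\frac{dt}{\sqrt{2F(t)+c^2}}.
\]
Differentiating the first expression in $c$ produces $\tfrac12 c^3\int_0^Q(2F+c^2)^{-3/2}\,dt>0$ for $c>0$, while the second expression is strictly decreasing in $c$; since $\mfc(\ell)>0$ precisely when $\ell<1$, the map $\ell\mapsto\mathcal J_\ell(\bar\varphi_\ell)$ is strictly decreasing on $(0,1]$, and it is clearly positive.

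Now I return to the minimizer $\psi$. On the open set $\{0<\psi<Q\}$ the obstacle is inactive and $F$ is smooth, so there $\psi$ solves $\psi''=f(\psi)$ classically; every component $(\alpha,\beta)$ of this set has endpoint values in $\{0,Q\}$, and by Lemma~\ref{lem3.1} applied after the translation $x\mapsto x-\alpha$, the reflection $x\mapsto\alpha+\beta-x$, or the reflection $\psi\mapsto Q-\psi$ (each preserving the equation, the last one because $f(Q-s)=-f(s)$), a component cannot connect a level to itself. Hence each component is a monotone passage between the two levels and, by the uniqueness in Lemma~\ref{lem3.4}, coincides with $\bar\varphi_{\beta-\alpha}$ up to orientation, with energy $\mathcal J_{\beta-\alpha}(\bar\varphi_{\beta-\alpha})$. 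Since $\psi'=0$ a.e. on $\{\psi\in\{0,Q\}\}$ and $F(0)=F(Q)=0$, the total energy is $\mathcal J_d(\psi)=\sum_i\mathcal J_{\lambda_i}(\bar\varphi_{\lambda_i})$ summed over the passages, with $\sum_i\lambda_i\le d$. Comparing with the admissible competitor $\bar\varphi_d$ gives $\mathcal J_d(\psi)\le\mathcal J_d(\bar\varphi_d)$, whereas each passage satisfies $\mathcal J_{\lambda_i}(\bar\varphi_{\lambda_i})\ge\mathcal J_d(\bar\varphi_d)>0$ by the previous step; therefore there is exactly one passage and it has length $d$, so $\psi=\bar\varphi_d$. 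This simultaneously shows that $\bar\varphi_d$ attains the infimum (so $\bar{\mathcal J}_d=\mathcal J_d(\bar\varphi_d)$) and that the minimizer is unique, and the strict monotonicity of the second paragraph then reads $\bar{\mathcal J}_d>\bar{\mathcal J}_{\tilde d}$ for $\tilde d\in(d,1]$.

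The main obstacle is that $F$ is concave, so $\mathcal J_d$ is not convex and uniqueness cannot be read off from convexity; moreover the non-Lipschitz, obstacle-type behaviour of $F$ at the levels $0$ and $Q$ would, a priori, permit a minimizer to stay pinned at one of these levels on a whole subinterval, which is exactly what must be excluded. Both difficulties are resolved by the strict energy--length monotonicity, and the only place where the special non-Lipschitz structure of $F$ is genuinely used is in proving that monotonicity, through the explicit first integral and the resulting reduction to a one-parameter family of integrals in $c=\mfc(\ell)$ whose $c$-derivative is manifestly positive; everything else is soft variational reasoning built on the ODE facts of Lemmas~\ref{lem3.1}--\ref{lem3.4}.
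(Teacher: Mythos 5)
Your proposal is correct, but it reaches the conclusion by a genuinely different route than the paper. The paper first proves that any minimizer is nondecreasing (by replacing it with a constant on any interval of descent, which strictly lowers the energy), so that $\{0<\psi^*<Q\}$ is a single interval $(d_*,d^*)$ on which $\psi^*$ coincides with $\bar\varphi_{\bar d}$, $\bar d=d^*-d_*$; it then proves the strict inequality $\bar{\mathcal{J}}_{\tilde d}<\bar{\mathcal{J}}_{d}$ by a soft first-variation argument: the competitor obtained by extending $\bar\varphi_{\bar d}$ by the constant $Q$ admits a one-sided perturbation $\epsilon\eta$ whose first variation equals $\epsilon\bigl(\bar\varphi_{\bar d}'(\bar d)\eta(\bar d)+f(Q)\int\eta\bigr)<0$ because $\bar\varphi_{\bar d}'(\bar d)>0$; the identity $\bar d=d$ then follows by contradiction. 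You instead (i) skip the monotonicity of the minimizer entirely, decomposing $\{0<\psi<Q\}$ into components, using Lemma \ref{lem3.1} together with the symmetry $f(Q-s)=-f(s)$ to show each component is a full passage between the levels and hence a translate/reflection of some $\bar\varphi_{\lambda_i}$, and closing with a counting argument; and (ii) prove the strict energy--length monotonicity by an explicit computation, writing $\mathcal{J}_\ell(\bar\varphi_\ell)$ and $\ell$ as integrals over $t\in(0,Q)$ in the shooting parameter $c=\mathfrak{c}(\ell)$ via the first integral $\tfrac12(\bar\varphi_\ell')^2-F(\bar\varphi_\ell)=\tfrac12 c^2$ and differentiating in $c$. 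Both steps check out (in particular your formula $\frac{d}{dc}\bigl[(\tfrac12 c^2+2F)(c^2+2F)^{-1/2}\bigr]=\tfrac12 c^3(c^2+2F)^{-3/2}$ is correct, and the endpoint case $c=0$ is handled by integrating $G'>0$ over $(\mathfrak c(\tilde d),\mathfrak c(d))$). What each approach buys: the paper's perturbation argument is softer and uses only $\bar\varphi_{\bar d}'(\bar d)>0$, so it generalizes more readily beyond the autonomous one-dimensional setting; your computation is more quantitative and self-contained, and your component-counting argument handles uniqueness of the minimizer in one stroke without first establishing its monotonicity. One point worth making explicit if you write this up: on each component the minimizer is a priori only $H^1$, so you should note that the bounded right-hand side $f(\psi)$ makes $\psi'$ Lipschitz up to the closed component, which is what licenses the application of Lemma \ref{lem3.1} and the uniqueness of Lemma \ref{lem3.4} there.
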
}
		
		\begin{proof}
			The proof is divided into three steps.
			
			{\it Step 1. Existence of energy minimizer.} Note that
			\[
			\mathcal{J}_d(\psi)\geq 0 \quad \text{for any}\,\, \psi\in \mathcal{U}_d.
			\]	
			Therefore, one has $\bar{\mathcal{J}}_d\geq 0$. Suppose that  $\{\psi^{(n)}\}\subset \mathcal{U}_d$  is a minimizing sequence such that
			\[
			\mathcal{J}_d(\psi^{(n)})\leq \bar{\mathcal{J}}_d+\frac{1}{n}.
			\]
			Thus one has
			\[
			\int_0^d \left|\frac{d}{dx_2}\psi^{(n)}(x_2)\right|^2dx_2\leq  2\bar{\mathcal{J}}_d+1.
			\]
			Therefore, there exists a function $\psi^*\in \mathcal{U}_d$ such that
			\[
			\psi^{(n)}\rightharpoonup \psi^* \quad \text{in}\,\, H^1(0, d)\,\,\ \text{and}\,\ \,\psi^{(n)}\to \psi^* \ a.e. \ \text{in}\,(0,d).
			\]
			This implies that
			\[
			\mathcal{J}_d(\psi^*)\leq \bar{\mathcal{J}}_d.
			\]
			Hence $\psi^*$ is an energy minimizer of $\mathcal{J}_d$ over $\mathcal{U}_d$.

			{\it Step 2. Properties and characterization of energy minimizer.}
			Suppose that there exists an $\bar{x}_2\in (0, d)$ such that $\psi^*(\bar{x}_2)\in (0, Q/2)$ and  $\frac{d\psi^*}{dx_2}(\bar{x}_2)<0$. Denote
			\[
			\hat{x}_2=\inf\{s:\psi^*(x_2)\geq \psi^*(\bar{x}_2) \,\, \text{for any}\,\, x_2\in (s, \bar{x}_2) \}
			\]
			and
			\begin{equation}
				\hat{\psi}=\left\{
				\begin{aligned}
					& \psi^*(\bar{x}_2),\quad && x_2\in (\hat{x}_2, \bar{x}_2),\\
					& \psi^*(x_2),\quad && x_2\in (0,d)\setminus (\hat{x}_2, \bar{x}_2).
				\end{aligned}
				\right.
			\end{equation}
			Clearly, $\hat{\psi}\in \mathcal{U}_d$ and one has
			\[
			\mathcal{J}_d(\psi^*)> \mathcal{J}_d(\hat{\psi}).
			\]
			This contradicts with the fact that $\psi^*$ is an energy minimizer of $\mathcal{J}_d$ over $\mathcal{U}_d$.
			Similarly, we can prove that there does not exist an $\bar{x}_2\in (0, d)$ such that $\psi^*(\bar{x}_2)\in [Q/2, Q)$ and  $\frac{d\psi^*}{dx_2}(\bar{x}_2)<0$.
			Thus,
			\begin{eqnarray}
				\frac{d\psi^*}{dx_2}({x}_2)
				\geq 0 \ \text{in}\ (0,d).
			\end{eqnarray}
			
			Denote
			\begin{equation}\label{defd_*}
				d_*=\sup\{s: \psi^*(x_2)=0 \,\,\text{for any}\,\, x_2\in (0, s)\}
			\end{equation}
			and
			\begin{equation}\label{defd^*}
				d^*=\inf\{s: \psi^*(x_2)=Q \,\,\text{for any}\,\, x_2\in (s, d)\}.
			\end{equation}
			Clearly, $\psi^*(x_2)\in (0, Q)$ for any $x_2\in (d_*, d^*)$. It is easy to show that $\psi^*$ satisfies the Euler-Lagrange equation
			\[
			\frac{d^2}{dx_2^2}\psi^*=f(\psi^*)\quad \text{in}\,\, (d_*, d^*)
			\]
			and
			\[
			\psi^*(d_*)=0, \quad \psi^*(d^*)=Q.
			\]
   
			Therefore, $\psi_{d_*}^*(x_2):=\psi^*(x_2-d_*)$ solves the problem \eqref{pb1.2} with ${d}=d^*-d_*$. Note that the solution for the problem \eqref{pb1.2} is unique. Hence one has $\psi_{d_*}^*= \bar{\varphi}_{\bar{d}}$ with $\bar{d}:=d^*-d_*$.

			{\it Step 3. Energy comparison and determination of energy minimizer.}	For any  $\tilde{d} \in (d,1]$, define
			\[
			\tilde{\varphi}_{\tilde{d}}=\left\{
			\begin{aligned}
				& \bar{\varphi}_{\bar{d}},\quad &&\text{if}\,\, x\in (0, \bar{d}],\\
				& Q, \quad &&\text{if}\,\, x \in (\bar{d}, {\tilde{d}}).
			\end{aligned}	
			\right.
			\]
			Clearly, $\tilde{\varphi}_{\tilde{d}}\in {\mathcal{U}}_{\tilde{d}}$. Therefore, one has
			\[
			\bar{\mathcal{J}}_{\tilde{d}}\leq	{\mathcal{J}}_{\tilde{d}}(\tilde{\varphi}_{\tilde{d}})=\mathcal{J}_{\bar{d}}(\bar{\varphi}_{\bar{d}})={\mathcal{J}}_{{d}}(\psi^*) =\bar{\mathcal{J}}_{{d}}.
			\]
			
			We claim that $\bar{\mathcal{J}}_{\tilde{d}}<\bar{\mathcal{J}}_{{d}}$.  Let  $\eta\in C_0^\infty (\bar{d}-\delta, \bar{d}+\delta)$ satisfy $\eta\leq 0$ for some $\delta\in (0, ({\tilde{d}}-\bar{d})/2)$. Then for any $\epsilon>0$, one has
			\begin{equation}
				\begin{aligned}
					{\mathcal{J}}_{\tilde{d}}(\tilde{\varphi}_{\tilde{d}}  +\epsilon\eta)-{\mathcal{J}}_{\tilde{d}}(\tilde{\varphi}_{\tilde{d}} ) = &\epsilon \int_{\bar{d}-\delta}^{\bar{d}}(\bar{\varphi}_{\bar{d}}'\eta'+f(\bar{\varphi}_{\bar{d}})\eta) dx +\int_{\bar{d}}^{\bar{d}+\delta}F(Q+\epsilon \eta)dx +O(\epsilon^2)\\
					=& \epsilon \left(\bar{\varphi}_{\bar{d}}'(\bar{d})\eta(\bar{d})+f(Q)\int_{\bar{d}}^{\bar{d}+\delta}
					\eta dx\right) +O(\epsilon^2).
				\end{aligned}
			\end{equation}
			Let $\eta(x)=-\cos \left(\frac{(x-\bar{d})\pi}{2\delta}\right)$. Then one has
			\begin{equation}
				\begin{aligned}
					\frac{1}{\epsilon}(	{\mathcal{J}}_{\tilde{d}}(\tilde{\varphi}_{\tilde{d}} +\epsilon \eta)-{\mathcal{J}}_{\tilde{d}}(\tilde{\varphi}_{\tilde{d}})) =  -\bar\varphi_{\bar{d}}'(\bar{d}) -f(Q)\frac{2\delta}{\pi}+O(\epsilon)<0
				\end{aligned}
			\end{equation}
			provided that $\delta$ is sufficiently small, where $\bar{\varphi}_{\bar{d}}'(0)=\bar{\varphi}_{\bar{d}}'(\bar{d})>0$ has been used. This implies that $\bar{\mathcal{J}}_{\tilde{d}}<\bar{\mathcal{J}}_{{d}}$.	
			
			For $d_*$ and $d^*$ defined in \eqref{defd_*} and \eqref{defd^*},
			if $\bar{d}=d^*-d_*<d$, then one has
			\[
			\bar{\mathcal{J}}_d= \mathcal{J}_{d}(\psi^*)=\mathcal{J}_{\bar{d}} (\psi^*_{d_*})={\mathcal{J}}_{\bar{d}}(\bar{\varphi}_{\bar{d}})>\mathcal{J}_d(\bar{\varphi}_d).
			\]
			This contradiction implies that $\bar{d}=d$ so that $d_*=0$, $d^*=d$, and $\psi^*=\bar{\varphi}_d$.
			
			It follows from {Lemmas \ref{lem3.3} and \ref{lem3.4}} that
			$\bar{\varphi}_d$ is the unique energy minimizer $\mathcal{J}_d$ over $\mathcal{U}_d$.
		\end{proof}

		We have the following interesting properties for the energy minimizers in the intervals with different lengths.
		\begin{lemma}\label{lem3.7}
			Even with a shift, two solutions of \eqref{pbphic} have at most one intersection. More precisely,  for any $c_1>c_2\geq 0$ and $s\geq 0$, if $\phi_{c_1}(\bar{x}_2-s)=\phi_{c_2}(\bar{x}_2)$ for some $\bar{x}_2\geq s$, then
			\begin{equation}
				\phi_{c_1}(x_2-s)>\phi_{c_2}(x_2)\,\, \text{for}\,\, \bar{x}_2<x_2<d_1+s \quad\text{and}\quad	\phi_{c_1}(x_2-s)<\phi_{c_2}(x_2)\,\, \text{for}\,\, s<x_2<\bar{x}_2,
			\end{equation}
			where
			\[
			d_i=\sup\{x_2: \phi_{c_i}(x_2)\in (0, Q), x_2>0\}, \quad i=1,\, 2.
			\]
		\end{lemma}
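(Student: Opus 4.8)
The plan is to exploit the first integral of the autonomous equation \eqref{ODE}. Write $g(x_2):=\phi_{c_1}(x_2-s)$; since \eqref{ODE} has no explicit dependence on $x_2$, the shifted function $g$ again solves \eqref{ODE}, now with $g(s)=0$ and $g'(s)=c_1$, and by Lemma \ref{lem3.1} it increases from $0$ to $Q$ on $(s,d_1+s)$. Set $h:=\phi_{c_2}$, which increases from $0$ to $Q$ on $(0,d_2)$. Multiplying \eqref{ODE} by $\psi'$ and integrating, exactly as in the derivation of \eqref{derivativeest}, shows that the quantity $E[\psi]:=\tfrac12(\psi')^2-F(\psi)$ is conserved along any solution; evaluating at the respective initial points gives $E[g]\equiv\tfrac12 c_1^2$ and $E[h]\equiv\tfrac12 c_2^2$. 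I restrict attention, as in the statement, to the region where both $g$ and $h$ take values in $(0,Q)$.

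The decisive step is a transversality estimate at every intersection. Suppose $g(\bar x_2)=h(\bar x_2)=v$ with $v\in(0,Q)$. Evaluating the two conservation laws at $\bar x_2$ yields $(g'(\bar x_2))^2=c_1^2+2F(v)$ and $(h'(\bar x_2))^2=c_2^2+2F(v)$, so that $(g'(\bar x_2))^2-(h'(\bar x_2))^2=c_1^2-c_2^2>0$. Because $v\in(0,Q)$, Lemma \ref{lem3.1} (applied to $h$ on $(0,d_2)$ and, after translation, to $g$ on $(s,d_1+s)$) gives $g'(\bar x_2),h'(\bar x_2)>0$; hence $g'(\bar x_2)>h'(\bar x_2)$ and therefore $(g-h)'(\bar x_2)>0$. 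In words, wherever the two graphs meet at an interior height, the steeper (higher-energy) flow $g$ crosses $h$ strictly from below to above. This is the crux of the argument, and it is where one must check that the hypotheses of Lemma \ref{lem3.1} transfer to the shifted solution $g$; the non-Lipschitz behavior of $f$ at the endpoints plays no role because $v$ is an interior height where $F$ is smooth and strictly positive.

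With this in hand, uniqueness of the crossing is soft. Let $J:=(s,\min(d_1+s,d_2))$ be the connected interval on which both $g$ and $h$ lie in $(0,Q)$, and set $w:=g-h$ on $J$. Every zero of $w$ in $J$ is a point of interior common height, so by the previous step $w'>0$ there. A continuous function all of whose zeros are strict up-crossings can vanish at most once: two zeros $p<q$ in $J$ would force $w>0$ just to the right of $p$ and $w<0$ just to the left of $q$, producing an intermediate zero $r\in(p,q)\subset J$ at which $w$ passes from positive to negative, contradicting $w'(r)>0$. Thus $\bar x_2$ is the only zero of $w$ in $J$, and since $w'(\bar x_2)>0$ we obtain $w<0$ on $(s,\bar x_2)$ and $w>0$ on $(\bar x_2,\sup J)$.

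It remains to identify $\sup J$ with $d_1+s$, which is the only remaining bookkeeping point and also pins down the intervals in the statement. If instead $d_2<d_1+s$, then $\sup J=d_2$, and letting $x_2\to d_2^-$ would give $h\to Q$ while $g\to g(d_2)<Q$, whence $w(d_2^-)<0$; this contradicts $w>0$ on $(\bar x_2,d_2)$. Hence $d_1+s\le d_2$, $\sup J=d_1+s$, and the two claimed inequalities $\phi_{c_1}(x_2-s)<\phi_{c_2}(x_2)$ on $(s,\bar x_2)$ and $\phi_{c_1}(x_2-s)>\phi_{c_2}(x_2)$ on $(\bar x_2,d_1+s)$ follow at once.
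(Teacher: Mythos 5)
Your proof is correct, and it takes a genuinely different route from the paper's. The paper argues variationally: assuming a second intersection $\tilde{x}_2$, it splices $\phi_{c_1}(\cdot-s)$ and $\phi_{c_2}$ over $(\bar{x}_2,\tilde{x}_2)$ into a competitor $\hat{\psi}$ whose energy does not exceed $\mathcal{J}_{d_2}(\phi_{c_2})$, concludes that $\hat{\psi}$ is itself an energy minimizer and hence a smooth solution of \eqref{pb1.2} on $(0,d_2)$, and then uses the first integral together with ODE uniqueness to show that the one-sided derivatives of $\hat{\psi}$ must disagree at $\tilde{x}_2$ --- a contradiction with smoothness. You bypass the variational structure entirely: evaluating the conserved quantity at an interior intersection gives the exact identity $(\phi_{c_1}'(\bar{x}_2-s))^2-(\phi_{c_2}'(\bar{x}_2))^2=c_1^2-c_2^2>0$, which together with the positivity of both derivatives from Lemma \ref{lem3.1} makes every interior intersection a strict up-crossing of $w=g-h$; a continuous function all of whose zeros are strict up-crossings vanishes at most once. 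This is more elementary (no appeal to Lemma \ref{lem3.6} or to the regularity of minimizers), it is quantitatively sharper at the crossing than the paper's mere inequality $\phi_{c_1}'(\tilde{x}_2-s)\neq\phi_{c_2}'(\tilde{x}_2)$, and your final bookkeeping recovers $d_1+s\leq d_2$ as a consequence rather than importing $d_2>d_1$ from Lemma \ref{lem3.4} as the paper does. What the paper's splicing argument buys in exchange is a template that transfers to the two-dimensional problems of Section \ref{secminimizer}, where no first integral exists; your argument is tied to the autonomous ODE. Both your proof and the paper's implicitly restrict to intersections at interior heights $v\in(0,Q)$, which is consistent with the clarifying remark following Theorem \ref{thmshear}.
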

		\begin{proof}
			
			It follows from Lemma \ref{lem3.4} that $d_2>d_1$.
			Suppose that there exists an $\tilde{x}_2\in(0,d_2)$ such that
			$\phi_{c_1}(\tilde{x}_2-s) =\phi_{c_2}(\tilde{x}_2)$. Without loss of generality, assume that
			\[
			\int_{\bar{x}_2}^{\tilde{x}_2}\frac{|\phi_{c_1}'|^2}{2}(\tau-s)+F(\phi_{c_1}(\tau-s)) d\tau \leq \int_{\bar{x}_2}^{\tilde{x}_2}\frac{|\phi_{c_2}'|^2}{2}(\tau)+F(\phi_{c_2}(\tau )) d\tau.
			\]
			Define
			\[
			\hat{\psi}(x_2)=\left\{
			\begin{aligned}
				& \phi_{c_1}(x_2-s)\quad &&\text{if}\,\, \bar{x}_2<x_2<\tilde{x}_2,\\
				&\phi_{c_2}(x_2)\quad &&\text{if}\,\, x_2\in (0,d_2)\setminus (\bar{x}_2, \tilde{x}_2).
			\end{aligned}
			\right.
			\]
			Clearly, ${\mathcal{J}}_{d_2}({\hat{\psi}})\leq {\mathcal{J}}_{{d}_2}(\phi_{c_2})$. Therefore, ${\hat{\psi}}$ must satisfy the problem \eqref{pb1.2} on $(0,{{d_2}})$ and thus is smooth by the standard regularity theory for ODEs.
			
			Note
			that
			\begin{equation}
				\begin{aligned}
					\frac{|\phi_{c_1}'|^2 (\tilde{x}_2-s)}{2}-\frac{|\phi_{c_1}'|^2(\bar{x}_2-s)}{2}=& F(\phi_{c_1}(\tilde{x}_2-{s}))-F(\phi_{c_1}(\bar{x}_2-s))\\
					=& F(\phi_{c_2}(\tilde{x}_2))-F(\phi_{c_2}(\bar{x}_2))
					= 	\frac{|\phi_{c_2}'|^2 (\tilde{x}_2)}{2}-\frac{|\phi_{c_2}'|^2(\bar{x}_2)}{2}.
				\end{aligned}
			\end{equation}
			It follows from the uniqueness of solutions for \eqref{ODE} that $\phi_{c_1}'(\bar{x}_2-s)\neq\phi_{c_2}'(\bar{x}_2)$.
			Thus, $\phi_{c_1}'(\tilde{x}_2-s)\neq \phi_{c_2}'(\tilde{x}_2)$.
			Hence ${\hat{\psi}}$ is not twice differentiable on $(0, d_2)$. This leads to a contradiction.
			Hence the proof of the lemma is completed.
		\end{proof}

		The proof of Lemma \ref{lem3.7} immediately gives the following corollary.
		\begin{corollary}\label{cor3.9}
			$\bar\varphi_d$ is also an energy minimizer in any local interval in $(0,d)$. More precisely,
			for any $d'$, $d''\in (0,d)$, one has
			\begin{equation}\label{localminimizer}
				\int_{d'}^{d''} \left(\frac{|\bar\varphi_d'|^2}{2}+F(\bar\varphi_d)\right)(s)ds =\inf_{\psi\in \mathcal{U}_{d;d',d''}}	\int_{d'}^{d''} \left(\frac{|\psi'|^2}{2}+F(\psi)\right)(s)ds,
			\end{equation}
			where  $\mathcal{U}_{d;d',d''}=\{\psi: \psi\in H^1(d',d''),\, \psi(d')=\bar\varphi_{d}(d'), \,\psi(d'')=\bar\varphi_{d}(d'')\}$.
		\end{corollary}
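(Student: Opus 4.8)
The plan is to reuse the gluing idea from the proof of Lemma~\ref{lem3.7}, combined with the global uniqueness of the energy minimizer established in Lemma~\ref{lem3.6}. Since $\bar\varphi_d$ restricted to $(d',d'')$ is itself an element of $\mathcal{U}_{d;d',d''}$, the left-hand side of \eqref{localminimizer} automatically dominates the infimum on the right. Thus it suffices to prove the reverse inequality, i.e.\ that no competitor in $\mathcal{U}_{d;d',d''}$ has strictly smaller local energy than $\bar\varphi_d$.

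Suppose, for contradiction, that there is a $\psi\in\mathcal{U}_{d;d',d''}$ with
\[
\int_{d'}^{d''}\left(\frac{|\psi'|^2}{2}+F(\psi)\right)(s)\,ds<\int_{d'}^{d''}\left(\frac{|\bar\varphi_d'|^2}{2}+F(\bar\varphi_d)\right)(s)\,ds.
\]
I would then glue $\psi$ into $\bar\varphi_d$ across the endpoints $d'$ and $d''$ by setting
\[
\tilde\psi(x_2)=\begin{cases}\psi(x_2),&x_2\in(d',d''),\\[2pt]\bar\varphi_d(x_2),&x_2\in(0,d)\setminus(d',d'').\end{cases}
\]
Because $\psi$ and $\bar\varphi_d$ share the same endpoint values $\bar\varphi_d(d')$ and $\bar\varphi_d(d'')$, the glued function $\tilde\psi$ is continuous across the interfaces, belongs to $H^1(0,d)$, and still satisfies $\tilde\psi(0)=0$, $\tilde\psi(d)=Q$; hence $\tilde\psi\in\mathcal{U}_d$. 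Splitting $\mathcal{J}_d$ additively over $(0,d')$, $(d',d'')$, and $(d'',d)$, the two outer pieces coincide with those of $\bar\varphi_d$, while the middle piece is strictly smaller by assumption, so $\mathcal{J}_d(\tilde\psi)<\mathcal{J}_d(\bar\varphi_d)=\bar{\mathcal{J}}_d$. This contradicts the fact, from Lemma~\ref{lem3.6}, that $\bar\varphi_d$ minimizes $\mathcal{J}_d$ over $\mathcal{U}_d$. Therefore equality holds in \eqref{localminimizer}.

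The step that must be handled with a little care — and the precise point where the argument inherits the mechanism of the proof of Lemma~\ref{lem3.7} — is the additivity of $\mathcal{J}_d$ across the gluing interfaces together with the admissibility of $\tilde\psi$: because $F$ (see \eqref{def}) is globally Lipschitz and $\psi$ matches the trace of $\bar\varphi_d$ at $d'$ and $d''$, no spurious boundary contributions are produced and $\tilde\psi$ genuinely lies in $\mathcal{U}_d$. If one wishes to upgrade the statement to uniqueness of the local minimizer, one argues exactly as in Lemma~\ref{lem3.7}: a glued function attaining the infimum must solve $\tilde\psi''=f(\tilde\psi)$ and hence be $C^2$ at $d'$ and $d''$, so the one-sided derivatives must agree, and the ODE uniqueness of Lemma~\ref{lem3.3} then forces $\psi=\bar\varphi_d$ on $(d',d'')$. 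I expect the only genuine obstacle to be the bookkeeping at the interfaces; the heart of the matter is the global uniqueness already in hand.
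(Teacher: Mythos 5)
Your proof is correct and follows essentially the same route the paper intends: the paper derives Corollary \ref{cor3.9} directly from the gluing-and-energy-comparison mechanism in the proof of Lemma \ref{lem3.7}, which is exactly your argument of splicing a hypothetically better local competitor into $\bar\varphi_d$ and contradicting the global minimality from Lemma \ref{lem3.6}. Your remarks on $H^1$ admissibility of the glued function and on the optional uniqueness upgrade via derivative matching are consistent with the paper's treatment.
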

		\begin{remark}	
			The idea of the proof for Lemma \ref{lem3.7} is also generalized to study PDEs in Section \ref{secminimizer}, which should be also useful to study other problems.
		\end{remark}

  \begin{remark}
      Corollary \ref{cor3.9} plays a crucial role in the proof for the existence of stagnation points in Section \ref{secexistfbpoint}.
  \end{remark}
		
		Choosing $s=0$ in Lemma \ref{lem3.7} gives the following corollary.
		\begin{corollary}\label{cor3.10}
			For any $0<d<\tilde{d}\leq 1$, one has
			$\bar\varphi_d(x_2)>\bar\varphi_{\tilde{d}}(x_2)$ for $x_2\in (0,d)$.
		\end{corollary}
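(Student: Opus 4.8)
The plan is to reduce the statement to a single application of Lemma \ref{lem3.7}, after identifying $\bar\varphi_d$ and $\bar\varphi_{\tilde d}$ as solutions of the Cauchy problem \eqref{pbphic} with suitable initial slopes. By the one-to-one correspondence recorded in Lemma \ref{lem3.4} (equivalently, part (2) of Theorem \ref{thmshear}), I would write $\bar\varphi_d=\phi_{c_1}$ and $\bar\varphi_{\tilde d}=\phi_{c_2}$ with $c_1=\mathfrak{c}(d)$ and $c_2=\mathfrak{c}(\tilde d)$. Since $\mathfrak{c}$ is monotone decreasing in $d$ (indeed strictly, as \eqref{ODElifespan} exhibits $d$ as a strictly decreasing function of $\mathfrak{c}(d)$) and $0<d<\tilde d\le 1$, we obtain $c_1>c_2\ge 0$, so the hypotheses of Lemma \ref{lem3.7} are satisfied.

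The key observation is that $x_2=0$ is itself a (boundary) intersection point of the two graphs, since $\phi_{c_1}(0)=\phi_{c_2}(0)=0$. I would therefore apply Lemma \ref{lem3.7} with shift $s=0$ and intersection point $\bar x_2=0$. The hypothesis $\phi_{c_1}(\bar x_2-s)=\phi_{c_2}(\bar x_2)$ then holds trivially, and the conclusion on the range $\bar x_2<x_2<d_1+s$ becomes $\phi_{c_1}(x_2)>\phi_{c_2}(x_2)$ for $0<x_2<d_1$, while the complementary range $s<x_2<\bar x_2$ reduces to the empty set $0<x_2<0$.

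It then remains only to identify $d_1$ with $d$. By Lemma \ref{lem3.1} the solution $\bar\varphi_d=\phi_{c_1}$ is strictly increasing with $\bar\varphi_d(0)=0$ and $\bar\varphi_d(d)=Q$, so $\bar\varphi_d(x_2)\in(0,Q)$ precisely for $x_2\in(0,d)$; hence $d_1=\sup\{x_2:\phi_{c_1}(x_2)\in(0,Q)\}=d$. Substituting $d_1=d$ yields $\bar\varphi_d(x_2)>\bar\varphi_{\tilde d}(x_2)$ for $x_2\in(0,d)$, which is the assertion.

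The argument is essentially immediate once the ingredients are lined up; the only point requiring a little care, and the natural place to slip, is recognizing that the common left endpoint $x_2=0$ should be used as the admissible intersection in Lemma \ref{lem3.7}, so that no fresh contradiction argument is needed. As a consistency check one may also note that Lemma \ref{lem3.2} gives the same inequality directly on the common interval $(0,\min\{d_1,d_2\})=(0,d)$, but routing through Lemma \ref{lem3.7} with $s=0$ is the cleaner path and is exactly the specialization advertised before the statement.
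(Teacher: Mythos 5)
Your proposal is correct and matches the paper's own route exactly: the paper derives Corollary \ref{cor3.10} precisely by choosing $s=0$ in Lemma \ref{lem3.7}, and your identification of $c_1=\mathfrak{c}(d)>c_2=\mathfrak{c}(\tilde d)$, the use of $\bar x_2=0$ as the intersection point, and the verification that $d_1=d$ are exactly the details the paper leaves implicit.
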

		
		\begin{proof}[Proof of Theorem \ref{thmshear}]
			Clearly, Part (1) of Theorem \ref{thmshear} follows from {Lemmas   \ref{lem3.1},  \ref{lem3.3}}, and \ref{lem3.4}. Part (2) of Theorem \ref{thmshear} is a direct consequence of Lemmas \ref{lem3.3} and \ref{lem3.4}.  Combining  Lemmas \ref{lem3.6}, \ref{lem3.7}, and Corollary \ref{cor3.10} yields Part (3) of Theorem \ref{thmshear}. Part (4) of Theorem \ref{thmshear} is exactly Lemma \ref{lem3.7}.
		\end{proof}
		
		In the following, we prove that one-dimensional solution $\bar{\varphi}_d$ in fact is an energy minimizer in the rectangle domain $\hat{\Omega}_{l, m}^d$ and that if the energy of a sequence of one-dimensional functions converges to the minimum of energy, then these functions converge to the energy minimizer in $L^\infty$-norm. This plays an important role in proving the asymptotic behavior of two-dimensional flows in a general nozzle in Section \ref{secminimizer}.
		\begin{lemma}\label{lem3.9}
			For any given domain $\hat{\Omega}_{l, m}^d:=\{(x_1, x_2): l<x_1<m, 0<x_2<d\}$ with the constants $l,m\in\mathbb{R}$ and $d\in (0,1]$, let $\hat{\mathcal{K}}^{d}_{l,m}=\{\psi: \psi\in H^1(\hat\Omega_{l, m}^d), \psi(x_1,0)=0, \psi(x_1,d)=Q\}$. Then
			\begin{equation}\label{defJlmd}
				\mathcal{J}_{l,m}^d(\bar{\varphi}_d):=(m-l)\int_0^d \frac{|\bar{\varphi}_d'(x_2)|^2}{2} +F(\bar{\varphi}_d(x_2))dx_2 =\inf_{\psi\in \hat{\mathcal{K}}^d_{l,m}} \int_{l}^m \int_0^d \frac{|\nabla \psi|^2}{2} +F(\psi)dx.
			\end{equation}
			Furthermore, for any $\epsilon>0$, if $\psi\in \mathcal{U}_d$ satisfies
			\[
			\|\psi-\bar{\varphi}_d\|_{L^{\infty}([0,d])}>\epsilon,
			\]
			then there exists a $\sigma>0$ such that
			\begin{equation}
				{\mathcal{J}}_d(\psi) \geq {\mathcal{J}}_d(\bar{\varphi}_d)+\sigma.
			\end{equation}
		\end{lemma}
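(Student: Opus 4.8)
The plan is to treat the two assertions of Lemma~\ref{lem3.9} separately: the first is a reduction of the two-dimensional minimization over the rectangle $\hat\Omega_{l,m}^d$ to the one-dimensional problem already settled in Lemma~\ref{lem3.6}, and the second is a quantitative (uniform gap) sharpening of strict minimality, which I would obtain by a compactness–contradiction argument resting on the \emph{uniqueness} part of Lemma~\ref{lem3.6}.

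For the first assertion I would exploit the splitting $|\nabla\psi|^2=|\partial_{x_1}\psi|^2+|\partial_{x_2}\psi|^2$ together with the fact that the Dirichlet data depend only on $x_2$. For any $\psi\in\hat{\mathcal{K}}^d_{l,m}$, Fubini's theorem for Sobolev functions gives that the slice $\psi(x_1,\cdot)$ lies in $H^1(0,d)$ for a.e. $x_1\in(l,m)$ and satisfies $\psi(x_1,0)=0$, $\psi(x_1,d)=Q$ in the trace sense, so $\psi(x_1,\cdot)\in\mathcal{U}_d$. Dropping the nonnegative term $|\partial_{x_1}\psi|^2$ and applying Lemma~\ref{lem3.6} slice by slice yields
\[
\int_l^m\int_0^d \frac{|\nabla\psi|^2}{2}+F(\psi)\,dx \geq \int_l^m \mathcal{J}_d\big(\psi(x_1,\cdot)\big)\,dx_1 \geq (m-l)\bar{\mathcal{J}}_d = \mathcal{J}_{l,m}^d(\bar{\varphi}_d).
\]
Since the $x_1$-independent extension of $\bar{\varphi}_d$ lies in $\hat{\mathcal{K}}^d_{l,m}$ and attains the right-hand side (its $x_1$-derivative vanishes), equality \eqref{defJlmd} follows.

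For the second assertion I would argue by contradiction. If no such $\sigma$ existed, there would be a sequence $\{\psi^{(n)}\}\subset\mathcal{U}_d$ with $\|\psi^{(n)}-\bar{\varphi}_d\|_{L^\infty([0,d])}>\epsilon$ yet $\mathcal{J}_d(\psi^{(n)})\to\bar{\mathcal{J}}_d$. As in Step~1 of the proof of Lemma~\ref{lem3.6}, the energy bound forces $\{\psi^{(n)}\}$ to be bounded in $H^1(0,d)$, so along a subsequence $\psi^{(n)}\rightharpoonup\psi^*$ weakly in $H^1(0,d)$. In one dimension the embedding $H^1(0,d)\hookrightarrow C([0,d])$ is compact, hence the convergence is uniform; combined with weak lower semicontinuity of the Dirichlet term and continuity of $\int_0^d F(\cdot)$ under uniform convergence (recall $F$ is globally Lipschitz), we get $\mathcal{J}_d(\psi^*)\leq\liminf_n\mathcal{J}_d(\psi^{(n)})=\bar{\mathcal{J}}_d$, so $\psi^*$ is a minimizer. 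By the uniqueness in Lemma~\ref{lem3.6}, $\psi^*=\bar{\varphi}_d$; but uniform convergence forces $\|\psi^*-\bar{\varphi}_d\|_{L^\infty}\geq\epsilon$, i.e. $0\geq\epsilon$, a contradiction. Hence a uniform gap $\sigma>0$ exists.

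The routine parts are the slicing in the first assertion and the coercivity and lower-semicontinuity bookkeeping in the second. The essential point — and the only place where the specific structure of the problem enters — is the \emph{uniqueness} of the one-dimensional minimizer from Lemma~\ref{lem3.6}: it is precisely what upgrades the soft compactness conclusion ``a subsequential limit minimizes the energy'' into the rigid statement that the limit must equal $\bar{\varphi}_d$, which is what produces the contradiction. I expect no genuine obstacle beyond invoking this uniqueness correctly, since the non-Lipschitz nonlinearity has already been tamed in the earlier lemmas.
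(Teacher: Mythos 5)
Your proposal is correct and follows essentially the same route as the paper: Step 1 drops the $|\partial_{x_1}\psi|^2$ term and applies the one-dimensional minimality slice by slice, and Step 2 runs the same compactness--contradiction argument using the $H^1$ bound on a minimizing sequence, the compact embedding into $C([0,d])$, and the uniqueness of the one-dimensional minimizer. Your version is slightly more explicit about the Fubini slicing and the lower-semicontinuity bookkeeping, but there is no substantive difference.
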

		\begin{proof}
  The proof is divided into two steps.
  
			{\it  Step 1.}	Given any $\psi\in \hat{\mathcal{K}}^{d}_{l,m}$, one has
			\begin{equation}
				\begin{aligned}
					\int_{\hat\Omega_{l,m}^d} \frac{|\nabla \psi|^2}{2} +F(\psi)dx
					\geq &  \int_{\hat\Omega_{l,m}^d} \frac{|\partial_{x_2}\psi|^2}{2} +F(\psi)dx
					=  \int_{l}^m \int_0^d \frac{|\partial_{x_2} \psi|^2}{2} +F(\psi)dx \\
					\geq & \int_l^m \mathcal{J}_d(\bar{\varphi}_{d})dx_1=\mathcal{J}_{l,m}^d(\bar{\varphi}_d).
				\end{aligned}	
			\end{equation}
			This proves \eqref{defJlmd}.
			
			{\it Step 2.}
			Suppose that there exists an $\epsilon_0>0$ and a sequence $\{\psi^{(n)}\}\subset \mathcal{U}_d$ such that
			\begin{equation}\label{devlarge}
				\|\psi^{(n)}-\bar{\varphi}_d\|_{L^{\infty}([0,d])}>\epsilon_0
			\end{equation}
			and
			\[
			\mathcal{J}_d(\psi^{(n)})\leq \mathcal{J}_d(\bar{\varphi}_d)+\frac{1}{n}.
			\]

			Therefore, $\{\psi^{(n)}\}$ is an energy minimizing sequence. It follows from Lemma \ref{lem3.4} that one has
			\[
			\psi^{(n)} \rightharpoonup \bar{\varphi}_d \quad\text{in}\,\, H^1((0,d)).
			\]
			This implies that $\psi^{(n)} \to\bar{\varphi}_d$ in $C([0,d])$ and contradicts with \eqref{devlarge}. Hence the proof of the lemma is completed.	
		\end{proof}
		
		\section{Uniqueness of the solutions}\label{secunique}
  In this section, we prove the uniqueness of steady Euler flows with positive horizontal velocity inside the nozzle as far field horizontal velocity tends to the one of Poiseuille flows.  The key ingredient of the analysis is to prove a Liouville type theorem for steady Euler system in a strip (cf. Proposition \ref{propthm1}).
		\subsection{Liouville type theorem for flows in a strip}\label{secLiou}
		In this subsection, we study the Liouville type theorem for Poiseuille flows of steady incompressible Euler system \eqref{ieuler}  in an infinitely long strip. With the aid of this particular uniqueness result, we prove the uniqueness of steady solutions in a general nozzle as long as its horizontal velocity is positive inside the nozzle.
 It follows from Proposition \ref{propstream} that the steady incompressible Euler system satisfying \eqref{basicppt1} and \eqref{con} can be reduced into the single second order semilinear elliptic equation \eqref{streameq} of the stream function, where $f$ is given in \eqref{formf}.  Then the key point is to prove that the solutions of problem
		\begin{equation}\label{pb1}
			\left\{
			\begin{aligned}
				&	\Delta \psi =f(\psi)\quad \text{in}\,\,\hat{\Omega}^d,\\
				&	\psi=0\,\, \text{on}\,\, S_0,\quad \psi=Q\,\,\text{on}\,\, S_d
			\end{aligned}\right.
		\end{equation}
		must be one-dimensional solutions as long as
		\begin{equation}\label{nonnegativevelocity}
			\partial_{x_2}\psi \geq 0 \ \ \ \mbox{in} \ \ \hat{\Omega}^d,
		\end{equation}
		where $\hat{\Omega}^d=\mathbb{R}\times (0,d)$, $S_0=\{(x_1,x_2):x_1\in \mathbb{R}, x_2=0\}$, and $S_d=\{(x_1,x_2):x_1\in \mathbb{R}, x_2=d\}$ for any $d \in (0,1]$.
		This is exactly Proposition \ref{prop2}.
		
		\begin{pro}\label{prop2}
			If $\psi\in C^2(\hat{\Omega}^d)\cap C^{0,1}({\overline{\hat{\Omega}^d}})$ is a  solution of \eqref{pb1} and satisfies
			\eqref{nonnegativevelocity},
			then $\psi$ is a function of $x_2$ only, i.e.,
			\begin{eqnarray}
				\psi(x_1,x_2)={\psi}(x_2).
			\end{eqnarray}
		\end{pro}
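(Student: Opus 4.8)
The goal is to show $\partial_{x_1}\psi\equiv 0$. I will first record the qualitative properties of $\psi$ that make a moving-plane/sliding scheme feasible, and then run that scheme in the unbounded direction $x_1$.

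\textbf{Step 1 (strict monotonicity and boundary positivity).} From \eqref{nonnegativevelocity} and the boundary conditions in \eqref{pb1}, the map $x_2\mapsto\psi(x_1,x_2)$ is nondecreasing from $0$ to $Q$, so $0\le\psi\le Q$. To upgrade this to $0<\psi<Q$ in the interior together with $\partial_{x_2}\psi>0$ in a neighborhood of $S_0$ and $S_d$, I would exploit the two sign conditions $f(0)=6Q>0$ and $f(Q)=-6Q<0$ coming from \eqref{formf}: near the bottom $\psi$ is small and $\Delta\psi=f(\psi)$ is strictly positive, so $\psi$ is strictly subharmonic there and, by the Hopf lemma applied at $S_0$, one gets $\partial_{x_2}\psi(x_1,0)>0$; symmetrically $\partial_{x_2}\psi(x_1,d)>0$. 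This is precisely the ``positivity of the horizontal velocity near the boundary'' needed to initiate moving planes, and it is here, at the two non-Lipschitz values $\psi=0,Q$, that the structure of $f$ is essential.

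\textbf{Step 2 (sliding in $x_1$).} Because both the strip $\hat\Omega^d$ and the boundary data in \eqref{pb1} are invariant under horizontal translation, every translate $\psi^\tau(x_1,x_2):=\psi(x_1+\tau,x_2)$ solves the same problem. Set $w_\tau=\psi^\tau-\psi$. In the interior, where $0<\psi<Q$ keeps the argument of $f$ in the smooth range $C^\infty((0,Q))$, $w_\tau$ satisfies a linear equation $\Delta w_\tau=c_\tau(x)\,w_\tau$ with $c_\tau=[f(\psi^\tau)-f(\psi)]/[\psi^\tau-\psi]$ locally bounded, while near $S_0$ and $S_d$ the one-sided sign of $f$ supplies the correct sign of the zeroth-order coefficient, so a narrow-domain maximum principle applies in thin collars along the boundary. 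The plan is then to slide: show $w_\tau$ cannot attain a negative interior infimum, deduce that $w_\tau$ has a fixed sign for each $\tau$, and since $w_0\equiv0$ and the sign cannot jump, conclude $w_\tau\equiv0$ for all $\tau$, i.e. $\psi=\psi(x_2)$. Equivalently, one may argue through the linearized operator directly: differentiating $\Delta\psi=f(\psi)$ shows that $v:=\partial_{x_2}\psi>0$ and $w:=\partial_{x_1}\psi$ both solve $\Delta(\cdot)=f'(\psi)(\cdot)$, so the ratio $\sigma=w/v$ satisfies $\operatorname{div}(v^2\nabla\sigma)=0$; a Liouville theorem for this degenerate divergence-form equation in the strip, together with $w=\partial_{x_1}\psi=0$ on $S_0\cup S_d$ (since $\psi$ is constant there), forces $\sigma\equiv0$ and again $\partial_{x_1}\psi\equiv0$.

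\textbf{Main obstacle.} Two features obstruct the textbook argument, and both are resolved by Step 1. First, Proposition \ref{prop2} assumes no decay as $x_1\to\pm\infty$, so a moving plane cannot be started ``from infinity''; this is why the \emph{sliding} method, using translation invariance of the domain and equation, rather than reflection, is the right vehicle, with the start furnished by the narrow-domain maximum principle near $S_0$ and $S_d$. Second, $f$ is only $C^{1/2}$ at $\psi=0$ and $\psi=Q$, where $f'\to\mp\infty$, so the usual linearization degenerates precisely on the boundary; the remedy is that at these values $f$ carries a definite favorable sign and $\partial_{x_2}\psi$ is bounded away from $0$ there, so the comparison principle survives. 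I expect the technical heart of the proof to be exactly this boundary analysis: the simultaneous control of the singular coefficient $f'(\psi)$ and of a positive lower bound for $\partial_{x_2}\psi$, uniformly along the unbounded $x_1$-direction.
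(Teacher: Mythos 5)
There is a genuine gap, and it starts at the very first claim of your Step 1. The assertion that Hopf's lemma gives $\partial_{x_2}\psi(x_1,0)>0$ is false: the Poiseuille stream function $\bar{\varphi}_1(x_2)=Q(3x_2^2-2x_2^3)$ — the very solution this proposition is meant to single out — satisfies $\bar{\varphi}_1'(0)=\bar{\varphi}_1'(1)=0$, since $\partial_{x_2}\psi=u_1=6Qx_2(1-x_2)$ vanishes on both walls. Near $S_0$ the function $\psi$ is a \emph{subsolution} ($\Delta\psi=f(\psi)\approx 6Q>0$) attaining its \emph{minimum} on $S_0$, which is the wrong sign configuration for Hopf; a subharmonic function can perfectly well have vanishing normal derivative at a boundary minimum. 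For the same reason your later claim that ``$\partial_{x_2}\psi$ is bounded away from $0$'' near the walls cannot hold. What is actually true — and what the paper extracts from the equation by computing $\partial_{x_2x_2}\psi(x_1,0)=f(0)=6Q$ (tangential second derivatives vanish on $S_0$) and Taylor expanding — is the quadratic behavior $\psi(x_1,x_2)\geq \tfrac{5Q}{2}x_2^2$ and $\partial_{x_2}\psi(x_1,x_2)\geq 5Qx_2$ near $S_0$, with the mirror bounds near $S_d$. This quadratic estimate, not boundary positivity of the normal derivative, is the engine of the proof.

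The second problem is that purely horizontal sliding, as you set it up, has no starting point and its concluding logic does not close. Since no decay in $x_1$ is assumed and $w_\tau=\psi^\tau-\psi$ vanishes on $S_0\cup S_d$ for \emph{every} $\tau$, there is no value of $\tau$ at which $w_\tau$ is known to have a sign; a narrow-domain maximum principle in thin collars cannot manufacture one. Moreover, even granting that each $w_\tau$ has a fixed sign, the inference ``$w_0\equiv 0$ and the sign cannot jump, hence $w_\tau\equiv 0$'' is invalid: $w_\tau\geq 0$ for $\tau>0$ and $w_\tau\leq 0$ for $\tau<0$ is perfectly consistent with $\psi$ strictly increasing in $x_1$. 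The paper circumvents both difficulties by sliding in \emph{tilted} directions $\nu=(\nu_1,\nu_2)$ with $\nu_2>0$: for $\tau$ close to $d/\nu_2$ the shifted function lives in the top layer where $\psi(x+\tau\nu)>3Q/4$ while $\psi(x)<Q/4$ (by the quadratic boundary estimates), which starts the sliding; the possibility that the infimum of $\Psi^\tau_\nu$ is only attained along a sequence $x_1^k\to\infty$ is excluded by translating, extracting a $C^2_{\mathrm{loc}}$ limit, and reapplying the strong maximum principle; and the conclusion $\partial_{x_1}\psi=0$ follows by letting $\nu\to(\pm 1,0)$ in $\partial_\nu\psi\geq 0$. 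Your alternative route via $\sigma=\partial_{x_1}\psi/\partial_{x_2}\psi$ and $\operatorname{div}(v^2\nabla\sigma)=0$ is likewise unsubstantiated here: $v=\partial_{x_2}\psi$ degenerates linearly at the walls, so the weight $v^2\sim x_2^2$ vanishes on the boundary, $\sigma$ is a $0/0$ quotient there, and the Liouville theorem you invoke for this degenerate operator in an unbounded strip is precisely the hard statement that would need proof.
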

		\begin{remark}
			The proof of this proposition  mainly uses the sliding method developed in  \cite{BN,BCN0,BCN} and adapted in \cite{HN} to study  the Euler system.
			Based on the elegant argument in \cite{HN}, we mainly need to overcome the difficulties near the boundary where the equation has non-Lipschitz nonlinearity.
		\end{remark}
		\begin{proof}
			The proof is divided into three steps.
			
			{\it Step 1. Set up.} For any ${\nu}=({\nu}_1,{\nu}_2)$ with ${\nu}_1\in \mathbb{R}$ and ${\nu}_2>0$, given
			$\tau\in(0, \frac{d}{\nu_2})$, denote
			\begin{eqnarray*}
				{\Psi}^{\tau}_{\nu}(x)=\psi(x+\tau\nu)-\psi({x})\ \ \mbox{for any}  \ x\in \overline{{D}^{\tau, \nu}},
			\end{eqnarray*}
			where ${D}^{\tau, \nu}=\hat\Omega^{d-\tau\nu_2}$.
			In the following, we compare ${\psi}(x)$ and $\psi(x+\tau\nu)$ in $\overline{{D}^{\tau, \nu}}$ via the maximum principle. A major difficulty is the lack of the Lipschitz continuity for the nonlinear function $f$ in \eqref{pb1}, which is only $C^{1/2}$ in the interval $([0,Q])$. Hence one cannot apply the maximum principle  directly.
			It follows 	from the standard elliptic regularity estimate (\hspace{1sp}\cite{gt}) that there exists a constant $C>0$ such that
			\begin{eqnarray}\label{uni1}
				\|\psi\|_{C^{2,\frac{1}{2}}(\overline{\hat{\Omega}^d})}\leq C.
			\end{eqnarray}
			
			Given any $x_1\in \mathbb{R}$, since
			${\partial}_{x_1x_1} \psi(x_1,0)=0$,
			one has
			\begin{eqnarray*}\label{par1}
				{\partial}_{x_2x_2} \psi(x_1,0)=\Delta \psi (x_1,0)
				=6Q(1-2\kappa (\psi(x_1,0)))=6Q.
			\end{eqnarray*}
			It follows from \eqref{uni1} that there exists a $\delta_1$ independent of $x_1$ such that for any $x_2\in (0,\delta_1)$,
			\begin{eqnarray}\label{par3}
				{\partial}_{x_2x_2}\psi(x_1, x_2)\geq 5Q \,\,\text{for any}\,\, (x_1, x_2)\in \mathbb{R}\times (0, \delta_1).
			\end{eqnarray}
			Therefore, for any $x_2 \in (0, \delta_1)$, there exist $\xi_1, \xi_2\in (0,x_2)$ such that
			\begin{eqnarray}\label{par2.0}
				\partial_{x_2}\psi(x_1,x_2)=\partial_{x_2}\psi(x_1,0)+{\partial}_{x_2x_2}\psi(x_1, \xi_1)x_2 \geq 5Qx_2,
			\end{eqnarray}
			and
			\begin{eqnarray}\label{par2}
				\psi(x_1,x_2)=\psi(x_1,0)+{\partial}_{x_2}\psi(x_1,0)x_2+\frac{1}{2}{\partial}_{x_2x_2}\psi(x_1, \xi_2)x_2^2 \geq \frac{5Q}{2}x_2^2.
			\end{eqnarray}
			
			Similarly, there exists a $\delta_2\in (0, \delta_1)$ such that $\partial_{x_2}\psi(x_1,x_2)\geq 5Q(d-x_2)$, and
			\begin{eqnarray}\label{par2.1}
				\psi(x_1,x_2)\leq Q-\frac{5Q(d-x_2)^2}{2} \ \ \text{for } (x_1, x_2)\in \mathbb{R}\times (d-\delta_2, d).
			\end{eqnarray}
			Hence, there exists a $\sigma_1\in(0, \delta_2)$ such that one has
			\begin{eqnarray}\label{es1}
				0<\psi(x_1,x_2)<\frac{Q}{4} \,\,\text{for any}\,\, (x_1,x_2)\in \mathbb{R}\times (0,\sigma_1),
			\end{eqnarray}
			and
			\begin{eqnarray}\label{es2}
				\frac{3Q}{4}<\psi(x_1,x_2)<Q \,\, \text{for any}\,\,(x_1,x_2)\in \mathbb{R}\times (d-\sigma_1,d).
			\end{eqnarray}
			Clearly, the estimates \eqref{es1} and \eqref{es2}, together with the assumption \eqref{nonnegativevelocity}, not only show that $0<\psi<Q$ in $\hat\Omega^d$, but also yield that there exists an $\epsilon>0$ such that for any $\tau \in  \left(\frac{d}{{\nu}_2}-\epsilon, \frac{d}{{\nu}_2}\right)$,
			\begin{eqnarray*}
				\Psi^{\tau}_{\nu}(x)> 0\ \  \ \text{for any}\,\, (x_1,x_2)\in \overline{{D}^{\tau, \nu}}.
			\end{eqnarray*}

			Define
			\begin{eqnarray*}
				\bar{\tau}=\mbox{inf}\left\{\tau\in\left(0,\frac{d}{\nu_2}\right): \Psi^s_{\nu}(x)>0 \ \mbox{in}\ \overline{{D}^{s, \nu}}  \ \  \mbox{for all}\ s\in\left(\tau,\frac{d}{\nu_2}\right)\right\}.
			\end{eqnarray*}
			Clearly, it holds that $0\leq \bar{\tau}\leq \frac{d}{{\nu}_2}-\epsilon$.
			
			{\it Step 2. Proof for $\bar{\tau}=0$.} Suppose that $\bar{\tau}>0$, then there are three cases:
			
			\emph{(i)} There exists a point $\bar{x}=(\bar{x}_1,\bar{x}_2)\in \partial{{D}^{\bar{\tau}, \nu}} $ such that
			\begin{eqnarray}\label{pf6.0}
				{\Psi}^{\bar{\tau}}_{\nu}(\bar{x}_1,\bar{x}_2)= 0.
			\end{eqnarray}
			
			\emph{(ii)} There exists a point $\bar{x}\in {{D}^{\bar{\tau}, \nu}} $ such that
			\begin{eqnarray}\label{pf6}
				{\Psi}^{\bar{\tau}}_{\nu}(\bar{x}_1,\bar{x}_2)= 0.
			\end{eqnarray}
			
			\emph{(iii)} There exist a sequence $\{\tau_k\}\subset (0,\bar{\tau}]$ converging to $\bar{\tau}$ and a sequence $\{x^k\}_{k\in\mathbb{N}}=\{(x_1^k,x_2^k)\}_{k\in\mathbb{N}}\subset {{D}^{{\tau}_k, \nu}}$  such that
			\begin{eqnarray}\label{2}
				x_1^k\to \infty\quad \text{and}\quad 	{\Psi}^{\tau_k}_{\nu}(x^k)\rightarrow 0 \ \ \ \mbox{as } \ k\to\infty.
			\end{eqnarray}
			In the following, we show that none of these cases happens.
			
			\emph{Case (i).}  If (\ref{pf6.0}) holds, then either $\bar{x}_2=0$ or $\bar{x}_2=d-\bar{\tau}\nu_2$. If $\bar{x}_2=0$, one has
			\begin{eqnarray}\label{bdy1}
				{\Psi}^{\bar{\tau}}_{\nu}(\bar{x}_1, 0)=\psi(\bar{x}_1+\bar{\tau}\nu_1, \bar{\tau}\nu_2)-\psi{(\bar{x}_1, 0)}=\psi(\bar{x}_1+\bar{\tau}\nu_1, \bar{\tau}\nu_2)>0.
			\end{eqnarray}
			If $\bar{x}_2=d-\bar{\tau}\nu_2$, one has
			\begin{eqnarray}\label{bdy2}
				{\Psi}^{\bar{\tau}}_{\nu}(\bar{x}_1, d-\bar{\tau}\nu_2)=\psi(\bar{x}_1+\bar{\tau}\nu_1, d)-\psi{(\bar{x}_1, d-\bar{\tau}\nu_2)}=Q-\psi{(\bar{x}_1, d-\bar{\tau}\nu_2)}>0.
			\end{eqnarray}
			Hence it is impossible that $\bar{x}\in \partial{{D}^{\bar{\tau}, \nu}}$.
			
			\emph{Case (ii).} If (\ref{pf6}) holds,
			it follows	from the equation (\ref{pb1}) that there exists a $\delta>0$ such that $B_\delta(\bar{x})\subset{D}^{\bar{\tau}, \nu}$ and
			\begin{equation*}\label{6}
				\left\{
				\begin{aligned}
					&\Delta {\Psi}^{\bar{\tau}}_{\nu}(x)+c(x){\Psi}^{\bar{\tau}}_{\nu}(x)=0 \quad &&\text{in}\,\,  B_\delta(\bar{x}),\\
					&{\Psi}^{\bar{\tau}}_{\nu}(x)\geq 0 \quad &&\text{on}\,\, \partial B_\delta(\bar{x}),\\
					&\Psi_{\nu}^{\bar{\tau}}(\bar{x})=0,
				\end{aligned}
				\right.
			\end{equation*}
			where
			\begin{eqnarray*}
				c(x)=-\frac{{f({\psi}(x+\bar{\tau}\nu)})-f({\psi}({x}))}{{\psi}(x+\bar{\tau}\nu)-{\psi}({x})}.
			\end{eqnarray*}
			Note that $B_\delta(\bar{x})\subset{D}^{\bar{\tau}, \nu}$. Hence there exists an $\varepsilon_0 >0$ such that $\psi(B_\delta(\bar{x}))\subset [\varepsilon_0, Q-\varepsilon_0]$ and $\psi(x+\tau\nu)\in [\varepsilon_0, Q-\varepsilon_0]$ for $x\in B_\delta(\bar{x})$. Clearly, $f$ is  Lipschitz continuous on $[\varepsilon_0, Q-\varepsilon_0]$. Hence $c(x)$ is a bounded function in $ B_\delta(\bar{x})$. It follows from the strong maximum principle (\hspace{1sp}\cite{hl}) that ${\Psi}^{\bar{\tau}}_{\nu}(x)\equiv 0$ in $B_\delta (\bar{x})$. 	
			Thus  ${\Psi}^{\bar{\tau}}_{\nu}(x)\equiv 0$ in ${D}^{\bar{\tau}, \nu}$ and then on $\partial{D}^{\bar{\tau}, \nu}$ by continuity. This leads to a contradiction.
			
			\emph{Case (iii).} Suppose that (\ref{2}) holds. Denote
			\begin{equation}\label{pb4}
				{\psi}_k(x)={\psi}(x_1+x_1^k, x_2) \ \ \ \mbox{in} \ \ \overline{\hat{\Omega}^d}.
			\end{equation}
			It follows	from the standard elliptic estimates and $f(\psi)\in C^{\frac{1}{2}}(\overline{\hat{\Omega}^d})$ that the sequences of the functions $\{{\psi}_k(x)\}$ and $\{{\psi}_k(x+{\tau}_k\nu)-{\psi}_k(x) \}$ are bounded in $C^{2,\frac{1}{2}}(\overline{\hat{\Omega}^d})$ and $C^{2,\frac{1}{2}}(\overline{{D}^{\bar{\tau}, \nu}})$, respectively. Then there exists a subsequence which is still labeled by $\{{\psi}_k(x)\}$ and converges in $C^2_{\text{loc}}(\overline{\hat{\Omega}^d})$ to $\bar\psi(x)$. Clearly, $\bar\psi \in C^2({\overline{\hat{\Omega}^d}})$ satisfies the equation in \eqref{pb1},
			\begin{eqnarray}\label{Psi}
				\bar\psi(x)=0 \ \ \ \mbox{on} \ S_0,\ \ \bar\psi(x)=Q \ \ \ \mbox{on} \ S_d,
			\end{eqnarray}
			and
			\begin{eqnarray}\label{ps}
				\partial_{x_2}{\bar\psi}(x)\geq 0 \ \ \ \mbox{in} \ \overline{\hat{\Omega}^d}.
			\end{eqnarray}
			Thus $\bar\psi$ satisfies the problem \eqref{pb1}.
			It follows from \eqref{par2} and \eqref{par2.1} that  $0<\bar\psi<Q$ in $\mathbb{R}\times ((0,\delta_0)\cup (d-\delta_0,d))$. This, together with \eqref{ps}, yields
			\begin{eqnarray}\label{psi1}
				0<\bar\psi(x)<Q \  \ \ \mbox{in} \ \hat\Omega^d.
			\end{eqnarray}
			
			Similarly, after choosing a subsequence if necessary, $\{{\psi}_k(x+{\tau}_k\nu)-{\psi}_k(x)\}$ converges in $C^2_{\text{loc}}(\overline{{D}^{\bar{\tau}, \nu}})$ to the function $\bar\psi(x+\bar{\tau}\nu)-\bar\psi(x)$ defined in $\overline{{D}^{\bar{\tau}, \nu}}$.
			
			Note that (\ref{2}) implies that
			\begin{eqnarray*}
				{\psi}(x_1^k+{\tau_k}{\nu}_1, x_2^k+\tau_k{\nu}_2)- {\psi}(x_1^k, x_2^k)\rightarrow 0\textup{ as }k\to\infty.
			\end{eqnarray*}
			By the definition of ${\psi}_k(x)$, one has
			\begin{eqnarray*}
				{\psi}_k(\tau_k{\nu}_1, x_2^k+\tau_k{\nu}_2)- {\psi}_k(0, x_2^k)\rightarrow 0\mbox{ as }k\to\infty.
			\end{eqnarray*}
			Up to choosing a subsequence, one can assume that $x_2^k\rightarrow x_2^{*}$ for some
			$x_2^{*}\in[0,d-\bar{\tau}{\nu}_2]$.
			Denote	$x^{*}=(0,x_2^{*})$.
			It holds that
			\begin{eqnarray}\label{4}
				\bar\psi(x^{*}+\bar{\tau}{\nu})=\bar\psi(x^{*}).
			\end{eqnarray}
			Hence it follows from \eqref{bdy1}, \eqref{bdy2}, and \eqref{psi1} that $x^{*}$ is an interior point of the set $\mathbb{R}\times [0,d-\bar{\tau}{\nu}_2]$. This leads to a contradiction with the strong maximum principle, as in the proof of {\it{Case (ii)}}. Therefore, the claim \eqref{2} cannot happen. Thus $\bar{\tau}=0$.
			
			{\it Step 3. Reduction to one-dimensional solutions.}
			Since  $\nabla \psi$ is continuous, one has ${\partial}_{\nu}\psi\geq 0$ for any $\nu$ with ${\nu}_2=0$. Taking $\nu=(1,0)$ and $\nu=(-1,0)$ gives $\partial_{x_1}\psi(x)=0$. Therefore, $\psi(x)$ does not depend on $x_1$. This implies that
			$\psi(x)={\psi}(x_2)$ in $\hat\Omega^d$.
			
			The proof of Proposition \ref{prop2} is completed.
		\end{proof}
		
	\begin{proof}[Proof of Proposition \ref{propthm1}] It follows from Proposition \ref{prop2} that ${\psi}(x_1, x_2)=\psi(x_2)=\bar{\varphi}_d(x_2)$. As it was showed in \eqref{uni1}, one has $\psi\in C^{2, 1/2}([0,d])$. This, together with Lemmas \ref{lem3.3} and \ref{lem3.4} that $\psi(x_2)=\bar{\varphi}_d(x_2)$ for any $d\in(0,1]$, where $\bar{\varphi}_d$ is defined in \eqref{defvarphid}. Hence ${\bf u}=(\bar{u}_{1,d}(x_2), 0)$ in $\hat\Omega^d$. Proposition \ref{propthm1} corresponds to the special case with $d=1$, i.e., ${\bf u}=(\bar{u}_1(x_2),0)= (6Qx_2(1-x_2), 0)$ in $\hat\Omega$.
 \end{proof}

		\subsection{Fine properties and uniqueness of flows in a nozzle}
		In this subsection, we prove the uniqueness of solution in a general nozzle with the aid of the Liouville type theorem established in Section \ref{secLiou}.
		
		First, we study far field behaviors of the solution to the problem
		\begin{equation}\label{psipb}
			\left\{
			\begin{aligned}
				&	\Delta\psi =f(\psi)\quad \text{in}\,\, \Omega,\\
				&	\psi=0\,\,\text{on}\,\, \Gamma_0,\quad \psi=Q\,\,\text{on}\,\, \Gamma_1.
			\end{aligned}\right.
		\end{equation}
		
		\begin{lemma}\label{lemmaasymptotic}
			If $\psi\in C^2(\Omega)\cap C^{0,1}(\bar\Omega)$ is a solution of \eqref{psipb} and satisfies $\partial_{x_2}\psi\geq 0$ in $\Omega$, then one has
			\begin{equation}\label{downstreambehavior}
				\|\psi(x_1, \cdot)- \bar\varphi_{1}(\cdot)\|_{C_{loc}^1((0,1))}\to 0 \quad \text{as}\,\, x_1\to -\infty
			\end{equation}
			and
			\begin{equation}\label{upstreambehavior}
				\|\psi(x_1, \cdot)-\bar\varphi_{b-a}(x_1,\cdot-a)\|_{C_{loc}^1((a,b))} \to 0\quad \text{as}\,\, x_1\to +\infty.
			\end{equation}	
		\end{lemma}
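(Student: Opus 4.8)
The plan is to prove both limits by a translation--compactness argument that exploits the flatness of the nozzle at its two ends (condition \eqref{flatcondition}) and reduces matters to the strip Liouville theorem of Proposition \ref{prop2}. The guiding principle is that, far upstream or far downstream, a translate of the stream function solves the autonomous equation $\Delta\psi=f(\psi)$ on an exhaustion of a flat strip, carries the boundary data $\psi=0$ and $\psi=Q$ on the two walls, and retains the monotonicity \eqref{nonnegativevelocity}; any limit of such translates must therefore be the one-dimensional profile $\bar\varphi_d$, and the uniqueness of that profile forces convergence of the whole family.

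For the upstream limit I would set $\psi^{(k)}(x_1,x_2)=\psi(x_1-k,x_2)$ for $k\in\mathbb{N}$. By \eqref{flatcondition} the nozzle coincides with the flat strip $(0,1)$ for $x_1\le\underline{L}$, so for each fixed $R>0$ and all large $k$ the restriction of $\psi^{(k)}$ to $[-R,R]\times[0,1]$ agrees with a genuine strip solution; the interior and boundary Schauder estimates underlying \eqref{uni1} (valid since $f\in C^{1/2}$) then give a bound $\|\psi^{(k)}\|_{C^{2,1/2}}\le C$ independent of $k$ on such sets. Passing to a subsequence, $\psi^{(k)}\to\bar\psi$ in $C^2_{loc}(\overline{\hat\Omega^1})$, where $\bar\psi$ solves the strip problem \eqref{pb1} with $d=1$, satisfies $0\le\bar\psi\le Q$ and $\partial_{x_2}\bar\psi\ge 0$, and takes the boundary values $0$ and $Q$. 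Proposition \ref{prop2} (hence Proposition \ref{propthm1}) forces $\bar\psi$ to be one-dimensional, and the uniqueness in Lemma \ref{lem3.4} identifies it as $\bar\psi=\bar\varphi_1$. Since every subsequential limit is the same profile $\bar\varphi_1$, the full family converges and \eqref{downstreambehavior} follows.

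The downstream limit is handled identically after the vertical shift $x_2\mapsto x_2-a$: setting $\psi^{(k)}(x_1,x_2)=\psi(x_1+k,x_2+a)$, condition \eqref{flatcondition} places us on the flat strip $\mathbb{R}\times(0,b-a)$ for $x_1\ge\bar{L}$, with $\psi^{(k)}=0$ on $\{x_2=0\}$ and $\psi^{(k)}=Q$ on $\{x_2=b-a\}$; the same compactness and Liouville argument identifies the limit with $\bar\varphi_{b-a}$, and unshifting yields \eqref{upstreambehavior}. I expect the main obstacle to be not the abstract scheme but the boundary analysis: because $f$ is only $C^{1/2}$ and degenerates in slope at $\psi=0,Q$, one cannot invoke Schauder theory or the maximum principle naively near the walls, nor immediately rule out a degenerate limit. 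This is exactly the difficulty already resolved in the setup of Proposition \ref{prop2}, where estimates \eqref{par2}--\eqref{es2} supply the quadratic detachment $\psi\gtrsim x_2^2$ near $S_0$ together with the symmetric bound near $S_d$; these are uniform in $k$, guarantee $0<\bar\psi<Q$ in the open strip, and upgrade weak convergence to genuine $C^1_{loc}$ convergence, so that the limiting profile satisfies all hypotheses of the strip Liouville theorem. Securing these uniform-in-$k$ boundary estimates is the crux of the argument.
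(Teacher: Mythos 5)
Your proposal is correct and follows essentially the same route as the paper: translate the solution, extract a $C^2_{loc}$ limit on compact subsets of the flat strip via the uniform Schauder bound \eqref{uni1}, identify the limit as $\bar\varphi_1$ (resp.\ $\bar\varphi_{b-a}$) through Proposition \ref{prop2} and the one-dimensional uniqueness of Lemma \ref{lem3.4}, and conclude by uniqueness of subsequential limits. The only cosmetic difference is that you dwell on uniform boundary estimates near the walls, which the paper sidesteps because the stated convergence is only in $C^1_{loc}$ of the open cross-section, so interior compactness on $K\Subset\mathbb{R}\times(0,1)$ already suffices.
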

		\begin{proof} The proof is based on the shifting method and the Liouville type theorem proved in Section 4.
			For any fixed  $x_1\in \mathbb{R}$, define
			\begin{equation}
				\psi^{(n)}(x_1,x_2)=\psi(x_1-n, x_2)\chi_{\{h_0(x_1-n)<x_2<h_1(x_1-n)\}}.
			\end{equation}
			It follows from \eqref{uni1} that
			\[
			\|\psi^{(n)}\|_{C^{2,\frac{1}{2}}(K)} \leq C, \ \ \text{as } n \text{ sufficiently large},
			\]
			for any compact set $K\Subset \mathbb{R}\times (0,1)$.
			Therefore, by Arzela-Ascoli lemma and a diagonal procedure, for any $\gamma\in (0,1/2)$, there exists a subsequence $\{{\psi}^{(n_k)}\}$ such that
			\begin{eqnarray}
				{\psi}^{(n_k)}\rightarrow {\psi}^{*} \ \ \mbox{in} \ \ C^{2,\gamma}(K) \ \ \mbox{as} \ k\to \infty.
			\end{eqnarray}
			Furthermore, $\psi^*$ satisfies $\partial_{x_2}\psi^*\geq 0$ and the problem
			\begin{equation}\label{psi*pb}
				\left\{
				\begin{aligned}
					&	\Delta\psi^* =f(\psi^*)\quad \text{in}\,\, \hat\Omega,\\
					&	\psi^*=0\,\,\text{on}\,\, S_0,\quad \psi^*=Q\,\,\text{on}\,\, S_1.
				\end{aligned}\right.
			\end{equation}
			It follows from Proposition \ref{prop2}  and Lemma \ref{lem3.4} that $\psi^*(x_1, x_2) =\bar\varphi_{1}(x_2)$. This yields the asymptotic behavior \eqref{downstreambehavior}.
			Similarly, one can prove that \eqref{upstreambehavior} holds.
			Hence the proof of the lemma is completed.
		\end{proof}
		
		Next, we show the positivity of the horizontal velocity in the nozzle for the solution of \eqref{psipb}.
		
		\begin{lemma}\label{lemmapositive}
			Suppose that $\psi\in C^2({\Omega})\cap C^{0,1}({\overline{{\Omega}}})$ solves the problem
			\eqref{psipb}
			and
			satisfies $\partial_{x_2}\psi\geq 0$ in $\Omega$. Then
			\[
			\partial_{x_2}\psi>0\quad \text{in}\,\, \Omega.
			\]
		\end{lemma}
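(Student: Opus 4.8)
The plan is to first establish the two-sided strict bound $0<\psi<Q$ in $\Omega$, and only then to run a maximum principle on $w:=\partial_{x_2}\psi$. The reason for this ordering is that the differentiated equation for $w$ has good sign properties, but its coefficients degenerate exactly where $\psi$ hits the boundary values $0$ and $Q$, so the strict bound must be secured in advance.

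For the first step, note that $\partial_{x_2}\psi\geq 0$ together with $\psi=0$ on $\Gamma_0$ and $\psi=Q$ on $\Gamma_1$ forces $0\leq\psi\leq Q$ in $\Omega$, since $\psi$ is nondecreasing along each vertical segment joining $\Gamma_0$ to $\Gamma_1$. To upgrade this to strict inequalities I would use the structure of $f$ recorded in Proposition \ref{propstream}, namely $f(0)=6Q>0$ and $f(Q)=-6Q<0$. Near $\Gamma_0$ the right-hand side $f(\psi)$ is $C^{1/2}$, so Schauder estimates (the walls being $C^{2,\alpha}$) give $\psi\in C^{2,1/2}$ up to $\Gamma_0$; flattening the boundary and using that $\psi$ and its tangential derivatives vanish on $\Gamma_0$, the equation forces the transversal second derivative of $\psi$ to equal a positive multiple of $f(0)=6Q$ along $\Gamma_0$. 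A Taylor expansion then yields a quadratic lower bound $\psi\gtrsim \mathrm{dist}(\cdot,\Gamma_0)^2$ in a one-sided neighborhood of $\Gamma_0$, exactly as in \eqref{par2} for the flat strip; this bound is locally uniform in $x_1$ and uniform for large $|x_1|$ by the flatness condition \eqref{flatcondition}. Propagating this positivity upward along vertical segments via $\partial_{x_2}\psi\geq 0$ gives $\psi>0$ throughout $\Omega$, and the symmetric argument near $\Gamma_1$ (using $f(Q)<0$) gives $\psi<Q$.

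For the second step, once $0<\psi<Q$ holds in $\Omega$ the nonlinearity is smooth along the solution, $f\in C^\infty((0,Q))$, so interior regularity gives $\psi\in C^\infty_{\mathrm{loc}}(\Omega)$ and we may differentiate \eqref{psipb} in $x_2$ to obtain
\[
\Delta w = f'(\psi)\, w \quad\text{in } \Omega,\qquad w=\partial_{x_2}\psi\geq 0.
\]
Since $\kappa$ is increasing, $f(\psi)=6Q(1-2\kappa(\psi))$ is strictly decreasing, so $f'(\psi)\leq 0$; combined with $w\geq 0$ this gives $\Delta w=f'(\psi)w\leq 0$, i.e. $w$ is superharmonic in $\Omega$. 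If $w$ vanished at some interior point, then $w$ would attain the minimum value $0$ in the interior, and the strong minimum principle for superharmonic functions would force $w\equiv 0$ on the connected set $\Omega$. But $w\equiv 0$ means $\psi$ is independent of $x_2$, which is incompatible with $\psi\to 0$ near $\Gamma_0$ and $\psi\to Q>0$ near $\Gamma_1$. Hence $\partial_{x_2}\psi>0$ everywhere in $\Omega$.

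The hard part is the non-Lipschitz behavior of $f$ at the endpoints: there $f'(\psi)$ blows up (indeed $\kappa'(\psi)\sim \psi^{-1/2}$ near $0$), so the equation for $w$ has an unbounded zeroth-order coefficient near $\Gamma_0\cup\Gamma_1$ and the maximum principle cannot be applied there directly. This is precisely why the two steps are separated: the strict bound $0<\psi<Q$ is first extracted from the boundary Taylor estimate, so that the superharmonicity argument takes place entirely in the region where $f$ is smooth. A secondary technical point is the curved portion of the walls on the compact set $[\underline{L},\bar{L}]$, where the quadratic lower bound requires flattening the $C^{2,\alpha}$ boundary; compactness keeps the relevant constants positive and bounded, so the estimate survives uniformly.
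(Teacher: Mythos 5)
Your Step 2 is essentially the paper's: once $0<\psi<Q$ is known, both arguments differentiate the equation in the smooth regime and apply a strong minimum principle to $w=\partial_{x_2}\psi$ (the paper does it locally in a ball with $\Delta w-f'(\psi)w=0$; your global superharmonicity observation $\Delta w=f'(\psi)w\le 0$ is a clean equivalent). The divergence is in Step 1, and there your key claim has a gap. You assert that, after flattening, ``the equation forces the transversal second derivative of $\psi$ to equal a positive multiple of $f(0)=6Q$ along $\Gamma_0$.'' On the curved portion of $\Gamma_0$ this is false as stated: in coordinates $y_2=x_2-h_0(x_1)$ the transformed Laplacian carries the terms $-2h_0'\partial_{y_1y_2}\psi$ and $-h_0''\partial_{y_2}\psi$, and since only $\psi$ and its \emph{tangential} derivatives vanish on $\{y_2=0\}$ (not $\partial_{y_1y_2}\psi$ or $\partial_{y_2}\psi$), one only gets $(1+(h_0')^2)\partial_{y_2y_2}\psi=6Q+2h_0'\partial_{y_1y_2}\psi+h_0''\partial_{y_2}\psi$ on the boundary. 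The right-hand side is not sign-definite, so the uniform quadratic lower bound $\psi\gtrsim \mathrm{dist}(\cdot,\Gamma_0)^2$ does not follow by the stated reasoning; the computation you cite, \eqref{par2}, is carried out in the paper only on the flat strip, where $h_0'\equiv h_0''\equiv 0$ and the issue disappears.

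The gap is repairable: at a boundary point where $\partial_\nu\psi=0$, nonnegativity of $\partial_\nu\psi$ on $\Gamma_0$ forces $\partial_{y_1}(\partial_{y_2}\psi)=0$ there as well, so the offending terms vanish and $\partial_{\nu\nu}\psi=6Q/(1+(h_0')^2)>0$; at points where $\partial_\nu\psi>0$ the linear term already gives positivity along the normal. A case split (small versus not-small normal derivative), together with the uniform $C^{2,1/2}$ bound up to the boundary and your flatness-at-infinity remark, recovers a uniform collar where $\psi>0$, and your upward propagation then finishes. But note that the paper avoids this boundary computation entirely: it argues by contradiction, taking the interior zero $x^*$ of maximal height above $\Gamma_0$ (finite by Lemma \ref{lemmaasymptotic}), observing that $\psi$ then vanishes on the whole vertical segment below $x^*$ so that $\partial_{x_2}\psi=\partial_{x_2x_2}\psi=0$ there, and applying Hopf's lemma to the shifted difference $W(x)=\psi(x)-\psi(x_1,x_2-\tau)$ on a small ball tangent at $x^*$ from above, where $\Delta W\le 0$ follows from monotonicity of $\kappa$ without any Lipschitz continuity or boundary regularity of $f$. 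That route is both shorter and insensitive to the curvature of $\Gamma_0$; if you keep your route, the curved-boundary case must be argued explicitly.
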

  
		\begin{proof}The proof is divided into two steps.
			
			{\it Step 1. Proof for the property $0<\psi<Q$.} 
			Clearly, $\psi$ satisfies $0\leq\psi\leq Q$ in $\Omega$. Suppose that there exists an $x\in \Omega$ such that $\psi(x)=0$. Define
			\[
			A=\{(x_1, x_2): \psi(x_1, x_2)=0,(x_1, x_2)\in \Omega\}.
			\]
			Choose a point $x^* =(x_1^*, x_2^*)\in A$ such that
			\[
			x_2^*-h_0(x_1^*)=\sup\{x_2-h_0(x_1):(x_1, x_2)\in A\}.
			\]
			Note that	$x^*$ may not be unique. It follows from Lemma \ref{lemmaasymptotic} that there exists an $L>0$ such that $|x_1^*|\leq L$.  Then one has
			$\psi(x_1^*, x_2)=0$ {for} $x_2\in (h_0(x_1^*), x_2^*)$.
			This yields
			\begin{equation}\label{vderivative}
				\partial_{x_2}\psi(x_1^*, x_2)= \partial_{x_2x_2}\psi(x_1^*, x_2)=0 \quad \text{for}\,\, x_2\in (h_0(x_1^*), x_2^*).
			\end{equation}
			Let $B$ be the ball centered at $(x_1^*, x_2^*+\epsilon)$ with radius $\epsilon$. If $\epsilon>0$ is small enough, then $B\subset \Omega$ and $0\leq \psi(x)<Q/2$. Let $W(x_1, x_2)=\psi(x_1, x_2)-\psi(x_1, x_2-\tau)$ for some sufficiently small $\tau \in (0, 2\epsilon)$. Then
			\begin{equation*}
				\left\{
				\begin{aligned}
					& \Delta W(x)=12Q(\kappa(\psi(x_1, x_2-\tau))-\kappa(\psi(x)))\leq 0,\quad &&\text{in}\,\, B,\\
					& W(x) \geq 0,\quad &&\text{in}\,\, B,\\
					& W(x^*)=0,
				\end{aligned}
				\right.
			\end{equation*}
			where the properties of $\partial_{x_2}\psi$ and that $\kappa$ is an increasing function have been used.
			Note that $W$ is not identically zero in $B$.
			It follows from Hopf lemma (\hspace{1sp}\cite[Lemma 1 in Section 9.5]{evans}) that $\frac{\partial W}{\partial x_2}(x^*)>0$, which leads to a contradiction with \eqref{vderivative}. 	
			Thus one has
			\begin{equation*}
				\psi(x)>0 \  \text{ in } \ \Omega.
			\end{equation*}
			Similarly, one can prove that $\psi(x)<Q$ in $\Omega$.
			
			{\it Step 2. Proof for the property $\partial_{x_2}\psi>0$.}  Suppose that there exists a point $x^*=(x_1^*, x_2^*)\in \Omega$ such that $\partial_{x_2}\psi(x^*)=0$. Note that $\psi(x^*)\in (0, Q)$. Hence there exist $\epsilon, \delta>0$ such that $B_\delta(x^*)\subset\Omega$ {and}
			\[
			\epsilon\leq \psi \leq Q-\epsilon\,\,\text{in}\,\, B_\delta(x^*).
			\]  
   Note that $f\in C^\infty([\epsilon, Q-\epsilon])$ so that one can apply the regularity theory for elliptic equations to get $\psi\in C^\infty(\overline{B_\delta(x^*)})$.
   Clearly, $\partial_{x_2}\psi$ satisfies
			\begin{equation}
				\left\{
				\begin{aligned}
					&	\Delta \partial_{x_2}\psi -f'(\psi)\partial_{x_2}\psi=0,\,\, && \text{in}\,\, B_\delta(x^*),\\
					&	\partial_{x_2}\psi\geq 0,\,\,&&\text{in}\,\, B_\delta(x^*),\\
					& \partial_{x_2}\psi(x^*)=0.
				\end{aligned}\right.
			\end{equation}
			Applying the strong maximum principle yields $\partial_{x_2}\psi=0$ in $B_\delta (x^*)$. Thus $\partial_{x_2}\psi\equiv 0$ in $\Omega$. This is absurd.
			Hence the proof of the lemma is completed.
		\end{proof}

		Now we are ready to prove the uniqueness of the solutions of \eqref{psipb}.  Our basic strategy is  to compare the solution with its shifts in the vertical direction.
		\begin{pro}\label{propunique}
			There is at most one solution $\psi$ of \eqref{psipb} satisfying $\partial_{x_2}\psi\geq 0$.
		\end{pro}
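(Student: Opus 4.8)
The plan is to take two solutions $\psi_1,\psi_2$ of \eqref{psipb} with $\partial_{x_2}\psi_i\ge 0$ and to prove $\psi_1\equiv\psi_2$ by a vertical sliding argument, comparing each solution against vertical translates of the other. By Lemma \ref{lemmapositive} both solutions satisfy $0<\psi_i<Q$ and $\partial_{x_2}\psi_i>0$ in $\Omega$, and by Lemma \ref{lemmaasymptotic} they share the same upstream and downstream limits $\bar\varphi_1$ and $\bar\varphi_{b-a}(\cdot-a)$. For $t\ge 0$ I would introduce the translate $\psi_2(x_1,x_2+t)$, defined on the nozzle shifted downward by $t$, and set $w_t(x)=\psi_2(x_1,x_2+t)-\psi_1(x_1,x_2)$ on the overlap $D_t=\{(x_1,x_2)\in\Omega: x_2<h_1(x_1)-t\}$. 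Since $f(\psi)=6Q(1-2\kappa(\psi))$ is strictly decreasing (because $\kappa$ is increasing), $w_t$ solves a linear equation $\Delta w_t=c_t(x)w_t$ with $c_t\le 0$ on the set where $w_t\ge 0$; on that set one rewrites the equation with nonpositive zeroth-order coefficient, which restores the strong maximum principle and the Hopf lemma at contact points despite the non-Lipschitz nonlinearity. The aim is to show $w_0\ge 0$, i.e. $\psi_2\ge\psi_1$; exchanging the roles of $\psi_1,\psi_2$ then yields the reverse inequality and hence $\psi_1\equiv\psi_2$.

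The boundary bookkeeping is favorable. For $t>0$ one has $w_t=\psi_2(x_1,h_0(x_1)+t)>0$ on $\Gamma_0$ and $w_t=Q-\psi_1(x_1,h_1(x_1)-t)>0$ on the upper edge $\{x_2=h_1(x_1)-t\}$, using $0<\psi_i<Q$ and strict monotonicity. To start the slide I would invoke near-boundary estimates in the spirit of \eqref{par2} and \eqref{par2.1}: from $\Delta\psi_i=f(\psi_i)$ with $f(0)=6Q>0$ and $f(Q)=-6Q<0$ together with $h_i\in C^{2,\alpha}$, each $\psi_i$ vanishes (resp. attains $Q$) quadratically with positive normal derivative along $\Gamma_0$ (resp. $\Gamma_1$). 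This shows that for $t$ just below the maximal width $T=\sup_{x_1}(h_1(x_1)-h_0(x_1))$, where $D_t$ collapses onto a thin region near $\Gamma_0$ that lies below the near-top portion of the translate, one has $w_t>0$ throughout $D_t$. Hence the sliding threshold $\tau^*=\inf\{\tau\ge 0: w_t\ge 0 \text{ on } D_t \text{ for all } t\in(\tau,T)\}$ is well defined with $\tau^*<T$.

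The heart of the proof is to show $\tau^*=0$. Suppose $\tau^*>0$; then $w_{\tau^*}\ge 0$ and $w_{\tau^*}$ touches $0$, and I would exclude the three contact scenarios exactly as in the proof of Proposition \ref{prop2}. An interior contact point is ruled out by the strong maximum principle (there $\psi_i\in(0,Q)$, so $f$ is smooth and $c_{\tau^*}$ bounded), which would force $w_{\tau^*}\equiv 0$ and contradict its strict positivity on $\partial D_{\tau^*}$. A contact on $\partial D_{\tau^*}$ is ruled out by the strict boundary positivity noted above (valid for $\tau^*>0$) together with the Hopf lemma. A contact escaping to $x_1\to\pm\infty$ is ruled out by translating $\psi_i^{(k)}(x)=\psi_i(x_1+x_1^k,x_2)$, extracting $C^2_{loc}$ limits via the uniform bound \eqref{uni1}, and noting that both limits coincide with the same one-dimensional profile ($\bar\varphi_1$ upstream, $\bar\varphi_{b-a}(\cdot-a)$ downstream) by Lemma \ref{lemmaasymptotic}; the limiting difference is $\bar\varphi(x_2+\tau^*)-\bar\varphi(x_2)>0$ for $\tau^*>0$ by strict monotonicity of $\bar\varphi$, contradicting vanishing at the contact. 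Thus $\tau^*=0$, letting $t\downarrow 0$ gives $\psi_2\ge\psi_1$, and symmetry completes the argument.

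The steps I expect to be most delicate are the escape-to-infinity case and the curved-boundary estimates. The former must reconcile the compactness/Liouville mechanism of Proposition \ref{prop2} with the non-Lipschitz behavior of $f$ at $\psi=0,Q$, while the latter requires reproving the quadratic boundary expansions \eqref{par2}--\eqref{par2.1}, originally obtained on a flat strip, along the curved walls $\Gamma_0,\Gamma_1$ in order to initiate the slide and to secure the strict boundary positivity of $w_t$.
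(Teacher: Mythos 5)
Your proposal is correct and follows essentially the same route as the paper: vertical sliding of one solution over a translate of the other, with Lemma \ref{lemmapositive} supplying the strict inequalities $0<\psi_i<Q$ that give boundary positivity of the difference, Lemma \ref{lemmaasymptotic} ruling out contact escaping to $x_1\to\pm\infty$, and the strong maximum principle (valid at any interior contact point since both functions lie in a compact subinterval of $(0,Q)$ there, where $f$ is Lipschitz) excluding interior touching. The only divergence is technical rather than conceptual: the paper extends each $\psi_i$ by the constants $0$ below $\Gamma_0$ and $Q$ above $\Gamma_1$ and slides within the fixed domain $\Omega$, which trivializes both the initialization at large shift and the boundary contact case, whereas you work on the shrinking overlap $D_t$ and initialize near the maximal width via thin-strip estimates along the curved walls (for which Lipschitz bounds from $\psi=0$ on $\Gamma_0$ and $\psi=Q$ on $\Gamma_1$ already suffice, so no new quadratic expansion is actually needed).
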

		\begin{proof}
			It follows from Lemma \ref{lemmapositive} that $0<\psi(x)<Q$ in $\Omega$. Suppose that there are two solutions $\psi_i$ ($i=1$, $2$) of the problem \eqref{psipb} with the properties $\partial_{x_2}\psi_i\geq 0$ ($i=1$, $2$) in $\Omega$. Extend $\psi_i$ on the whole plane as follows
			\begin{equation}
				\tilde\psi_i(x_1,x_2)=\left\{
				\begin{aligned}
					&Q\quad &\text{if}\,\,x_2\geq h_1(x_1),\\
					&\psi_i(x_1,x_2)\quad &\text{if}\,\, (x_1,x_2)\in \Omega,\\
					&0\quad &\text{if}\,\,x_2 \leq h_0(x_1).
				\end{aligned}\right.
			\end{equation}	
			
			For any $\tau>0$, denote $\tilde\psi_{i,\tau}(x)=\tilde\psi_i(x_1, x_2+\tau)$. It follows from Lemma \ref{lemmapositive} that $\tilde\psi_{1, \tau}(x)>\tilde\psi_2(x)$ for $x\in\Omega$ when $\tau$ is sufficiently large.
			
			Let
			\[
			\bar\delta =\inf\{ \delta>0:\tilde\psi_{1, \tau}(x)>\tilde\psi_2(x) \, \text{in } \Omega \text{ for any}\,\, \tau >\delta\}.
			\]
			If $\bar \delta>0$, then it follows from Lemma \ref{lemmaasymptotic} that there are two possibilities:
			
			(i) There exists a point $\bar{x}\in \Omega$ such that
			\begin{equation}\label{uniq}
				\tilde\psi_{1,\bar{\delta}}(\bar x)=\tilde\psi_2(\bar x).
			\end{equation}
			
			(ii) There exist a sequence $\{\delta_k\}\subset (0,\bar{\delta}]$ converging to $\bar{\delta}$ and a sequence $\{x^k\}_{k\in\mathbb{N}}\subset \Omega$ such that
			\begin{eqnarray}\label{uniq1}
				\tilde\psi_{1,{\delta}_k}(x^k)-\tilde\psi_2(x^k)\leq 0  \quad \text{and}\quad x^k\to \bar{x}\in \partial\Omega\quad \text{as}\quad  k\to \infty.
			\end{eqnarray}
			In the following, we show that neither of these two cases happens.
			
			{\it Case (i).} Suppose that \eqref{uniq} holds. There exists an $\epsilon>0$ such that $B_\epsilon(\bar x)\subset \Omega$ and
			\begin{equation}
				\left\{
				\begin{aligned}
					&	\Delta  	(\tilde\psi_{1,\bar{\delta}}-\tilde\psi_2)(x)+c(x) 	(\tilde\psi_{1,\bar{\delta}}-\tilde\psi_2)(x)=0,\quad &  \text{in}\,\, B_\epsilon(\bar{x}),\\
					&	(\tilde\psi_{1,\bar{\delta}}-\tilde\psi_2)(x)\geq 0,\quad &\text{in}\,\, B_\epsilon(\bar{x}),\\
					& 	(\tilde\psi_{1,\bar{\delta}}-\tilde\psi_2)(\bar{x})=0,
				\end{aligned}\right.
			\end{equation}
			where
			\begin{eqnarray*}
				c(x)=-\frac{f(\tilde\psi_{1,\bar{\delta}}(x))-f(\tilde\psi_2({x}))}{\tilde\psi_{1,\bar{\delta}}(x)-\tilde\psi_2({x})}.
			\end{eqnarray*}
			Applying the strong maximum principle yields $\tilde\psi_{1,\bar\delta}(x)=\tilde\psi_2(x)$ for $x\in B_\epsilon(\bar x)$. Thus $\tilde\psi_{1,\bar\delta}(x)=\tilde\psi_2(x)$ holds in $\Omega$. This leads to a contradiction.
			
			{\it  Case (ii).} Suppose that \eqref{uniq1} holds. Without loss of generality, one can assume that
			\begin{equation}
				\delta_k>\frac{\bar\delta}{2} \ \ \text{ and } \ \ |x^k-\bar{x}|<\frac{\bar\delta}{10} \ \text{ for }  k \text{ sufficiently large}.
			\end{equation}
			This implies that
			\begin{eqnarray}
				\tilde\psi_{1,\delta_k}(x^k)=\tilde\psi_1(x_1^k,x_2^k+\delta_k)=Q\ \ \text{for }(x_1^k,x_2^k)\in \Omega.
			\end{eqnarray}
			
			On the other hand, one has
			\begin{equation}
				0<\tilde\psi_2(x_1^k,x_2^k)<Q \ \ \text{for }(x_1^k,x_2^k)\in \Omega.
			\end{equation}
			Then $\tilde\psi_{1,{\delta}_k}(x^k)-\tilde\psi_2(x^k)>0$. This leads to a contradiction.
			
			Thus $\bar\delta=0$. By continuity, one has $\psi_1(x)\geq \psi_2(x)$ in $\Omega$. Similarly, one can prove that $\psi_2(x)\geq \psi_1(x)$ in $\Omega$. Therefore,
			\begin{eqnarray}
				\psi_1(x)\equiv \psi_2(x) \  \text{ in } \ \Omega.
			\end{eqnarray}
			This finishes the proof of the proposition.
		\end{proof}

		Combining Propositions \ref{propunique} and \ref{propstream}  yields Theorem \ref{thmunique}.

		\section{Existence of solutions in a general nozzle and analysis on the stagnation points}\label{secminimizer}
		In this section, we prove the existence of solutions for the Euler flows in a general nozzle whose downstream far field has asymptotic height not bigger than $1$. The existence of solutions is proved by the variational method. The key issue is that an obstacle type free boundary (the boundary of the set containing stagnation points) may appear where the solutions take the value $0$ or $Q$.
		Since the governing equations in the regions on the two sides of the free boundary are different, the analysis on the regularity and associated properties is pretty subtle and should be very careful. 
		
		\subsection{Set up and existence in truncated domain}

		For any given $N>0$, denote $\Omega_N=\Omega\cap\{(x_1,x_2): |x_1|<N\}$ and $g_N\in H^1(\Omega_N)$ satisfies
\begin{equation}
	g_N=\left\{
	\begin{aligned}
		& 0,\quad &&\text{if}\,\, (x_1, x_2)\in \Gamma_{0,N},\\
		& Q,\quad &&\text{if}\,\, (x_1, x_2)\in \Gamma_{1,N},\\
		& \bar{\varphi}_{h_1(\pm N)-h_0(\pm N)}(x_2-h_0(\pm N)),\,\, &&\text{if}\,\, x_1=\pm N, x_2\in (h_0(\pm N), h_1(\pm N)),
	\end{aligned}\right.
\end{equation}
where 
\begin{equation}\label{defGammaiN}
\Gamma_{i, N} :=  \{(x_1, x_2): x_2 ={h}_{i,N}(x_1), -N<x_1<N\},\quad i=0,1.
\end{equation}
Define
	\[
\mathcal{K}_N =\{\psi: \psi\in H^1(\Omega_N), \psi=g_N \,\,\text{on}\,\, \partial \Omega_N\}
\]
and
	\[
\mathcal{E}_N(\psi)= \int_{\Omega_N} \frac{|\nabla \psi|^2}{2} +F(\psi)dx,
\]
		where $F$ is defined in \eqref{def}.
		
		\begin{lemma}\label{lem6.2}
			For any given $N>0$, there exists a minimizer $\psi_N$ for the energy functional $\mathcal{E}_N$ over $\mathcal{K}_N$, i.e.,
	\[
\mathcal{E}_N(\psi_N)=\mathcal{I}_N:=\inf_{\psi\in \mathcal{K}_N}\mathcal{E}_N(\psi)=\inf_{\psi\in \mathcal{K}_N} \int_{\Omega_N} \frac{|\nabla \psi|^2}{2} +F(\psi)dx.
		\]
Furthermore, $\psi_N\in C^{1, \alpha_0}({\Omega}_N)$ for some $\alpha_0\in (0,1)$ satisfies
\[
	0\leq \psi_N(x) \leq {Q} \quad \text{for any}\,\, x\in \Omega_N.
	\]
		\end{lemma}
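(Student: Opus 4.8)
The plan is to produce the minimizer by the direct method of the calculus of variations, to secure the two-sided bound $0\le\psi_N\le Q$ by a truncation that does not raise the energy, and finally to read off $C^{1,\alpha_0}$ regularity from the boundedness of the distributional Laplacian, this last step being where the obstacle-type structure of $F$ is essential. For existence, I would note first that the integrand is nonnegative: $F\ge 0$ everywhere (since $F(t)=\tfrac12\bar u_1^2(\kappa(t))$ on $[0,Q]$ and $F\equiv0$ off $[0,Q]$), so $\mathcal E_N\ge0$ and hence $\mathcal I_N\ge0$, while taking $\psi=g_N$ and using that $F$ is bounded on the bounded set $\Omega_N$ gives $\mathcal I_N\le\mathcal E_N(g_N)<\infty$. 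For a minimizing sequence $\{\psi^{(n)}\}\subset\mathcal K_N$, the energy bound together with $F\ge0$ controls $\|\nabla\psi^{(n)}\|_{L^2(\Omega_N)}$, and since $\psi^{(n)}-g_N\in H_0^1(\Omega_N)$ the Poincaré inequality yields a uniform $H^1(\Omega_N)$ bound. Passing to a subsequence, $\psi^{(n)}\rightharpoonup\psi_N$ weakly in $H^1$, strongly in $L^2$ and a.e.; the affine constraint $\psi-g_N\in H_0^1$ is weakly closed, so $\psi_N\in\mathcal K_N$, and weak lower semicontinuity of the Dirichlet term together with $\int_{\Omega_N}F(\psi^{(n)})\to\int_{\Omega_N}F(\psi_N)$ (dominated convergence, $F$ bounded and continuous) yields $\mathcal E_N(\psi_N)\le\liminf_n\mathcal E_N(\psi^{(n)})=\mathcal I_N$, so $\psi_N$ is a minimizer.

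For the bound I would truncate: set $\tilde\psi_N:=\min\{\max\{\psi_N,0\},Q\}$. As $g_N$ takes values in $[0,Q]$, the boundary data are unchanged and $\tilde\psi_N\in\mathcal K_N$; truncation does not increase the gradient, $|\nabla\tilde\psi_N|\le|\nabla\psi_N|$ a.e., while $F(\tilde\psi_N)=F(\psi_N)$ pointwise because $F\equiv0$ outside $[0,Q]$ and $\tilde\psi_N=\psi_N$ on $\{0\le\psi_N\le Q\}$. Hence $\mathcal E_N(\tilde\psi_N)\le\mathcal E_N(\psi_N)=\mathcal I_N$, so $\tilde\psi_N$ is again a minimizer; relabelling it $\psi_N$ provides a minimizer with $0\le\psi_N\le Q$.

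For regularity, on the open non-contact set $\{0<\psi_N<Q\}$ the function $f$ is smooth, the Euler--Lagrange equation $\Delta\psi_N=f(\psi_N)$ holds, and interior elliptic theory gives local smoothness there. The key is to control $\psi_N$ across the free boundary separating this set from the contact sets $\{\psi_N=0\}$ and $\{\psi_N=Q\}$, where $f$ is merely $C^{1/2}$ (indeed $f'\to-\infty$ at $0$ and $Q$). The decisive structural facts are that $f$ stays bounded, $|f|\le 6Q$, and that $F$ is globally $6Q$-Lipschitz with only finite convex corners at $0$ and $Q$ (one-sided slopes $F'(0^+)=6Q$, $F'(Q^-)=-6Q$), so near each contact set the energy is of obstacle-problem type ($F(t)\sim 6Q\,t$ as $t\to0^+$ and $F(Q-s)\sim 6Q\,s$ as $s\to0^+$). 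I would use this to show $\Delta\psi_N\in L^\infty(\Omega_N)$: for instance, mollify $F$ to $C^1$ functions $F_\varepsilon\ge0$ with $\|F_\varepsilon'\|_\infty\le 6Q$, minimize the regularized energies to obtain $\psi_N^\varepsilon$ solving $\Delta\psi_N^\varepsilon=F_\varepsilon'(\psi_N^\varepsilon)$ with $\|\Delta\psi_N^\varepsilon\|_{L^\infty}\le 6Q$ and uniform two-sided bounds, derive uniform interior $W^{2,p}$ estimates, and pass to the limit (using $F_\varepsilon\to F$ uniformly, hence convergence of the energies and of their infima) to produce a minimizer with $0\le\psi_N\le Q$ and $\Delta\psi_N\in L^\infty$. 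Interior $W^{2,p}$ estimates for all $p<\infty$ and Sobolev embedding then give $\psi_N\in C^{1,\alpha_0}_{\mathrm{loc}}(\Omega_N)$ for every $\alpha_0\in(0,1)$.

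The main obstacle is exactly this regularity across the free boundary: the right-hand side $f(\psi_N)$ loses Lipschitz continuity precisely where $\psi_N$ meets $0$ or $Q$, and the contact sets may carry positive measure, so the Euler--Lagrange relation is genuinely a variational inequality / obstacle problem rather than a pointwise PDE. Showing that this degeneracy does not destroy the boundedness of $\Delta\psi_N$—so that the $W^{2,p}\hookrightarrow C^{1,\alpha_0}$ chain still runs—is the delicate point, and it is where the regularity theory for obstacle-type free boundaries (Caffarelli et al., invoked later for the finer free-boundary analysis) is needed.
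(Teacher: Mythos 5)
Your existence argument and the truncation giving $0\le\psi_N\le Q$ are essentially identical to the paper's (direct method with Poincar\'e, weak lower semicontinuity, and comparison with the cut-off competitor, which costs nothing since $F\equiv 0$ off $[0,Q]$ and truncation does not increase $|\nabla\psi|$). Where you genuinely diverge is the regularity step: the paper disposes of it in one line by citing the standard $C^{1,\alpha}$ theory for minimizers of functionals with a Lipschitz lower-order term (Giusti), whereas you reconstruct that theory by mollifying $F$, solving the regularized problems, and passing to the limit. Your route is correct in substance, but it has two costs worth noting. First, it only produces \emph{one} minimizer with the stated regularity (the limit of the regularized minimizers), while the cited theory -- and the later lemmas of the paper, which take an arbitrary minimizer and use its continuity -- give $C^{1,\alpha_0}$ for \emph{every} minimizer; since uniqueness is only proved afterwards (Lemma \ref{lem6.5}), this distinction matters downstream even though it does not invalidate the lemma as literally stated. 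Second, the mollified $F_\varepsilon$ is no longer exactly zero outside $[0,Q]$, so your truncation argument only yields $-C\varepsilon\le\psi_N^\varepsilon\le Q+C\varepsilon$ (or an $O(\varepsilon)$ energy error), a harmless but unaddressed wrinkle. Both issues evaporate if you instead take the first variation directly on the original functional: since $F$ is globally $6Q$-Lipschitz, $\mathcal E_N(\psi_N+t\eta)\ge\mathcal E_N(\psi_N)$ for $\eta\in C_0^\infty(\Omega_N)$ gives $\bigl|\int_{\Omega_N}\nabla\psi_N\cdot\nabla\eta\,dx\bigr|\le 6Q\int_{\Omega_N}|\eta|\,dx$, so the distributional Laplacian of any minimizer is an $L^\infty$ function bounded by $6Q$, and interior $W^{2,p}$ estimates plus Sobolev embedding give $C^{1,\alpha_0}_{\mathrm{loc}}$ without any regularization. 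Your identification of the obstacle-type structure (the convex corners of $F$ at $0$ and $Q$) as the reason $f(\psi_N)$ fails to be Lipschitz there is accurate and is indeed what the paper exploits later for the finer $C^{1,1}$ and free-boundary analysis, but it is more than is needed for this lemma.
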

		\begin{proof}
			The proof is divided into two steps.

\emph{Step 1. Existence.}
Clearly, for any $\psi\in \mathcal{K}_N$, $\mathcal{E}_N(\psi)\geq 0$. Hence there exists a minimizing sequence $\{\psi_N^{(n)}\}$ such that $\lim\limits_{n\to \infty}\mathcal{E}_N(\psi_N^{(n)})=\mathcal{I}_N$. Note that
	\[
	\|\nabla \psi_N^{(n)}\|_{L^2(\Omega_N)}^2\leq 2\mathcal{I}_N+1.
	\]
	Since $\psi_N^{(n)}=0$ on $\Gamma_0\cap \partial\Omega_N$, it follows from Poincar\'{e} inequality that
	\[
	\|\psi_N^{(n)}\|_{H^1(\Omega_N)}\leq C.
	\]

	Therefore, there exists a subsequence $\{\psi_N^{(n_k)}\}$ such that
	\[
	\psi_N^{(n_k)}\rightharpoonup \psi_N \quad \text{in} \,\, H^1(\Omega_N) \quad \text{and}\,\, \  \psi_N^{(n_k)}\to \psi_N \,\,a.e.\,\, \text{in}\,\, \Omega_N.
	\]
This implies
	\[
	\mathcal{E}_N(\psi_N)\leq \liminf_{k\to \infty} \mathcal{E}_N(\psi_N^{(n_k)}).
	\]
	Thus $\psi_N$ is the minimizer for $\mathcal{E}_N$ over $\mathcal{K}_N$.

\emph{Step 2. Boundedness and regularity.} Let $\psi_{N,+}=\max\{\psi_N, 0\}$. Clearly, one has
	\[
	{\mathcal{E}}_N(\psi_{N,+})\leq {\mathcal{E}}_N(\psi_N).
	\]
	Hence {$\mathcal{H}^2(\{\psi_N<0\})=0$}, where $\mathcal{H}^2$ is the two-dimensional Hausdorff measure. This implies $\psi_N\geq 0$ in $\Omega_N$. Similarly, one can prove that $\psi_N\leq Q$ in $\Omega_N$.
	
	It follows from the standard regularity theory for the minimizer of variational solutions (cf. \cite{Giusti}) that there exists an $\alpha_0\in (0,1)$ such that $\psi_{N}\in C^{1, \alpha_0}({\Omega}_N)$. Hence the proof of the lemma is completed.
		\end{proof}
		
		\begin{lemma}
If $\psi_N$ is a minimizer of $\mathcal{E}_N$ over $\mathcal{K}_N$, then for any $\beta\in (0,1)$,  $\psi_N\in C^{2, \beta}({\Omega}_N\cap \{x:0<\psi_N(x)<Q\})$ and satisfies
	\begin{equation}\label{ELeq}
	\Delta \psi_N =f(\psi_N)\quad \text{in} \ \,\,{\Omega}_N\cap \{x:0<\psi_N(x)<Q\}.
	\end{equation}
Furthermore, one has
	\[
	|\nabla \psi_N|=0\quad \text{on}\,\, \ (\partial\{\psi_N>0\}\cup\partial\{\psi_N<Q\})\cap \Omega_N.
	\]
\end{lemma}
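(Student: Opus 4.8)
The plan is to treat the statement in two independent pieces: the interior Euler--Lagrange equation together with its $C^{2,\beta}$ regularity on the non-coincidence set $U:=\Omega_N\cap\{0<\psi_N<Q\}$, and the free-boundary gradient condition, which will turn out to be an immediate consequence of the $C^{1,\alpha_0}$ regularity already furnished by Lemma \ref{lem6.2}.

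For the equation, first I would note that, since $\psi_N\in C^{1,\alpha_0}(\Omega_N)$ by Lemma \ref{lem6.2}, the set $U$ is open. Given any $\varphi\in C_c^\infty(U)$, the competitor $\psi_N+\epsilon\varphi$ lies in $\mathcal{K}_N$ for every $\epsilon$ (it agrees with $g_N$ near $\partial\Omega_N$), and for $|\epsilon|$ small its values on $\operatorname{supp}\varphi$ stay inside $(0,Q)$, where $F$ is smooth with $F'=f$. Differentiating $\mathcal{E}_N(\psi_N+\epsilon\varphi)$ at $\epsilon=0$ then yields the first variation
\[
\int_{\Omega_N}\nabla\psi_N\cdot\nabla\varphi+f(\psi_N)\varphi\,dx=0,
\]
so $\psi_N$ is a weak solution of $\Delta\psi_N=f(\psi_N)$ in $U$. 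Localizing the test functions to $U$ is exactly what avoids the two values $t=0,Q$ at which $F$ has corners (equivalently, at which $f$ fails to be Lipschitz). On a compact $K\Subset U$ one has $\psi_N(K)\subset[\epsilon_K,Q-\epsilon_K]$, where $f\in C^\infty$; hence $f(\psi_N)\in C^{0,\alpha_0}(K)$ and interior Schauder estimates give $\psi_N\in C^{2,\alpha_0}_{\mathrm{loc}}(U)$. Since $f\in C^\infty((0,Q))$, one further bootstrap upgrades $f(\psi_N)$ to $C^{0,\beta}_{\mathrm{loc}}$ for every $\beta\in(0,1)$, whence $\psi_N\in C^{2,\beta}(U)$ for each such $\beta$ (in fact $C^\infty_{\mathrm{loc}}(U)$).

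For the free-boundary condition I would argue purely from extremality and $C^1$ regularity. By Lemma \ref{lem6.2} we have $0\le\psi_N\le Q$ on $\Omega_N$ and $\psi_N\in C^1(\Omega_N)$. Let $x_0\in\partial\{\psi_N>0\}\cap\Omega_N$. Since $\{\psi_N>0\}$ is open and $x_0$ lies on its boundary while $\psi_N\ge 0$, continuity forces $\psi_N(x_0)=0$; thus $x_0$ is an interior global minimizer of $\psi_N$, and the first-order condition gives $\nabla\psi_N(x_0)=0$. The same reasoning applied to $\{\psi_N<Q\}$ shows that any $x_0\in\partial\{\psi_N<Q\}\cap\Omega_N$ is an interior global maximizer with $\psi_N(x_0)=Q$, so again $\nabla\psi_N(x_0)=0$. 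This establishes $|\nabla\psi_N|=0$ on $(\partial\{\psi_N>0\}\cup\partial\{\psi_N<Q\})\cap\Omega_N$.

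I do not expect a serious obstacle here: the only delicate feature of the problem, the non-Lipschitz behaviour of $f$ near $0$ and $Q$, is sidestepped because the equation and its $C^{2,\beta}$ regularity are asserted only on $U$, away from those values. It is worth emphasizing that the gradient condition is this clean precisely because $F$ is \emph{continuous} at $t=0$ and $t=Q$ (there is no Alt--Caffarelli-type jump), so $\nabla\psi_N$ is continuous across the free boundary and must match the vanishing gradient of the constant phases $\psi_N\equiv 0$ and $\psi_N\equiv Q$. The genuinely hard part, the description of this boundary as a $C^1$ graph, is a separate matter deferred to the later parts of Theorem \ref{thm2}.
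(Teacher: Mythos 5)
Your proof is correct. The first part (localized outer variations, weak Euler--Lagrange equation on the open set $\{0<\psi_N<Q\}$, then Schauder bootstrap using $f\in C^\infty((0,Q))$) is essentially identical to the paper's Step~1. For the free-boundary condition, however, you take a genuinely different and more elementary route: you use the $C^{1,\alpha_0}(\Omega_N)$ regularity from Lemma~\ref{lem6.2} together with the two-sided bound $0\le\psi_N\le Q$ to observe that every point of $\partial\{\psi_N>0\}\cap\Omega_N$ (resp.\ $\partial\{\psi_N<Q\}\cap\Omega_N$) is an interior local minimum (resp.\ maximum) of a $C^1$ function, so $\nabla\psi_N$ vanishes there by the first-order condition. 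The paper instead performs inner (domain) variations $\tau_\vartheta(x)=x+\vartheta\eta(x)$ and derives the Pohozaev-type boundary identity
\[
\lim_{s\searrow 0}\int_{\p(\Omega_N\cap \{s<\psi_N<Q-s\})} \Bigl[-\frac{|\nabla\psi_N|^2}{2}+F(\psi_N)\Bigr](\eta\cdot \nu)=0,
\]
from which (using $F(0)=F(Q)=0$) the vanishing of $|\nabla\psi_N|$ on the free boundary follows. Your argument is shorter and exploits exactly the structural feature you identify -- $F$ is continuous at $0$ and $Q$, so this is an obstacle-type rather than Alt--Caffarelli-type problem and the gradient has no jump. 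The paper's domain-variation computation is more robust (it would survive a discontinuity of $F$, where the conclusion becomes a gradient-jump condition rather than $|\nabla\psi_N|=0$) and records an identity that is standard groundwork for free-boundary analysis, but for the statement as given your derivation is complete and correct.
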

\begin{proof}
The proof is divided into two steps.

\emph{Step 1. Euler-Lagrange equation.} Since $\psi_N\in C({\Omega}_N)$, the set $\{x: 0<\psi_N(x)<Q\}$ is open in ${\Omega}_N$. Given any point $\bar{x}\in {\Omega}_N\cap\{x: 0<\psi_N(x)<Q\}$, there exists a $\delta>0$ such that $B_\delta(\bar{x})\subset  {\Omega}_N\cap\{x: 0<\psi_N(x)<Q\}$. For any $\eta\in C_0^\infty (B_\delta(\bar{x}))$, denote
\[
\mathcal{L}(\theta)=\int_{{\Omega}_N}\frac{|\nabla(\psi_N+\theta \eta)|^2}{2}+F(\psi_N+\theta \eta)dx.
\]
Therefore, there exists a $\theta_0>0$ such that $\psi_N+\theta \eta \in (0, Q)$ for $\theta \in (-\theta_0, \theta_0)$.
Clearly, $\mathcal{L}(\theta)$ achieves its minimum at $\theta=0$. Therefore, one has
\[
0=\mathcal{L}'(0)=\int_{\tilde{\Omega}_N} \nabla\psi_N \nabla \eta +f(\psi_N)\eta dx.
\]
This implies that $\psi_N$ solves the equation in \eqref{ELeq} in $B_\delta(\bar{x})$. It follows from the regularity theory for the weak solution of the equation in \eqref{ELeq} that $\psi_N\in C^{2, \beta}({\Omega}_N\cap \{x: 0<\psi_N(x)<Q\})$ for any $\beta\in (0,1)$.

\emph{Step 2. Properties of the energy minimizer on the free boundaries.} Let $\eta\in C_0^\infty(\Omega_{N}; \mathbb{R}^2)$ and  $\tau_\vartheta(x):=x+\vartheta\eta(x)$ for $\vartheta>0$. If $|\vartheta|$ is sufficiently small, then $\tau_\vartheta$ is a diffeomorphism of $\Omega_{N}$. Let $\psi^{\vartheta}_N(y):=\psi_N(\tau_\vartheta^{-1}(y))$. Since $\psi_N^\vartheta\in \mathcal{K}_N$ and $\psi_N$ is a minimizer, then
\begin{equation}\label{est_m1}
	\begin{aligned}
		0\leq &\mathcal{E}_{N}(\psi^{\vartheta}_N)-\mathcal{E}_N(\psi_N)\\
		=& \int_{\Omega_N}\left(\frac{1}{2}|\nabla \psi_N (\nabla \tau_\vartheta)^{-1}|^2+F(\psi_N) \right)\det (\nabla \tau_\vartheta)  -\int_{\Omega_N}\left(\frac{1}{2}|\nabla \psi_N|^2+F(\psi_N)\right)\\
		= &\ \vartheta\int_{\Omega_N}\left(\frac{|\nabla\psi_N|^2}{2}+F(\psi_N)\right)\nabla\cdot \eta -\vartheta\int_{\Omega_N} \nabla \psi_N \nabla \eta \nabla  \psi_N +o(\vartheta).
	\end{aligned}
\end{equation}
Dividing $\vartheta$ on both sides of \eqref{est_m1} and passing to the limit $\vartheta\rightarrow 0$ yield
\begin{align}\label{eq_51.5}
	\int_{\Omega_N}\left(\frac{|\nabla\psi_N|^2}{2}+F(\psi_N)\right)\nabla\cdot \eta  -\int_{\Omega_N} \nabla \psi_N \nabla \eta \nabla  \psi_N =0.
\end{align}
Note that
\begin{equation*}
	\begin{aligned}
		&\left(\frac{|\nabla\psi_N|^2}{2}+F(\psi_N)\right)\nabla\cdot \eta
		= \nabla\cdot \left[\left(\frac{|\nabla\psi_N|^2}{2}+F(\psi_N)\right)\eta\right]- \nabla\psi_N D^2\psi_N\eta -f(\psi_N)\nabla \psi_N\cdot\eta
	\end{aligned}
\end{equation*}
and
\[
\nabla \psi_N \nabla\eta\nabla\psi_N+ \nabla\psi_N D^2\psi_N \eta= \nabla(\eta\cdot \nabla\psi_N)\cdot \nabla\psi_N.
\]
Using the divergence theorem for \eqref{eq_51.5} we have
\begin{align*}
	0&=\lim_{s\searrow 0}\int_{\Omega_N\cap\{s<\psi_N<Q-s\}}\left(\frac{|\nabla\psi_N|^2}{2}+F(\psi_N)\right)\nabla\cdot \eta - \nabla \psi_N \nabla \eta \nabla  \psi_N \\
	&=\lim_{s\searrow 0}\int_{\p\{s<\psi_N<Q-s\}} \left[\frac{|\nabla\psi_N|^2}{2}+F(\psi_N)\right](\eta\cdot \nu)-(\eta\cdot\nabla \psi_N)\frac{\partial\psi_N}{\partial \nu}\\
	&\quad +\int_{\Omega_N\cap\{s<\psi_N<Q-s\}}(\Delta\psi_N  -f(\psi_N))\nabla \psi_N\cdot\eta \\
	&=\lim_{s\searrow 0}\int_{\p(\Omega_N\cap \{s<\psi_N<Q-s\})} \left[-\frac{|\nabla\psi_N|^2}{2}+F(\psi_N)\right](\eta\cdot \nu),
\end{align*}
where we have used the equation of $\psi_N$ in  the open set $\{x: 0<\psi_N(x)<Q\}$ to get the last equality. This finishes the proof of the lemma.
\end{proof}
		
		\subsection{Monotonicity and uniqueness}
		In this subsection, we prove two important properties of energy minimizers: monotonicity in the vertical direction and uniqueness of the energy minimizer.
		The proof for the monotonicity of the energy minimizer is based on the shifting method and comparison for the associated energy of the shifted functions.
  
		\begin{lemma}\label{mon}
			If ${\psi}_N$ is an energy minimizer for ${\mathcal{E}}_N$ over ${\mathcal{K}}_N$, i.e.,  ${\mathcal{E}}_N({\psi}_N)=\inf_{\phi\in {\mathcal{K}}_N}{\mathcal{E}}_N(\phi)$,  then $\psi_N$ satisfies
	\[
	\partial_{x_2}{\psi}_N\geq 0\quad \text{in}\,\, {\Omega}_{N}.
	\]
		\end{lemma}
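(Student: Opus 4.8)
The plan is to prove the stronger pointwise statement that, for every $\tau>0$,
\[
\psi_N(x_1,x_2)\le \psi_N(x_1,x_2+\tau)\quad\text{whenever }(x_1,x_2),\,(x_1,x_2+\tau)\in\Omega_N,
\]
since this immediately yields $\partial_{x_2}\psi_N\ge 0$ using the $C^{1,\alpha_0}$ regularity from Lemma \ref{lem6.2}. First I would extend $\psi_N$ to the whole strip $\mathcal{D}=(-N,N)\times\mathbb{R}$ by setting it equal to $0$ below $\Gamma_{0,N}$ and to $Q$ above $\Gamma_{1,N}$, calling the extension $\tilde\psi_N$. Because the integrand of $\mathcal{E}_N$ vanishes where $\tilde\psi_N$ is locally constant and $F\equiv 0$ outside $[0,Q]$, one has $\mathcal{E}_{\mathcal D}(\tilde\psi_N)=\mathcal{E}_N(\psi_N)=\mathcal{I}_N$, where $\mathcal{E}_{\mathcal D}$ denotes the same energy integrated over $\mathcal D$. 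Moreover $\tilde\psi_N$ minimizes $\mathcal{E}_{\mathcal D}$ in the class $\mathcal{G}$ of functions that equal $0$ below $\Gamma_{0,N}$, equal $Q$ above $\Gamma_{1,N}$, and coincide with the shear profiles on $\{x_1=\pm N\}$, since any competitor in $\mathcal{G}$ restricts to an element of $\mathcal{K}_N$ on $\Omega_N$ and carries zero energy elsewhere.

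Next, for a fixed $\tau>0$ set $V(x_1,x_2)=\tilde\psi_N(x_1,x_2+\tau)$. Since both $\mathcal{E}_{\mathcal D}$ and $\mathcal D$ are invariant under vertical translation, $V$ minimizes $\mathcal{E}_{\mathcal D}$ in the downward-shifted class $\mathcal{G}_\tau$ and $\mathcal{E}_{\mathcal D}(V)=\mathcal{I}_N$. The crucial elementary observation is that the boundary data $g_N$ is increasing in $x_2$ (the profiles $\bar\varphi$ are increasing, with values $0$ and $Q$ on $\Gamma_{0,N}$ and $\Gamma_{1,N}$); together with $0\le\tilde\psi_N\le Q$ this gives directly that
\[
m:=\min(\tilde\psi_N,V)\in\mathcal{G},\qquad M:=\max(\tilde\psi_N,V)\in\mathcal{G}_\tau.
\]
For instance, on $\Gamma_{0,N}$ one has $\tilde\psi_N=0\le V$, so $m=0$; on $\Gamma_{1,N}$ both equal $Q$; and on $\{x_1=\pm N\}$ monotonicity of the profile gives $V\ge\tilde\psi_N$, so $m=\tilde\psi_N$ there. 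Now the pointwise identities $|\nabla m|^2+|\nabla M|^2=|\nabla\tilde\psi_N|^2+|\nabla V|^2$ and $F(m)+F(M)=F(\tilde\psi_N)+F(V)$ give the modularity relation $\mathcal{E}_{\mathcal D}(m)+\mathcal{E}_{\mathcal D}(M)=\mathcal{E}_{\mathcal D}(\tilde\psi_N)+\mathcal{E}_{\mathcal D}(V)=2\mathcal{I}_N$. Combined with the two minimality statements $\mathcal{E}_{\mathcal D}(m)\ge\mathcal{I}_N$ and $\mathcal{E}_{\mathcal D}(M)\ge\mathcal{I}_N$, this forces equality, so $m$ is itself a minimizer of $\mathcal{E}_{\mathcal D}$ over $\mathcal{G}$; equivalently, $\min(\psi_N,V)$ is a minimizer of $\mathcal{E}_N$ over $\mathcal{K}_N$.

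Finally I would upgrade the statement ``$\min(\psi_N,V)$ is a minimizer'' to the ordering $\psi_N\le V$. For $s\ge 0$ write $V_s(x_1,x_2):=\tilde\psi_N(x_1,x_2+s)$ and run the sliding scheme of Proposition \ref{propunique}: for $s$ large one has $V_s\ge\tilde\psi_N$ on $\Omega_N$, so one may set $\bar\tau=\inf\{s>0:\ V_{s'}\ge\tilde\psi_N\text{ on }\Omega_N\text{ for all }s'\ge s\}$ and aim at $\bar\tau=0$. If $\bar\tau>0$ there is a touching point $\bar x$. At an interior $\bar x$ with $\tilde\psi_N(\bar x)\in(0,Q)$ the difference $W=V_{\bar\tau}-\tilde\psi_N\ge 0$ solves a linear elliptic equation $\Delta W+c(x)W=0$ with $c=-(f(V_{\bar\tau})-f(\tilde\psi_N))/(V_{\bar\tau}-\tilde\psi_N)$ bounded near $\bar x$, exactly as in Case (i) of Proposition \ref{propunique}; the strong maximum principle then forces $W\equiv 0$, hence the impossible conclusion that $\tilde\psi_N$ is $\bar\tau$-periodic in $x_2$ while tending to $0$ and $Q$ as $x_2\to\mp\infty$. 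The genuinely delicate case, which I expect to be the main obstacle, is a touching point lying on the obstacle-type free boundary $\partial\{\psi_N>0\}\cup\partial\{\psi_N<Q\}$, where the equation degenerates and the bare maximum principle fails. There I would exploit the free-boundary condition $|\nabla\psi_N|=0$ established in the preceding lemma together with the energy-comparison/cut-and-paste mechanism of Lemmas \ref{lem3.6} and \ref{lem3.7} — using that $\min(\psi_N,V)$ is an admissible competitor of no larger energy — to exclude such touching and conclude $\bar\tau=0$, that is, $\psi_N(x_1,x_2)\le\psi_N(x_1,x_2+\tau)$ for every $\tau>0$.
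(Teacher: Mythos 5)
Your extension-plus-lattice argument in the first two paragraphs is correct and genuinely different from the corresponding portion of the paper's proof. The pointwise identities for $\min$ and $\max$ of two $H^1$ functions, the translation invariance of the energy, and the monotonicity of the boundary data $g_N$ in $x_2$ do show in one stroke that $m=\min(\tilde\psi_N,V)$ and $M=\max(\tilde\psi_N,V)$ are admissible in $\mathcal{G}$ and $\mathcal{G}_\tau$ respectively and that both are minimizers. The paper reaches the same conclusion by decomposing the negative set $\{V<\psi_N\}$ into connected components and excluding a strict energy inequality in either direction on each component (Cases (ii-1) and (ii-2) in the proof of Lemma \ref{mon}, plus the separate Case (i)); your argument subsumes all of that and is cleaner.

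However, the proof is not complete, because everything now hinges on upgrading ``$\min(\psi_N,V)$ is a minimizer'' to the ordering $\psi_N\le V$, and this is exactly the content of the paper's Case (ii-3), which you do not carry out. That step needs two ingredients that are absent from your proposal. First, one must locate a point $\bar x$ on the boundary of a component $V_0$ of $\{V<\psi_N\}$ at which $0<\psi_N(\bar x)=V(\bar x)<Q$; this is claim \eqref{equiv} in the paper and requires a further cut-and-paste energy comparison (if, say, $\psi_N\equiv Q$ on the relevant part of $\partial V_0$, truncating at $Q$ strictly lowers the energy). Second, at such a point the Hopf lemma applied to $V-\psi_N<0$ in $V_0$ produces a jump of the normal derivative across $\partial V_0$, which contradicts the interior $C^{1,\alpha}$ regularity of the pasted minimizer $m$ at a point where its value lies in $(0,Q)$. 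Your alternative finish by sliding in $\tau$ does not circumvent this: the touching point produced by the sliding scheme may sit on the obstacle-type free boundary, where $f$ is not Lipschitz and the strong maximum principle is unavailable, and you explicitly defer that case; even in the interior case, propagating $W\equiv 0$ from a ball to a global periodicity contradiction requires continuation through regions where the linearized coefficient is unbounded near the free boundary. As written, the argument establishes the energy identity but not the ordering, so the lemma is not yet proved.
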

		\begin{proof}
			Extend $\psi_N$ on the whole plane as follows
			\begin{equation}
				\tilde\psi_N(x_1,x_2)=\left\{
				\begin{aligned}
					& Q\quad &&\text{if}\,\,x_2\geq h_1(x_1),\\
					&\psi_N(x_1,x_2)\quad &&\text{if}\,\, (x_1,x_2)\in \Omega_N,\\
					&0\quad &&\text{if}\,\,x_2 \leq h_0(x_1).
				\end{aligned}\right.
			\end{equation}
			For ease of notations, we still denote $\tilde{\psi}_N$ by $\psi_N$.
For any $\tau\geq 0$, denote
\begin{eqnarray*}
{\psi}_N^{\tau}(x)={\psi}_N(x_1,x_2+\tau)\ \ \text{and} \ \ {\Omega}_N^{\tau}=\{(x_1,x_2): (x_1,x_2+\tau)\in{\Omega}_N\}.
\end{eqnarray*}
Let ${D}^{\tau}={\Omega}_N\cap {\Omega}_N^{\tau}$ (cf. Figure \ref{f5.1}) and
		\begin{eqnarray*}
		{\Psi}^{\tau}(x)={\psi}_N^{\tau}(x)-\psi_N({x})\ \ \mbox{for any}  \ x\in {{D}^{\tau}}.
	\end{eqnarray*}

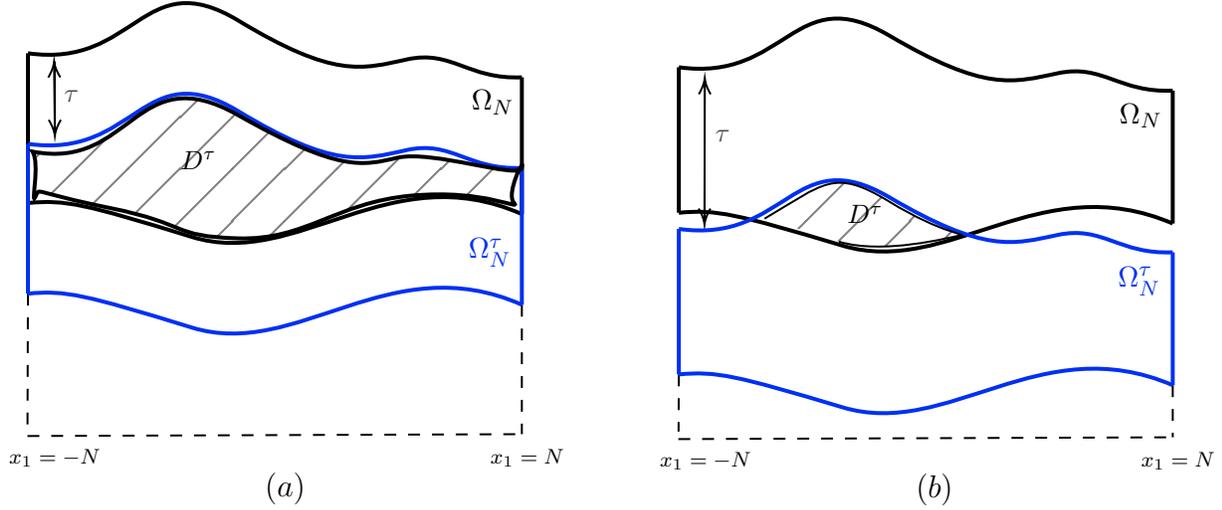
\begin{figure}[h]
				\centering
				\begin{minipage}[t]{.45\textwidth}



 
\tikzset{
pattern size/.store in=\mcSize, 
pattern size = 5pt,
pattern thickness/.store in=\mcThickness, 
pattern thickness = 0.3pt,
pattern radius/.store in=\mcRadius, 
pattern radius = 1pt}
\makeatletter
\pgfutil@ifundefined{pgf@pattern@name@_kvqonljwz}{
\pgfdeclarepatternformonly[\mcThickness,\mcSize]{_kvqonljwz}
{\pgfqpoint{0pt}{0pt}}
{\pgfpoint{\mcSize+\mcThickness}{\mcSize+\mcThickness}}
{\pgfpoint{\mcSize}{\mcSize}}
{
\pgfsetcolor{\tikz@pattern@color}
\pgfsetlinewidth{\mcThickness}
\pgfpathmoveto{\pgfqpoint{0pt}{0pt}}
\pgfpathlineto{\pgfpoint{\mcSize+\mcThickness}{\mcSize+\mcThickness}}
\pgfusepath{stroke}
}}
\makeatother
\tikzset{every picture/.style={line width=0.75pt}} 

\begin{tikzpicture}[x=0.55pt,y=0.7pt,yscale=-1,xscale=1]

\draw [line width=1.5]    (157,51) .. controls (231.67,59.33) and (235.67,2) .. (301.67,33.33) .. controls (367.67,64.67) and (386.67,59.67) .. (417.67,54.33) .. controls (448.67,49) and (459.67,65.33) .. (496.67,64) ;
\draw [line width=1.5]    (157,51) -- (157,132) ;
\draw [line width=1.5]    (496.67,64) -- (496.67,138) ;
\draw [line width=1.5]    (157,132) .. controls (182.67,130) and (196.67,132.67) .. (269.67,150.33) .. controls (342.67,168) and (405.67,106.33) .. (496.67,138) ;
\draw  [dash pattern={on 4.5pt off 4.5pt}]  (156.67,258) -- (496.67,257) ;
\draw [line width=0.75]  [dash pattern={on 4.5pt off 4.5pt}]  (157,181) -- (156.67,258) ;
\draw [line width=0.75]  [dash pattern={on 4.5pt off 4.5pt}]  (496.67,187) -- (496.67,257) ;
\draw [color={rgb, 255:red, 0; green, 0; blue, 0 }  ,draw opacity=1 ]   (175.4,55.5) -- (175.49,93.6) ;
\draw [shift={(175.5,95.6)}, rotate = 269.86] [color={rgb, 255:red, 0; green, 0; blue, 0 }  ,draw opacity=1 ][line width=0.75]    (10.93,-3.29) .. controls (6.95,-1.4) and (3.31,-0.3) .. (0,0) .. controls (3.31,0.3) and (6.95,1.4) .. (10.93,3.29)   ;
\draw [color={rgb, 255:red, 0; green, 0; blue, 0 }  ,draw opacity=1 ]   (175.5,95.6) -- (175.02,57.5) ;
\draw [shift={(175,55.5)}, rotate = 89.29] [color={rgb, 255:red, 0; green, 0; blue, 0 }  ,draw opacity=1 ][line width=0.75]    (10.93,-3.29) .. controls (6.95,-1.4) and (3.31,-0.3) .. (0,0) .. controls (3.31,0.3) and (6.95,1.4) .. (10.93,3.29)   ;
\draw [color={rgb, 255:red, 0; green, 48; blue, 243 }  ,draw opacity=1 ][line width=1.5]    (157,100) -- (157,181) ;
\draw [color={rgb, 255:red, 0; green, 48; blue, 243 }  ,draw opacity=1 ][line width=1.5]    (496.67,113) -- (496.67,187) ;
\draw [color={rgb, 255:red, 0; green, 48; blue, 243 }  ,draw opacity=1 ][line width=1.5]    (157,181) .. controls (182.67,179) and (196.67,181.67) .. (269.67,199.33) .. controls (342.67,217) and (405.67,155.33) .. (496.67,187) ;
\draw [color={rgb, 255:red, 0; green, 48; blue, 243 }  ,draw opacity=1 ][line width=1.5]    (157,100) .. controls (231.67,108.33) and (235.67,51) .. (301.67,82.33) .. controls (367.67,113.67) and (386.67,108.67) .. (417.67,103.33) .. controls (448.67,98) and (459.67,114.33) .. (496.67,113) ;
\draw [draw opacity=0][pattern=_kvqonljwz,pattern size=19.725pt,pattern thickness=0.75pt,pattern radius=0pt, pattern color={rgb, 255:red, 128; green, 128; blue, 128}][line width=1.5]    (160.08,104.17) .. controls (228.58,114.67) and (230.67,53) .. (301.13,84.3) .. controls (371.6,115.6) and (387.18,111.83) .. (411.13,107.5) .. controls (435.08,103.17) and (494.75,119.33) .. (496.67,113) .. controls (498.58,106.67) and (485.73,126.6) .. (491.08,131.67) .. controls (496.43,136.73) and (472.75,124.33) .. (420.67,128) .. controls (368.58,131.67) and (348.4,152.2) .. (301.53,151.1) .. controls (254.67,150) and (268.73,143.5) .. (213.53,134.7) .. controls (158.33,125.9) and (165.58,123.17) .. (162.08,128.67) .. controls (158.58,134.17) and (165.58,110.67) .. (160.08,104.17) -- cycle ;

\draw (142,263.4) node [anchor=north west][inner sep=0.75pt]  [font=\tiny]  {$x_{1} =-N$};
\draw (473,263.4) node [anchor=north west][inner sep=0.75pt]  [font=\tiny]  {$x_{1} =N$};
\draw (180,69.4) node [anchor=north west][inner sep=0.75pt]  [font=\footnotesize,color={rgb, 255:red, 74; green, 74; blue, 74 }  ,opacity=1 ]  {$\tau $};
\draw (460,69.4) node [anchor=north west][inner sep=0.75pt]  [font=\small]  {$\Omega _{N}$};
\draw (458,149.4) node [anchor=north west][inner sep=0.75pt]  [font=\small,color={rgb, 255:red, 0; green, 48; blue, 243 }  ,opacity=1 ]  {$\Omega _{N}^{\tau }$};
\draw (260,102.4) node [anchor=north west][inner sep=0.75pt]  [font=\footnotesize]  {$D^{\tau }$};
\draw (318,276.4) node [anchor=north west][inner sep=0.75pt]    {$( a)$};

\end{tikzpicture}

				\end{minipage}%
				\begin{minipage}[t]{.6\textwidth}
					\centering



 
\tikzset{
pattern size/.store in=\mcSize, 
pattern size = 5pt,
pattern thickness/.store in=\mcThickness, 
pattern thickness = 0.3pt,
pattern radius/.store in=\mcRadius, 
pattern radius = 1pt}
\makeatletter
\pgfutil@ifundefined{pgf@pattern@name@_sq2y31xcg}{
\pgfdeclarepatternformonly[\mcThickness,\mcSize]{_sq2y31xcg}
{\pgfqpoint{0pt}{0pt}}
{\pgfpoint{\mcSize+\mcThickness}{\mcSize+\mcThickness}}
{\pgfpoint{\mcSize}{\mcSize}}
{
\pgfsetcolor{\tikz@pattern@color}
\pgfsetlinewidth{\mcThickness}
\pgfpathmoveto{\pgfqpoint{0pt}{0pt}}
\pgfpathlineto{\pgfpoint{\mcSize+\mcThickness}{\mcSize+\mcThickness}}
\pgfusepath{stroke}
}}
\makeatother
\tikzset{every picture/.style={line width=0.75pt}} 

\begin{tikzpicture}[x=0.55pt,y=0.68pt,yscale=-1,xscale=1]

\draw [line width=1.5]    (157,44) .. controls (231.67,52.33) and (235.67,-5) .. (301.67,26.33) .. controls (367.67,57.67) and (386.67,52.67) .. (417.67,47.33) .. controls (448.67,42) and (459.67,58.33) .. (496.67,57) ;
\draw [line width=1.5]    (157,44) -- (157,125) ;
\draw [line width=1.5]    (496.67,57) -- (496.67,131) ;
\draw [line width=1.5]    (157,125) .. controls (182.67,123) and (196.67,125.67) .. (269.67,143.33) .. controls (342.67,161) and (405.67,99.33) .. (496.67,131) ;
\draw  [dash pattern={on 4.5pt off 4.5pt}]  (156.67,251) -- (496.67,250) ;
\draw [line width=0.75]  [dash pattern={on 4.5pt off 4.5pt}]  (157,215) -- (156.83,246.83) ;
\draw [line width=0.75]  [dash pattern={on 4.5pt off 4.5pt}]  (496.67,221) -- (496.67,250) ;
\draw [color={rgb, 255:red, 0; green, 0; blue, 0 }  ,draw opacity=1 ]   (174.83,50.18) -- (174.67,129.5) ;
\draw [shift={(174.67,131.5)}, rotate = 270.12] [color={rgb, 255:red, 0; green, 0; blue, 0 }  ,draw opacity=1 ][line width=0.75]    (10.93,-3.29) .. controls (6.95,-1.4) and (3.31,-0.3) .. (0,0) .. controls (3.31,0.3) and (6.95,1.4) .. (10.93,3.29)   ;
\draw [color={rgb, 255:red, 0; green, 0; blue, 0 }  ,draw opacity=1 ]   (174.67,131.5) -- (174.83,52.18) ;
\draw [shift={(174.83,50.18)}, rotate = 90.12] [color={rgb, 255:red, 0; green, 0; blue, 0 }  ,draw opacity=1 ][line width=0.75]    (10.93,-3.29) .. controls (6.95,-1.4) and (3.31,-0.3) .. (0,0) .. controls (3.31,0.3) and (6.95,1.4) .. (10.93,3.29)   ;
\draw [color={rgb, 255:red, 0; green, 48; blue, 243 }  ,draw opacity=1 ][line width=1.5]    (157,134) -- (157,215) ;
\draw [color={rgb, 255:red, 0; green, 48; blue, 243 }  ,draw opacity=1 ][line width=1.5]    (496.67,147) -- (496.67,221) ;
\draw [color={rgb, 255:red, 0; green, 48; blue, 243 }  ,draw opacity=1 ][line width=1.5]    (157,215) .. controls (182.67,213) and (196.67,215.67) .. (269.67,233.33) .. controls (342.67,251) and (405.67,189.33) .. (496.67,221) ;
\draw [color={rgb, 255:red, 0; green, 48; blue, 243 }  ,draw opacity=1 ][line width=1.5]    (157,134) .. controls (231.67,142.33) and (235.67,85) .. (301.67,116.33) .. controls (367.67,147.67) and (386.67,142.67) .. (417.67,137.33) .. controls (448.67,132) and (459.67,148.33) .. (496.67,147) ;
\draw [draw opacity=0][pattern=_sq2y31xcg,pattern size=19.725pt,pattern thickness=0.75pt,pattern radius=0pt, pattern color={rgb, 255:red, 128; green, 128; blue, 128}]   (215.93,128.3) .. controls (257.13,108.7) and (263.53,99.9) .. (301.53,118.7) .. controls (339.53,137.5) and (356.5,136.72) .. (349.83,137.72) .. controls (343.17,138.72) and (297.7,148.48) .. (267.13,141.1) ;

\draw (142,256.4) node [anchor=north west][inner sep=0.75pt]  [font=\tiny]  {$x_{1} =-N$};
\draw (473,256.4) node [anchor=north west][inner sep=0.75pt]  [font=\tiny]  {$x_{1} =N$};
\draw (180,77.4) node [anchor=north west][inner sep=0.75pt]  [font=\footnotesize,color={rgb, 255:red, 74; green, 74; blue, 74 }  ,opacity=1 ]  {$\tau $};
\draw (458,62.4) node [anchor=north west][inner sep=0.75pt]  [font=\small]  {$\Omega _{N}$};
\draw (457,153.4) node [anchor=north west][inner sep=0.75pt]  [font=\small,color={rgb, 255:red, 0; green, 48; blue, 243 }  ,opacity=1 ]  {$\Omega _{N}^{\tau }$};
\draw (270.8,119.2) node [anchor=north west][inner sep=0.75pt]  [font=\footnotesize]  {$D^{\tau }$};
\draw (318,268.4) node [anchor=north west][inner sep=0.75pt]    {$( b)$};

\end{tikzpicture}

				\end{minipage}
    \caption{The intersection of $\Omega_N$ and $\Omega_N^{\tau}$}\label{f5.1}
			\end{figure}
To prove the lemma, it suffices to show that for any $\tau>0$, one has
\begin{eqnarray*}
		{\Psi}^{\tau}(x)\geq 0\ \ \mbox{for any}  \ x\in {{D}^{\tau}}.
	\end{eqnarray*}
We prove this property by the contradiction argument.
Suppose that there exists $\bar\tau>0$ such that the set
\begin{eqnarray}\label{m2}
U=\{x\in {D}^{\bar\tau}: {\Psi}^{\bar\tau}(x)< 0 \}
\end{eqnarray}
is not empty. Since $ {\Psi}^{\bar\tau}$ is a continuous function, $U$ must be an open set. Hence one has either $U={D}^{\bar\tau}$ or $U\subsetneq
			{D}^{\bar\tau}$. In the following, we show that neither of these possibilities happens.

	(i) If $U={D}^{\bar\tau}$. Denote
\begin{eqnarray*}
				\hat{x}_1=\text{min}\{x_1: (x_1,x_2)\in \partial D^{\bar\tau}\}.
			\end{eqnarray*}
One can see that $\psi(\hat{x})=0$ and $0<\psi_N^{\bar\tau}(\hat x)\leq Q$  for $\hat{x}=(\hat{x}_1, \hat{x}_2)\in \partial D^{\bar\tau}\cap \Gamma_{0}$. Indeed, if $\hat{x}_1=-N$, then $\psi_N^{\bar\tau}(\hat x)\in (0, Q)$ as described in Figure \ref{f5.1}(a), and if $\hat{x}_1>-N$, then $\psi_N^{\bar\tau}(\hat x)=Q$ as described in Figure \ref{f5.1}(b). Therefore, one has
			\begin{eqnarray}
				{\Psi}^{\bar\tau}(\hat{x})> 0.
			\end{eqnarray}
			On the other hand, it follows from continuity that
			\begin{eqnarray}
				{\Psi}^{\bar\tau}(x)\leq 0\ \  \ \text{for any}\,\, x\in \overline{{D}^{\bar\tau}}.
			\end{eqnarray}
			This leads to a contradiction.

	(ii) Suppose that $U\subsetneq{D}^{\bar\tau}$, let $V$ be any connected component of $U$. Clearly, for $x\in \partial V\cap D^{\bar\tau}$, one has $\Psi^{\bar\tau}(x)=0$. Furthermore, for $x\in \overline{D^{\bar\tau}}\cap \{(x_1, x_2): x_1=-N \,\, \text{or}\,\, N\}$, one has $\Psi^{\bar\tau}(x)>0$. Finally, for any $x\in \partial D^{\bar\tau}\cap \{(x_1, x_2): x_2=h_1(x_1)-\bar{\tau}\}$, one has
			\[
			\Psi^{\bar\tau}=\psi_N^{\bar\tau}(x)-\psi_N(x)=Q-\psi_N(x)\geq 0.
			\]
			Similarly, for any $x\in \partial D^{\bar\tau}\cap \{(x_1,x_2): x_2=h_0(x_1)\}$, one has
			\[
			\Psi^{\bar\tau}=\psi_N^{\bar\tau}(x)-\psi_N(x)=\psi_N^{\bar\tau}(x)-0\geq 0.
			\]
			This implies that
			$$ {\Psi}^{\bar\tau}(x)=0 \ \ \ \ \text{on} \ \ \partial V$$
			and
			$$\partial V \cap \{(x_1,x_2)\in \overline{D^{\bar\tau}}: x_1=-N \,\, \text{or}\,\, N\}=\emptyset. $$
			Furthermore, we analyze the problem via the following three subcases.

\ \ {\it  Case (ii-1).}  Assume that
	\begin{eqnarray}\label{m5}
	\int_{V}\frac{|\nabla {\psi}_N^{{\bar\tau}}|^2}{2}+F({\psi}_N^{{\bar\tau}})dx < \int_{V}\frac{|\nabla \psi_N|^2}{2}+F(\psi_N)dx.	\end{eqnarray}
 
	Define
	\[
	{\psi}_*=\left\{
	\begin{aligned}
		& {\psi}_N^{{\bar\tau}},\quad \text{in}\,\, V,\\
		& \psi_N,\quad \text{in}\,\, {\Omega}_{N}\setminus V.
	\end{aligned}\right.
	\]
	Clearly, $\psi_*\in {\mathcal{K}}_N$ and $\psi_*$ satisfies
	\[
	\int_{{\Omega}_{N}}\frac{|\nabla {\psi}_*|^2}{2}+F({\psi}_*)dx < \int_{{\Omega}_{N}}\frac{|\nabla {\psi}_N|^2}{2}+F({\psi}_N)dx.	
	\]
	This contradicts with the fact that ${\psi}_N$ is an energy minimizer for ${\mathcal{E}}_N$ over ${\mathcal{K}}_N$.

\ \ {\it  Case (ii-2).} If
	\begin{eqnarray}\label{m6}
	\int_{V}\frac{|\nabla \psi_N|^2}{2}+F(\psi_N)dx < \int_{V}\frac{|\nabla {\psi}_N^{{\bar\tau}}|^2}{2}+F({\psi}_N^{{\bar\tau}})dx.	
\end{eqnarray}
Let $g_N^{\tau}(x_1,x_2)= g_N(x_1,x_2+\tau)$. Clearly, one has $g_N^{\tau}\in H^1({\Omega}_N^{\tau})$. Let $\mathcal{K}_N^{\tau} $ and $\mathcal{E}_N^{\tau}$ be the associated admissible set and energy functional on ${\Omega}_N^{\tau}$, respectively. More precisely, define
			\begin{equation}\label{E1}
			\mathcal{K}_N^{\tau} =\{\psi:  \psi\in H^1({\Omega}_N^{\tau}), \psi=g_N^{\tau} \,\,\text{on}\,\, \partial {\Omega}_N^{\tau}\}
			\end{equation}
			and
			\begin{equation}\label{E2}
			\mathcal{E}_N^{\tau}(\psi)= \int_{{\Omega}_N^{\tau}} \frac{|\nabla \psi|^2}{2} +F(\psi)dx.
						\end{equation}
			It is easy to see that ${\psi}_N^{\bar\tau}$ is an energy minimizer for ${\mathcal{E}}_N^{\bar\tau}$ over ${\mathcal{K}}_N^{\bar\tau}$.
			
			Define
			\[
			{\psi}_{**}=\left\{
			\begin{aligned}
				& {\psi}_N,\quad &&\text{in}\,\, V,\\
				&  {\psi}_N^{{\bar\tau}},\quad &&\text{in}\,\, {\Omega}_{N}^{\bar\tau}\setminus V.
			\end{aligned}\right.
			\]
			Clearly, $\psi_{**}\in {\mathcal{K}}_N^{\bar\tau}$ and $\psi_{**}$ satisfies
			\[
			\int_{{\Omega}_{N}^{\bar\tau}}\frac{|\nabla {\psi}_{**}|^2}{2}+F({\psi}_{**})dx < \int_{{\Omega}_{N}^{\bar\tau}}\frac{|\nabla {\psi}
_N^{{\bar\tau}}|^2}{2}+F({\psi}_N^{{\bar\tau}})dx.	
			\]
			{This contradicts with the fact that ${\psi}_N^{\bar\tau}$ is an energy minimizer for ${\mathcal{E}}_N^{\bar\tau}$ over ${\mathcal{K}}_N^{\bar\tau}$.}
	Hence \eqref{m6} cannot happen either.

\ \ {\it  Case (ii-3).} We assume that
\begin{eqnarray}\label{m4}
	\int_{V}\frac{|\nabla {\psi}_N^{{\bar\tau}}|^2}{2}+F({\psi}_N^{{\bar\tau}})dx = \int_{V}\frac{|\nabla \psi_N|^2}{2}+F(\psi_N)dx.
\end{eqnarray}
One can see that the complement of $V$ in $\mathbb{R}^2$ has a unique unbounded connected component $A$. Let $\Sigma=\partial A\cap D^{\bar{\tau}}$ and $\mathcal{O}=\mathbb{R}^2\setminus A$ (cf. Figure \ref{f5.2}).
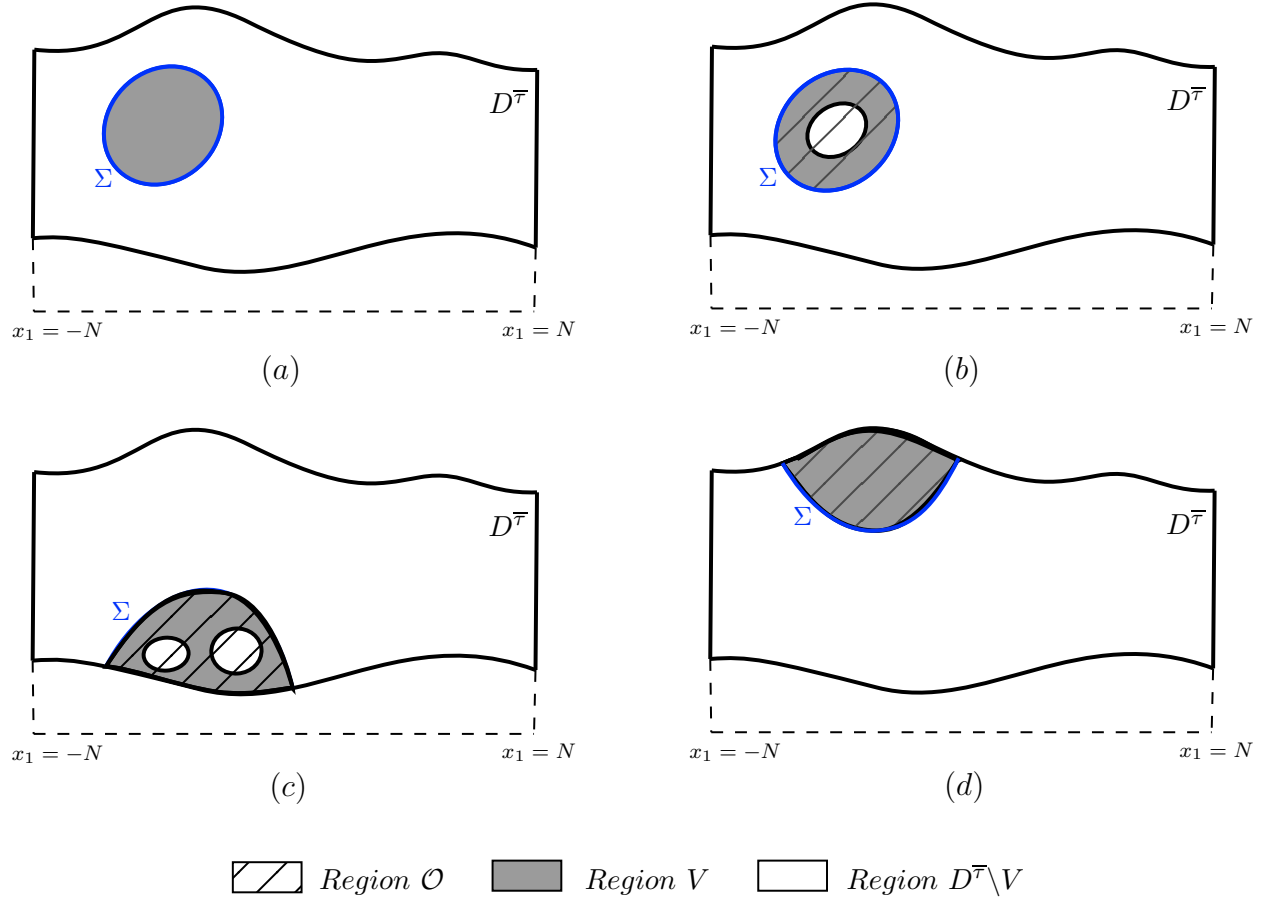
\begin{figure}[h]
				\centering
				\begin{minipage}[t]{.45\textwidth}

 
\tikzset{
pattern size/.store in=\mcSize, 
pattern size = 5pt,
pattern thickness/.store in=\mcThickness, 
pattern thickness = 0.3pt,
pattern radius/.store in=\mcRadius, 
pattern radius = 1pt}
\makeatletter
\pgfutil@ifundefined{pgf@pattern@name@_f8ca7rewt}{
\pgfdeclarepatternformonly[\mcThickness,\mcSize]{_f8ca7rewt}
{\pgfqpoint{0pt}{0pt}}
{\pgfpoint{\mcSize+\mcThickness}{\mcSize+\mcThickness}}
{\pgfpoint{\mcSize}{\mcSize}}
{
\pgfsetcolor{\tikz@pattern@color}
\pgfsetlinewidth{\mcThickness}
\pgfpathmoveto{\pgfqpoint{0pt}{0pt}}
\pgfpathlineto{\pgfpoint{\mcSize+\mcThickness}{\mcSize+\mcThickness}}
\pgfusepath{stroke}
}}
\makeatother
\tikzset{every picture/.style={line width=0.75pt}} 

\begin{tikzpicture}[x=0.56pt,y=0.59pt,yscale=-1,xscale=1]

\draw [line width=1.5]    (161,53) .. controls (235.67,61.33) and (239.67,4) .. (305.67,35.33) .. controls (371.67,66.67) and (390.67,61.67) .. (421.67,56.33) .. controls (452.67,51) and (463.67,67.33) .. (500.67,66) ;
\draw [line width=1.5]    (160,174) .. controls (185.67,172) and (199.67,174.67) .. (272.67,192.33) .. controls (345.67,210) and (408.67,148.33) .. (499.67,180) ;
\draw [line width=1.5]    (161,53) -- (160,174) ;
\draw [line width=1.5]    (500.67,66) -- (499.67,180) ;
\draw  [dash pattern={on 4.5pt off 4.5pt}]  (160.67,221) -- (498.67,221) ;
\draw [line width=0.75]  [dash pattern={on 4.5pt off 4.5pt}]  (160,174) -- (160.67,221) ;
\draw [line width=0.75]  [dash pattern={on 4.5pt off 4.5pt}]  (499.67,180) -- (498.67,221) ;
\draw  [color={rgb, 255:red, 0; green, 50; blue, 250 }  ,draw opacity=0.71 ][pattern=_f8ca7rewt,pattern size=15.524999999999999pt,pattern thickness=0.75pt,pattern radius=0pt, pattern color={rgb, 255:red, 0; green, 0; blue, 0}][line width=1.5]  (213.64,125.54) .. controls (202.3,109.3) and (208.44,85.42) .. (227.35,72.21) .. controls (246.27,59.01) and (270.8,61.47) .. (282.14,77.72) .. controls (293.48,93.96) and (287.34,117.84) .. (268.43,131.05) .. controls (249.51,144.25) and (224.98,141.79) .. (213.64,125.54) -- cycle ;
\draw  [color={rgb, 255:red, 0; green, 50; blue, 250 }  ,draw opacity=1 ][fill={rgb, 255:red, 155; green, 155; blue, 155 }  ,fill opacity=0.5 ][line width=1.5]  (213.69,125.61) .. controls (202.37,109.4) and (208.53,85.56) .. (227.45,72.35) .. controls (246.37,59.15) and (270.88,61.58) .. (282.19,77.79) .. controls (293.51,94) and (287.34,117.84) .. (268.43,131.05) .. controls (249.51,144.25) and (225,141.82) .. (213.69,125.61) -- cycle ;

\draw (144,226.4) node [anchor=north west][inner sep=0.75pt]  [font=\tiny]  {$x_{1} =-N$};
\draw (475,225.4) node [anchor=north west][inner sep=0.75pt]  [font=\tiny]  {$x_{1} =N$};
\draw (466,76.4) node [anchor=north west][inner sep=0.75pt]  [font=\small]  {$D^{\overline{\tau }}$};
\draw (199.19,127.83) node [anchor=north west][inner sep=0.75pt]  [font=\footnotesize,color={rgb, 255:red, 0; green, 50; blue, 250 }  ,opacity=1 ]  {$\Sigma $};
\draw (312,247.4) node [anchor=north west][inner sep=0.75pt]    {$( a)$};

\end{tikzpicture}

				\end{minipage}%
				\begin{minipage}[t]{.65\textwidth}
					\centering

 
\tikzset{
pattern size/.store in=\mcSize, 
pattern size = 5pt,
pattern thickness/.store in=\mcThickness, 
pattern thickness = 0.3pt,
pattern radius/.store in=\mcRadius, 
pattern radius = 1pt}
\makeatletter
\pgfutil@ifundefined{pgf@pattern@name@_0wghgboo5}{
\pgfdeclarepatternformonly[\mcThickness,\mcSize]{_0wghgboo5}
{\pgfqpoint{0pt}{0pt}}
{\pgfpoint{\mcSize+\mcThickness}{\mcSize+\mcThickness}}
{\pgfpoint{\mcSize}{\mcSize}}
{
\pgfsetcolor{\tikz@pattern@color}
\pgfsetlinewidth{\mcThickness}
\pgfpathmoveto{\pgfqpoint{0pt}{0pt}}
\pgfpathlineto{\pgfpoint{\mcSize+\mcThickness}{\mcSize+\mcThickness}}
\pgfusepath{stroke}
}}
\makeatother
\tikzset{every picture/.style={line width=0.75pt}} 

\begin{tikzpicture}[x=0.56pt,y=0.59pt,yscale=-1,xscale=1]

\draw [line width=1.5]    (161,58) .. controls (235.67,66.33) and (239.67,9) .. (305.67,40.33) .. controls (371.67,71.67) and (390.67,66.67) .. (421.67,61.33) .. controls (452.67,56) and (463.67,72.33) .. (500.67,71) ;
\draw [line width=1.5]    (160,179) .. controls (185.67,177) and (199.67,179.67) .. (272.67,197.33) .. controls (345.67,215) and (408.67,153.33) .. (499.67,185) ;
\draw [line width=1.5]    (161,58) -- (160,179) ;
\draw [line width=1.5]    (500.67,71) -- (499.67,185) ;
\draw  [dash pattern={on 4.5pt off 4.5pt}]  (160.67,226) -- (498.67,226) ;
\draw [line width=0.75]  [dash pattern={on 4.5pt off 4.5pt}]  (160,179) -- (160.67,226) ;
\draw [line width=0.75]  [dash pattern={on 4.5pt off 4.5pt}]  (499.67,185) -- (498.67,226) ;
\draw  [color={rgb, 255:red, 0; green, 50; blue, 250 }  ,draw opacity=0.71 ][fill={rgb, 255:red, 155; green, 155; blue, 155 }  ,fill opacity=0.5 ][line width=1.5]  (209.4,136.7) .. controls (197.99,120.36) and (204.82,95.89) .. (224.66,82.04) .. controls (244.5,68.19) and (269.83,70.21) .. (281.24,86.55) .. controls (292.64,102.89) and (285.81,127.36) .. (265.97,141.21) .. controls (246.13,155.06) and (220.8,153.04) .. (209.4,136.7) -- cycle ;
\draw  [fill={rgb, 255:red, 255; green, 255; blue, 255 }  ,fill opacity=1 ][line width=1.5]  (227,121.71) .. controls (222.76,114.01) and (227.52,103.25) .. (237.64,97.68) .. controls (247.75,92.11) and (259.39,93.84) .. (263.63,101.53) .. controls (267.87,109.23) and (263.11,119.99) .. (252.99,125.56) .. controls (242.88,131.13) and (231.24,129.41) .. (227,121.71) -- cycle ;
\draw  [color={rgb, 255:red, 0; green, 50; blue, 250 }  ,draw opacity=0.71 ][pattern=_0wghgboo5,pattern size=15.524999999999999pt,pattern thickness=0.75pt,pattern radius=0pt, pattern color={rgb, 255:red, 74; green, 74; blue, 74}][line width=1.5]  (209.4,136.7) .. controls (197.99,120.36) and (204.82,95.89) .. (224.66,82.04) .. controls (244.5,68.19) and (269.83,70.21) .. (281.24,86.55) .. controls (292.64,102.89) and (285.81,127.36) .. (265.97,141.21) .. controls (246.13,155.06) and (220.8,153.04) .. (209.4,136.7) -- cycle ;

\draw (144,231.4) node [anchor=north west][inner sep=0.75pt]  [font=\tiny]  {$x_{1} =-N$};
\draw (475,230.4) node [anchor=north west][inner sep=0.75pt]  [font=\tiny]  {$x_{1} =N$};
\draw (466,81.4) node [anchor=north west][inner sep=0.75pt]  [font=\small]  {$D^{\overline{\tau }}$};
\draw (190.33,133.7) node [anchor=north west][inner sep=0.75pt]  [font=\footnotesize,color={rgb, 255:red, 0; green, 50; blue, 250 }  ,opacity=1 ]  {$\Sigma $};
\draw (315,254.4) node [anchor=north west][inner sep=0.75pt]    {$( b)$};

\end{tikzpicture}

				\end{minipage}
			
				\begin{minipage}[t]{.5\textwidth}


 
\tikzset{
pattern size/.store in=\mcSize, 
pattern size = 5pt,
pattern thickness/.store in=\mcThickness, 
pattern thickness = 0.3pt,
pattern radius/.store in=\mcRadius, 
pattern radius = 1pt}
\makeatletter
\pgfutil@ifundefined{pgf@pattern@name@_gf5qcgh66}{
\pgfdeclarepatternformonly[\mcThickness,\mcSize]{_gf5qcgh66}
{\pgfqpoint{0pt}{0pt}}
{\pgfpoint{\mcSize+\mcThickness}{\mcSize+\mcThickness}}
{\pgfpoint{\mcSize}{\mcSize}}
{
\pgfsetcolor{\tikz@pattern@color}
\pgfsetlinewidth{\mcThickness}
\pgfpathmoveto{\pgfqpoint{0pt}{0pt}}
\pgfpathlineto{\pgfpoint{\mcSize+\mcThickness}{\mcSize+\mcThickness}}
\pgfusepath{stroke}
}}
\makeatother
\tikzset{every picture/.style={line width=0.75pt}} 

\begin{tikzpicture}[x=0.56pt,y=0.59pt,yscale=-1,xscale=1]

\draw [line width=1.5]    (163,57) .. controls (237.67,65.33) and (241.67,8) .. (307.67,39.33) .. controls (373.67,70.67) and (392.67,65.67) .. (423.67,60.33) .. controls (454.67,55) and (465.67,71.33) .. (502.67,70) ;
\draw [line width=1.5]    (162,178) .. controls (177.55,176.79) and (188.81,177.29) .. (211.51,181.8) .. controls (219.81,183.45) and (229.65,185.64) .. (241.79,188.48) .. controls (251.23,190.68) and (262.07,193.29) .. (274.67,196.33) .. controls (296.19,201.54) and (316.85,199.85) .. (337.61,195.7) .. controls (387.25,185.75) and (437.5,161.67) .. (501.67,184) ;
\draw [line width=1.5]    (163,57) -- (162,178) ;
\draw [line width=1.5]    (502.67,70) -- (501.67,184) ;
\draw  [dash pattern={on 4.5pt off 4.5pt}]  (162.67,225) -- (500.67,225) ;
\draw [line width=0.75]  [dash pattern={on 4.5pt off 4.5pt}]  (162,178) -- (162.67,225) ;
\draw [line width=0.75]  [dash pattern={on 4.5pt off 4.5pt}]  (501.67,184) -- (500.67,225) ;
\draw [color={rgb, 255:red, 0; green, 50; blue, 250 }  ,draw opacity=1 ][line width=1.5]    (211.51,181.8) .. controls (242.83,125.5) and (313.67,102) .. (337.61,195.7) ;
\draw [draw opacity=0][fill={rgb, 255:red, 155; green, 155; blue, 155 }  ,fill opacity=0.61 ][line width=1.5]    (211.51,181.8) .. controls (238.17,142.72) and (260.05,128.06) .. (291.77,134.56) .. controls (323.5,141.06) and (338.38,197.34) .. (337.61,195.7) .. controls (336.83,194.06) and (312.17,201.39) .. (286.33,197.9) .. controls (260.5,194.41) and (225.5,181.72) .. (211.51,181.8) -- cycle ;
\draw  [fill={rgb, 255:red, 255; green, 255; blue, 255 }  ,fill opacity=1 ][line width=1.5]  (236.98,174.81) .. controls (236.6,169) and (243.1,163.85) .. (251.48,163.31) .. controls (259.87,162.77) and (266.97,167.05) .. (267.34,172.86) .. controls (267.72,178.68) and (261.22,183.83) .. (252.83,184.37) .. controls (244.45,184.9) and (237.35,180.63) .. (236.98,174.81) -- cycle ;
\draw  [fill={rgb, 255:red, 255; green, 255; blue, 255 }  ,fill opacity=1 ][line width=1.5]  (282.73,172.92) .. controls (282.22,164.96) and (289.38,158.02) .. (298.73,157.42) .. controls (308.09,156.82) and (316.08,162.79) .. (316.59,170.75) .. controls (317.1,178.71) and (309.94,185.65) .. (300.58,186.25) .. controls (291.23,186.85) and (283.24,180.89) .. (282.73,172.92) -- cycle ;
\draw [draw opacity=0][pattern=_gf5qcgh66,pattern size=14.850000000000001pt,pattern thickness=0.75pt,pattern radius=0pt, pattern color={rgb, 255:red, 0; green, 0; blue, 0}][line width=1.5]    (211.51,181.8) .. controls (238.17,142.72) and (258.5,131.06) .. (291.77,134.56) .. controls (325.05,138.06) and (338.38,197.34) .. (337.61,195.7) .. controls (336.83,194.06) and (312.17,201.39) .. (286.33,197.9) .. controls (260.5,194.41) and (225.5,181.72) .. (211.51,181.8) -- cycle ;

\draw (146,230.4) node [anchor=north west][inner sep=0.75pt]  [font=\tiny]  {$x_{1} =-N$};
\draw (477,229.4) node [anchor=north west][inner sep=0.75pt]  [font=\tiny]  {$x_{1} =N$};
\draw (468,80.4) node [anchor=north west][inner sep=0.75pt]  [font=\small]  {$D^{\overline{\tau }}$};
\draw (213.33,138.7) node [anchor=north west][inner sep=0.75pt]  [font=\footnotesize,color={rgb, 255:red, 0; green, 50; blue, 250 }  ,opacity=1 ]  {$\Sigma $};
\draw (320,247.4) node [anchor=north west][inner sep=0.75pt]    {$( c)$};

\end{tikzpicture}

				\end{minipage}%
				\begin{minipage}[t]{.55\textwidth}
					\centering

 
\tikzset{
pattern size/.store in=\mcSize, 
pattern size = 5pt,
pattern thickness/.store in=\mcThickness, 
pattern thickness = 0.3pt,
pattern radius/.store in=\mcRadius, 
pattern radius = 1pt}
\makeatletter
\pgfutil@ifundefined{pgf@pattern@name@_ds84z0u88}{
\pgfdeclarepatternformonly[\mcThickness,\mcSize]{_ds84z0u88}
{\pgfqpoint{0pt}{0pt}}
{\pgfpoint{\mcSize+\mcThickness}{\mcSize+\mcThickness}}
{\pgfpoint{\mcSize}{\mcSize}}
{
\pgfsetcolor{\tikz@pattern@color}
\pgfsetlinewidth{\mcThickness}
\pgfpathmoveto{\pgfqpoint{0pt}{0pt}}
\pgfpathlineto{\pgfpoint{\mcSize+\mcThickness}{\mcSize+\mcThickness}}
\pgfusepath{stroke}
}}
\makeatother
\tikzset{every picture/.style={line width=0.75pt}} 

\begin{tikzpicture}[x=0.56pt,y=0.59pt,yscale=-1,xscale=1]

\draw [line width=1.5]    (162,180) .. controls (187.67,178) and (201.67,180.67) .. (274.67,198.33) .. controls (347.67,216) and (410.67,154.33) .. (501.67,186) ;
\draw [line width=1.5]    (163,59) -- (162,180) ;
\draw [line width=1.5]    (502.67,72) -- (501.67,186) ;
\draw  [dash pattern={on 4.5pt off 4.5pt}]  (162.67,227) -- (500.67,227) ;
\draw [line width=0.75]  [dash pattern={on 4.5pt off 4.5pt}]  (162,180) -- (162.67,227) ;
\draw [line width=0.75]  [dash pattern={on 4.5pt off 4.5pt}]  (501.67,186) -- (500.67,227) ;
\draw [draw opacity=0][fill={rgb, 255:red, 155; green, 155; blue, 155 }  ,fill opacity=0.61 ][line width=1.5]    (210.82,54.36) .. controls (226.05,73.02) and (240.9,96.74) .. (274.08,97.67) .. controls (307.26,98.6) and (327.19,53.02) .. (329.65,52.04) .. controls (332.1,51.05) and (331.19,54.17) .. (307.67,41.33) .. controls (284.14,28.5) and (260.9,29.88) .. (235.48,43.6) .. controls (210.05,57.31) and (232.9,45.6) .. (210.82,54.36) -- cycle ;
\draw [draw opacity=0][pattern=_ds84z0u88,pattern size=14.100000000000001pt,pattern thickness=0.75pt,pattern radius=0pt, pattern color={rgb, 255:red, 74; green, 74; blue, 74}][line width=1.5]    (210.82,54.36) .. controls (225.48,73.31) and (242.69,98.6) .. (274.08,97.67) .. controls (305.48,96.74) and (328.9,50.45) .. (329.65,52.04) .. controls (330.39,53.62) and (332.24,53.62) .. (306.62,42.74) .. controls (281,31.86) and (261.76,29.02) .. (235.48,43.6) .. controls (209.19,58.17) and (232.9,45.6) .. (210.82,54.36) -- cycle ;
\draw [line width=1.5]    (163,59) .. controls (237.67,67.33) and (241.67,10) .. (307.67,41.33) .. controls (373.67,72.67) and (392.67,67.67) .. (423.67,62.33) .. controls (454.67,57) and (465.67,73.33) .. (502.67,72) ;
\draw [color={rgb, 255:red, 0; green, 50; blue, 250 }  ,draw opacity=1 ][line width=1.5]    (210.82,54.36) .. controls (226.17,79.59) and (249.37,99.19) .. (274.08,97.67) .. controls (298.8,96.15) and (313.74,82) .. (329.77,51.22) ;

\draw (146,232.4) node [anchor=north west][inner sep=0.75pt]  [font=\tiny]  {$x_{1} =-N$};
\draw (477,231.4) node [anchor=north west][inner sep=0.75pt]  [font=\tiny]  {$x_{1} =N$};
\draw (468,82.4) node [anchor=north west][inner sep=0.75pt]  [font=\small]  {$D^{\overline{\tau }}$};
\draw (216.05,80.41) node [anchor=north west][inner sep=0.75pt]  [font=\footnotesize,color={rgb, 255:red, 0; green, 50; blue, 250 }  ,opacity=1 ]  {$\Sigma $};
\draw (318,250.4) node [anchor=north west][inner sep=0.75pt]    {$( d)$};

\end{tikzpicture}

				\end{minipage}
    

 \hspace{.15in}

 
\tikzset{
pattern size/.store in=\mcSize, 
pattern size = 5pt,
pattern thickness/.store in=\mcThickness, 
pattern thickness = 0.3pt,
pattern radius/.store in=\mcRadius, 
pattern radius = 1pt}
\makeatletter
\pgfutil@ifundefined{pgf@pattern@name@_1ex8uvsm8}{
\pgfdeclarepatternformonly[\mcThickness,\mcSize]{_1ex8uvsm8}
{\pgfqpoint{0pt}{0pt}}
{\pgfpoint{\mcSize+\mcThickness}{\mcSize+\mcThickness}}
{\pgfpoint{\mcSize}{\mcSize}}
{
\pgfsetcolor{\tikz@pattern@color}
\pgfsetlinewidth{\mcThickness}
\pgfpathmoveto{\pgfqpoint{0pt}{0pt}}
\pgfpathlineto{\pgfpoint{\mcSize+\mcThickness}{\mcSize+\mcThickness}}
\pgfusepath{stroke}
}}
\makeatother
\tikzset{every picture/.style={line width=0.75pt}} 

\begin{tikzpicture}[x=0.75pt,y=0.6pt,yscale=-1,xscale=1]

\draw  [pattern=_1ex8uvsm8,pattern size=14.475000000000001pt,pattern thickness=0.75pt,pattern radius=0pt, pattern color={rgb, 255:red, 0; green, 0; blue, 0}] (70,196) -- (104.67,196) -- (104.67,215) -- (70,215) -- cycle ;
\draw  [fill={rgb, 255:red, 155; green, 155; blue, 155 }  ,fill opacity=0.61 ] (201,195) -- (235.67,195) -- (235.67,214) -- (201,214) -- cycle ;
\draw   (335,195) -- (369.67,195) -- (369.67,214) -- (335,214) -- cycle ;

\draw (112,198.4) node [anchor=north west][inner sep=0.75pt]  [font=\small]  {$Region\ \mathcal{O}$};
\draw (246,198.4) node [anchor=north west][inner sep=0.75pt]  [font=\small]  {$Region\ V$};
\draw (378,195.4) node [anchor=north west][inner sep=0.75pt]  [font=\small]  {$Region\ D^{\overline{\tau }} \backslash V$};

\end{tikzpicture}

\caption{The regions $V$ and $\mathcal{O}$ in the intersection domain}\label{f5.2}

			\end{figure}

We claim that
			\begin{eqnarray}\label{equiv}
				{\psi}_N={\psi}_N^{{\bar\tau}}\not\equiv 0 \text{ and } Q\ \  \ \ \text{on}\  \Sigma.
			\end{eqnarray}
If {${\psi}_N\equiv Q$ on $ \Sigma$}, define
	\[
	\hat{\psi}=\left\{
	\begin{aligned}
		& Q,\quad &&\text{in}\,\, \mathcal{O},\\
		& {\psi}_N,\quad &&\text{in}\,\, {\Omega}_{N}\setminus \mathcal{O}.
	\end{aligned}\right.
	\]
	One can obtain that $\hat{\psi}\in {\mathcal{K}}_N$ and
		\begin{equation}\label{energy11}
	\int_{{\Omega}_{N}}\frac{|\nabla \hat{\psi}|^2}{2}+F(\hat{\psi})dx \leq \int_{{\Omega}_{N}}\frac{|\nabla {\psi}_N|^2}{2}+F({\psi}_N)dx.	
	\end{equation}
	The equality in \eqref{energy11} holds
if and only if
${\psi}_N\equiv Q$ in $\mathcal{O}$. In fact, if ${\psi}_N\equiv Q$ in $\mathcal{O}$, this yields that $0\not\equiv {\psi}_N^{{\bar\tau}} <Q$ in $V$ and leads to a contradiction. Hence the strictly inequality must hold, i.e.
	\[
	\int_{{\Omega}_{N}}\frac{|\nabla \hat{\psi}|^2}{2}+F(\hat{\psi})dx < \int_{{\Omega}_{N}}\frac{|\nabla {\psi}_N|^2}{2}+F({\psi}_N)dx.	
	\]
	This also leads to a contradiction with the fact that ${\psi}_N$ is an energy minimizer for ${\mathcal{E}}_N$ over $\mathcal{K}_N$. Therefore, \eqref{equiv} holds. Similarly, one can prove that ${\psi}_N^{{\bar\tau}}\not\equiv 0$ on $\Sigma$. Thus, there exists an $\bar{x}\in \Sigma$ such that $0<{\psi}_N(\bar{x})={\psi}_N^{{\bar\tau}}(\bar{x})<Q$. Therefore, in a neighborhood of $\bar{x}$, $B_\epsilon(\bar{x})=\{x: |x-\bar{x}|<\epsilon\}$, one has
\begin{equation}
				\left\{
				\begin{aligned}
					&\Delta \Psi^{\bar\tau}(x) =c(x)\Psi^{\bar\tau}(x), \quad &&\text{in}\,\, B_\epsilon(\bar{x})\cap V,\\
					&\Psi^{\bar\tau}(x)< 0, \quad &&\text{in}\,\, B_\epsilon(\bar{x})\cap V,\\
					&\Psi^{\bar\tau}(\bar{x})=0,
				\end{aligned}
				\right.
			\end{equation}
where
	\begin{eqnarray*}
		c(x)=\frac{f({\psi}_N^{{\bar\tau}}(x))-f({\psi}_N(x))}{{\psi}_N^{{\bar\tau}}(x)-{\psi}_N(x)}.
	\end{eqnarray*}
It follows from Hopf lemma (\hspace{1sp}\cite{evans}) that one has
			\begin{equation}\label{intnormalhopf1}
				\frac{\partial \Psi^{\bar\tau}}{\partial \nu}(\bar{x})>0,
			\end{equation}
where $\nu$ is unit out normal at $\bar{x}$ to the domain $B_\epsilon(\bar{x})\cap V$.
Define
\[
\check{\psi}=\left\{
\begin{aligned}
	&{\psi}_N^{{\bar\tau}},\quad &&\text{in}\,\, V,\\
&{\psi}_N, \quad && \text{in}\,\, {\Omega}_N\setminus V.
\end{aligned}
\right.
\]
It follows from \eqref{m4} that {$\check{\psi}$ is also an energy minimizer for ${\mathcal{E}}_N$ over $\mathcal{K}_N$. Note that $\check{\psi}(\bar{x})\in (0, Q)$. It follows from the standard regularity theory for variational problems (cf. \cite{Giusti}) that there exists an $\alpha_0>0$ such that $\check{\psi}\in C^{1,\alpha_0}_{\text{loc}}(\{0<\psi_N<Q\}\cap \Omega_N)$.} More precisely, $ \check{\psi}$ must be smooth in the neighborhood of $\bar{x}$. However, \eqref{intnormalhopf1} implies that $\check{\psi}$ is not smooth in the neighborhood of $\bar{x}$. This leads to a contradiction.

Therefore, for any $\tau>0$, one has
			\begin{eqnarray*}
				{\Psi}^{\tau}(x)\geq 0\ \ \mbox{for any}  \ x\in {{D}^{\tau}},
			\end{eqnarray*}
			which implies
\[
	\partial_{x_2}{\psi}_N\geq 0\quad \text{in}\,\, {\Omega}_{N}.
	\]
This completes the proof of the lemma.
		\end{proof}
		
Applying the ideas developed in the proof of Lemma 	\ref{mon}, we can prove the following uniqueness results.


\begin{lemma}\label{lem6.5}
	Suppose that ${\psi}_{N,1}$ and ${\psi}_{N,2}$ are two energy minimizers for ${\mathcal{E}}_N$ over ${\mathcal{K}}_N$. Then $\psi_{N,1}\equiv \psi_{N,2}$ in $\Omega_N$.
\end{lemma}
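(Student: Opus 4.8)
The plan is to argue by contradiction and to transplant, essentially verbatim, the energy-comparison-on-components mechanism from the proof of Lemma \ref{mon}, the only change being that the two competitors are now the two minimizers $\psi_{N,1}$ and $\psi_{N,2}$ rather than a single minimizer and its vertical shift. First I would record the structural facts that both minimizers enjoy: they share the datum $g_N$ on $\partial\Omega_N$, they satisfy $0\le \psi_{N,i}\le Q$ and $\psi_{N,i}\in C^{1,\alpha_0}(\Omega_N)$ by Lemma \ref{lem6.2}, and $\partial_{x_2}\psi_{N,i}\ge 0$ by Lemma \ref{mon}; this monotonicity will play here the same role it played in analyzing the shifted function. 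Setting $\Psi=\psi_{N,1}-\psi_{N,2}$, I would suppose $\Psi\not\equiv 0$ and, after possibly interchanging the indices, fix a connected component $V$ of the open set $\{x\in\Omega_N:\ \Psi(x)<0\}$. Since $\psi_{N,1}=\psi_{N,2}=g_N$ on $\partial\Omega_N$ and $\Psi$ is continuous, one has $\Psi\equiv 0$ on $\partial V$, so interchanging the two profiles across $V$ always yields an admissible function in $\mathcal{K}_N$.

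The next step is the trichotomy of Cases (ii-1)--(ii-3) from Lemma \ref{mon}, applied to the localized energies
\[
\int_V\Big(\tfrac{|\nabla\psi_{N,1}|^2}{2}+F(\psi_{N,1})\Big)dx \quad\text{and}\quad \int_V\Big(\tfrac{|\nabla\psi_{N,2}|^2}{2}+F(\psi_{N,2})\Big)dx.
\]
If either strict inequality holds, I would glue the lower-energy profile into $V$ while keeping the other minimizer on $\Omega_N\setminus V$; the glued function lies in $\mathcal{K}_N$ (continuity across $\partial V$ being guaranteed by $\Psi\equiv 0$ there) and has strictly smaller energy, contradicting the minimality of $\psi_{N,1}$ or of $\psi_{N,2}$. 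Hence the two localized energies must coincide, which is the analogue of Case (ii-3) and is where the real work lies.

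In the equal-energy case the swapped function $\check\psi$, equal to $\psi_{N,1}$ on $V$ and to $\psi_{N,2}$ on $\Omega_N\setminus V$, is itself an energy minimizer. Mimicking the claim \eqref{equiv} of Lemma \ref{mon}, I would use the monotonicity $\partial_{x_2}\psi_{N,i}\ge 0$ together with one-sided energy modifications to exclude the degenerate alternatives that the common trace $\psi_{N,1}=\psi_{N,2}$ equals $0$ or $Q$ identically on $\partial V$; this produces a point $\bar x\in\partial V\cap\Omega_N$ with $0<\psi_{N,1}(\bar x)=\psi_{N,2}(\bar x)<Q$. In a ball $B_\epsilon(\bar x)$ both minimizers take values in a compact subinterval of $(0,Q)$ on which $f$ is Lipschitz, so $\Psi$ solves a linear equation $\Delta\Psi=c(x)\Psi$ with bounded $c$, with $\Psi<0$ in $B_\epsilon(\bar x)\cap V$ and $\Psi(\bar x)=0$; Hopf's lemma then forces $\partial_\nu\Psi(\bar x)\neq 0$. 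Such a jump in the normal derivative across $\partial V$ contradicts the $C^{1,\alpha_0}_{\mathrm{loc}}$ regularity of the minimizer $\check\psi$ near $\bar x$ supplied by Lemma \ref{lem6.2}. Thus $\{\Psi<0\}=\emptyset$, and by the symmetric argument $\{\Psi>0\}=\emptyset$, whence $\psi_{N,1}\equiv\psi_{N,2}$ in $\Omega_N$.

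The hard part is the equal-energy Case (ii-3): one must simultaneously (i) rule out, via monotonicity and energy modifications, that $\partial V$ lies entirely in the coincidence set $\{\psi=0\}\cup\{\psi=Q\}$ (where $f$ is non-Lipschitz and Hopf is unavailable), and (ii) reconcile the strict normal-derivative jump from Hopf's lemma with the global $C^1$ regularity of the swapped minimizer across the obstacle-type free boundary $\partial V$. This is exactly the subtle interplay between the free boundary and the non-Lipschitz nonlinearity that Lemma \ref{mon} was built to control, and I expect the bookkeeping there, rather than the maximum-principle step itself, to be the main obstacle.
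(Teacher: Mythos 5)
Your toolbox is the right one (component-wise energy comparison, swapping the two profiles across a component $V$, then playing Hopf's lemma against the $C^{1,\alpha_0}$ regularity of the swapped minimizer), but you have dropped the one device the paper's proof actually turns on: the paper never compares $\psi_{N,1}$ and $\psi_{N,2}$ directly. It compares the \emph{vertical shift} $\psi_{N,1}^{\tau}(x)=\psi_{N,1}(x_1,x_2+\tau)$ with $\psi_{N,2}$ on the overlap $\mathcal{D}^{\tau}=\Omega_N\cap\Omega_N^{\tau}$, proves $\psi_{N,1}^{\tau}\ge\psi_{N,2}$ there for \emph{every} $\tau>0$ (and symmetrically $\psi_{N,2}^{\tau}\ge\psi_{N,1}$), and only then recovers $\tau=0$ by continuity. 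The shift is not cosmetic. It confines the bad set $V$ to $\mathcal{D}^{\bar\tau}$, which sits a distance $\bar\tau$ below $\Gamma_1$ and is kept off the lateral boundary; this geometric separation is exactly what makes the ``constant replacement on $\mathcal{O}$'' competitor of \eqref{equiv} admissible in $\mathcal{K}_N$, and hence what produces a point $\bar x\in\Sigma$ with $0<\psi(\bar x)<Q$ at which $f$ is Lipschitz and Hopf's lemma applies.

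This is where your version has a genuine gap. With $\tau=0$, a component $V$ of $\{\psi_{N,1}<\psi_{N,2}\}$ may hug $\Gamma_0$, $\Gamma_1$, and the lateral boundary simultaneously (the only information you have is $\Psi=0$ on $\partial\Omega_N$), and nothing a priori prevents the relative boundary $\partial V\cap\Omega_N$ from lying entirely in the coincidence set $\{\psi=0\}\cup\{\psi=Q\}$ — indeed it can carry the value $0$ on one part and $Q$ on another. In that situation neither replacement $\hat\psi\equiv 0$ nor $\hat\psi\equiv Q$ on the filled-in set $\mathcal{O}$ is admissible (each violates the boundary datum $g_N$ on the opposite wall), so ``mimicking the claim \eqref{equiv}'' does not go through, and without a point of $\partial V$ where both minimizers take values in $(0,Q)$ you have neither a Lipschitz coefficient $c(x)$ for Hopf's lemma nor the interior $C^{1,\alpha_0}$ regularity of the swap to contradict. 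Your sentence ``I would use the monotonicity together with one-sided energy modifications to exclude the degenerate alternatives'' is precisely the step that needs a proof and does not transfer from Lemma \ref{mon}. The fix is the paper's: run your whole argument on $\Phi^{\tau}=\psi_{N,1}^{\tau}-\psi_{N,2}$ for $\tau>0$, where the overlap geometry resolves the degenerate cases, and pass to the limit $\tau\searrow 0$ at the very end. (Two smaller points: your case~(ii-2) swap should be measured against the minimality of the \emph{shifted} functional $\mathcal{E}_N^{\bar\tau}$ over $\mathcal{K}_N^{\bar\tau}$ once the shift is reinstated; and the strict boundary inequality that disposes of the alternative $U=\mathcal{D}^{\bar\tau}$ in case~(i) also comes from the shift and has no substitute at $\tau=0$.)
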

\begin{proof}
The proof is quite similar to that for Lemma \ref{mon}. For $i=1,2$, define 
\begin{equation}
				\tilde\psi_{N,i}(x_1,x_2)=\left\{
				\begin{aligned}
					& Q\quad &&\text{if}\,\,x_2\geq h_1(x_1),\\
					&\psi_{N,i}(x_1,x_2)\quad &&\text{if}\,\, (x_1,x_2)\in \Omega_N,\\
					&0\quad &&\text{if}\,\,x_2 \leq h_0(x_1).
				\end{aligned}\right.
			\end{equation}
   For ease of notations, we still denote $\tilde\psi_{N,i}$ by $\psi_{N,i}$.

For any $\tau> 0$, denote
\begin{eqnarray*}
{\psi}_{N,1}^{\tau}(x)={\psi}_{N,1}(x_1,x_2+\tau)\ \ \text{and} \ \ {\psi}_{N,2}^{\tau}(x)={\psi}_{N,2}(x_1,x_2+\tau). \end{eqnarray*}

Let
\begin{equation}
{\Omega}_N^{\tau}=\{(x_1,x_2): (x_1,x_2+\tau)\in{\Omega}_N\} \ \ \text{and} \ \ 
{\mathcal{D}}^{\tau}={\Omega}_N\cap {\Omega}_N^{\tau},
\end{equation}
and define
		\begin{eqnarray*}
		{\Phi}^{\tau}(x)={\psi}_{N,1}^{\tau}(x)-\psi_{N,2}({x})\ \ \text{and} \ \ \tilde{\Phi}^{\tau}(x)={\psi}_{N,2}^{\tau}(x)-{\psi}_{N,1}(x) \ \ 
\mbox{for any}  \ x\in {{\mathcal{D}}^{\tau}}.
	\end{eqnarray*}
 
We show that for any $\tau>0$, one has
\begin{eqnarray}\label{p1}
		{\Phi}^{\tau}(x)\geq 0\ \ \text{and}  \ \ 
 {\tilde\Phi}^{\tau}(x)\geq 0\ \ \mbox{for any}  \ x\in {{\mathcal{D}}^{\tau}}.
	\end{eqnarray}
This implies ${\psi}_{N,1}(x)\geq \psi_{N,2}({x})$ and ${\psi}_{N,2}(x)\geq \psi_{N,1}({x})$ in $\Omega_N$. 

Thus, one has
$${\psi}_{N,1}(x)\equiv\psi_{N,2}({x}) \ \ \text{in}\ \Omega_N,$$
which implies the uniqueness of the energy minimizer. 

In the following, we prove ${\Phi}^{\tau}(x)\geq 0$ for any $x\in {{\mathcal{D}}^{\tau}}$ by the contradiction argument. The proof for ${\tilde\Phi}^{\tau}(x)\geq 0$ for any $x\in {{\mathcal{D}}^{\tau}}$ is similar. Suppose that there exists $\bar\tau>0$ such that the set
${U}=\{x\in {\mathcal{D}}^{\bar\tau}: {\Phi}^{\bar\tau}(x)< 0 \}$
is not empty. Since $ {\Phi}^{\bar\tau}$ is a continuous function, ${U}$ must be an open set. Hence one has either ${U}={\mathcal{D}}^{\bar\tau}$ or ${U}\subsetneq
			{\mathcal{D}}^{\bar\tau}$. Next, we show that neither of these possibilities happens.

	(i) If ${U}={\mathcal{D}}^{\bar\tau}$. Denote
\begin{eqnarray*}
				\hat{x}_1=\text{min}\{x_1: (x_1,x_2)\in \partial \mathcal{D}^{\bar\tau}\}.
			\end{eqnarray*}
One can see that $\psi_{N,2}(\hat{x})=0$ and $0<\psi^{\bar\tau}_{N,1}(\hat x)\leq Q$ for $\hat{x} = (\hat{x}_1, \hat{x}_2)\in \partial \mathcal{D}^{\bar\tau}\cap\Gamma_{0}$. 
Therefore, one has ${\Phi}^{\bar\tau}(\hat{x})> 0$.  
			On the other hand, it follows from continuity that ${\Phi}^{\bar\tau}(x)\leq 0$ for any $x\in \overline{{\mathcal{D}}^{\bar\tau}}$ which
			leads to a contradiction.

	(ii) Suppose that $U\subsetneq{\mathcal{D}^{\bar\tau}}$, let $V$ be any connected component of $U$. It follows from the argument in Lemma \ref{mon} that
			$$ {\Phi}^{\bar\tau}(x)=0 \ \ \ \ \text{on} \ \ \partial V$$
			and
			$$\partial V \cap \{(x_1,x_2)\in \overline{{\mathcal{D}}^{\bar\tau}}: x_1=-N \,\, \text{or}\,\, N\}=\emptyset. (\text{cf. Figure \ref{f5.2.2}}) $$
	
\begin{figure}[h]
				\centering
				\begin{minipage}[t]{.5\textwidth}

\tikzset{every picture/.style={line width=0.75pt}} 

\begin{tikzpicture}[x=0.62pt,y=0.6pt,yscale=-1,xscale=1]

\draw [line width=1.5]    (181,69) .. controls (255.67,77.33) and (259.67,20) .. (325.67,51.33) .. controls (391.67,82.67) and (418.67,93.33) .. (449.67,88) ;
\draw [line width=1.5]    (180,190) .. controls (205.67,188) and (219.67,190.67) .. (292.67,208.33) .. controls (365.67,226) and (356.67,175.33) .. (449.67,188.33) ;
\draw [line width=1.5]    (181,69) -- (180,190) ;
\draw [line width=1.5]    (449.67,88) -- (449.67,188.33) ;
\draw  [dash pattern={on 4.5pt off 4.5pt}]  (179.67,230) -- (449.67,229.96) ;
\draw [line width=0.75]  [dash pattern={on 4.5pt off 4.5pt}]  (180,190) -- (179.67,230) ;
\draw [line width=0.75]  [dash pattern={on 4.5pt off 4.5pt}]  (449.67,188.33) -- (449.67,229.96) ;
\draw  [color={rgb, 255:red, 0; green, 0; blue, 0 }  ,draw opacity=1 ][fill={rgb, 255:red, 155; green, 155; blue, 155 }  ,fill opacity=0.5 ][line width=1.5]  (239.68,132.77) .. controls (231.13,120.52) and (235.5,102.69) .. (249.45,92.95) .. controls (263.4,83.21) and (281.64,85.25) .. (290.2,97.5) .. controls (298.76,109.76) and (294.38,127.59) .. (280.43,137.33) .. controls (266.48,147.07) and (248.24,145.03) .. (239.68,132.77) -- cycle ;

\draw (164,237.4) node [anchor=north west][inner sep=0.75pt]  [font=\tiny]  {$x_{1} =-N$};
\draw (427,237.4) node [anchor=north west][inner sep=0.75pt]  [font=\tiny]  {$x_{1} =N$};
\draw (411,102.4) node [anchor=north west][inner sep=0.75pt]  [font=\small]  {$\mathcal{D}^{\overline{\tau }}$};
\draw (299,256.4) node [anchor=north west][inner sep=0.75pt]    {$( a)$};
\draw (261,110.4) node [anchor=north west][inner sep=0.75pt]  [font=\footnotesize]  {$V$};

\end{tikzpicture}

				\end{minipage}%
				\begin{minipage}[t]{.5\textwidth}
					\centering

\tikzset{every picture/.style={line width=0.75pt}} 

\begin{tikzpicture}[x=0.62pt,y=0.62pt,yscale=-1,xscale=1]

\draw  [color={rgb, 255:red, 0; green, 0; blue, 0 }  ,draw opacity=1 ][fill={rgb, 255:red, 155; green, 155; blue, 155 }  ,fill opacity=0.5 ][line width=1.5]  (229.4,145.7) .. controls (217.99,129.36) and (223.09,106.1) .. (240.79,93.74) .. controls (258.49,81.38) and (282.09,84.61) .. (293.49,100.95) .. controls (304.9,117.29) and (299.8,140.55) .. (282.1,152.91) .. controls (264.4,165.27) and (240.8,162.04) .. (229.4,145.7) -- cycle ;
\draw  [fill={rgb, 255:red, 255; green, 255; blue, 255 }  ,fill opacity=1 ][line width=1.5]  (247,130.71) .. controls (242.76,123.01) and (245.44,113.4) .. (252.97,109.25) .. controls (260.51,105.1) and (270.06,107.97) .. (274.3,115.67) .. controls (278.54,123.37) and (275.87,132.98) .. (268.33,137.13) .. controls (260.79,141.28) and (251.24,138.41) .. (247,130.71) -- cycle ;
\draw [line width=1.5]    (184,71) .. controls (258.67,79.33) and (262.67,22) .. (328.67,53.33) .. controls (394.67,84.67) and (421.67,95.33) .. (452.67,90) ;
\draw [line width=1.5]    (183,192) .. controls (208.67,190) and (222.67,192.67) .. (295.67,210.33) .. controls (368.67,228) and (359.67,177.33) .. (452.67,190.33) ;
\draw [line width=1.5]    (184,71) -- (183,192) ;
\draw  [dash pattern={on 4.5pt off 4.5pt}]  (182.67,224.89) -- (451.67,224.67) ;
\draw [line width=1.5]    (451.67,91) -- (451.67,191.33) ;
\draw [line width=0.75]  [dash pattern={on 4.5pt off 4.5pt}]  (183,192) -- (182.67,224.89) ;
\draw [line width=0.75]  [dash pattern={on 4.5pt off 4.5pt}]  (451.67,191.33) -- (451.67,224.67) ;

\draw (301,252.4) node [anchor=north west][inner sep=0.75pt]    {$( b)$};
\draw (165,232.4) node [anchor=north west][inner sep=0.75pt]  [font=\tiny]  {$x_{1} =-N$};
\draw (430,231.4) node [anchor=north west][inner sep=0.75pt]  [font=\tiny]  {$x_{1} =N$};
\draw (414,104.4) node [anchor=north west][inner sep=0.75pt]  [font=\small]  {$\mathcal{D}^{\overline{\tau }}$};
\draw (279,129.4) node [anchor=north west][inner sep=0.75pt]  [font=\footnotesize]  {$V$};

\end{tikzpicture}
					
				\end{minipage}
			
				\begin{minipage}[t]{.5\textwidth}

\tikzset{every picture/.style={line width=0.75pt}} 

\begin{tikzpicture}[x=0.62pt,y=0.6pt,yscale=-1,xscale=1]

\draw [color={rgb, 255:red, 0; green, 0; blue, 0 }  ,draw opacity=1 ][line width=1.5]    (231.51,197.8) .. controls (262.83,141.5) and (333.14,114.97) .. (357.08,208.67) ;
\draw [draw opacity=0][fill={rgb, 255:red, 155; green, 155; blue, 155 }  ,fill opacity=0.61 ][line width=1.5]    (231.51,197.8) .. controls (258.17,158.72) and (279.08,141.67) .. (313.08,148.67) .. controls (347.08,155.67) and (356.63,214.72) .. (357.61,211.7) .. controls (358.58,208.67) and (332.92,220.44) .. (305.58,214.67) .. controls (278.25,208.89) and (241.67,200.68) .. (231.51,197.8) -- cycle ;
\draw  [fill={rgb, 255:red, 255; green, 255; blue, 255 }  ,fill opacity=1 ][line width=1.5]  (281.71,181.23) .. controls (281.2,173.16) and (289.35,166.07) .. (299.93,165.4) .. controls (310.51,164.72) and (319.5,170.71) .. (320.02,178.77) .. controls (320.54,186.84) and (312.38,193.93) .. (301.8,194.61) .. controls (291.23,195.29) and (282.23,189.3) .. (281.71,181.23) -- cycle ;
\draw [line width=1.5]    (184,74) .. controls (258.67,82.33) and (262.67,25) .. (328.67,56.33) .. controls (394.67,87.67) and (421.67,98.33) .. (452.67,93) ;
\draw [line width=1.5]    (183,195) .. controls (208.67,193) and (222.67,195.67) .. (295.67,213.33) .. controls (368.67,231) and (359.67,180.33) .. (452.67,193.33) ;
\draw [line width=1.5]    (184,74) -- (183,195) ;
\draw  [dash pattern={on 4.5pt off 4.5pt}]  (182.67,235) -- (452.67,236.33) ;
\draw [line width=1.5]    (451.67,94) -- (451.67,194.33) ;
\draw [line width=0.75]  [dash pattern={on 4.5pt off 4.5pt}]  (183,195) -- (182.67,235) ;
\draw [line width=0.75]  [dash pattern={on 4.5pt off 4.5pt}]  (451.67,194.33) -- (451.67,235.96) ;

\draw (304,253.4) node [anchor=north west][inner sep=0.75pt]    {$( c)$};
\draw (167,242.4) node [anchor=north west][inner sep=0.75pt]  [font=\tiny]  {$x_{1} =-N$};
\draw (430,242.4) node [anchor=north west][inner sep=0.75pt]  [font=\tiny]  {$x_{1} =N$};
\draw (414,107.4) node [anchor=north west][inner sep=0.75pt]  [font=\small]  {$\mathcal{D}^{\overline{\tau }}$};
\draw (325,193.4) node [anchor=north west][inner sep=0.75pt]  [font=\footnotesize]  {$V$};

\end{tikzpicture}

				\end{minipage}%
				\begin{minipage}[t]{.5\textwidth}
					\centering

\tikzset{every picture/.style={line width=0.75pt}} 

\begin{tikzpicture}[x=0.62pt,y=0.6pt,yscale=-1,xscale=1]

\draw [draw opacity=0][fill={rgb, 255:red, 155; green, 155; blue, 155 }  ,fill opacity=0.61 ][line width=1.5]    (233.82,84.36) .. controls (245.67,104) and (263.9,126.74) .. (297.08,127.67) .. controls (330.26,128.6) and (350.19,83.02) .. (352.65,82.04) .. controls (355.1,81.05) and (354.19,84.17) .. (330.67,71.33) .. controls (307.14,58.5) and (283.9,59.88) .. (258.48,73.6) .. controls (233.05,87.31) and (255.9,75.6) .. (233.82,84.36) -- cycle ;
\draw [color={rgb, 255:red, 0; green, 0; blue, 0 }  ,draw opacity=1 ][line width=1.5]    (233.7,85.18) .. controls (249.05,110.41) and (272.25,130.01) .. (296.96,128.49) .. controls (321.68,126.96) and (336.62,112.82) .. (352.65,82.04) ;
\draw [line width=1.5]    (186,89) .. controls (260.67,97.33) and (264.67,40) .. (330.67,71.33) .. controls (396.67,102.67) and (423.67,113.33) .. (454.67,108) ;
\draw [line width=1.5]    (185,210) .. controls (210.67,208) and (224.67,210.67) .. (297.67,228.33) .. controls (370.67,246) and (361.67,195.33) .. (454.67,208.33) ;
\draw [line width=1.5]    (186,89) -- (185,210) ;
\draw  [dash pattern={on 4.5pt off 4.5pt}]  (184.67,250) -- (454.67,251.33) ;
\draw [line width=1.5]    (453.67,109) -- (453.67,209.33) ;
\draw [line width=0.75]  [dash pattern={on 4.5pt off 4.5pt}]  (185,210) -- (184.67,250) ;
\draw [line width=0.75]  [dash pattern={on 4.5pt off 4.5pt}]  (453.67,209.33) -- (453.67,250.96) ;
\draw  [fill={rgb, 255:red, 255; green, 255; blue, 255 }  ,fill opacity=1 ][line width=1.5]  (262.81,99.79) .. controls (262.35,92.58) and (268.41,86.32) .. (276.36,85.81) .. controls (284.3,85.3) and (291.11,90.73) .. (291.57,97.94) .. controls (292.04,105.15) and (285.97,111.41) .. (278.03,111.92) .. controls (270.09,112.43) and (263.28,107) .. (262.81,99.79) -- cycle ;
\draw  [fill={rgb, 255:red, 255; green, 255; blue, 255 }  ,fill opacity=1 ][line width=1.5]  (301,86.66) .. controls (300.64,81.04) and (306.24,76.1) .. (313.51,75.64) .. controls (320.77,75.17) and (326.96,79.35) .. (327.32,84.97) .. controls (327.68,90.59) and (322.08,95.53) .. (314.81,96) .. controls (307.54,96.46) and (301.36,92.28) .. (301,86.66) -- cycle ;

\draw (302,266.4) node [anchor=north west][inner sep=0.75pt]    {$( d)$};
\draw (169,257.4) node [anchor=north west][inner sep=0.75pt]  [font=\tiny]  {$x_{1} =-N$};
\draw (432,257.4) node [anchor=north west][inner sep=0.75pt]  [font=\tiny]  {$x_{1} =N$};
\draw (416,122.4) node [anchor=north west][inner sep=0.75pt]  [font=\small]  {$\mathcal{D}^{\overline{\tau }}$};
\draw (303,102.4) node [anchor=north west][inner sep=0.75pt]  [font=\footnotesize]  {$V$};

\end{tikzpicture}

				\end{minipage}
    


\caption{The region $V$ in the intersection domain}\label{f5.2.2}

			\end{figure}
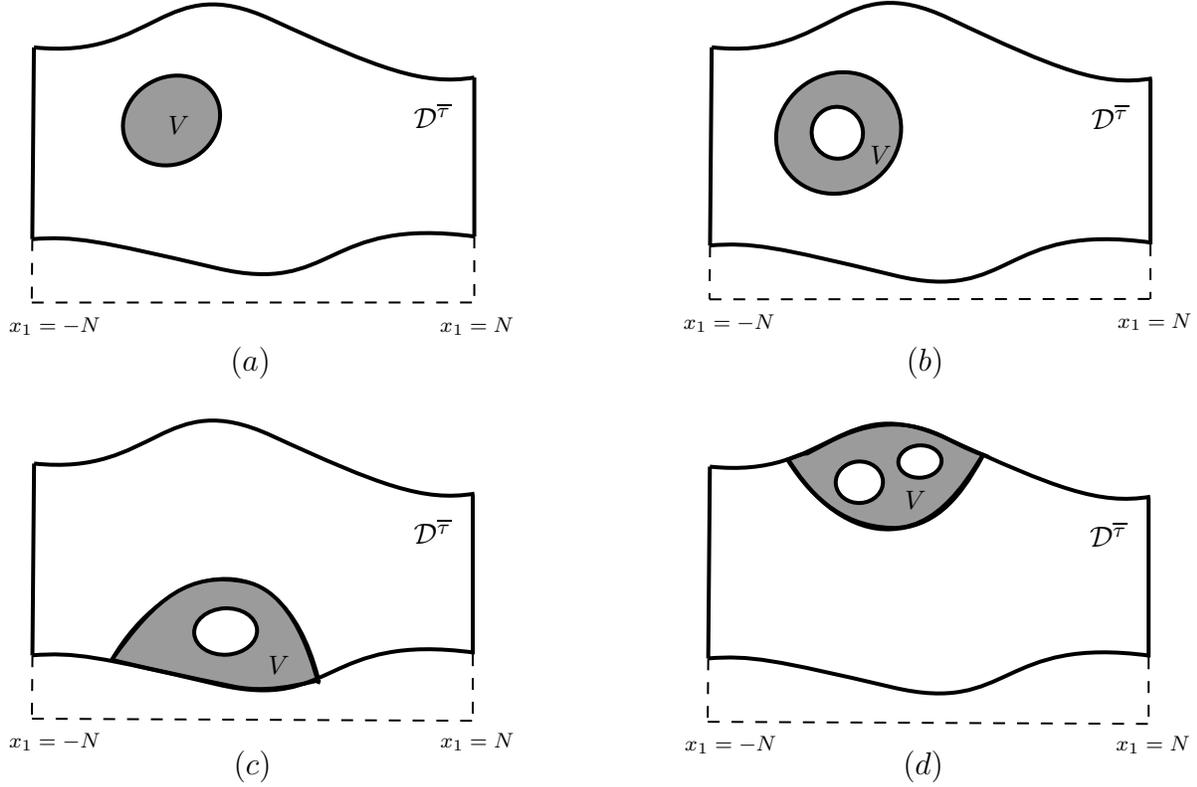

 Similarly, we analyze the problem via the comparison for the associated energies of the shifted functions.

\ \ {\it  Case (ii-1).}  Assume that
	\begin{eqnarray}\label{m5-2}
	\int_{V}\frac{|\nabla {\psi}_{N,1}^{{\bar\tau}}|^2}{2}+F({\psi}_{N,1}^{{\bar\tau}})dx < \int_{V}\frac{|\nabla \psi_{N,2}|^2}{2}+F(\psi_{N,2})dx.	\end{eqnarray}
 
	Define
	\[
	{\psi}^*=\left\{
	\begin{aligned}
		& {\psi}_{N,1}^{{\bar\tau}},\quad \text{in}\,\, V,\\
		& \psi_{N,2},\quad \text{in}\,\, {\Omega}_{N}\setminus V.
	\end{aligned}\right.
	\]
	Clearly, ${\psi}^*\in {\mathcal{K}}_N$ and ${\psi}^*$ satisfies
	\[
	\int_{{\Omega}_{N}}\frac{|\nabla {\psi}^*|^2}{2}+F({{\psi}^*})dx < \int_{{\Omega}_{N}}\frac{|\nabla {\psi}_{N,2}|^2}{2}+F({\psi}_{N,2})dx.	
	\]
	This contradicts with the fact that ${\psi}_{N,2}$ is an energy minimizer for ${\mathcal{E}}_N$ over ${\mathcal{K}}_N$.

\ \ {\it  Case (ii-2).} If
	\begin{eqnarray}\label{m6-2}
	\int_{V}\frac{|\nabla \psi_{N,2}|^2}{2}+F(\psi_{N,2})dx < \int_{V}\frac{|\nabla {\psi}_{N,1}^{{\bar\tau}}|^2}{2}+F({\psi}_{N,1}^{{\bar\tau}})dx.	
\end{eqnarray}

			Define
			\[
			{{\psi}^{**}}=\left\{
			\begin{aligned}
				& {\psi}_{N,2},\quad &&\text{in}\,\, V,\\
				&  {\psi}_{N,1}^{{\bar\tau}},\quad &&\text{in}\,\, {\Omega}_{N}^{\bar\tau}\setminus V.
			\end{aligned}\right.
			\]
			Clearly, ${\psi}^{**}\in {\mathcal{K}}_N^{\bar\tau}$ and ${\psi}^{**}$ satisfies
			\[
			\int_{{\Omega}_{N}^{\bar\tau}}\frac{|\nabla {\psi}^{**}|^2}{2}+F({\psi}^{**})dx < \int_{{\Omega}_{N}^{\bar\tau}}\frac{|\nabla {\psi}_{N,1}^{{\bar\tau}}|^2}{2}+F({\psi}_{N,1}^{{\bar\tau}})dx.	
			\]
			{This contradicts with the fact that ${\psi}_{N,1}^{\bar\tau}$ is an energy minimizer for ${\mathcal{E}}_N^{\bar\tau}$ over ${\mathcal{K}}_N^{\bar\tau}$.} Here ${\mathcal{E}}_N^{\bar\tau}$ and ${\mathcal{K}}_N^{\bar\tau}$ are defined in \eqref{E1} and \eqref{E2}. This implies  \eqref{m6-2} cannot happen.

\ \ {\it  Case (ii-3).} We assume that
\begin{eqnarray}\label{m4-2.1}
	\int_{V}\frac{|\nabla {\psi}_{N,1}^{{\bar\tau}}|^2}{2}+F({\psi}_{N,1}^{{\bar\tau}})dx = \int_{V}\frac{|\nabla \psi_{N,2}|^2}{2}+F(\psi_{N,2})dx.
\end{eqnarray}
Note that the complement of $V$ in $\mathbb{R}^2$ has a unique unbounded connected component $A$. Let $\Sigma=\partial A\cap \mathcal{D}^{\bar\tau}$. Using the same argument in Lemma \ref{mon} that there exists an $\bar{x}\in \Sigma$ such that $0<{\psi}_{N,2}(\bar{x})={\psi}_{N,1}^{{\bar\tau}}(\bar{x})<Q$. Therefore, one has
\begin{equation}
				\left\{
				\begin{aligned}
					&\Delta \Phi^{\bar\tau}(x) =c(x)\Phi^{\bar\tau}(x), \quad &&\text{in}\,\, B_\epsilon(\bar{x})\cap V,\\
					&\Phi^{\bar\tau}(x)< 0, \quad &&\text{in}\,\, B_\epsilon(\bar{x})\cap V,\\
					&\Phi^{\bar\tau}(\bar{x})=0,
				\end{aligned}
				\right.
			\end{equation}
where
	\begin{eqnarray*}
		c(x)=\frac{f({\psi}_{N,1}^{{\bar\tau}})-f({\psi}_{N,2})}{{\psi}_{N,1}^{{\bar\tau}}-{\psi}_{N,2}}.
	\end{eqnarray*}
It follows from Hopf lemma (\hspace{1sp}\cite{evans}) that one has
			\begin{equation}\label{intnormalhopf1}
				\frac{\partial \Phi^{\bar\tau}}{\partial \nu}(\bar{x})>0,
			\end{equation}
where $\nu$ is unit out normal at $\bar{x}$ to the domain $B_\epsilon(\bar{x})\cap V$.
Define
\[
\hat{\psi}(x)=\left\{
\begin{aligned}
&	{\psi}_{N,2}(x), \quad \text{if}\,\, x\in {\Omega}_N\setminus V,\\
&	{\psi}_{N,1}^{{\bar\tau}}(x),\quad \text{if}\,\, x\in V.
\end{aligned}
\right.
\]
It follows from \eqref{m4-2.1} that $\hat{\psi}$ is also an energy minimizer for ${\mathcal{E}}_N$ over $\mathcal{K}_N$. Combining the standard regularity theory for variational problems (cf. \cite{Giusti}) and the fact that $\hat{\psi}(\bar{x})\in (0, Q)$ yields $\hat{\psi}$ must be smooth in the neighborhood of $\bar{x}$. However, \eqref{intnormalhopf1} implies that $\hat{\psi}$ is not smooth in the neighborhood of $\bar{x}$. This leads to a contradiction.
This completes the proof of the lemma.
\end{proof}


	\subsection{Regularity  of the boundary of stagnation region (the free boundary)}

 In this subsection, we prove the regularity of the boundary of stagnation region when the set of stagnation points  is not empty. In the next subsection, we prove the existence of stagnation points when the boundary of nozzle satisfies certain geometric conditions. 

In fact, the boundary of stagnation region  can be regarded as an obstacle type free boundary. In order to prove its regularity,  
 we first establish uniform $C^{1,1}$ local regularity of the energy minimizer in the truncated domain $\Omega_N$. With the aid of this regularity result, we prove that the boundary of non-stagnant region is $C^1$. The key point is to use the sliding method to derive some monotonicity property which helps to show the positive density of the coincidence set.
 This, together with the analysis near the touching points between the free and fixed boundary, yields global regularity of the boundary of non-stagnant region. 
 It should be remarked that in general $C^1$-regularity is the optimal smoothness near the touching points of the free boundary and the fixed boundary. In fact, there are examples of optimality, in the sense that the free boundary can not be $C^{1,Dini}$ at the touching point; see \cite{SU_Duke}. We also show that the boundary of stagnation region away from the solid boundary can have better regularity and belong to $C_{loc}^{1,\beta}$ for some $\beta>0$.
 
It follows from Lemma \ref{mon} that one knows $\partial_{x_2}\psi_N\geq 0$ in $\Omega_N$. For any $x_1\in(-N,N)$, define
\begin{equation}\label{defh0N}
    \tilde{h}_{0,N}(x_1):=\sup\{x_2: \psi_N(x_1,x_2)=0, x_2\in [h_0(x_1), h_1(x_1)]\}
\end{equation}
 and 
\begin{equation}\label{defh1N}
    \tilde{h}_{1,N}(x_1):=\inf\{x_2: \psi_N(x_1,x_2)=Q, x_2\in [h_0(x_1), h_1(x_1)]\}.
\end{equation}
 Denote
 \[
 \tilde{\Omega}_N=\{(x_1,x_2): \tilde{h}_{0,N}(x_1)<x_2<\tilde{h}_{1,N}(x_1), -N<x_1<N\}, \quad \Omega_N^\diamond =\Omega_N\setminus \tilde{\Omega}_N, 
 \]
 and
\[
\tilde{\Gamma}_{0, N}=\partial \tilde{\Omega}_N \cap \{x: \psi(x)=0\}, \,\, \tilde{\Gamma}_{1, N}=\partial \tilde{\Omega}_N \cap \{x: \psi(x)=Q\},
\,\, \check{\Gamma}_{i, N}=\tilde{\Gamma}_{i,N}\setminus \Gamma_i\,\, \text{for}\,\, i=0,1.
\]

 Note that $\Omega_N\setminus \tilde{\Omega}_N$ is the stagnation region and  $\check{\Gamma}_{i,N}$ forms the boundary of stagnation region, which is indeed a free boundary of obstacle type. 
In the next subsection, we show that $ \Omega_N^\diamond$ is not empty when $h_i$ satisfies certain conditions.  
The main goal of this subsection is to study the regularity of $\partial \tilde{\Omega}_N$ when $ \Omega_N^\diamond$ is not an empty set. 
Note that  the free boundary $\check \Gamma_{i,N}$ ($i=0,1$) may well touch the corresponding fixed boundary $\Gamma_{i,N}$ $(i=0,1)$ at some points, and may cease to exist for certain $x_1$-intervals, see Figure \ref{1-9}.
Hence one of the major difficulties is to prove the global regularity of $\tilde{\Gamma}_{i,N}$, which gives the behavior in the neighborhood of the intersection points of $\Gamma_i$  and $\overline{\check{\Gamma}_{i,N}}$.

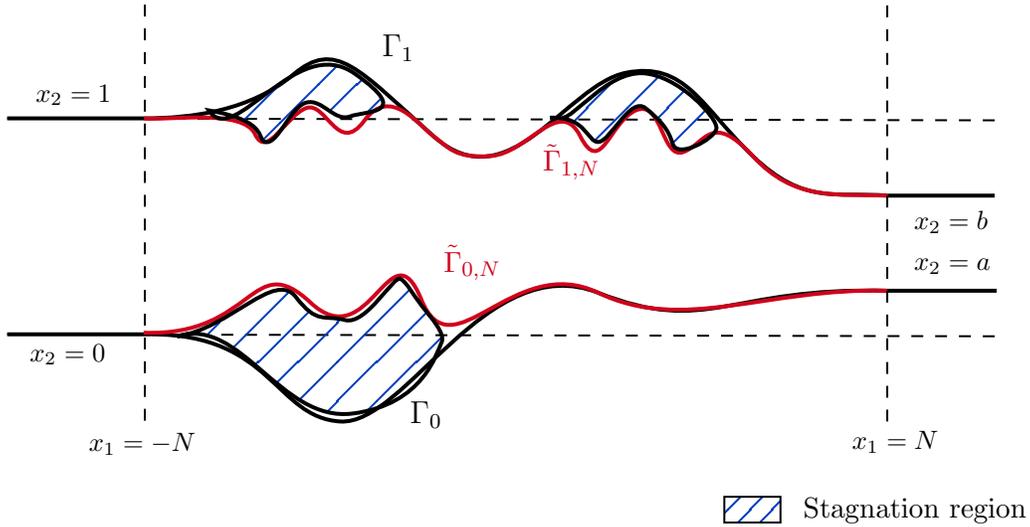
\begin{figure}[h]
			\centering

 
\tikzset{
pattern size/.store in=\mcSize, 
pattern size = 5pt,
pattern thickness/.store in=\mcThickness, 
pattern thickness = 0.3pt,
pattern radius/.store in=\mcRadius, 
pattern radius = 1pt}
\makeatletter
\pgfutil@ifundefined{pgf@pattern@name@_1dhpxtpvc}{
\pgfdeclarepatternformonly[\mcThickness,\mcSize]{_1dhpxtpvc}
{\pgfqpoint{0pt}{0pt}}
{\pgfpoint{\mcSize+\mcThickness}{\mcSize+\mcThickness}}
{\pgfpoint{\mcSize}{\mcSize}}
{
\pgfsetcolor{\tikz@pattern@color}
\pgfsetlinewidth{\mcThickness}
\pgfpathmoveto{\pgfqpoint{0pt}{0pt}}
\pgfpathlineto{\pgfpoint{\mcSize+\mcThickness}{\mcSize+\mcThickness}}
\pgfusepath{stroke}
}}
\makeatother

 
\tikzset{
pattern size/.store in=\mcSize, 
pattern size = 5pt,
pattern thickness/.store in=\mcThickness, 
pattern thickness = 0.3pt,
pattern radius/.store in=\mcRadius, 
pattern radius = 1pt}
\makeatletter
\pgfutil@ifundefined{pgf@pattern@name@_5gnp5tr71}{
\pgfdeclarepatternformonly[\mcThickness,\mcSize]{_5gnp5tr71}
{\pgfqpoint{0pt}{0pt}}
{\pgfpoint{\mcSize+\mcThickness}{\mcSize+\mcThickness}}
{\pgfpoint{\mcSize}{\mcSize}}
{
\pgfsetcolor{\tikz@pattern@color}
\pgfsetlinewidth{\mcThickness}
\pgfpathmoveto{\pgfqpoint{0pt}{0pt}}
\pgfpathlineto{\pgfpoint{\mcSize+\mcThickness}{\mcSize+\mcThickness}}
\pgfusepath{stroke}
}}
\makeatother

 
\tikzset{
pattern size/.store in=\mcSize, 
pattern size = 5pt,
pattern thickness/.store in=\mcThickness, 
pattern thickness = 0.3pt,
pattern radius/.store in=\mcRadius, 
pattern radius = 1pt}
\makeatletter
\pgfutil@ifundefined{pgf@pattern@name@_uaem9s2a4}{
\pgfdeclarepatternformonly[\mcThickness,\mcSize]{_uaem9s2a4}
{\pgfqpoint{0pt}{0pt}}
{\pgfpoint{\mcSize+\mcThickness}{\mcSize+\mcThickness}}
{\pgfpoint{\mcSize}{\mcSize}}
{
\pgfsetcolor{\tikz@pattern@color}
\pgfsetlinewidth{\mcThickness}
\pgfpathmoveto{\pgfqpoint{0pt}{0pt}}
\pgfpathlineto{\pgfpoint{\mcSize+\mcThickness}{\mcSize+\mcThickness}}
\pgfusepath{stroke}
}}
\makeatother

 
\tikzset{
pattern size/.store in=\mcSize, 
pattern size = 5pt,
pattern thickness/.store in=\mcThickness, 
pattern thickness = 0.3pt,
pattern radius/.store in=\mcRadius, 
pattern radius = 1pt}
\makeatletter
\pgfutil@ifundefined{pgf@pattern@name@_humqgq9e4}{
\pgfdeclarepatternformonly[\mcThickness,\mcSize]{_humqgq9e4}
{\pgfqpoint{0pt}{0pt}}
{\pgfpoint{\mcSize+\mcThickness}{\mcSize+\mcThickness}}
{\pgfpoint{\mcSize}{\mcSize}}
{
\pgfsetcolor{\tikz@pattern@color}
\pgfsetlinewidth{\mcThickness}
\pgfpathmoveto{\pgfqpoint{0pt}{0pt}}
\pgfpathlineto{\pgfpoint{\mcSize+\mcThickness}{\mcSize+\mcThickness}}
\pgfusepath{stroke}
}}
\makeatother
\tikzset{every picture/.style={line width=0.75pt}} 

\begin{tikzpicture}[x=0.75pt,y=0.75pt,yscale=-1,xscale=1]

\draw [line width=0.75]  [dash pattern={on 4.5pt off 4.5pt}]  (75.33,75) -- (576.33,76) ;
\draw [line width=0.75]  [dash pattern={on 4.5pt off 4.5pt}]  (75.33,184) -- (576.33,185) ;
\draw [line width=1.5]    (144.33,75) .. controls (168.67,75) and (198.33,71) .. (219.64,52.87) .. controls (240.96,34.75) and (252.33,50) .. (290.33,84) .. controls (328.33,118) and (357.33,58) .. (387.33,52) .. controls (417.33,46) and (425.33,69) .. (451.33,94) .. controls (477.33,119) and (493.33,112) .. (519.33,114) ;
\draw [line width=1.5]    (148,184) .. controls (209.33,184) and (210.33,226) .. (243.33,228) .. controls (276.33,230) and (317.33,141) .. (373.33,163) .. controls (429.33,185) and (438.17,160) .. (519.17,162) ;
\draw [line width=1.5]    (75.33,75) -- (144.33,75) ;
\draw [line width=1.5]    (519.33,114) -- (573.33,114) ;
\draw [line width=1.5]    (75.33,184) -- (148,184) ;
\draw [line width=1.5]    (519.17,162) -- (574.33,162) ;
\draw [color={rgb, 255:red, 208; green, 2; blue, 27 }  ,draw opacity=1 ][line width=1.5]    (144.33,75) .. controls (157.01,76.18) and (194.33,70) .. (200.33,83) .. controls (206.33,96) and (216.19,73.29) .. (226.21,69.78) .. controls (236.23,66.28) and (242.21,94.18) .. (255.41,76.18) .. controls (268.61,58.18) and (282.86,78.62) .. (290.33,84) .. controls (297.81,89.38) and (300.61,95.38) .. (319.33,94) .. controls (338.06,92.62) and (353.81,63.38) .. (364.33,84) .. controls (374.86,104.62) and (378.93,77.05) .. (392.33,71) .. controls (405.73,64.95) and (405.33,96) .. (416.33,92) .. controls (427.33,88) and (430.33,71) .. (451.33,94) .. controls (472.33,117) and (493.01,112.98) .. (519.33,114) ;
\draw [color={rgb, 255:red, 208; green, 2; blue, 27 }  ,draw opacity=1 ][line width=1.5]    (144.13,183.2) .. controls (200.47,185.2) and (198.33,140) .. (228.33,168) .. controls (258.33,196) and (268.57,127.87) .. (284.19,166.43) .. controls (299.81,204.98) and (330.33,142.33) .. (373.33,163) .. controls (416.33,183.67) and (458.33,160.33) .. (519.17,162) ;
\draw [draw opacity=0][pattern=_1dhpxtpvc,pattern size=14.475000000000001pt,pattern thickness=0.75pt,pattern radius=0pt, pattern color={rgb, 255:red, 7; green, 62; blue, 192}][line width=1.5]    (177,71) .. controls (190.13,90.8) and (220.93,31.6) .. (250.53,52.4) .. controls (280.13,73.2) and (256.33,71) .. (248.33,74) .. controls (240.33,77) and (234.33,65) .. (226.33,67) .. controls (218.33,69) and (212.42,86.96) .. (206.33,87) .. controls (200.24,87.04) and (209.33,76.56) .. (177,71) -- cycle ;
\draw [draw opacity=0][pattern=_5gnp5tr71,pattern size=15.149999999999999pt,pattern thickness=0.75pt,pattern radius=0pt, pattern color={rgb, 255:red, 7; green, 62; blue, 192}][line width=1.5]    (349,74.67) .. controls (368,73.89) and (367.68,87.46) .. (373.67,86.89) .. controls (379.65,86.32) and (389.67,63.56) .. (397.33,69.89) .. controls (405,76.22) and (408.67,80.22) .. (411,87.89) .. controls (413.33,95.56) and (432,85.56) .. (433,80.22) .. controls (434,74.89) and (413.33,52.89) .. (395.67,52.22) .. controls (378,51.56) and (361.33,74.67) .. (355.33,74.67) ;
\draw [draw opacity=0][pattern=_uaem9s2a4,pattern size=14.924999999999999pt,pattern thickness=0.75pt,pattern radius=0pt, pattern color={rgb, 255:red, 7; green, 62; blue, 192}][line width=1.5]    (161,185) .. controls (187.83,181.17) and (210.13,151.2) .. (220.93,165.2) .. controls (231.73,179.2) and (234.93,174) .. (245.73,176.8) .. controls (256.53,179.6) and (270.35,153.86) .. (273.73,156) .. controls (277.12,158.14) and (279.88,163.82) .. (284.53,169.78) .. controls (289.19,175.73) and (296.61,183.68) .. (294.33,188.22) .. controls (292.06,192.77) and (285.67,219.56) .. (249.33,224) .. controls (213,228.44) and (193.33,174.67) .. (161,185) -- cycle ;
\draw  [pattern=_humqgq9e4,pattern size=8.399999999999999pt,pattern thickness=0.75pt,pattern radius=0pt, pattern color={rgb, 255:red, 7; green, 62; blue, 192}] (437,266) -- (465.33,266) -- (465.33,279) -- (437,279) -- cycle ;
\draw [line width=0.75]  [dash pattern={on 4.5pt off 4.5pt}]  (519.17,18) -- (519.17,162) -- (519.17,228) ;
\draw [line width=0.75]  [dash pattern={on 4.5pt off 4.5pt}]  (144.73,17.6) -- (144.73,70.6) -- (144.73,161.6) -- (144.73,227.6) ;

\draw (85.33,187.4) node [anchor=north west][inner sep=0.75pt]  [font=\footnotesize]  {$x_{2} =0$};
\draw (88.33,56.4) node [anchor=north west][inner sep=0.75pt]  [font=\footnotesize]  {$x_{2} =1$};
\draw (531.33,144.4) node [anchor=north west][inner sep=0.75pt]  [font=\footnotesize]  {$x_{2} =a$};
\draw (531.33,120.4) node [anchor=north west][inner sep=0.75pt]  [font=\footnotesize]  {$x_{2} =b$};
\draw (277,216.4) node [anchor=north west][inner sep=0.75pt]    {$\Gamma _{0}$};
\draw (263,31.4) node [anchor=north west][inner sep=0.75pt]    {$\Gamma _{1}$};
\draw (344,86.4) node [anchor=north west][inner sep=0.75pt]  [font=\small,color={rgb, 255:red, 208; green, 2; blue, 27 }  ,opacity=1 ]  {$\tilde{\Gamma }_{1,N}$};
\draw (475.33,265) node [anchor=north west][inner sep=0.75pt]  [font=\small] [align=left] {Stagnation region};
\draw (500,231.4) node [anchor=north west][inner sep=0.75pt]  [font=\footnotesize]  {$x_{1} =N$};
\draw (115,232.4) node [anchor=north west][inner sep=0.75pt]  [font=\footnotesize]  {$x_{1} =-N$};
\draw (294,137.4) node [anchor=north west][inner sep=0.75pt]  [font=\small,color={rgb, 255:red, 208; green, 2; blue, 27 }  ,opacity=1 ]  {$\tilde{\Gamma }_{0,N}$};

\end{tikzpicture}

\caption{The free boundaries $\tilde{\Gamma }_{0,N}$ and $\tilde{\Gamma }_{1,N}$}\label{1-9}
\end{figure}

In order to prove the regularity of $\tilde{\Gamma}_{i, N} (i=0,1)$, we investigate the optimal regularity of the function $\psi_N$ when the free boundary $\check{\Gamma}_{i,N}$ is present; if $\check{\Gamma}_{i,N}=\emptyset$, then $\tilde{h}_{i,N}=h_{i,N}$.  
Since  the stream function  satisfies the free boundary problem
\begin{equation}\label{pbpsiN}
	\left\{
	\begin{aligned}	
&	\Delta \psi_N=f(\psi_N)\chi_{\{0<\psi_N<Q\}}\quad &&\text{in}\,\, \Omega_N,\\
&\psi_N=g_N\quad &&\text{on}\,\, \partial \Omega_N,
\end{aligned}
\right.
\end{equation}
where $f\in C^{1/2}([0, Q])\cap C^{\infty}((0,Q))$ is shown in Proposition \ref{propstream}, we are dealing with an obstacle problem of semilinear type.

By uniform  continuity of $\psi_N $ over the nozzle, we know that the free boundaries $\check{\Gamma}_{0,N}$ and $\check{\Gamma}_{1,N}$ are uniformly away from each other, by a distance $2r_0$ for some $r_0 >0$.

Note that the analysis for $\tilde{\Gamma}_{0,N}$ is almost the same as that for $\tilde{\Gamma}_{1,N}$. Hence in the rest of this subsection, we focus on the analysis on $\tilde{\Gamma}_{0,N}$.
Recall that close to $\tilde \Gamma_{0,N}$ we have the following free boundary problem for $\psi_N$
\begin{equation}\label{e1}
	\Delta \psi_N=f(\psi_N)\chi_{\{\psi_N>0\}}=6Q(1-2\kappa(\psi_N))\chi_{\{\psi_N>0\}}.
\end{equation}
The classical regularity theory for elliptic equations implies that $\psi_N\in C_{loc}^{1, \alpha}(\Omega_N)$ for some $\alpha\in(0,1)$. 
Nevertheless, since we deal with a  free boundary problem, we can prove a stronger result.
\begin{pro}\label{Thm:C11}
There exists a universal constant $C_0$ (independent of $N$)
such that
$$
|D^2 \psi_N | \leq C_0 \quad \hbox{in } K \cap \Omega_N,
$$
for any compact set $K$ in $\mathbb{R}^2$.
\end{pro}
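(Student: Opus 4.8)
The plan is to treat \eqref{e1} as a semilinear obstacle problem and to establish the optimal $C^{1,1}$ bound by combining a quadratic growth estimate at the free boundary with an interior rescaling argument. First I would record the ingredients that are uniform in $N$: by Lemma \ref{lem6.2} one has $0\le\psi_N\le Q$, and since $|f(\psi_N)\chi_{\{0<\psi_N<Q\}}|\le\|f\|_{L^\infty([0,Q])}=6Q$, Calder\'on--Zygmund estimates give $\psi_N\in W^{2,p}_{loc}$ for every $p<\infty$, hence a uniform $C^{1,\alpha}$ bound on $K\cap\Omega_N$ with constants depending only on $Q$, $p$ and $K$, not on $N$. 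The decisive structural fact, coming from Proposition \ref{propstream}, is that $f(0)=6Q>0$ and $f(Q)=-6Q<0$: near $\check\Gamma_{0,N}$ the equation reads $\Delta\psi_N=f(\psi_N)\chi_{\{\psi_N>0\}}$ with right-hand side tending to the positive constant $6Q$, while near $\check\Gamma_{1,N}$ the function $Q-\psi_N\ge0$ solves the analogous problem with $\Delta(Q-\psi_N)\to-f(Q)=6Q>0$. Thus both components of the free boundary are genuine, non-degenerate obstacle problems, and by the preceding lemma $|\nabla\psi_N|=0$ along them.

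The heart of the matter is the quadratic growth bound: there is a universal $C$ so that for every free boundary point $x_0\in\check\Gamma_{0,N}$ and every sufficiently small $r$,
\[
\sup_{B_r(x_0)\cap\Omega_N}\psi_N\le C r^2.
\]
I would prove this by a blow-up/compactness contradiction. If it failed there would be minimizers $\psi_{N_j}$, free boundary points normalized to $0$, and radii $r_j\to0$ with $S_j(r_j)/r_j^2\to\infty$, where $S_j(r)=\sup_{B_r}\psi_{N_j}$. Choosing radii $\rho_j$ at which the ratio $S_j(\rho)/\rho^2$ is (almost) maximal and setting $w_j(y)=\psi_{N_j}(\rho_j y)/S_j(\rho_j)$, one obtains $\sup_{B_1}w_j=1$, $w_j\ge0$, $w_j(0)=|\nabla w_j(0)|=0$, a controlled growth $\sup_{B_R}w_j\le CR^2$ forced by the choice of $\rho_j$, and $\Delta w_j=\tfrac{\rho_j^2}{S_j(\rho_j)}f(\cdots)\chi\to0$ because $\rho_j^2/S_j(\rho_j)\to0$. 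Passing to a limit $w_\infty$ via the uniform $C^{1,\alpha}$ bounds, $w_\infty$ is nonnegative, harmonic, vanishes to first order at the origin, and grows subquadratically; by the strong maximum principle a nonnegative harmonic function attaining an interior minimum $0$ is identically $0$, contradicting $\sup_{B_1}w_\infty=1$.

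With the quadratic growth in hand, the $C^{1,1}$ bound follows by interior rescaling. For $x$ in the non-coincidence set let $d=\mathrm{dist}(x,\{\psi_N=0\})$ and $x^*$ the nearest free boundary point; then $B_d(x)\subset\{\psi_N>0\}$ and $\sup_{B_{2d}(x^*)}\psi_N\le 4Cd^2$, so $w(y):=d^{-2}\psi_N(x+dy)$ satisfies $\|w\|_{L^\infty(B_{1/2})}\le C$ and $\Delta w(y)=f(\psi_N(x+dy))$. Using $f\in C^{1/2}([0,Q])$ together with the uniform $C^{1,\alpha}$ bound, the right-hand side has a uniformly bounded (indeed small) $C^{1/2}$-seminorm in $y$, so Schauder estimates yield $|D^2\psi_N(x)|=|D^2 w(0)|\le C$; points deep inside $\{0<\psi_N<Q\}$ are controlled directly by interior estimates since $f\in C^\infty((0,Q))$, and $D^2\psi_N=0$ a.e. on the interior of the coincidence set. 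As every constant depends only on $Q$, universal quantities, and the geometry of $K$, the bound $C_0$ is independent of $N$. I expect the main obstacle to be the behaviour near the points where $\check\Gamma_{i,N}$ meets the fixed boundary $\Gamma_i$: there the rescaling must be performed in a half-ball after flattening the $C^{2,\alpha}$ graph $h_i$, and one must verify that the growth estimate and the Schauder step persist with the constant Dirichlet data $\psi_N\equiv0$ (resp.\ $Q$), so that the uniform second-derivative bound survives up to the solid wall.
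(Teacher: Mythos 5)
Your proposal follows essentially the same route as the paper: a quadratic growth estimate $\sup_{B_r(z)}\psi_N\le Cr^2$ at free boundary points proved by a blow-up/compactness contradiction, followed by rescaling to unit scale and Schauder estimates using that $f\circ\psi_N$ is H\"older continuous because $f\in C^{1/2}([0,Q])$ and $\psi_N\in C^{1,\alpha}$. The boundary-touching case that you flag as the remaining obstacle is precisely what the paper's Lemma \ref{lem5.7} resolves: there the blow-up limit lives in a half-ball and vanishes on the flat part (uniform $C^1$ convergence up to the wall requires only $C^{1,Dini}$ regularity of $\Gamma_{0,N}$), and the contradiction comes from Hopf's boundary point lemma together with $\nabla\psi_0(0)=0$, rather than from the interior strong maximum principle.
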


For    $C^{3,\alpha}$ boundary $\Gamma_i$,   the uniform $W^{2,\infty} \cap C^{1,1}$ regularity of solutions was proved in  \cite{Ger85, jensen80}.  Such an estimate can also be achieved when the regularity of the fixed boundary was relaxed to     $C^{2,\alpha}$ in   \cite{Liberman}.

For the analysis of the free boundary regularity up to the fixed boundary (the boundary of nozzle) there seems no need to require $C^{2,\alpha}$ regularity for the fixed boundary. Indeed, we shall prove below, using blow-up techniques, that $C^{1,Dini}$ boundaries give rise to quadratic decay  for solutions to our problem toward any free boundary point on the fixed one.

The proof of Proposition \ref{Thm:C11} relies crucially on the following quadratic growth property of the solutions.

\begin{lemma}\label{lem5.7} For $N \geq 2$,
assume  $\Gamma_{0,N}$ is at least uniformly $C^{1,Dini}$.
Then there is a universal constant $C > 0$ 
(independent of $N$) such that for any 
$z \in \overline{\check \Gamma}_{0,N  } \cap \overline{\Omega}_{N-1}$ we have 
\begin{equation}\label{Bdry-estimate}
    \psi_N (x) \leq C|x-z|^2  \qquad \text{for any} \ x \in \Omega_N.
\end{equation}
A similar conclusion holds for $z \in  \overline{\check\Gamma}_{1,N}\cap  \overline{\Omega}_{N -1}  $, where the estimate now becomes 
\begin{equation}\label{Bdry-estimate2}
  Q-   \psi_N (x) \leq C|x-z|^2  \qquad \text{for any} \ x \in \Omega_N.
\end{equation}
\end{lemma}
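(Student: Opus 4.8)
The plan is to prove \eqref{Bdry-estimate} first at the genuine (interior) free boundary points $z\in\check\Gamma_{0,N}$ and then recover the full statement for $z\in\overline{\check\Gamma}_{0,N}\cap\overline{\Omega}_{N-1}$ by continuity: choosing $z_m\in\check\Gamma_{0,N}$ with $z_m\to z$ and letting $m\to\infty$ in $\psi_N(x)\le C|x-z_m|^2$ gives the claim. So I fix an interior free boundary point $z\in\check\Gamma_{0,N}\cap\overline{\Omega}_{N-1}$. Because the two free boundaries are at least $2r_0$ apart, on $B_{r_0}(z)$ only the obstacle $\{\psi_N=0\}$ is active, and $\psi_N\ge 0$ solves $\Delta\psi_N=h:=f(\psi_N)\chi_{\{\psi_N>0\}}$ with $\|h\|_{L^\infty}\le\|f\|_{L^\infty([0,Q])}=:G$, a bound independent of $N$ by Proposition \ref{propstream}. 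Moreover $\psi_N(z)=0$ and, by the Euler--Lagrange analysis established above (the vanishing of $|\nabla\psi_N|$ on the free boundary), one has $\nabla\psi_N(z)=0$; this gradient vanishing is the decisive ingredient that will kill the otherwise admissible linear blow-up. The companion estimate \eqref{Bdry-estimate2} follows verbatim after replacing $\psi_N$ by $Q-\psi_N$, which, thanks to the antisymmetry $f(\psi)=-f(Q-\psi)$, solves the same obstacle equation and vanishes on $\check\Gamma_{1,N}$.

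I would then argue by contradiction and blow-up. Suppose no universal $C$ exists. Then there are indices $N_j$, interior free boundary points $z_j\in\check\Gamma_{0,N_j}\cap\overline{\Omega}_{N_j-1}$, and scales such that $S_j(r):=\sup_{B_r(z_j)\cap\Omega_{N_j}}\psi_{N_j}$ has $\sup_r S_j(r)/r^2$ unbounded. A standard scale-selection (doubling) argument for obstacle problems produces radii $\rho_j\in(0,r_0]$ with $m_j:=S_j(\rho_j)/\rho_j^2\to\infty$ together with the controlled growth $S_j(r)\le C_0 m_j r^2$ for $\rho_j\le r\le r_0$ (the restriction $z_j\in\overline{\Omega}_{N_j-1}$ and $N_j\ge 2$ keep these balls away from the lateral boundaries $\{x_1=\pm N_j\}$). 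Since the fixed boundary is uniformly $C^{1,Dini}$, I would first flatten it by a uniformly $C^{1,Dini}$ diffeomorphism, under which $\Delta$ becomes $\operatorname{div}(A\nabla\,\cdot\,)$ with \emph{continuous} coefficients $A$ (this is exactly where the Dini hypothesis is used) normalized so that $A(z)=I$, while the Dirichlet datum stays identically $0$ because $\psi_N\equiv 0$ on $\Gamma_{0,N}$. Rescaling by $w_j(x)=\psi_{N_j}(z_j+\rho_j x)/S_j(\rho_j)$ yields $w_j\ge 0$, $w_j(0)=0$, $\nabla w_j(0)=0$, $\sup_{B_1}w_j=1$, the uniform quadratic bound $\sup_{B_R}w_j\le C_0 R^2$, and an equation whose right-hand side has sup-norm $\rho_j^2 G/S_j(\rho_j)=G/m_j\to 0$, with frozen coefficients $A_j(\cdot)\to I$.

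By interior and boundary Schauder estimates together with $\|\Delta w_j\|_{L^\infty}\to 0$, a subsequence converges in $C^1_{loc}$ to a limit $w_0\ge 0$ with $\sup_{B_1}w_0=1$, $w_0(0)=0$, $\nabla w_0(0)=0$, and at most quadratic growth. I distinguish two cases according to whether $\operatorname{dist}(z_j,\Gamma_{0,N_j})/\rho_j\to\infty$ or stays bounded. In the interior case $w_0$ is harmonic on all of $\mathbb{R}^2$ and nonnegative with an interior zero, hence $w_0\equiv 0$ by the strong minimum principle, contradicting $\sup_{B_1}w_0=1$. In the boundary case $w_0$ is harmonic and nonnegative on a half-space, vanishes on the bounding hyperplane, and has at most quadratic growth; odd reflection across the hyperplane and the Liouville theorem force $w_0=\alpha x_2$ for some $\alpha\ge 0$, after which $\nabla w_0(0)=0$ gives $\alpha=0$, so again $w_0\equiv 0$ and the same contradiction results.

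The main obstacle I expect is twofold and concentrated in the second paragraph: realizing the \emph{controlled} quadratic growth at the selected scale \emph{uniformly in $N$} (the doubling step), and flattening a merely $C^{1,Dini}$ fixed boundary while keeping all constants independent of $N$ and guaranteeing that the rescaled operators converge to the constant-coefficient Laplacian rather than to a genuinely variable-coefficient operator. The Dini modulus is precisely what makes the coefficients continuous so that this convergence holds, and the gradient vanishing $\nabla\psi_N(z)=0$ at interior free boundary points is what rules out the linear profile $\alpha x_2$ at touching points on the fixed boundary; without it the half-space Liouville step would fail.
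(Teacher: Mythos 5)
Your proposal is correct and follows essentially the same strategy as the paper's proof: a contradiction/scale-selection argument, rescaling to a normalized blow-up with vanishing right-hand side and uniform quadratic growth, and a dichotomy according to whether the rescaled point stays in the interior (where the nonnegative harmonic limit with an interior zero violates the minimum principle) or approaches the fixed boundary (where the vanishing gradient at the free boundary point rules out the linear profile). The remaining differences are cosmetic: you flatten the $C^{1,Dini}$ boundary and classify the half-space limit by odd reflection and Liouville, while the paper lets the scaled domains converge directly to a half-space and invokes Hopf's boundary point lemma, and you recover touching points by density in $\overline{\check\Gamma}_{0,N}$ rather than treating them as a separate blow-up case — all equivalent.
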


In Lemma \ref{lem5.7} we take $z $ in a smaller domain $\Omega_{N-1}$ to avoid the lateral  boundary of $\Omega_N$. Note that the $C^{1,Dini}$ assumption is needed to assure the uniform $C^1$-regularity of the solutions up the fixed boundary.

\begin{proof}
As the proofs of both estimates are similar, we only prove the estimate \eqref{Bdry-estimate}. 

By elliptic regularity theory, the solutions 
$\{\psi_N\}$ are uniformly $C^{1,\alpha}(\overline{\Omega_{N-\frac{1}{2}}})$.
For $0 < r  < 1/10 $, define
$$S_r (\psi_N ,z) = \sup_{B_r(z)} \psi_N.$$ We aim to prove 
$S_r (\psi_N,z) \leq C_0 r^2$ for a universal $C_0 > 0$.

Suppose towards a contradiction this fails. That is for any positive integer $j $, there is a sequence $\psi_{N_j}$ solving \eqref{pbpsiN}, a  free boundary point $z^j$,
and radius $r_j$ satisfying 
$$S_r (\psi_{N_j},z^j  ) \leq j r^2  \quad \text{for any}\,\, \ r \geq r_j, \qquad \hbox{and}\qquad  
S_{r_j} (\psi_{N_j},z^j) = j r_j^2. $$
Define
$$  \psi^\#_{N_j} (x): = \frac{\psi_{N_j}(r_j x + z^j )}{jr_j^2} \quad\text{and}\quad   \Omega^\#_{N_j}: = \frac{1}{r_j}( \Omega_N - z^j)=\left\{\frac{x-z^j}{r_j}: x\in \Omega_N\right\}.$$
Note that 
$$
|\Delta  \psi^\#_{N_j} | \leq \frac{C}{j} \quad \hbox{in }  \Omega^\#_{N_j},\quad 
\sup_{B_1 \cap  \Omega^\#_{N_j}} \psi^\#_{N_j}= 1,\quad \text{and}\quad 
\sup_{B_2 \cap  \Omega^\#_{N_j}}\psi^\#_{N_j} \leq 4.
$$

We consider three separate case.
For a subsequence of $\{z^j\}$ (labeled again as $z^j$),  one of the following cases must hold.

1)  $z^j \in \overline{\check \Gamma}_{0,N}\cap \Gamma_{0,N-1}$.

2)  $z^j \in  \check \Gamma_{0,N}\cap \Omega_{N-1}  $, and $dist(z^j, \overline{\check \Gamma}_{0,N}\cap \Gamma_{0,N}) = o( r_j ) $.

3)  $z^j \in  \check \Gamma_{0,N}\cap \Omega_{N-1}  $, and $dist(z^j, \overline{\check \Gamma}_{0,N}\cap \Gamma_{0,N}) > c_0 r_j $ (for some $c_0 > 0$).

To prove  cases 1) and 2), 
we note further that (after a suitable rotation)
$$B_1 \cap  \Omega^\#_{N_j} \  \to \  B_1 \cap \{ x_2 > 0\} ,$$
since $\partial \Omega_{N_j}$ is (uniformly) $C^{1,Dini}$ and thus  has a normal and a tangent plane.

A crucial further property is that $ \psi^\#_{N_j}$ are uniformly $C^1$ up to the boundary of the nozzle, and hence the convergence is uniformly  in $C^1$ in the closure of the  varying/scaled  domain, $B_1 \cap  \Omega^\#_{N_j}$. Therefore, the limit function $\psi_0$ (after a a suitable subsequence) satisfies
$$
\Delta \psi_0 = 0 , \quad 
\psi_0 \geq 0 \quad \hbox{in } B_1 \cap \{x_2 > 0\},
$$
$$
\psi_0 = 0 \quad \hbox{on } B_1 \cap \{x_2 = 0\},$$
$$
\sup_{B_1 \cap \{x_2 > 0\} }  \psi_0 = 1,\qquad 
\sup_{B_2 \cap \{x_2 > 0\}}  \psi_0 \leq 4, \qquad 
\nabla \psi_0 (0) = 0,
$$
where the latter follows from uniform $C^{1,Dini}$ regularity and that $\nabla \psi^\#_{N_j} (0) = 0$, as $\{z^j\}$ are  free boundary points.
This leads to contradiction with Hopf's boundary point lemma (see \cite{Hopf}).
Hence the lemma is proved in cases 1) and 2).

In case 3) the same argument leads us to an interior point, such that 
$\Delta \psi_0 = 0$ in $B_{c_0} (0)$, and it takes a local minimum at the origin. This also contradicts to the local minimum principle and finishes the proof of the lemma. 
\end{proof}

Now we are ready to prove Proposition \ref{Thm:C11}.

\begin{proof}[Proof of Proposition \ref{Thm:C11}]
 
  Let $y \in \Omega_{N-1}$ be such that $0 < \psi_N(y) < Q$, 
  and $z \in 
  (\overline{\check \Gamma_{0,N} \cup  \check \Gamma_{1,N})  \cap \Omega_N}$ be the closest point to $y$. We may assume,
  $\psi_N (z) =0$ (i.e., $z \in 
  \overline{\check \Gamma_{0,N}   \cap \Omega_N}$), as the other case is treated similarly. 
  Set 
  $r_y = |y-z| $, 
  $$  \psi^\flat_N (x) = \frac{\psi_N(r_y x + y) }{r_y^2} \quad \text{and} \quad \Omega^\flat_N=\frac1{r_y}(\Omega_N - y) .$$

It follows from Lemma \ref{lem5.7} that $\psi^\flat_N$ is bounded in $B_1(0) $ and satisfies a scaled free boundary equation in the scaled domain. We observe that in $B_{1/2}$ there are no more  free boundary points, and the boundary 
$\partial \Omega^\flat_N$ is $C^{2,\alpha}$.

 Since  $f\in C^{1/2}([0, Q])\cap C^{\infty}((0,Q))$
 and  $\psi_N\in C^{1, \alpha}(\Omega_N)$ for some $\alpha\in(0,1)$, one has ${f}^\flat (x) := f(\psi_N (x) )\in C^{{\alpha}/{2}}(\Omega_N)$. 
From this and that  
$\Delta \psi^\flat_N =  f^\flat$ in 
$\Omega^\flat_N $, we can invoke elliptic regularity theory to conclude    $ \psi^\flat_N$ is $C^2$ in 
$B_{1} (0)  \cap \Omega^\flat_N$. More precisely,
$$
|D^2 \psi_N(y) | = |D^2  \psi^\flat_N(0) | \leq C_0
$$
for a universal constant $C_0$. This completes the proof of Proposition \ref{Thm:C11}.
\end{proof}


Furthermore, we have the following lemma on the nondegeneracy of free boundary. 

\begin{lemma}
Let $\psi_N$ be a solution of \eqref{e1}. Then for any $x_0\in \bar{\check
\Gamma}_{0,N}\cap \bar{\Omega}_{N-1}$, there exists a constant $c$ such that 
\begin{eqnarray}
\sup_{B_r(x_0)}{\psi_N}\geq cr^2>0
\end{eqnarray}
provided $B_r(x_0)\subset \Omega_{N-1} \cap \{(x_1,x_2): x_2<\frac{1}{2}\}$.
\end{lemma}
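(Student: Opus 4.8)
The plan is to run the classical Caffarelli nondegeneracy argument for obstacle-type problems, the only genuine difficulty being that the source term $f(\psi_N)=6Q(1-2\kappa(\psi_N))$ in \eqref{e1} degenerates to $0$ when $\psi_N$ reaches $Q/2$ (recall that $\kappa$ is increasing with $\kappa(Q/2)=1/2$, since $\bar\psi(1/2)=Q/2$), so it is \emph{not} bounded below uniformly over the whole positivity set. The hypotheses $B_r(x_0)\subset\Omega_{N-1}\cap\{x_2<1/2\}$ and $x_0\in\overline{\check\Gamma}_{0,N}$ (so $\psi_N(x_0)=0$) are exactly what let me restore a positive lower bound for $f$ \emph{near} the free boundary point, where $\psi_N$ is small and hence $f(\psi_N)$ is close to $6Q$; the large-$r$ case will then be obtained for free from the monotonicity of $r\mapsto\sup_{B_r(x_0)}\psi_N$.

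\textbf{Step 1 (a positive lower bound for the source near $x_0$).} Since $x_0\in\overline{\check\Gamma}_{0,N}\cap\overline{\Omega}_{N-1}$, Lemma \ref{lem5.7} provides a universal constant $C$ with $\psi_N(x)\le C|x-x_0|^2$ for $x\in\Omega_N$. Hence I may fix a universal radius $r_1\in(0,1/10)$ so small that $Cr_1^2\le Q/4$, which forces $0\le\psi_N\le Q/4$ on $B_{r_1}(x_0)$. By monotonicity of $\kappa$ and $\kappa(Q/2)=1/2$, one gets $\kappa(\psi_N)\le\kappa(Q/4)<1/2$ there, so on $B_{r_1}(x_0)\cap\{\psi_N>0\}$,
\[
\Delta\psi_N=f(\psi_N)=6Q\bigl(1-2\kappa(\psi_N)\bigr)\ge\lambda_0:=6Q\bigl(1-2\kappa(Q/4)\bigr)>0.
\]
In particular the only free boundary inside this ball is the level set $\{\psi_N=0\}$ (the level $\{\psi_N=Q\}$ is absent), which is exactly why $\{x_2<1/2\}$ is the correct side on which to argue.

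\textbf{Step 2 (barrier argument for small radii).} Fix $r\le r_1$. Because $\tilde h_{0,N}(x_1)$ is the supremum of the heights at which $\psi_N$ vanishes, points with $\psi_N>0$ accumulate at $x_0$; choose such a $y$ with $|y-x_0|$ small and set $\rho=r-|y-x_0|$. On the connected component $D$ of $B_\rho(y)\cap\{\psi_N>0\}$ containing $y$, the comparison function $v(x)=\psi_N(x)-\tfrac{\lambda_0}{4}|x-y|^2$ satisfies $\Delta v=f(\psi_N)-\lambda_0\ge0$, hence $v$ is subharmonic and attains its maximum on $\partial D$. On the free-boundary part of $\partial D$ one has $\psi_N=0$, so $v\le0\le v(y)$, forcing the maximum onto $\partial B_\rho(y)$ and producing $x^\ast\in\partial B_\rho(y)$ with
\[
\psi_N(x^\ast)\ge v(x^\ast)+\tfrac{\lambda_0}{4}\rho^2\ge v(y)+\tfrac{\lambda_0}{4}\rho^2=\psi_N(y)+\tfrac{\lambda_0}{4}\rho^2\ge\tfrac{\lambda_0}{4}\rho^2.
\]
Since $B_\rho(y)\subset B_r(x_0)$, letting $y\to x_0$ (so $\rho\to r$) yields $\sup_{B_r(x_0)}\psi_N\ge\tfrac{\lambda_0}{4}r^2$ for every $r\le r_1$.

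\textbf{Step 3 (all admissible radii, and the obstacle).} For the remaining range $r_1<r\le R$, where $R\le1/2$ denotes the largest radius with $B_R(x_0)\subset\Omega_{N-1}\cap\{x_2<1/2\}$, I simply use that $r\mapsto\sup_{B_r(x_0)}\psi_N$ is nondecreasing: $\sup_{B_r(x_0)}\psi_N\ge\sup_{B_{r_1}(x_0)}\psi_N\ge\tfrac{\lambda_0}{4}r_1^2\ge\tfrac{\lambda_0 r_1^2}{4R^2}\,r^2$. Taking $c=\tfrac{\lambda_0}{4}\min\{1,(r_1/R)^2\}$, which is a fixed positive constant independent of $N$ because $\lambda_0,r_1,R$ all are, gives $\sup_{B_r(x_0)}\psi_N\ge cr^2>0$ for every admissible $r$, proving the lemma. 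The essential point — and the only place real care is needed — is the degeneracy of $f$ at $\psi_N=Q/2$: running the barrier only at the small scale $r_1$, where Lemma \ref{lem5.7} guarantees $\psi_N\le Q/4$ and thus $f\ge\lambda_0$, and then bootstrapping to larger $r$ by monotonicity of the supremum together with the a priori bound $R\le1/2$, keeps every constant universal and avoids the collapse $\lambda_0\to0$ that would occur if one tried to bound $f$ from below on the entire ball.
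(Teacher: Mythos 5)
Your proof is correct and follows exactly the route the paper intends: the paper's own "proof" is a one-line citation to the classical Caffarelli--Rivi\`ere nondegeneracy argument for the obstacle problem, and your Steps 2--3 are precisely that argument (subharmonic comparison with $\psi_N-\tfrac{\lambda_0}{4}|x-y|^2$ at a nearby positivity point, then monotonicity of the supremum for large radii). Your Step 1 usefully supplies the one detail the citation glosses over --- the classical argument needs $f\ge\lambda_0>0$, which fails globally since $f$ changes sign at $\psi_N=Q/2$, and your use of Lemma \ref{lem5.7} together with the hypothesis $B_r(x_0)\subset\{x_2<1/2\}$ to localize where $\psi_N\le Q/4$ is exactly the right fix.
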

\begin{proof}
The proof is same as the proof for the classical obstacle problem. Please refer to \cite{C-R}. 
\end{proof}

In order to study the regularity of $\tilde{\Gamma}_{i, N}$ ($i=0,1$), we first prove
 the following lemma on the continuity of $\tilde{h}_{i,N}$ ($i=0$, $1$).

\begin{lemma}Assume 
$\Gamma_{i,N}$ ($i=0,1$) is at least uniformly $C^{1,Dini}$.
The functions $x_2=\tilde{h}_{i,N}(x_1)$ ($i=0,1$) are continuous. 
    \end{lemma}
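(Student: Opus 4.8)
The plan is to prove continuity of $\tilde{h}_{0,N}$ and $\tilde{h}_{1,N}$ by establishing upper and lower semicontinuity separately. I would treat $\tilde{h}_{0,N}$ in detail; the statement for $\tilde{h}_{1,N}$ follows by the reflection $\psi_N\mapsto Q-\psi_N$, which by $f(\psi)=-f(Q-\psi)$ turns the upper coincidence set $\{\psi_N=Q\}$ into a lower one. The starting point is the monotonicity $\partial_{x_2}\psi_N\ge 0$ from Lemma \ref{mon}: for each fixed $x_1$ the slice $x_2\mapsto\psi_N(x_1,x_2)$ is nondecreasing, so the coincidence set $\{\psi_N=0\}$ meets that slice in the closed interval $[h_0(x_1),\tilde{h}_{0,N}(x_1)]$. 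In particular the supremum defining $\tilde{h}_{0,N}$ is attained (by continuity of $\psi_N$), one has $\psi_N(x_1,\tilde{h}_{0,N}(x_1))=0$, and $\psi_N(x_1,x_2)>0$ whenever $x_2>\tilde{h}_{0,N}(x_1)$.

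Upper semicontinuity is elementary. If $x_1^k\to x_1^0$ and $\tilde{h}_{0,N}(x_1^k)\to\ell$, then continuity of $\psi_N$ gives $\psi_N(x_1^0,\ell)=\lim_k\psi_N(x_1^k,\tilde{h}_{0,N}(x_1^k))=0$, and monotonicity forces $\psi_N(x_1^0,\cdot)\equiv 0$ on $[h_0(x_1^0),\ell]$, whence $\tilde{h}_{0,N}(x_1^0)\ge\ell$. Therefore $\limsup_{x_1\to x_1^0}\tilde{h}_{0,N}(x_1)\le\tilde{h}_{0,N}(x_1^0)$; the endpoint value $\ell=h_1(x_1^0)$ is excluded since $\psi_N=Q$ on $\Gamma_1$, and $\ell=h_0(x_1^0)$ causes no trouble.

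The heart of the matter, and where I expect the main difficulty, is lower semicontinuity. Suppose it fails: there exist $x_1^k\to x_1^0$ with $\tilde{h}_{0,N}(x_1^k)\to\ell<m:=\tilde{h}_{0,N}(x_1^0)$. Since by monotonicity the coincidence set is exactly the subgraph $\{x_2\le\tilde{h}_{0,N}(x_1)\}$, this means the subgraph develops either a thin vertical ``spike'' rising to $(x_1^0,m)$ or a downward jump, while collapsing to height $\approx\ell$ on the slices $x_1^k$. The obstruction is that the $C^{1,1}$ bound of Proposition \ref{Thm:C11} and nondegeneracy alone do \emph{not} exclude a spike: for a free boundary point $P_0=(x_1^0,y)$ with $y\in(\ell,m)$, the estimate $|D^2\psi_N|\le C_0$ together with $\psi_N(x_1^0,y)=\partial_{x_1}\psi_N(x_1^0,y)=0$ yields only $\psi_N(x_1,y)\le\tfrac{C_0}{2}(x_1-x_1^0)^2$, so $\sup_{B_r(P_0)}\psi_N\le\tfrac{C_0}{2}r^2$ for $r<\min(y-\ell,m-y)$, which is merely \emph{consistent} with the nondegeneracy lower bound $\sup_{B_r(P_0)}\psi_N\ge c\,r^2$. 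Hence the spike must be ruled out by a genuinely measure-theoretic input.

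The mechanism I would use is the positive Lebesgue density of the coincidence set at free boundary points, which is precisely what the additional monotonicity extracted from the sliding method is designed to supply. Concretely, I would blow up at $P_0=(x_1^0,y)$: the rescalings $x\mapsto\psi_N(P_0+\rho x)/\rho^2$ are bounded in $C^{1,1}$ and, along $\rho\to 0$, converge to a global homogeneous degree-two solution $\psi_0\ge 0$ of the obstacle problem with $\partial_{x_2}\psi_0\ge 0$; because the vertical coincidence segment through $P_0$ has fixed positive length on both sides, $\{\psi_0=0\}$ contains the whole line $\{x_1=0\}$. By the classification of such blow-ups, $\psi_0$ is then either a degenerate polynomial profile $\sim x_1^2$, whose coincidence set is Lebesgue-null and is therefore excluded by the density estimate, or a half-space profile $\tfrac{\beta}{2}\big((x\cdot e)_+\big)^2$ with $e_2\ge 0$ (the sign forced by $\partial_{x_2}\psi_0\ge 0$), whose coincidence set is a genuine half-plane; the latter is incompatible with the collapse to height $\ell$ on the slices $x_1^k$. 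The remaining boundary case $e_2=0$, corresponding to a vertical free boundary (a true jump), is the most delicate point and is where I would invest the most care, closing it by combining the half-space structure with the strict positivity $\partial_{x_2}\psi_N>0$ inside $\{0<\psi_N<Q\}$ (as in Lemma \ref{lemmapositive}). This forces $\ell\ge m$, giving lower semicontinuity and hence continuity; free boundary points touching $\Gamma_0$ are handled by the same scheme using the quadratic decay toward the fixed boundary from Lemma \ref{lem5.7}.
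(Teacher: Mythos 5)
Your reduction of the problem is sound: upper semicontinuity of $\tilde{h}_{0,N}$ is indeed elementary, and the crux is to exclude a vertical segment $L=\{x_1^0\}\times(\ell,m)$ consisting of free boundary points. But your treatment of that crux has a genuine gap, precisely where you flag it. Once the zero set of the blow-up $\psi_0$ at an interior point $P_0$ of $L$ contains the whole vertical line $\{x_1=0\}$, a half-space profile $\tfrac{\beta}{2}\bigl((x\cdot e)_+\bigr)^2$ is possible only for $e=\pm{\bf e}_1$: for $e_2>0$ the zero set $\{x\cdot e\le 0\}$ cannot contain the full line, so your ``$e_2>0$, incompatible with the collapse'' subcase is vacuous and the entire half-space branch reduces to the case you leave open. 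The tool you propose for it --- strict positivity of $\partial_{x_2}\psi_N$ in $\{0<\psi_N<Q\}$ as in Lemma \ref{lemmapositive} --- cannot close it: strict inequalities do not survive blow-up, and the horizontal half-space profile has $\partial_{x_2}\psi_0\equiv 0$, which is perfectly consistent with $\partial_{x_2}\psi_N>0$ at every finite scale. So the decisive case is asserted rather than proved. (A repair inside your framework does exist: the cone monotonicity of Lemma \ref{lemdirder} puts the full two-sided downward cone $\{P_0-t{\bf e}:\,t>0,\ |{\bf e}-{\bf e}_2|<\sigma\}$ into the coincidence set, and no vertical half-plane $\{\pm x_1\le 0\}$ contains such a cone; but you invoke that lemma only for the density estimate, not here, and in the paper it is proved after the present statement.)

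For comparison, the paper's proof is self-contained and avoids both the density estimate and the cone monotonicity. It splits according to whether some half-ball $U_r^{\pm}=B_r(x^*)\cap\{\pm(x_1-x_1^*)>0\}$ about the midpoint $x^*$ of $L$ lies entirely in $\{\psi_N>0\}$. If one does, the vertical increment $W(x_1,x_2)=\psi_N(x_1,x_2)-\psi_N(x_1,x_2-\tau)$ is nonnegative and superharmonic there with $W(x^*)=0$, so Hopf's lemma forces $\partial_{x_1}W(x^*)\neq 0$, contradicting $\nabla\psi_N=0$ on $L$. If neither does, the paper sets $\tilde\psi_N=\psi_N\chi_{U_{r_0}^-}$ --- legitimate since $\psi_N=|\nabla\psi_N|=0$ on $L$, so the glued function still solves the local obstacle problem with $U_{r_0}^+\subset\{\tilde\psi_N=0\}$ --- and blows up at free boundary points $z^j\in U_{r_j}^-$ with scale $t_j=x_1^*-z_1^j$. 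The limit is a global solution vanishing at the origin whose coincidence set contains $\{x_1\ge 1\}$, hence by the classification of global solutions it equals $3Q\left(\max(1-x_1,0)\right)^2$, which does not vanish at the origin. You would need either to adopt one of these two mechanisms or to genuinely carry out the cone-monotonicity repair before your argument is complete.
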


    \begin{proof} 
 
   
   We  consider the problem only for  $\tilde{h}_{0,N}$, as the case for $\tilde{h}_{1,N}$  is similar.
   Suppose that $\psi_N=0$ on a
    line segment 
    $$L:=\{(x_1^*, x_2): \underline{x}_2< x_2<\bar{x}_2\}.$$
   Set $ x_2^* = (\underline{x}_2 + \bar{x}_2)/2$,  $x^* = (x_1^*, x_2^*)$, 
   and 
   consider two cases:
   \begin{enumerate}
       \item  There exists a $r >0$ such that $\psi_N>0$ in  at least one of the sets
       $$ U_r^\pm := B_{r}(x^*) \cap \{ \pm( x_1 - x_1^*) >0 \}.$$
       \item For each  $r > 0$ (small)  case (1)  does not hold. 
       
   \end{enumerate}

   
   
In case (1)
   define   $W(x_1, x_2):=\psi_N(x_1, x_2)-\psi_N(x_1, x_2-\tau)$, and 
   for definiteness  assume $\psi_N > 0$ in $U^-$.
   Then    
      there exist a sufficiently small $\tau >0$  such that
\begin{equation*}
\left\{
\begin{aligned}
& \Delta W(x)=12Q(\kappa(\psi_N(x_1, x_2-\tau))-\kappa(\psi_N(x)))\leq 0 ,\quad &&\text{in}\,\, U_r^-,\\
& W(x) \geq 0 , \quad &&\text{in}\,\, U_r^-,\\
					& W(x^*)=0,
\end{aligned}
\right.
\end{equation*}
			where    the non-negativity of $W$ follows from the monotonicity of $\psi_N$ in direction $x_2$, see  Lemma  \ref{mon}. 
			It follows from Hopf's lemma (\hspace{1sp}\cite{evans,Hopf}) 
   that ${\partial _{x_1}W}(x^*)<0$. 
   On the other hand 
$$0= {\partial_{x_1} \psi_N}(x^*)-{\partial_{x_1} \psi_N}(x_1^*, x_2^*-\tau)={\partial_{x_1} W}(x^*) < 0,$$
which is a contradiction with $\nabla \psi_N(x^*)=0$. This implies that $x_2=\tilde{h}_{0,N}(x_1)$ has no jump discontinuity, in  case (1).

In case (2), we consider the solution only in $B_{r_0} (x^*)$ for some  small enough $r_0$, and  redefine it by setting 
$\tilde \psi_N := \psi_N \chi_{U_{r_0}^-} $ in $B_{r_0} (x^*)$. Since $\psi_N = |\nabla \psi_N | = 0$
on the line segment $L$, we must have that $\tilde \psi_N$ 
 solves the same local obstacle problem in $B_{r_0} (x^*)$,  with  $U_{r_0}^+ \subset \{ \tilde \psi_N  = 0 \}$. 
 Invoking the assumption that for each $r_j> 0$,   $U_{r_j}^-$ contains free boundary points, $z^j \in \check \Gamma_{0,N}$, we may  scale $\tilde \psi_N$ at $z^j$ with distance 
 $t_j =   x_1^* - z_1^j > 0 $, and define
$$
\psi^\flat_{N,j} = \frac{\tilde \psi_N (t_j x + z^j)}{t_j^2}.
$$
Since $z^j \in \check \Gamma_{0,N}$ we must have 
$(0,0) \in  \frac1j (\check\Gamma_{0,N} - z^j)$, and hence $\tilde \psi_{N,j} (0,0) = 0$.

By standard free boundary regularity and non-degeneracy \cite[Sections 2.1 and  3.1]{PSU} we have $\psi^\flat_{N,j}$ converges to a global solution $\psi_0 $ in the entire plane. Using the fact that $U_{r_0}^+ \subset \{ \tilde \psi_N  = 0 \}$ we must have 
$$\{ \psi_0 = 0\} \supset   \lim_j \frac{1}{t_j} (U^+_{r_0} - z^j) = \{x_1 \geq  1\}, \quad \hbox{and }\  \psi_0 (0,0) = 0.$$

By the classification of global solutions of the obstacle problem (see  \cite[Section 5.1]{PSU})
we know that $\psi_0$ is one dimensional. Consequently  $\psi_0 (x) = 3Q\left(\max(1 - x_1, 0 )\right)^2$  and therefore  it can not be zero at $(0,0)$. This  contradiction implies that $x_2=\tilde{h}_{0,N}(x_1)$ cannot be discontinuous as stated in  case  (2). 

   The proof of the lemma is now completed.
       \end{proof}

As long as Proposition \ref{Thm:C11} is at our disposal, we can delve into examining the local regularity of the free boundary. It's worth noting that the analysis takes a slightly different course since the free boundary may come into contact with the fixed boundary $\partial \Omega_N$. In order to use the theory for free boundary problem achieved in the last several decades, we need to lay some groundwork to tailor these findings to our specific situation.

Caffarelli's groundbreaking work \cite{Caf1} asserts that the free boundary is $C^{1}$ within the interior of $\Omega_N$ and close to points where the coincidence set ${\psi_N =0 }$ (similarly ${\psi_N =Q }$) possesses a positive volume density. Additionally,  the classical obstacle problem can give rise to singular points along the free boundary \cite{Sch,Sakai}. However, our problem possesses unique characteristics that supply additional information to help prevent the occurrence of singular points.

In order to prove positive volume density of the coincidence set for interior free boundary points,
 we may use the geometry of the nozzle and boundary values, that  allows us to apply the  well-known sliding method to compare the solutions with a translated version of the solution, and derive monotonicity
 properties for the solution in directions close to ${\bf e}_2=(0,1)$. More precisely, we have the following lemma.
\begin{lemma}\label{lemdirder}
There exists a $\sigma>0$ such that if $  |{\bf e}- {\bf e}_2|<\sigma$, one has
\begin{equation}\label{monotone1}
 \partial_{\bf e} \psi_N \geq 0 \qquad \text{in}\ \ \Omega_N.
\end{equation}
\end{lemma}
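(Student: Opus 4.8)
The plan is to prove \eqref{monotone1} by running the sliding method of Lemma~\ref{mon} in the oblique direction ${\bf e}$ instead of the vertical direction ${\bf e}_2$. As in that proof, extend $\psi_N$ by $0$ below $\Gamma_{0,N}$ and by $Q$ above $\Gamma_{1,N}$, keeping the same notation, and for a unit vector ${\bf e}=(e_1,e_2)$ with $e_2>0$ and a parameter $\tau>0$ set
\[
\Omega_N^{\tau{\bf e}}=\{x:\ x+\tau{\bf e}\in\Omega_N\},\quad D^{\tau{\bf e}}=\Omega_N\cap\Omega_N^{\tau{\bf e}},\quad \Psi^{\tau{\bf e}}(x)=\psi_N(x+\tau{\bf e})-\psi_N(x).
\]
It suffices to show $\Psi^{\tau{\bf e}}\ge 0$ on $D^{\tau{\bf e}}$ for every small $\tau>0$; since $\psi_N\in C^{1,1}$ by Proposition~\ref{Thm:C11}, dividing by $\tau$ and letting $\tau\to 0^+$ then yields $\partial_{\bf e}\psi_N\ge 0$ in $\Omega_N$.

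The constant $\sigma$ is dictated by the geometry. On the compact interval $[-N,N]$ the slopes $|h_0'|,|h_1'|$ are bounded by some $M_N<\infty$ (recall $h_i\in C^{2,\alpha}$), and $|\nabla\psi_N|$ is bounded by a constant $C_N$ by Proposition~\ref{Thm:C11}. I take $\sigma$ small enough that every admissible ${\bf e}$ is strictly steeper than all wall tangents, $e_2>M_N|e_1|$, and also $e_2>(C_N/c_0)\,|e_1|$, where $c_0>0$ is a lower bound for the slope of the inlet/outlet shear profile away from the walls. The steepness $e_2>M_N|e_1|$ forces the shifted lower wall $\Gamma_{0,N}-\tau{\bf e}$ to lie below $\Gamma_{0,N}$ and the shifted upper wall $\Gamma_{1,N}-\tau{\bf e}$ to lie below $\Gamma_{1,N}$, so that exactly as in Lemma~\ref{mon} the boundary $\partial D^{\tau{\bf e}}$ consists of the lower wall $\Gamma_{0,N}$ (where $\psi_N=0$, hence $\Psi^{\tau{\bf e}}=\psi_N(\cdot+\tau{\bf e})\ge 0$), the shifted upper wall (where $\psi_N(\cdot+\tau{\bf e})=Q$, hence $\Psi^{\tau{\bf e}}=Q-\psi_N\ge 0$), and two lateral edges.

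On the lateral edge inherited from $\Omega_N^{\tau{\bf e}}$ the shifted argument lands on a lateral wall of $\Omega_N$, so $\psi_N(\cdot+\tau{\bf e})$ equals the strictly increasing shear datum $g_N$ at a raised height and the comparison is clean as in Lemma~\ref{mon}. On the edge inherited from $\Omega_N$, however, the shift carries the point into the interior and one must estimate $\Psi^{\tau{\bf e}}(-N,x_2)=\psi_N(-N+\tau e_1,x_2+\tau e_2)-g_N(-N,x_2)$. I split this as the vertical increment $g_N(-N,x_2+\tau e_2)-g_N(-N,x_2)\ge c_0\tau e_2$ (on the part of the edge where $0<g_N<Q$, using strict monotonicity of the shear profile) plus a horizontal increment bounded by $C_N\tau|e_1|$ via the gradient bound; the choice $e_2>(C_N/c_0)|e_1|$ makes the sum strictly positive. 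Near the two corners, where the shear slope degenerates, one instead uses $\psi_N=0$ on $\Gamma_0$ together with $\partial_{x_2}\psi_N\ge 0$ (Lemma~\ref{mon}) and the quadratic boundary behaviour to conclude $\Psi^{\tau{\bf e}}\ge 0$. Thus $\Psi^{\tau{\bf e}}>0$ on the lateral edges, so the open set $U=\{\Psi^{\tau{\bf e}}<0\}$ cannot reach $\{x_1=\pm N\}$.

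With these sign conditions in hand, the interior argument is identical to Lemma~\ref{mon}: assuming $U\ne\emptyset$ for some $\bar\tau>0$, each connected component $V$ of $U$ has $\Psi^{\bar\tau{\bf e}}=0$ on $\partial V$ and avoids the lateral edges; since the Dirichlet energy and $\int F$ are translation invariant, $\psi_N(\cdot+\bar\tau{\bf e})$ minimizes the energy on $\Omega_N^{\bar\tau{\bf e}}$, and the three cut-and-paste competitors give a contradiction in the two strict-energy cases, while in the equal-energy case Hopf's lemma applied to $\Delta\Psi^{\bar\tau{\bf e}}=c(x)\Psi^{\bar\tau{\bf e}}$ (with $c$ bounded where the values lie in $[\varepsilon,Q-\varepsilon]$) produces a glued minimizer that is singular across the interface, contradicting its $C^{1,\alpha_0}$ regularity. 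Hence $U=\emptyset$ and $\Psi^{\tau{\bf e}}\ge 0$. The main obstacle is the lateral-edge estimate of the previous paragraph: for a vertical shift both lateral edges reduce to a one-dimensional shear comparison, whereas the oblique shift produces genuine interior values there, and controlling them is exactly what forces $\sigma$ to be small (and $N$-dependent, through $M_N$ and $C_N$); the remainder of the scheme transfers verbatim from Lemma~\ref{mon}.
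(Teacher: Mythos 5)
Your proposal is correct in outline but follows a genuinely different route from the paper, and one step of it fails as written. The paper does not rerun the shift-by-$\tau$ comparison of Lemma \ref{mon}; it slides continuously in a single parameter $t$, starting from a displacement $t_0$ at which $\Omega_N$ and its ${\bf e}$-translate are disjoint and decreasing to the critical $t_1$ at which $\psi_N(\cdot+t{\bf e})\ge\psi_N$ first holds. The whole difficulty is then concentrated at the touching point $z$ when $t_1>0$: the hard case is $z\in\check\Gamma_{0,N}$, i.e.\ contact on the free boundary, which the paper resolves by Hopf's lemma at regular points and, at singular points, by the Caffarelli--Rivi\`ere structure theorem together with a comparison against a homogeneous harmonic function of growth $r^{2-\delta_\epsilon}$ in a cone of opening $\pi-\epsilon$, played off against the quadratic bound $\sup_{B_r(z)}\mathfrak{u}\le C_0r^2$. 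Your energy cut-and-paste scheme sidesteps all of that: exactly as in Case (ii-3) of Lemma \ref{mon}, the equal-energy alternative manufactures a contact point on $\Sigma$ at which both functions lie strictly in $(0,Q)$, so no information about singular free boundary points is needed. The price is the lateral edges, which you correctly identify as the new obstruction and which the paper's sliding argument essentially avoids (its inequality is seeded in a thin intersection near $t_0$ using only continuity and the boundary values).

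The step that does not hold as written is your lateral-edge lower bound. At the inlet $g_N(-N,\cdot)=\bar\varphi_1$ and $\bar\varphi_1'=\bar u_1=6Qx_2(1-x_2)$ vanishes at $x_2=0$ and $x_2=1$, so there is no uniform $c_0>0$ with $g_N(-N,x_2+\tau e_2)-g_N(-N,x_2)\ge c_0\tau e_2$ along the whole edge; the gain degenerates like $\min(x_2,1-x_2)\,\tau e_2+\tau^2e_2^2$, while your horizontal error is a priori of size $C_N\tau|e_1|$, which dominates near the walls for any fixed ${\bf e}$ as $\tau\to0^+$. The fix --- which your phrase ``quadratic boundary behaviour'' gestures at but does not supply --- is that $\partial_{x_1}\psi_N$ vanishes on the horizontal walls near $x_1=\pm N$ (the nozzle is flat there and $\psi_N$ is constant on each wall), so Proposition \ref{Thm:C11} upgrades the gradient bound to $|\partial_{x_1}\psi_N|\le C_0\min(x_2-h_0,h_1-x_2)$; the horizontal increment is then $O\bigl(\tau|e_1|(\min(x_2,1-x_2)+\tau e_2)\bigr)$ and is dominated term by term by the vertical gain $\ge 3Q\tau e_2\min(x_2,1-x_2)+\tfrac{3Q}{2}\tau^2e_2^2$ once $|e_1|\le cQe_2/C_0$. (At the outlet, where $b-a<1$, the profile $\bar u_{1,b-a}\ge\mathfrak{c}(b-a)>0$ and the estimate is easier.) With this inserted your argument closes; note also that since $\sup|h_i'|$ and the constant of Proposition \ref{Thm:C11} are independent of $N$, the resulting $\sigma$ is uniform in $N$, so the $N$-dependence you flag at the end is not actually present.
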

\begin{proof}
Extend $\psi_N$ as follows
	\begin{equation}
		\tilde\psi_N(x_1,x_2)=\left\{
		\begin{aligned}
			&Q\quad &&\text{if}\,\,x_2\geq h_1(x_1),\\
			&\psi_N(x_1,x_2)\quad &&\text{if}\,\, (x_1,x_2)\in \Omega_N,\\
				&0\quad &&\text{if}\,\,x_2 \leq h_0(x_1).
		\end{aligned}\right.
	\end{equation}	
For simplicity, we still denote it by $\psi_N$. We denote by  $\psi_{N,\be,t} (x) = \psi_N (x + t {\bf e}) $ for ${\bf e}=(e_1,e_2)$ satisfying $|{\bf e}- {\bf e}_2|<\sigma$, and such that the graph
 $\{x_2 = h_{i,N} (x_1 )\}$ does  not touch $\{x_2 +te_2  = h_{i,N} (x_1 + te_1)\}$, $i=0,1$ for $t>0$. 
 Let
 $$\Omega^t_{N, {\bf e}}=\{x: x+t{\bf e}\in \Omega_N\}.$$

Clearly, $\psi_{N,\be, t}$ solves the same problem in $\Omega^t_{N, {\bf e}}$, i.e.,
 $$
	\Delta \psi_{N,\be, t}=6Q(1-2\kappa(\psi_{N, \be,t}))\chi_{\{\psi_{N, \be, t}>0\}}\quad \text{in}\,\, \Omega^t_{N, {\bf e}}.
$$
We now take $t_0$ the smallest positive number such that $\Omega_N\cap \Omega^{t_0}_{N, {\bf e}}= \emptyset$.
Now moving up $\Omega^{t_0}_{N, {\bf e}}$ slightly, by taking $t= t_0 -\epsilon$ for $\epsilon$ small, the two nozzles will have an intersection with small width. For such small values of $\epsilon$ we obviously have
\begin{equation*}
\psi_{N,\be, t_0 -\epsilon} \geq \psi_N \ \ \text{   in }\  \Omega_N\cap \Omega^{t_0}_{N, {\bf e}}.
\end{equation*}
This depends on continuity of $\psi_N$, and  
the boundary values of these functions; observe that  the lateral boundary is monotone non-decreasing  in $x_2$-direction.

Now let us take $t_1$ to be the smallest number for which
$\psi_{N,\be, t_1} \geq \psi_N$. We aim to show that $t_1 = 0$. We prove this property by contradiction argument.
Suppose $t_1 > 0$, so that $\Omega_N\cap \Omega^{t_1}_{N, {\bf e}}  \neq \emptyset$.
Define
 \begin{equation*}
\mfu = \psi_{N,\be, t_1}- \psi_N \ \ \text{   in }\  \Omega_N\cap \Omega^{t_1}_{N, {\bf e}}.
\end{equation*}
Obviously there is a point $z$ such that $\mfu (z) = 0$, otherwise we could move the nozzle upward in ${\bf e}$-direction slightly (say with $\delta$ distance) and still keep the inequality  $\psi_{N,\be, {t_1 -\delta }} \geq \psi_N$, which contradicts the minimality of $t_1$.

Two different possibilities may appear: either $z \in \{ \psi_N > 0\}$ or $z \in \check\Gamma_{0,N}$. Now we consider these two cases one by one.

{\it Case 1. $z \in \{ \psi_N > 0\}$.}
From the equations for $\psi_N$, $\psi_{N,\be, {t_1 }}$ and that $\kappa$ is monotone non-decreasing  we have that
$\mfu $ is  superharmonic in its domain of positivity, i.e.
$$
\Delta \mfu = 12Q\left(\kappa (\psi_N) - \kappa (\psi_{N,\be, {t_1 }}) \right) \leq 0 \qquad \hbox{in } \Omega_N\cap \Omega^{t_1}_{N, {\bf e}}.
$$
By the minimum principle $\mfu$ can not attain an interior minimum, and hence $z \in \{ \psi_N > 0\}$ cannot occur.

{\it Case 2. $z \in \check\Gamma_{0,N}$.} We see immediately that $z \in  \check\Gamma_{0,N} \cap \check\Gamma^{t_1}_{0,N}$, where  $ \check\Gamma^{t_1}_{0,N} :=\{(x_1, h_{0,N}(x_1+te_1) -te_2): (x_1,x_2)\in \check{\Gamma}_{0,N}\}$ is the associated shift of $\check\Gamma_{0,N}$.
Therefore,  by  Hopf's boundary point lemma,  $z$ cannot belong to any smooth part of the free boundary. Suppose now $z$ belongs to a singular free boundary point. By results
in \cite[Theorem 5]{Caff-Riv},
we know that the free boundary at a singular point has the following  {nice {\bf feature}:}

\medskip
\noindent
{\it There are two  non-intersecting $C^1$-curves $G_i$, through $z$, and such that the coincidence set
$\{\psi_N = 0\} \cap B_{r_0} (z)) $ is captured between these two curves; here, $r_0$ depends on the distance of the singular point to the  boundary of the nozzle.
Hence  $G_i$ $(i=0,1)$
is represented
by   $\partial \{\mfg_i(x) >0\} $ for $\mfg_i  \in C^1 (B_{r_0} (z))$, such that $\mfg_i(z) = 0 $,
$B_{r_0}  (z) \cap \{\mfg_0(x) >0\}  \cap \{\mfg_1(x) >0\} = \emptyset $, and  $\{\psi_N = 0\}  \cap B_{r_0}  (z) \subset  \{\mfg_0(x) <0\}  \cap \{\mfg_1(x) <0\} $.}

\medskip


In particular, the  superharmonic function $\mfu$ is non-negative in the set $\{g_1 > 0\}$.
Since $z \in \check \Gamma_{0,N}$, there exists a constant $C_0$ such that
 \begin{equation}\label{quadratic}
\sup_{B_r(z) }\mfu (x)  \leq C_0 r^2 , \qquad  \text{for any}\  r < r_0 ,
\end{equation}
due to bounds on the second derivatives of $\psi_N$.

Now, we consider a cone with  opening $\pi - \epsilon$, with vertex at $z$, and  pointing inside
$\{\mfg_0 > 0\} \subset \{\psi_N > 0\}$. Let us truncate this cone with the ball $B_{r_0} (z)$ and denote it by  $K_{r_0}(z)$, so that  $K_{r_0}(z)\subset \{ \psi_N > 0\}$.
 In fact, in the cone $K_{r_0}(z)$,  there exists
 a  positive homogeneous harmonic function $W$ with zero boundary values on the boundary of the cone such that
  \begin{equation}\label{subquadratic}
 \sup_{B_r (z) } W \geq c_0 r^{2-\delta_\epsilon} \qquad \text{for any} \ \ r < r_0 ,
 \end{equation}
 for some positive $\delta_\epsilon$.

 Since  $\mfu > 0$ is superharmonic on $\partial K_{r_0} (z) $, for some large $M$, one has
  \begin{equation}
M \mfu  \geq W  \qquad \hbox{on } \partial K_{r_0} (z) .
 \end{equation}
 Then by comparison principle, one derives
  \begin{equation}\label{comparison}
M \mfu  \geq W  \qquad \hbox{in }  K_{r_0} (z).
 \end{equation}
This leads to a contradiction with \eqref{quadratic}-\eqref{subquadratic}. Hence the proof of the lemma is completed.
\end{proof}

It follows from \cite{BS2003}
we  know that  the free boundary in a universal neighborhood of the touching points of the free boundary with the nozzle, is uniformly a $C^1$-graph. Therefore, to prove $C^1$-regularity of $\tilde{h}_{i,N}$ we need to 
prove that the free boundary touches the fixed one tangentially, in a $C^1$-fashion.
This is the following main regularity theory for the boundary of the non-stagnant region, the freee boundary.

\begin{theorem}\label{regularity}
Assume 
$\Gamma_{i,N}$ ($i=0,1$) is at least uniformly $C^{1,Dini}$.
Then  $\tilde{h}_{i,N}$ is a $C^1$ in the interval   
$x_1 \in  [-N +1, N -1]$.
\end{theorem}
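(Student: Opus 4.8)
The plan is to establish $C^1$-regularity of $\tilde h_{i,N}$ in two regimes---at free boundary points lying in the interior of $\Omega_N$ and at the touching points where $\check\Gamma_{i,N}$ meets the fixed wall $\Gamma_{i,N}$---and then to patch these together using the continuity of $\tilde h_{i,N}$ already proved. Since the argument for $\tilde h_{1,N}$ is entirely symmetric, I focus on $\tilde h_{0,N}$, where the relevant coincidence set is $\{\psi_N=0\}$.

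First I would treat the interior free boundary points. Let $z\in\check\Gamma_{0,N}\cap\Omega_N$ with $\mathrm{dist}(z,\Gamma_{0,N})>0$. The monotonicity from Lemma \ref{lemdirder} is the key input here: since $\partial_{\bf e}\psi_N\ge 0$ for every ${\bf e}$ with $|{\bf e}-{\bf e}_2|<\sigma$ and $\psi_N\ge 0$, the identity $\psi_N(z)=0$ forces $\psi_N\equiv 0$ along the entire downward cone $\{z-t{\bf e}:t>0,\ |{\bf e}-{\bf e}_2|<\sigma\}$ near $z$. Hence the coincidence set $\{\psi_N=0\}$ has Lebesgue density bounded uniformly below at every interior free boundary point. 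Combined with the quadratic growth of Lemma \ref{lem5.7} and the nondegeneracy established above, Caffarelli's regularity theory (\cite{Caf1}) yields that $\check\Gamma_{0,N}$ is locally a $C^1$ graph near $z$; moreover the uniform positive density rules out singular free boundary points, whose blow-ups have coincidence sets of vanishing density (\cite{Caff-Riv,PSU}). Thus $\tilde h_{0,N}$ is $C^1$ at every $x_1$ whose free boundary point is interior, and on the $x_1$-intervals where $\check\Gamma_{0,N}=\emptyset$ one trivially has $\tilde h_{0,N}=h_{0,N}\in C^1$.

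The hard part is the behavior at a touching point $z\in\overline{\check\Gamma_{0,N}}\cap\Gamma_{0,N}$, where one must show that $\check\Gamma_{0,N}$ meets the wall tangentially in a $C^1$ fashion. Here I would blow up: by Proposition \ref{Thm:C11} and the quadratic growth of Lemma \ref{lem5.7}, the rescalings $\psi_N(z+rx)/r^2$ are uniformly bounded and, after flattening the $C^{1,Dini}$ boundary to $\{x_2>0\}$, converge along a subsequence to a global half-plane solution $\psi_0$ satisfying $\Delta\psi_0=6Q\chi_{\{\psi_0>0\}}$, $\psi_0=0$ on $\{x_2=0\}$ and $\nabla\psi_0(0)=0$. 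The directional monotonicity passes to the limit, giving $\partial_{\bf e}\psi_0\ge 0$ for the same cone of directions, and this together with the classification of global solutions of the obstacle problem forces $\psi_0$ to be one-dimensional, whence its free boundary degenerates and the contact is tangent to the wall. Invoking \cite{BS2003}, which asserts that in a universal neighborhood of each touching point $\check\Gamma_{0,N}$ is a uniform $C^1$ graph, the tangential contact shows that the one-sided limits of $\tilde h_{0,N}'(x_1)$ as $x_1$ approaches the touching point exist and coincide with the slope $h_{0,N}'$ of the fixed wall.

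Combining the interior $C^1$-regularity, the tangential $C^1$ touching at the contact points, and the continuity of $\tilde h_{i,N}$ already established, we conclude $\tilde h_{i,N}\in C^1([-N+1,N-1])$, the shrinkage to $[-N+1,N-1]$ being needed only to stay away from the artificial lateral boundary $\{x_1=\pm N\}$. I expect the touching-point analysis to be the principal obstacle: ruling out transversal contact and identifying the correct one-dimensional blow-up is precisely where the monotonicity of Lemma \ref{lemdirder} is indispensable, and $C^1$ is optimal at such points in view of the examples in \cite{SU_Duke}.
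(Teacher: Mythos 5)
Your proposal follows essentially the same route as the paper: interior $C^1$-regularity via the cone of monotonicity directions from Lemma \ref{lemdirder}, which gives positive density of the coincidence set and excludes singular points so that Caffarelli's theory applies, and tangential contact at the touching points via blow-up, the classification of global half-plane solutions (forcing the limit $3Qx_2^2$), and the uniform $C^1$-graph result of \cite{BS2003} together with the arguments of \cite{SU_Duke}. The only cosmetic difference is that the paper packages the tangency step as an explicit cone-exclusion lemma proved by contradiction, whereas you argue directly from the blow-up limit; the substance is the same.
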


It   suffices to prove this theorem for one of the graphs, say $\tilde \Gamma_{0, N}$.
The proof of this theorem follows in a similar way as that of
\cite[Theorems C and D]{SU_Duke}. We shall sketch the argument below without too many details.
For the argument to work, one needs quadratic growth as in Lemma 5.7, which only uses $C^{1,Dini}$ regularity of the boundary of the nozzle. 
The main argument of the $C^1$-regularity of $\tilde{\Gamma}_{i,N}$ uses indirect argument, that we explain by some hand-waving here. One may refer to \cite{SU_Duke} for more details.

Let $z $ be a point on the solid boundary where free boundary point touches. One can 
 rotate locally the nozzle such that 
 the inward normal vector to the boundary of the nozzle at $z$ is pointing in the $x_2$-direction. 
 We want to prove that for each $\epsilon > 0$, there is $r_\epsilon$ such that 
 a cone of opening approximately  $  (\pi - \epsilon ) $,   truncated by $B_{r_\epsilon } (z) $,  is inside the support of the function $\psi_N$. More precisely, we have the following lemma.
 
 \begin{lemma} 
 Given  $\epsilon >0 $, there
 exists $r_\epsilon$  such that
 for any solution  $\psi_N$ of the problem \eqref{pbpsiN}, and any  touching point $z$, 
after suitable rotation of $\Gamma_{0,N}$, we have
\begin{eqnarray}
    \label{cap}
    \check \Gamma_{0,N}\cap B_{r_\epsilon} (z)  \subset   (B_{r_\epsilon} (z)\setminus K^z_\epsilon )  ,
\end{eqnarray}
where 
$K^z_\epsilon:= \{x: \ x_2  >  \epsilon |x_1 - z_1| + z_2 \}. $
\end{lemma}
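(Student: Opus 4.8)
The plan is to argue by contradiction through a blow-up at the touching point, in the spirit of \cite{SU_Duke}. Suppose the conclusion fails for some $\epsilon_0>0$. Then for every $j$ there are a solution $\psi_{N_j}$ of \eqref{pbpsiN}, a touching point $z^j\in\overline{\check\Gamma}_{0,N_j}\cap\Gamma_{0,N_j}\cap\Omega_{N_j-1}$, and a free boundary point $w^j\in\check\Gamma_{0,N_j}\cap K^{z^j}_{\epsilon_0}\cap B_{1/j}(z^j)$. After rotating so that the inner normal to $\Gamma_{0,N_j}$ at $z^j$ points along ${\bf e}_2$, and translating $z^j$ to the origin, I would set $\rho_j=|w^j|\le 1/j\to 0$ and rescale $\psi^\flat_j(x)=\psi_{N_j}(\rho_j x)/\rho_j^2$ on $\Omega^\flat_j=\rho_j^{-1}\Omega_{N_j}$. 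The free boundary points then become $w^j/\rho_j\in\partial B_1\cap\overline{K_{\epsilon_0}}$.

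First I would extract a blow-up limit. By the quadratic growth of Lemma \ref{lem5.7} and the uniform second-derivative bound of Proposition \ref{Thm:C11}, the family $\{\psi^\flat_j\}$ is uniformly bounded in $C^{1,1}_{loc}$, so along a subsequence $\psi^\flat_j\to\psi_0$ in $C^1_{loc}$. Since $\Gamma_{0,N_j}$ is uniformly $C^{1,Dini}$, the scaled domains flatten, $\Omega^\flat_j\to\{x_2>0\}$, and the uniform $C^1$ convergence up to the boundary produces a global solution $\psi_0\ge 0$ of the obstacle problem $\Delta\psi_0=6Q\chi_{\{\psi_0>0\}}$ in $\{x_2>0\}$ with $\psi_0=0$ on $\{x_2=0\}$ and $\nabla\psi_0(0)=0$; here the right-hand side becomes the constant $6Q$ because $\kappa(\rho_j^2\psi^\flat_j)\to\kappa(0)=0$. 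The points $w^j/\rho_j$ converge to some $w^*\in\partial B_1$ with $w^*_2\ge\epsilon_0|w^*_1|\ge 0$, and $|w^*|=1$ forces $w^*_2>0$. Using the nondegeneracy estimate together with the quadratic growth (uniform convergence of the coincidence sets, cf. \cite{PSU}), $w^*$ is an \emph{interior} free boundary point of $\psi_0$.

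Finally I would reach a contradiction by showing $\psi_0$ admits no interior free boundary point. By Lemma \ref{lemdirder} the monotonicity $\partial_{\bf e}\psi_N\ge 0$ for $|{\bf e}-{\bf e}_2|<\sigma$ passes to the limit, so $\partial_{\bf e}\psi_0\ge 0$ in $\{x_2>0\}$ for all such ${\bf e}$. In the open positivity set $\Delta\psi_0=6Q$ is constant, hence each $\partial_{\bf e}\psi_0$ is harmonic and nonnegative there, and by the strong maximum principle it is either strictly positive or identically zero on each component. This dichotomy is exactly the input for the cone-enlargement argument of \cite{SU_Duke}: the monotonicity cone about ${\bf e}_2$ can be successively widened until $\psi_0$ is monotone in every direction ${\bf e}$ with $e_2>0$, which together with $\psi_0=0$ on $\{x_2=0\}$ forces $\psi_0$ to be one-dimensional, $\psi_0(x)=3Q(x_2)_+^2$. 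Its coincidence set is precisely $\{x_2\le 0\}$, so it has no interior free boundary point, contradicting that $w^*$ is one. This contradiction yields the uniform radius $r_\epsilon$.

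The hard part will be the last step, namely making the transition from monotonicity in the fixed narrow cone $C_\sigma$ to genuine flatness of the blow-up rigorous: one must check that the enlargement is never obstructed (whenever $\partial_{\bf e}\psi_0\not\equiv 0$ on a boundary direction of the current cone it is strictly positive in the interior, leaving room to widen) and that the terminal, maximally monotone global solution vanishing on $\{x_2=0\}$ is necessarily $3Q(x_2)_+^2$. This is where the classification of global solutions of the obstacle problem \cite{PSU} and the detailed structure of \cite{SU_Duke} are essential; the remaining ingredients — quadratic growth, the uniform $C^{1,1}$ bound, convergence of free boundaries, and the passage of monotonicity to the limit — are comparatively routine given Lemma \ref{lem5.7}, Proposition \ref{Thm:C11}, and Lemma \ref{lemdirder}.
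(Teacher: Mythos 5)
Your blow-up scheme is exactly the paper's: assume failure, pick solutions $\psi_{N_j}$, touching points $z^j$ and offending free boundary points $w^j$ in the cone, rescale by $r_j=|w^j-z^j|$, use the quadratic growth of Lemma \ref{lem5.7} and nondegeneracy to extract a nontrivial global limit $\psi_0\ge 0$ of $\Delta\psi_0=6Q\chi_{\{\psi_0>0\}}$ in $\{x_2>0\}$ vanishing on $\{x_2=0\}$, identify $\psi_0=3Q(x_2^+)^2$, and contradict the fact that the scaled free boundary points accumulate at a point $w^*\in\partial B_1$ with $w^*_2\ge\epsilon$ where $\psi_0(w^*)=0$ by $C^1_{loc}$ convergence yet $3Q(w^*_2)^2>0$. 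Up to and including the extraction of the limit, this matches the paper step for step.

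Where you diverge is the identification $\psi_0=3Q(x_2^+)^2$. The paper gets this in one line by citing the classification of non-negative global solutions of the obstacle problem in a half-plane vanishing on the hyperplane (Theorem B of \cite{SU_Duke}); you instead propose to rederive it by passing the directional monotonicity of Lemma \ref{lemdirder} to the limit and then widening the monotonicity cone. Two problems with that detour. First, the lemma is stated ``after suitable rotation of $\Gamma_{0,N}$'': in the rotated frame the inward normal at $z^j$ is the new ${\bf e}_2$, but the cone of monotone directions supplied by Lemma \ref{lemdirder} is centered at the \emph{pre-rotation} vertical, whose angle to the new ${\bf e}_2$ need not be smaller than $\sigma$; so you cannot assert $\partial_{\bf e}\psi_0\ge 0$ for $|{\bf e}-{\bf e}_2|<\sigma$ in the rotated coordinates without further argument. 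Second, the cone-enlargement step you invoke (widening from a narrow cone to the full half-space of directions $\{e_2>0\}$) is precisely the part you leave unproved, and the ``strictly positive or identically zero'' dichotomy for $\partial_{\bf e}\psi_0$ does not by itself let you enlarge the cone: you need a quantitative gain at the boundary directions of the current cone, which is the substance of the argument, not a corollary of the maximum principle. The clean repair is to drop the monotonicity detour entirely and conclude as the paper does, from quadratic growth plus nondegeneracy plus the half-space classification in \cite{SU_Duke}; with that substitution your proof is complete and coincides with the paper's.
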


\begin{proof}
The proof of this lemma follows a similar approach to the proof in \cite[Theorems C and D]{SU_Duke}. We elucidate the argument in a more geometric manner. However, we specifically focus on the case $\check \Gamma_{0,N}$ in this discussion.

If the assertion of the lemma is false, it implies the existence of sequences $\psi_{N_j}$ (representing solutions), $z^j$ (denoting touching points), and
 \begin{equation}\label{xj}
       x^j \in   \check \Gamma_{0,N_j}     \cap   K^{z^j}_{\epsilon}   , \qquad \hbox{(after suitable rotation of $\Gamma_{0,N}$)} .
        \end{equation} 
By scaling $\psi_{N_j}$ with $r_j = |x^j - z^j|$, we define
$$
 \psi^\flat_{N_j} (x) = \frac{\psi_{N_j}( z^j + r_j x) }{r_j^2} \quad \text{and}\quad \Omega^\flat_{N_j}=\frac{1}{r_j} (\Omega_{N_j} - z^j).
$$
As demonstrated in the proof of Lemma \ref{lem5.7},one has
$$
\Delta  \psi^\flat_{N_j} = f(r_j^2\psi^\flat_{N_j} (x))
\chi_{\{ \psi^\flat_{N_j}  > 0\}}\quad \text{in}\quad \Omega^\flat_{N_j}.
$$

 Similarly, $\psi^\flat_{N_j}$ converges to a global solution $\psi_0$ in the upper half-plane that satisfies
$$\Delta \psi_0 = 6Q \chi_{\{ \psi_0 > 0 \} },$$
and due to the quadratic growth property and non-degeneracy, we conclude that $\psi_0 \not \equiv 0 $ and exhibits optimally quadratic growth. Given that $\psi_0 \geq 0$, according to the classification of global solutions in the half-space ${x_2 > 0}$ (refer to \cite[Theorem B]{SU_Duke}), we deduce that $\psi_0 = 3Qx_2^2$.

Returning to \eqref{xj}, we observe that the free boundary point $x^j$ is now scaled to
$$ \tilde x^j =   \frac{1}{r_j}(x^j  -z^j) \in   
\frac{1}{r_j} (\check \Gamma_{0,N}  -z^j)   \cap   K^{0}_{\epsilon }.$$
Thus $\tilde x^j_2 > \epsilon $, with $\tilde x^j \in \partial B_1$. Consequently, its limit $\tilde x^0 \in \partial B_1$ satisfies $\tilde x^0_2 > \epsilon $.

As $\tilde x^0$ is also a free boundary point, it follows that
$$0 < 3Q\epsilon^2<  3Q(\tilde x^0_2)^2 =  \psi_0 (\tilde x^0) = 0.$$
This contradiction establishes the conclusion of the lemma.
\end{proof}

\begin{proof}[Proof of Theorem \ref{regularity}]
The above lemma shows that the free boundary touches the fixed one in a tangential fashion. In other words, 
inverting  $r_\epsilon $ (as a function of $\epsilon$), that we call 
$\omega (r) \searrow 0$,
we see that this defines a modulus of continuity of how 
the free boundary touches the fixed one. More precisely, after suitable rotation of $\Gamma_{0,N}$, we have
$$  
   ( \check \Gamma_{0,N}     \cap  B_{r} )
    \subset  (   B_{r} \setminus K^z_{\omega(r) }).
$$

The remainder of the argument for proving  Theorem \ref{regularity} closely parallels the proofs for  \cite[Theorems C and D]{SU_Duke}, with the important distinction that in \cite[Theorem C]{SU_Duke}, the fixed boundary is assumed to be flat, and in Theorem D, the fixed boundary is assumed to be $C^3$. However, in the proof of \cite[Theorem D]{SU_Duke}, the requirement of $C^3$ smoothness is only necessary to achieve optimal smoothness for solutions that change sign, which is in contrast to our problem dealing with non-negative functions. As the reasoning in the proofs remains essentially the same, we omit the details here, and conclude that the above argument proves Theorem \ref{regularity}.
\end{proof}


In fact the boundary of the stagnation region inside the nozzle can have higher regularity and belongs to $C^{1,\beta}$ for some $\beta>0$. This is a consequence of  Lipschitz regularity of $f(\psi_N(x))$ near the interior free boundary point, a proof of which appears to be slightly more intricate and relies on the $C^1$-smoothness of the stagnation boundary.

\begin{lemma}\label{lemnew} Assume 
$\Gamma_{i,N}$ ($i=0,1$) is at least uniformly $C^{1,Dini}$.
There exists a sufficiently small $\delta_0>0$ such that for any $x\in \tilde{\Omega}_N$ satisfying $$d(x):=dist(x,{\check\Gamma}_{0,N})<\delta_0 < 2\delta_0 < dist(x,{\Gamma}_{0,N}),$$ 
one has 
\begin{equation}\label{nondeg}
    \psi_N (x) \geq \frac{Q}{100}  d^2(x).
\end{equation}
\end{lemma}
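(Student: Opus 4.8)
\emph{Strategy.} The plan is to prove the nondegeneracy \eqref{nondeg} by a blow-up contradiction argument, reducing the claim to the classification of global solutions of the obstacle problem $\Delta w=6Q\,\chi_{\{w>0\}}$. First I would record the relevant reductions. Since $d(x)<\delta_0<2\delta_0<\mathrm{dist}(x,\Gamma_{0,N})$, the ball $B_{d(x)}(x)$ lies in the interior of the nozzle, uniformly away from the fixed boundary and, after shrinking $\delta_0$, inside $\{x_2<1/2\}$, so that only the lower free boundary $\check\Gamma_{0,N}$ is relevant; moreover $B_{d(x)}(x)\subset\{\psi_N>0\}$, since otherwise a segment joining $x$ to an interior zero of $\psi_N$ would meet $\check\Gamma_{0,N}$ at a distance less than $d(x)$. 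By the quadratic growth of Lemma~\ref{lem5.7} together with the $C^{1,1}$-bound of Proposition~\ref{Thm:C11}, one has $\psi_N\le C\delta_0^2$ on $B_{d(x)}(x)$, hence $\kappa(\psi_N)$ is small and $f(\psi_N)=6Q(1-2\kappa(\psi_N))$ is close to $6Q$. Finally, by Lemma~\ref{lemdirder} and Theorem~\ref{regularity} the stagnation boundary is a $C^1$ graph with no singular points, which is the decisive structural input.

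\emph{Blow-up.} I would argue by contradiction. If no such $\delta_0$ existed, then taking $\delta_0=1/j$ produces solutions $\psi_{N_j}$ and points $x^j$ with $d_j:=d(x^j)\to 0$ and $\psi_{N_j}(x^j)<\tfrac{Q}{100}d_j^2$. Let $z^j\in\check\Gamma_{0,N_j}$ realize $d_j=|x^j-z^j|$, and rescale
$$ w_j(y)=\frac{\psi_{N_j}(z^j+d_jy)}{d_j^2}. $$
On compact sets the quadratic growth gives $d_j^2 w_j=\psi_{N_j}(z^j+d_jy)\to 0$, so $\kappa(d_j^2 w_j)\to 0$ and the rescaled equation $\Delta w_j=6Q(1-2\kappa(d_j^2 w_j))\chi_{\{w_j>0\}}$ has right-hand side converging to $6Q$. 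The uniform $C^{1,1}$ estimate of Proposition~\ref{Thm:C11} lets me extract a limit $w_0\in C^{1,1}_{loc}(\mathbb{R}^2)$ with $w_0\ge 0$, $w_0(0)=0$, $0\in\partial\{w_0>0\}$, solving $\Delta w_0=6Q\,\chi_{\{w_0>0\}}$ globally; the nondegeneracy lemma established above guarantees $w_0\not\equiv 0$.

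\emph{Classification and conclusion.} The cone monotonicity of Lemma~\ref{lemdirder} passes to the limit, giving $\partial_{\mathbf e}w_0\ge 0$ for all $\mathbf e$ near $\mathbf e_2$; since the only global solutions monotone along a direction are the half-space solutions (the polynomial, i.e.\ singular, global solutions fail monotonicity in every direction), $w_0$ must equal, after a rotation, $w_0(y)=3Q\,(y_2)_+^2$. Because $z^j$ is the nearest free boundary point, $B_1(\tilde x^j)$ with $\tilde x^j=(x^j-z^j)/d_j\in\partial B_1$ contains no rescaled free boundary points; passing to the limit, $B_1(\tilde x^0)$ avoids $\{y_2=0\}$ while $0\in\{y_2=0\}\cap\partial B_1(\tilde x^0)$, which forces $(\tilde x^0)_2=1$. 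Hence $w_0(\tilde x^0)=3Q$, whereas $\psi_{N_j}(x^j)/d_j^2<\tfrac{Q}{100}$ yields $w_0(\tilde x^0)\le\tfrac{Q}{100}<3Q$, a contradiction. The main obstacle is precisely the classification step: ensuring the blow-up limit is the half-space solution rather than a singular global solution, which is where the $C^1$-smoothness of the free boundary, equivalently the cone monotonicity of Lemma~\ref{lemdirder}, is indispensable; a secondary technical point is the Hausdorff convergence of the free boundaries needed to pin down $(\tilde x^0)_2=1$, which follows from the nondegeneracy together with the uniform $C^{1,1}$ bounds.
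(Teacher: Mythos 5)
Your proof is correct and follows the same skeleton as the paper's: argue by contradiction, rescale $\psi_N$ by $d_j^2$ around the nearest free boundary point, identify the blow-up limit as the half-space solution $3Q[(x_2)^+]^2$, and derive the contradiction $3Q\le Q/100$ by evaluating at ${\bf e}_2$. The one genuine divergence is in how you identify the limit. The paper blows up the \emph{domain} $\tilde\Omega_N$ as well: since $\check\Gamma_{0,N}$ is already known to be $C^1$ (Theorem \ref{regularity}), the rescaled non-stagnant region converges to a half-plane, the limit solves the obstacle-type equation in $\{x_2>0\}$ with zero Dirichlet data on $\{x_2=0\}$, and the half-plane classification immediately forces the one-dimensional profile. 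You instead take a whole-plane blow-up and exclude the polynomial (singular) global solutions by passing the cone monotonicity of Lemma \ref{lemdirder} to the limit; this works, but it is slightly more delicate, since you must check that every non-half-space global solution (not only the polynomials, but also those with compact coincidence set) violates monotonicity in some direction of the cone, and you need Hausdorff convergence of the free boundaries (via the nondegeneracy lemma) to conclude $\tilde x^0={\bf e}_2$. The paper's route buys a shorter classification step at the price of invoking the free boundary regularity already proved; your route is self-contained modulo the standard global classification in the plane, and you are more explicit than the paper about why the rescaled points $\tilde x^j$ converge to ${\bf e}_2$, a point the published proof leaves implicit.
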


\begin{proof}
We prove the lemma by way of  contradiction. Suppose that \eqref{nondeg} is not true, for any $j\in\mathbb N$, there exists $x_j$ satisfying $d(x_j)<1/j$ such that
\begin{equation}\label{no}
\psi_N(x_j)< \frac{Q}{100} d^2(x_j). 
\end{equation}
Choose $y_j\in {\check\Gamma}_{0,N}$ such that
$d(x_j)=|x_j-y_j|$, and 
define
$$ r_j=d(x_j), \quad   \psi^\flat_{N,j} (x) = \frac{\psi_N(r_j x+y_j)}{r_j^2},\quad \text{and} \ \ \Omega^\flat_{N,j}=\frac{1}{r_j}( \tilde{\Omega}_N - y_j).$$
It follows that
$$\Delta \psi^\flat_{N,j}(x)=f(r^2_j\psi^\flat_{N,j} (x))\chi_{\{\psi^\flat_{N,j} (x)>0\}} \ \  \ \text{in}\ \ \Omega^\flat_{N,j}.$$
By elliptic regularity, over a subsequence,  $\psi^\flat_{N,j}$  converges to a global solution $\psi_0$ in the upper half plane.
Let us rotate the nozzle such that the inward normal vectors to ${\check\Gamma}_{0,N}$ are pointing in the $x_2$ direction. Then the limit function $\psi_0$ satisfies
\begin{equation}
	\left\{
	\begin{aligned}	
&	\Delta \psi_0(x)=6Q\chi_{\{\psi_0 (x)>0\}},\quad &&\text{in}\,\, \{x_2>0\},\\
&\psi_0(x)\geq 0,\quad &&\text{in}\,\, \{x_2>0\},\\
&\psi_0(x)= 0,\quad &&\text{on}\,\, \{x_2=0\}.\\
\end{aligned}
\right.
\end{equation}

From the classification of global solutions (cf. \cite{PSU}), one can obtain that
\begin{equation}\label{limit}
\psi_0(x)={3Q}[(x\cdot {\bf e}_2)^+]^2.
\end{equation}
Thus, one has
\begin{equation*}
\psi_0({\bf e}_2)= {3Q}.
\end{equation*}

On the other hand, by \eqref{no} we have 
\begin{equation}\label{limit2}
\psi_0({\bf e}_2)\leq \frac{Q}{100}.
\end{equation}
This leads to a contradiction. The proof of the lemma is completed.
\end{proof}

\begin{lemma}\label{Lip-f}
For any $x\in  \check \Gamma_{0,N}\cap \Omega_{N-1} $, there exists $\delta_0>0$ such that  $f(\psi_N(x))\in C^{0,1}(B_{\delta_0}(x)\cap \tilde\Omega_N)$.
\begin{proof}
It follows from Lemmas \ref{lem5.7} and \ref{lemnew} that there exists a $\delta>0$ such that for any $x\in \tilde{\Omega}_N$ satisfying $d(x):=dist(x,\check\Gamma_{0,N})<\delta$, one has
	\begin{eqnarray}\label{S1}
    \frac{1}{100}  d^2(x)\leq \psi_N (x) \leq c d^2(x),
	\end{eqnarray}	 
	for some constant $c>0$.
	
To prove the lemma, it suffices to show that for any $x, y\in \tilde{\Omega}_N$, if $|x-y|<\delta$, $x\neq y$, $d(x)<\delta$ and $d(y)<\delta$, then there exists a constant $C$ such that 
\begin{eqnarray}\label{lip}
\frac{|f(\psi_N(x))-f(\psi_N(y))|}{|x-y|}\leq C.
\end{eqnarray} 
It follows from \eqref{psi} that 
\begin{eqnarray}\label{est1}
|\kappa(\psi_N(x))|\leq \sqrt{\frac{\psi_N(x)}{Q}}\quad \text{and}\quad |\kappa(\psi_N(y))|\leq \sqrt{\frac{\psi_N(y)}{Q}}.
\end{eqnarray}

We first claim that	there exists a constant $C_0$ such that
\begin{eqnarray}\label{sqrt}
|\kappa(\psi_N(x))-\kappa(\psi_N(y))|\leq\frac{C_0|\psi_N(x))-\psi_N(y)|}{\sqrt{\psi_N(x)}+\sqrt{\psi_N(y)}}.
\end{eqnarray}
We may assume that $\psi_N(x)>\psi_N(y)$. There are two possibilities:

{\it (i)} $0\leq \psi_N(y) \leq \frac{\psi_N(x)}{2}$;

{\it (ii)} $\frac{\psi_N(x)}{2}<\psi_N(y) < \psi_N(x)$.
In case ${\it (i)}$, it follows from \eqref{est1} that
 \begin{eqnarray*}
 |\kappa(\psi_N(x))-\kappa(\psi_N(y))|\leq |\kappa(\psi_N(x))|\leq \sqrt{\frac{\psi_N(x)}{Q}}\leq \frac{C_0|\psi_N(x))-\psi_N(y)|}{\sqrt{\psi_N(x)}+\sqrt{\psi_N(y)}}.
 \end{eqnarray*}
 In case ${\it (ii)}$, one can derive that there exists $\theta\in(0,1)$ such that
 \begin{eqnarray*}
 |\kappa(\psi_N(x)-\kappa(\psi_N(y)|&=& |\kappa'(\theta\psi_N(x))+(1-\theta)\psi_N(y))||\psi_N(x)-\psi_N(y)|\\ &
 \leq & \frac{C_1|\psi_N(x))-\psi_N(y)|}{\sqrt{\theta\psi_N(x)+(1-\theta)\psi_N(y)}}\\
 &\leq& \frac{C_0|\psi_N(x))-\psi_N(y)|}{\sqrt{\psi_N(x)}+\sqrt{\psi_N(y)}}.
 \end{eqnarray*}

In order to prove \eqref{lip}, we first have
\begin{equation}\label{eq5.51}
\begin{aligned}
\frac{|f(\psi_N(x))-f(\psi_N(y))|}{|x-y|}=&\frac{12Q|\kappa(\psi_N(x)-\kappa(\psi_N(y)|}{|x-y|}\\ 
\leq&\frac{C_0|\psi_N(x)-\psi_N(y)|}{\sqrt{\psi_N(x)}+\sqrt{\psi_N(y)}}\cdot\frac{1}{|x-y|}.
\end{aligned}
\end{equation}
We fix a positive constant $\epsilon_0>0$ and consider two cases:

{\it Case (i)} $|x-y|\leq \epsilon_0 (d(x)+d(y))$. It follows from \eqref{eq5.51} that there exists $\tau\in (0,1)$ such that
\begin{eqnarray}
\frac{|f(\psi_N(x))-f(\psi_N(y))|}{|x-y|} \leq \frac{|\nabla \psi_N(\tau x+(1-\tau)y)|}{\sqrt{\psi_N(x)}+\sqrt{\psi_N(y)}} \leq\frac{C_2d({\tau x+(1-\tau)y})}{{d(x)+d(y)}}\leq C_3,
\end{eqnarray}
where Proposition \ref{Thm:C11} has been used.

{\it Case (ii)} $|x-y|> \epsilon_0 (d(x)+d(y))$. Lemma \ref{lemnew} implies
\begin{eqnarray}
\frac{|f(\psi_N(x))-f(\psi_N(y))|}{|x-y|}\leq\frac{C_4(d^2(x)+d^2(y))}{(d(x)+d(y))^2}\leq C_5.
\end{eqnarray}
This completes the proof of the lemma. 	
	\end{proof}
	\end{lemma}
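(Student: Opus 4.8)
The plan is to reduce the claim to a quantitative estimate for the profile function $\kappa$ and then absorb the square-root singularity of $\kappa$ against the quadratic behavior of $\psi_N$ near $\check\Gamma_{0,N}$. Since $f(\psi_N)=6Q(1-2\kappa(\psi_N))$, showing $f(\psi_N)\in C^{0,1}(B_{\delta_0}(x)\cap\tilde\Omega_N)$ is equivalent to bounding the difference quotient of $\kappa(\psi_N)$. The obstruction is that $\kappa\in C^{1/2}([0,Q])$ only, with $\kappa'(t)\sim t^{-1/2}$ as $t\to0^+$; a direct chain-rule estimate diverges at the free boundary where $\psi_N\to0$. The key idea is that $\psi_N$ vanishes \emph{exactly} quadratically and nondegenerately, so that $\sqrt{\psi_N(x)}$ is comparable to $d(x):=\mathrm{dist}(x,\check\Gamma_{0,N})$, and this comparability is what cancels the singularity of $\kappa'$.

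First I would record the two-sided quadratic bound. Combining the upper quadratic growth from Lemma \ref{lem5.7} with the nondegeneracy lower bound from Lemma \ref{lemnew} gives $\delta>0$ and $c>0$ with
$$\tfrac{1}{100}\,d^2(x)\le \psi_N(x)\le c\,d^2(x)\qquad\text{for }x\in\tilde\Omega_N,\ d(x)<\delta.$$
Next I would establish the chord estimate
$$|\kappa(s)-\kappa(t)|\le \frac{C_0\,|s-t|}{\sqrt{s}+\sqrt{t}}\qquad\text{for }s,t\text{ near }0,$$
which is the relevant regime since $\psi_N(x),\psi_N(y)$ are small near $\check\Gamma_{0,N}$. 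For this I use the defining relation $s=\int_0^{\kappa(s)}\bar u_1(\sigma)\,d\sigma$ with $\bar u_1(\sigma)=6Q\sigma(1-\sigma)$, which yields $s\ge Q\kappa(s)^2$, hence $\kappa(s)\le\sqrt{s/Q}$, and $\kappa'(\tau)=1/\bar u_1(\kappa(\tau))\le C\tau^{-1/2}$ for small $\tau$. Assuming $s>t$, I split into the regime $t\le s/2$, where the bound follows from $|\kappa(s)-\kappa(t)|\le\kappa(s)\le\sqrt{s/Q}$ and $\sqrt{s}+\sqrt{t}\le 2\sqrt s$, and the regime $s/2<t<s$, where the mean value theorem together with $\kappa'(\tau)\le C\tau^{-1/2}$ gives the claim.

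Then I would combine these facts. Writing $\psi_N(x)=s$, $\psi_N(y)=t$, the chord estimate gives
$$\frac{|f(\psi_N(x))-f(\psi_N(y))|}{|x-y|}\le \frac{C_0\,|\psi_N(x)-\psi_N(y)|}{(\sqrt{\psi_N(x)}+\sqrt{\psi_N(y)})\,|x-y|}.$$
I fix a small $\epsilon_0>0$ and split according to geometry. When $|x-y|\le\epsilon_0(d(x)+d(y))$, the mean value theorem for $\psi_N$ bounds $|\psi_N(x)-\psi_N(y)|$ by $|\nabla\psi_N(\xi)|\,|x-y|$ for some $\xi$ on the segment, and the uniform $C^{1,1}$ bound from Proposition \ref{Thm:C11}, together with the fact that $\nabla\psi_N$ vanishes on $\check\Gamma_{0,N}$, gives $|\nabla\psi_N(\xi)|\le C\,d(\xi)\lesssim d(x)+d(y)$; the nondegeneracy $\sqrt{\psi_N(x)}+\sqrt{\psi_N(y)}\gtrsim d(x)+d(y)$ then makes the quotient bounded. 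When $|x-y|>\epsilon_0(d(x)+d(y))$, I instead use $|\psi_N(x)-\psi_N(y)|\le\psi_N(x)+\psi_N(y)\le C(d^2(x)+d^2(y))$ and again the nondegeneracy in the denominator, so the quotient is $\le C(d^2(x)+d^2(y))/\big(\epsilon_0(d(x)+d(y))^2\big)\le C$. In both cases the difference quotient is uniformly bounded, proving the Lipschitz continuity of $f(\psi_N)$.

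The main obstacle is precisely this cancellation: the $t^{-1/2}$ blow-up of $\kappa'$ is exactly matched by the linear vanishing of both $|\nabla\psi_N|$ and $\sqrt{\psi_N}$ near the free boundary, and making this rigorous requires the sharp two-sided quadratic estimate, i.e. the upper growth \emph{and} the lower nondegeneracy together, rather than either bound alone. The dichotomy on $|x-y|$ versus $d(x)+d(y)$ is needed because the mean-value argument breaks down when $x$ and $y$ sit at comparable distance from $\check\Gamma_{0,N}$ but the segment joining them passes much closer to the free boundary; in that ``far'' regime the estimate must be closed purely by the size bounds rather than by the gradient bound.
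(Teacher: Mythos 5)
Your proposal is correct and follows essentially the same route as the paper's own proof: the two-sided quadratic bound from Lemmas \ref{lem5.7} and \ref{lemnew}, the chord estimate $|\kappa(s)-\kappa(t)|\le C_0|s-t|/(\sqrt{s}+\sqrt{t})$ proved by the same case split, and the final dichotomy on $|x-y|$ versus $d(x)+d(y)$ with the mean value theorem plus the $C^{1,1}$ bound in the near regime and the pure size bounds in the far regime. Your added remark that $|\nabla\psi_N(\xi)|\le C\,d(\xi)$ because $\nabla\psi_N$ vanishes on $\check\Gamma_{0,N}$ makes explicit a step the paper leaves implicit, but the argument is the same.
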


\begin{theorem}
    If $\tilde{h}_{0, N}(x_1)\neq h_{0,N}(x_1)$ for $x_1\in [-N+1, N-1]$, then there exists a $\delta>0$ such that $\tilde{h}_{0, N}\in C^{1, \beta}(x_1-\delta, x_1+\delta)$ for all $\beta>0$.
\end{theorem}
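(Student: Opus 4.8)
The plan is to recognize that the hypothesis $\tilde h_{0,N}(x_1)\neq h_{0,N}(x_1)$ places us at an \emph{interior} free boundary point, so that the full obstacle-problem machinery applies away from the fixed boundary, and then to upgrade the $C^1$ regularity of Theorem \ref{regularity} to $C^{1,\beta}$ using the Lipschitz regularity of the source established in Lemma \ref{Lip-f}. First I would localize: set $z=(x_1,\tilde h_{0,N}(x_1))\in\check\Gamma_{0,N}$. Since $\tilde h_{0,N}(x_1)>h_{0,N}(x_1)$, the point $z$ lies at a positive distance from $\Gamma_{0,N}$, so one can fix $\delta>0$ with $B_{2\delta}(z)\Subset\Omega_N$ and $B_{2\delta}(z)\cap\Gamma_{0,N}=\emptyset$. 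In $B_{2\delta}(z)$ the stream function solves the obstacle-type equation $\Delta\psi_N=f(\psi_N)\chi_{\{\psi_N>0\}}$; because $\psi_N\to 0$ and $\kappa(0)=0$ as $x\to\check\Gamma_{0,N}$, the source satisfies $f(\psi_N)=6Q(1-2\kappa(\psi_N))\to 6Q>0$, so after shrinking $\delta$ we have $f(\psi_N)\ge 3Q>0$ throughout $B_{2\delta}(z)\cap\tilde\Omega_N$. Thus we are dealing with a genuine obstacle problem with a strictly positive right-hand side.

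Next I would assemble the structural estimates that put us in the classical framework. By Proposition \ref{Thm:C11} the solution is uniformly $C^{1,1}$, and the nondegeneracy lemma (the quadratic lower bound $\sup_{B_r(z)}\psi_N\ge cr^2$, cf.\ also Lemma \ref{lemnew}) gives optimal quadratic growth at $z$; crucially, Lemma \ref{Lip-f} shows the coefficient $g(x):=f(\psi_N(x))$ is Lipschitz continuous in $B_{\delta}(z)\cap\tilde\Omega_N$. Hence $\psi_N$ is a solution of the obstacle problem $\Delta\psi_N=g\,\chi_{\{\psi_N>0\}}$ with $g\in C^{0,1}$ and $g\ge 3Q>0$, which is exactly the setting of Caffarelli's regularity theory (\cite{Caf1}, see also \cite{PSU}): at free boundary points of positive Lebesgue density of the coincidence set, the free boundary is locally a $C^{1,\beta}$ graph.

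The remaining and decisive step is to verify that $z$ is such a \emph{regular} point, i.e.\ that $|\{\psi_N=0\}\cap B_r(z)|\ge c_\ast r^2$, thereby excluding the singular points that the obstacle problem can otherwise produce (\cite{Caff-Riv}). Here I would exploit the directional monotonicity of Lemma \ref{lemdirder}: there is $\sigma>0$ so that $\partial_{\bf e}\psi_N\ge 0$ for every ${\bf e}$ with $|{\bf e}-{\bf e}_2|<\sigma$. For such ${\bf e}$ the map $t\mapsto\psi_N(z-t{\bf e})$ has derivative $-\partial_{\bf e}\psi_N(z-t{\bf e})\le 0$, is therefore non-increasing, and equals $\psi_N(z)=0$ at $t=0$; since $\psi_N\ge 0$, it must vanish identically along the ray. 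Consequently the coincidence set contains the full backward cone $\{z-t{\bf e}:t>0,\ |{\bf e}-{\bf e}_2|<\sigma\}\cap B_\delta(z)$, which has nonempty interior with opening $2\sigma$ about $-{\bf e}_2$, so the density bound holds with a universal $c_\ast$. Applying Caffarelli's theorem then yields that $\check\Gamma_{0,N}$ is a $C^{1,\beta}$ graph near $z$, i.e.\ $\tilde h_{0,N}\in C^{1,\beta}(x_1-\delta,x_1+\delta)$.

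The main obstacle is this last density/cone argument, together with making sure the Lipschitz source from Lemma \ref{Lip-f} is genuinely available (it is precisely what separates the present interior situation from the behavior near the \emph{fixed} boundary, where the source loses Lipschitz control and only $C^1$ is expected, cf.\ Theorem \ref{regularity}); once the positive density of the coincidence set is secured from the monotonicity, excluding singular points and invoking the standard $C^{1,\beta}$ theory is routine. A minor care point is to confirm that the backward cone stays inside $B_\delta(z)$ and within the region where the monotonicity and the lower bound $f(\psi_N)\ge 3Q$ hold, which is guaranteed by the interior separation $\tilde h_{0,N}(x_1)>h_{0,N}(x_1)$.
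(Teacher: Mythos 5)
Your proposal is correct, but it takes a different route for the final step than the paper does. The paper's own proof is very short: it simply observes that Theorem \ref{regularity} already gives $C^1$ regularity of $\check\Gamma_{0,N}$, that Lemma \ref{Lip-f} makes the source $f(\psi_N(\cdot))$ Lipschitz near interior free boundary points, and then invokes the Kinderlehrer--Nirenberg bootstrap \cite{KN} to upgrade a $C^1$ free boundary with Lipschitz data to $C^{1,\beta}$. You instead re-derive the regularity at $z$ from scratch through Caffarelli's positive-density criterion, and your verification of positive density --- the backward cone $\{z-t\mathbf{e}: t>0,\ |\mathbf{e}-\mathbf{e}_2|<\sigma\}$ lying in the coincidence set because $t\mapsto\psi_N(z-t\mathbf{e})$ is nonincreasing, nonnegative, and vanishes at $t=0$ --- is a clean, explicit version of the argument the paper only sketches in the preamble to Lemma \ref{lemdirder}. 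Your route is more self-contained at this step and makes the exclusion of singular points transparent, at the cost of duplicating work already encapsulated in Theorem \ref{regularity}; the paper's route is shorter given its prior results. Two small points of care: (i) strictly speaking \cite{Caf1} yields $C^1$ at density points, and the $C^{1,\beta}$ upgrade with Lipschitz right-hand side is the content of \cite{KN}/\cite{PSU}, so your citation should be adjusted accordingly; (ii) your proof is not actually independent of Theorem \ref{regularity}, since Lemma \ref{Lip-f} rests on Lemma \ref{lemnew}, whose blow-up argument uses the existence of a normal to $\check\Gamma_{0,N}$ --- this is not circular, but worth acknowledging.
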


\begin{proof}
Theorem  \ref{regularity} implies that the free boundary is $C^1$, and hence Lemma \ref{Lip-f} implies that $\tilde f(x):= f (\psi_N (x))$ is a Lipschitz function in the vicinity of the free boundary in the nozzle.

From the work of \cite{KN}, it follows that close to such points, the free boundary is $C^{1,\beta}$ for all $\beta \in (0,1)$. We also remark that the $C^{1,\beta}$-norm depends on the distance to the closest touching point between the fixed and the free boundary.
\end{proof}

\begin{remark}
    Note that $f(\psi(\cdot))\in W^{1,p}$ for $p>1$. It follows from the recent result in \cite{Ros-Oton} that the free boundary is also $C^{1,\beta}$ for some $\beta>0$, when staying away from the touching points.
\end{remark}

 \subsection{Existence of stagnation points}\label{secexistfbpoint}
In this subsection, we prove that $\tilde{h}_{0, N}\neq h_0$ when the nozzle satisfies some geometric condition, which implies the existence of stagnation points.
\begin{pro}
For $\tilde{h}_{0,N}$ defined in \eqref{defh0N} and $h_0(\bar{x}_1)<0$ for some $\bar{x}_1\in (-N,N)$, one has $\tilde{h}_{0,N}(\bar{x}_1)\geq 0$ if one of the following conditions holds, 
\begin{enumerate}
    \item[(i)] $h_1(x_1)=1-h_0(x_1)$ for $x_1\in \mathbb{R}$ (cf. Figure \ref{ex}(a));
    \item[(ii)] $h_1(x_1)\equiv 1$ for $x_1\in \mathbb{R}$ (cf. Figure \ref{ex}(b)).
\end{enumerate}
\end{pro}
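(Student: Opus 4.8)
The plan is to reduce the statement to a pointwise comparison of $\psi_N$ with the ``full'' Poiseuille profile, and to prove that comparison by the sliding method, since the monotone–decreasing, non-Lipschitz nonlinearity $f$ forbids a one–shot maximum principle. First I would record that by the monotonicity $\partial_{x_2}\psi_N\ge 0$ (Lemma \ref{mon}) the set $\{\psi_N(\bar x_1,\cdot)=0\}$ is the interval $[h_0(\bar x_1),\tilde h_{0,N}(\bar x_1)]$, so that $\tilde h_{0,N}(\bar x_1)\ge 0$ is \emph{equivalent} to $\psi_N(\bar x_1,0)=0$; thus it suffices to prove $\psi_N(\bar x_1,0)=0$. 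I then introduce the global shear solution
\[
P(x_2)=\begin{cases} 0, & x_2\le 0,\\ \bar{\varphi}_1(x_2), & 0\le x_2\le 1,\\ Q, & x_2\ge 1,\end{cases}
\]
which by \eqref{defbarvarphi11} and \eqref{formf} solves $\Delta P=f(P)\chi_{\{0<P<Q\}}$ on the whole plane and is $C^{1,1}$ (because $\bar{\varphi}_1'(0)=\bar{u}_1(0)=0$ and $\bar{\varphi}_1'(1)=\bar{u}_1(1)=0$). The goal becomes $\psi_N\le P$ on a suitable subdomain, for then $\psi_N(\bar x_1,0)\le P(0)=0$ forces $\psi_N(\bar x_1,0)=0$. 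Throughout I assume $N$ large enough that $x_1=\pm N$ lie in the flat far-field parts (the relevant regime for the existence statement).

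\textbf{Case (ii) ($h_1\equiv 1$).} Here $\Gamma_1=\{x_2=1\}$ and the whole nozzle $\Omega_N$ already carries the boundary data needed for the comparison: on $\Gamma_0$, $\psi_N=0\le P$; on $\Gamma_1$, $\psi_N=Q=P(1)$; at $x_1=-N$, $g_N=\bar{\varphi}_1=P$; and at $x_1=N$, $g_N=\bar{\varphi}_{1-a}(\cdot-a)\le \bar{\varphi}_1=P$ on $(a,1)$. The last inequality I would obtain from the ``at most one intersection'' property (Lemma \ref{lem3.7}, Corollary \ref{cor3.10}): both profiles equal $Q$ at $x_2=1$, the downstream profile has strictly larger slope there ($\mathfrak{c}(1-a)>0=\bar{u}_1(1)$) and vanishes at $x_2=a$ while $\bar{\varphi}_1(a)>0$, so they cannot cross inside $(a,1)$. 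To upgrade this boundary comparison to $\psi_N\le P$ in $\Omega_N$ I would slide: for $t\ge 0$ set $W_t(x)=P(x_2+t)-\psi_N(x)$; for $t$ large $P(\cdot+t)\equiv Q\ge\psi_N$, and decreasing $t$ to the critical $\bar t$ I would rule out $\bar t>0$ by the touching-point analysis below.

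\textbf{Case (i) ($h_1=1-h_0$).} Here I would first exploit symmetry: the map $\psi\mapsto Q-\psi(x_1,1-x_2)$ sends a solution of \eqref{pbpsiN} to a solution with the same data (using $f(s)=-f(Q-s)$, $F(Q-s)=F(s)$, and the invariance of the symmetric nozzle and of $g_N$), and it preserves $\mathcal{E}_N$; by uniqueness of the minimizer (Lemma \ref{lem6.5}) one gets $\psi_N(x_1,x_2)=Q-\psi_N(x_1,1-x_2)$, hence $\psi_N(x_1,1/2)\equiv Q/2$. I would then run the same barrier comparison on the lower half–nozzle $\Omega_N\cap\{x_2<1/2\}$, whose upper side $\{x_2=1/2\}$ carries the exact value $\psi_N=Q/2=P(1/2)$; the bottom and lateral comparisons proceed as in Case (ii), with the downstream inequality now on $(a,1/2)$ (again Lemma \ref{lem3.7}), and sliding gives $\psi_N\le P$ on this half–nozzle, so $\psi_N(\bar x_1,0)\le P(0)=0$.

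The crux and expected main obstacle is that, since $\kappa$ is increasing, $f$ is monotone \emph{decreasing} and only $C^{1/2}$ at the stagnation values $0,Q$; thus the linearized comparison operator has the ``wrong'' sign in its zeroth–order term and $\psi_N\le P$ cannot be read off from a single maximum principle, which is exactly why the sliding is essential. The delicate step is the analysis at the critical shift $\bar t$: a first interior contact at a point where $P(\cdot+\bar t)=\psi_N\in(0,Q)$ gives, via the strong maximum principle applied to $\Delta W_{\bar t}+c(x)W_{\bar t}=0$ with $c$ now bounded (as $f$ is Lipschitz away from $0,Q$), that $W_{\bar t}\equiv 0$ on the connected non-stagnant component; this propagates into the upstream region where $\psi_N=\bar{\varphi}_1=P$ and contradicts $\bar t>0$. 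Contacts on $\Gamma_0$ are excluded by Hopf's lemma, while contacts in the common stagnation set $\{P(\cdot+\bar t)=\psi_N=0\}$ or along the top (where $P(\cdot+\bar t)=Q$) are harmless and do not obstruct the sliding. This touching-point bookkeeping—at the junction of the free boundary $\check\Gamma_{0,N}$ with the active region and at the fixed boundary—is the only substantive difficulty, and it mirrors the sliding arguments already carried out in Proposition \ref{propunique} and Lemma \ref{mon}.
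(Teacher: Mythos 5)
Your reduction (it suffices to show $\psi_N\le\varphi_1$, where $\varphi_1$ is the Poiseuille profile extended by $0$ below $x_2=0$ and by $Q$ above $x_2=1$) and your use of the reflection symmetry $\psi_N(x_1,x_2)=Q-\psi_N(x_1,1-x_2)$ in case (i) coincide with the paper's. But the paper does \emph{not} prove $\psi_N\le\varphi_1$ by sliding: it sets $\tilde\psi_N=\min\{\varphi_1,\psi_N\}$, observes that on each vertical slice the set $\{\varphi_1<\psi_N\}$ has $\varphi_1=\psi_N$ on its endpoints, invokes Corollary \ref{cor3.9} (local one-dimensional minimality of $\bar\varphi_1$) together with $|\partial_{x_2}\psi_N|^2\le|\nabla\psi_N|^2$ to conclude $\mathcal{E}_N(\tilde\psi_N)\le\mathcal{E}_N(\psi_N)$, and then uses uniqueness of the minimizer (Lemma \ref{lem6.5}) to force $\tilde\psi_N=\psi_N$. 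This truncation-plus-energy argument never touches the free boundary and is essentially three lines.

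Your sliding argument, by contrast, has a genuine gap exactly at the contact points you declare ``harmless.'' At the critical shift $\bar t>0$ the inequality $\psi_N\le P(\cdot+\bar t)$ can degenerate at a point $x^*$ with $\psi_N(x^*)=0$ and $x_2^*=-\bar t$ (a point of $\overline{\check\Gamma}_{0,N}$, or of $\Gamma_0$ where $h_0(x_1^*)=-\bar t$), or, in case (ii), at a point with $\psi_N(x^*)=Q$ and $x_2^*=1-\bar t$ on $\overline{\check\Gamma}_{1,N}$ --- and these are precisely the points where decreasing $t$ further destroys the inequality, since $P(\cdot+t)$ strictly decreases there. At such a point $W_{\bar t}=0$ \emph{and} $\nabla W_{\bar t}=0$, because $\bar\varphi_1'(0)=\bar\varphi_1'(1)=0$ and $\nabla\psi_N=0$ on the free boundary; both functions vanish (resp.\ reach $Q$) to exactly second order, so neither the strong maximum principle nor Hopf's lemma applies, and $W_{\bar t}$ is not even superharmonic across the coincidence set (where $\psi_N=0<P(\cdot+\bar t)<Q/2$ one has $\Delta W_{\bar t}=f(P(\cdot+\bar t))>0$). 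Excluding such second-order tangential contacts is the hard step of the paper's own sliding argument in Lemma \ref{lemdirder}, where it requires the Caffarelli--Rivi\`ere description of singular free boundary points plus a cone comparison with a homogeneous harmonic function of degree $2-\delta_\epsilon$; nothing of that sort is supplied in your proposal, and without it the sliding can genuinely get stuck at $\bar t>0$. (Your interior-contact and fixed-boundary Hopf cases, and the downstream boundary comparison $\bar\varphi_{b-a}(\cdot-a)\le\bar\varphi_1$ via Lemma \ref{lem3.7}, are fine.) Either import the full machinery of Lemma \ref{lemdirder} to handle these contacts, or switch to the truncation/energy route, which avoids them altogether.
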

\begin{figure}[h]
    \centering

 
\tikzset{
pattern size/.store in=\mcSize, 
pattern size = 5pt,
pattern thickness/.store in=\mcThickness, 
pattern thickness = 0.3pt,
pattern radius/.store in=\mcRadius, 
pattern radius = 1pt}
\makeatletter
\pgfutil@ifundefined{pgf@pattern@name@_wxp972546}{
\pgfdeclarepatternformonly[\mcThickness,\mcSize]{_wxp972546}
{\pgfqpoint{0pt}{0pt}}
{\pgfpoint{\mcSize+\mcThickness}{\mcSize+\mcThickness}}
{\pgfpoint{\mcSize}{\mcSize}}
{
\pgfsetcolor{\tikz@pattern@color}
\pgfsetlinewidth{\mcThickness}
\pgfpathmoveto{\pgfqpoint{0pt}{0pt}}
\pgfpathlineto{\pgfpoint{\mcSize+\mcThickness}{\mcSize+\mcThickness}}
\pgfusepath{stroke}
}}
\makeatother

 
\tikzset{
pattern size/.store in=\mcSize, 
pattern size = 5pt,
pattern thickness/.store in=\mcThickness, 
pattern thickness = 0.3pt,
pattern radius/.store in=\mcRadius, 
pattern radius = 1pt}
\makeatletter
\pgfutil@ifundefined{pgf@pattern@name@_vf6rtsu87}{
\pgfdeclarepatternformonly[\mcThickness,\mcSize]{_vf6rtsu87}
{\pgfqpoint{0pt}{0pt}}
{\pgfpoint{\mcSize+\mcThickness}{\mcSize+\mcThickness}}
{\pgfpoint{\mcSize}{\mcSize}}
{
\pgfsetcolor{\tikz@pattern@color}
\pgfsetlinewidth{\mcThickness}
\pgfpathmoveto{\pgfqpoint{0pt}{0pt}}
\pgfpathlineto{\pgfpoint{\mcSize+\mcThickness}{\mcSize+\mcThickness}}
\pgfusepath{stroke}
}}
\makeatother

 
\tikzset{
pattern size/.store in=\mcSize, 
pattern size = 5pt,
pattern thickness/.store in=\mcThickness, 
pattern thickness = 0.3pt,
pattern radius/.store in=\mcRadius, 
pattern radius = 1pt}
\makeatletter
\pgfutil@ifundefined{pgf@pattern@name@_a9yuw1scm}{
\pgfdeclarepatternformonly[\mcThickness,\mcSize]{_a9yuw1scm}
{\pgfqpoint{0pt}{0pt}}
{\pgfpoint{\mcSize+\mcThickness}{\mcSize+\mcThickness}}
{\pgfpoint{\mcSize}{\mcSize}}
{
\pgfsetcolor{\tikz@pattern@color}
\pgfsetlinewidth{\mcThickness}
\pgfpathmoveto{\pgfqpoint{0pt}{0pt}}
\pgfpathlineto{\pgfpoint{\mcSize+\mcThickness}{\mcSize+\mcThickness}}
\pgfusepath{stroke}
}}
\makeatother

 
\tikzset{
pattern size/.store in=\mcSize, 
pattern size = 5pt,
pattern thickness/.store in=\mcThickness, 
pattern thickness = 0.3pt,
pattern radius/.store in=\mcRadius, 
pattern radius = 1pt}
\makeatletter
\pgfutil@ifundefined{pgf@pattern@name@_f6nkgilrb}{
\pgfdeclarepatternformonly[\mcThickness,\mcSize]{_f6nkgilrb}
{\pgfqpoint{0pt}{0pt}}
{\pgfpoint{\mcSize+\mcThickness}{\mcSize+\mcThickness}}
{\pgfpoint{\mcSize}{\mcSize}}
{
\pgfsetcolor{\tikz@pattern@color}
\pgfsetlinewidth{\mcThickness}
\pgfpathmoveto{\pgfqpoint{0pt}{0pt}}
\pgfpathlineto{\pgfpoint{\mcSize+\mcThickness}{\mcSize+\mcThickness}}
\pgfusepath{stroke}
}}
\makeatother
\tikzset{every picture/.style={line width=0.75pt}} 

\begin{tikzpicture}[x=0.7pt,y=0.75pt,yscale=-1,xscale=1]

\draw [line width=0.75]  [dash pattern={on 4.5pt off 4.5pt}]  (20.33,63) -- (328.33,63) ;
\draw [line width=0.75]  [dash pattern={on 4.5pt off 4.5pt}]  (24.33,173) -- (328.33,173) ;
\draw [line width=1.5]    (65,63) .. controls (89.33,63) and (96,65) .. (118,48) .. controls (140,31) and (140,57) .. (162,69) .. controls (184,81) and (197,55) .. (218,47) .. controls (239,39) and (256,82) .. (290,85) ;
\draw [line width=1.5]    (20.33,63) -- (65,63) ;
\draw [line width=1.5]    (290,85) -- (328.33,85) ;
\draw [line width=1.5]    (24.33,173) -- (66,173) ;
\draw  [pattern=_wxp972546,pattern size=8.399999999999999pt,pattern thickness=0.75pt,pattern radius=0pt, pattern color={rgb, 255:red, 7; green, 62; blue, 192}] (492.33,258.67) -- (520.67,258.67) -- (520.67,271.67) -- (492.33,271.67) -- cycle ;
\draw [line width=1.5]    (66,173) .. controls (115,171) and (118,210) .. (144,187) .. controls (170,164) and (171,157) .. (187,169) .. controls (203,181) and (213,203) .. (232,191) .. controls (251,179) and (253,156) .. (288,153) ;
\draw [line width=1.5]    (288,153) -- (326.33,153) ;
\draw [line width=1.5]  [dash pattern={on 5.63pt off 4.5pt}]  (25.33,119) -- (328.33,119) ;
\draw [color={rgb, 255:red, 208; green, 2; blue, 27 }  ,draw opacity=1 ][line width=1.5]    (66,173) .. controls (90.27,170.67) and (88.53,142.33) .. (106.27,163.17) .. controls (124,184) and (123.59,136.74) .. (141,159) .. controls (158.41,181.26) and (160.81,162.06) .. (173.77,161.67) .. controls (186.72,161.27) and (186.03,174.33) .. (199.27,171.67) .. controls (212.5,169) and (213,151) .. (224.31,164.29) .. controls (235.63,177.58) and (270.41,152.46) .. (288,153) ;
\draw [line width=1.5]    (362,63.33) -- (650.54,63.33) ;
\draw [line width=0.75]  [dash pattern={on 4.5pt off 4.5pt}]  (364.33,172.33) -- (651,172.33) ;
\draw [line width=1.5]    (364.33,172.33) -- (395.33,172.33) ;
\draw [line width=1.5]    (395.33,172.33) .. controls (444.33,170.33) and (447.33,209.33) .. (473.33,186.33) .. controls (499.33,163.33) and (498.67,147.33) .. (514.67,159.33) .. controls (530.67,171.33) and (538.67,161.33) .. (557.67,149.33) .. controls (576.67,137.33) and (582.33,155.33) .. (617.33,152.33) ;
\draw [line width=1.5]    (617.33,152.33) -- (650.54,152.33) ;
\draw [color={rgb, 255:red, 208; green, 2; blue, 27 }  ,draw opacity=1 ][line width=1.5]    (398.32,171.86) .. controls (422.59,169.52) and (417.86,141.67) .. (435.6,162.5) .. controls (453.33,183.33) and (452.33,139.83) .. (470.33,158.33) .. controls (488.33,176.83) and (494.2,155.13) .. (503.67,154.33) .. controls (513.13,153.53) and (516.32,166.66) .. (531.4,163.93) .. controls (546.48,161.21) and (559.28,140.41) .. (578.6,146.73) .. controls (597.92,153.06) and (599.92,153.46) .. (611.04,152) ;
\draw [draw opacity=0][pattern=_vf6rtsu87,pattern size=14.024999999999999pt,pattern thickness=0.75pt,pattern radius=0pt, pattern color={rgb, 255:red, 7; green, 62; blue, 192}][line width=1.5]    (70.58,175.8) .. controls (100.96,168.47) and (82.49,146.67) .. (102.88,164.67) .. controls (123.27,182.67) and (121.38,143.67) .. (139.38,162.17) .. controls (157.38,180.67) and (159.77,163.67) .. (167.88,164.67) .. controls (175.99,165.67) and (128.88,190.67) .. (130.88,194.67) .. controls (132.88,198.67) and (93.46,166.07) .. (78.58,171.4) ;
\draw [draw opacity=0][pattern=_a9yuw1scm,pattern size=14.774999999999999pt,pattern thickness=0.75pt,pattern radius=0pt, pattern color={rgb, 255:red, 7; green, 62; blue, 192}][line width=1.5]    (411.47,166.73) .. controls (441.85,159.4) and (417.48,148.33) .. (437.87,166.33) .. controls (458.25,184.33) and (452.76,151.07) .. (468.71,161.5) .. controls (484.67,171.93) and (480.55,164.53) .. (488.67,165.53) .. controls (496.78,166.53) and (458.21,190) .. (460.21,194) .. controls (462.21,198) and (421.55,165.8) .. (406.67,171.13) ;
\draw [draw opacity=0][pattern=_f6nkgilrb,pattern size=11.850000000000001pt,pattern thickness=0.75pt,pattern radius=0pt, pattern color={rgb, 255:red, 7; green, 62; blue, 192}][line width=1.5]    (191.16,176) .. controls (240.09,152.6) and (215.78,174.3) .. (230.89,170.2) .. controls (246,166.1) and (240.81,169.66) .. (248.67,169.44) .. controls (256.53,169.22) and (232.89,187.27) .. (226.49,191) .. controls (220.09,194.73) and (190.45,177.03) .. (196.89,172.2) ;
\draw [line width=0.75]  [dash pattern={on 4.5pt off 4.5pt}]  (64.73,27) -- (64.73,61.6) -- (64.73,152.6) -- (64.73,218.6) ;
\draw [line width=0.75]  [dash pattern={on 4.5pt off 4.5pt}]  (288.73,27) -- (288.73,61.6) -- (288.73,152.6) -- (288.73,218.6) ;
\draw [line width=0.75]  [dash pattern={on 4.5pt off 4.5pt}]  (396.4,26.67) -- (396.4,61.27) -- (396.4,152.27) -- (396.4,217.1) ;
\draw [line width=0.75]  [dash pattern={on 4.5pt off 4.5pt}]  (611.04,26.4) -- (611.04,61) -- (611.04,152) -- (611.04,218) ;

\draw (22.33,176.4) node [anchor=north west][inner sep=0.75pt]  [font=\scriptsize]  {$x_{2} =0$};
\draw (19.5,48.4) node [anchor=north west][inner sep=0.75pt]  [font=\scriptsize]  {$x_{2} =1$};
\draw (293,137.4) node [anchor=north west][inner sep=0.75pt]  [font=\scriptsize]  {$x_{2} =a$};
\draw (292,88.4) node [anchor=north west][inner sep=0.75pt]  [font=\scriptsize]  {$x_{2} =b$};
\draw (153.67,182.4) node [anchor=north west][inner sep=0.75pt]  [font=\small]  {$\Gamma _{0}$};
\draw (153.67,35.4) node [anchor=north west][inner sep=0.75pt]  [font=\small]  {$\Gamma _{1}$};
\draw (148.67,136.2) node [anchor=north west][inner sep=0.75pt]  [font=\small,color={rgb, 255:red, 208; green, 2; blue, 27 }  ,opacity=1 ]  {$x_{2} =\tilde{h}_{0}( x_{1})$};
\draw (531.67,257.67) node [anchor=north west][inner sep=0.75pt]  [font=\small] [align=left] {Stagnation region};
\draw (12,123.4) node [anchor=north west][inner sep=0.75pt]  [font=\scriptsize]  {$x_{2} =1/2$};
\draw (355.33,49.07) node [anchor=north west][inner sep=0.75pt]  [font=\scriptsize]  {$x_{2} =1$};
\draw (355.33,177.73) node [anchor=north west][inner sep=0.75pt]  [font=\scriptsize]  {$x_{2} =0$};
\draw (612.83,138.23) node [anchor=north west][inner sep=0.75pt]  [font=\scriptsize]  {$x_{2} =a$};
\draw (475.33,189.73) node [anchor=north west][inner sep=0.75pt]  [font=\small]  {$\Gamma _{0}$};
\draw (441.6,128.13) node [anchor=north west][inner sep=0.75pt]  [font=\small,color={rgb, 255:red, 208; green, 2; blue, 27 }  ,opacity=1 ]  {$x_{2} =\tilde{h}_{0}( x_{1})$};
\draw (35,218.07) node [anchor=north west][inner sep=0.75pt]  [font=\scriptsize]  {$x_{1} =-N$};
\draw (269.67,218.07) node [anchor=north west][inner sep=0.75pt]  [font=\scriptsize]  {$x_{1} =N$};
\draw (366.67,217.07) node [anchor=north west][inner sep=0.75pt]  [font=\scriptsize]  {$x_{1} =-N$};
\draw (591.67,221.73) node [anchor=north west][inner sep=0.75pt]  [font=\scriptsize]  {$x_{1} =N$};

\end{tikzpicture}

\caption{Existence of stagnation points}\label{ex}
\end{figure}
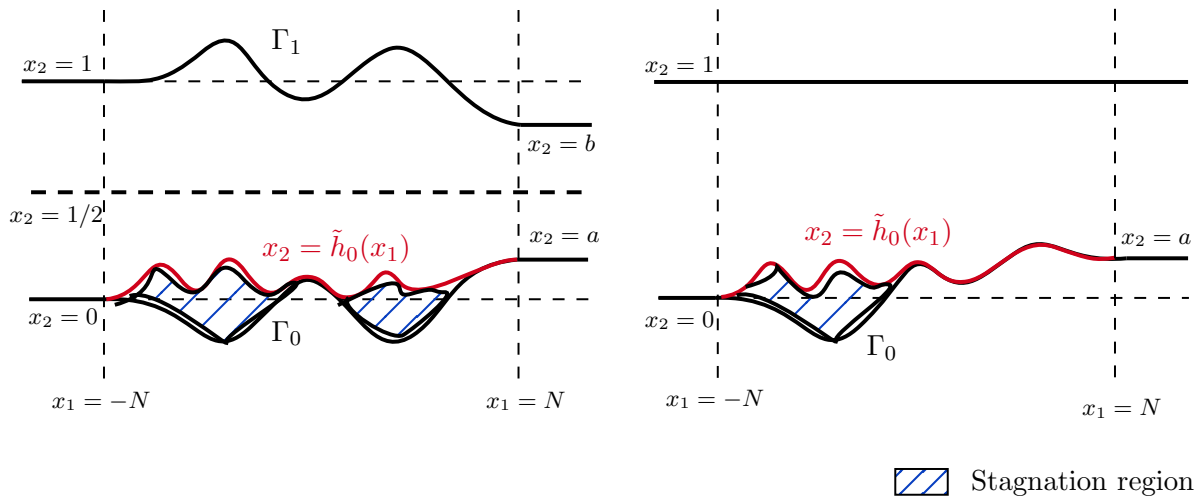
\begin{proof}
The proof relies on Corollary \ref{cor3.9}. 
Define
\begin{equation}\label{defvarphi1}
			\varphi_1(x_2)=\left\{			\begin{aligned}
				& \bar\varphi_1(x_2),\quad &&\text{if}\,\, x_2\in(0,1),\\
				& 0,\quad &&\text{if}\,\, x_2\leq 0,\\
                 & Q, \quad && \text{if}\,\, x_2\geq 1,
			\end{aligned}\right.
			\end{equation}
where $\bar{\varphi}_1$ is the one-dimensional solution defined in \eqref{defbarvarphi11}. Now we prove the two cases separately.

{\it Case (i).} Suppose that $h_1=1-h_0$. It follows from Lemma \ref{lem6.5} that the energy minimizer is unique. Hence we have $\psi_N(x_1,x_2) =Q-\psi_N(x_1, 1-x_2)$. 
Denote
\begin{eqnarray*}
\tilde{\psi}_N(x_1,x_2)=\min\{\varphi_1(x_2), \psi_N(x_1,x_2)\} \ \ \text{for}\ \ (x_1, x_2)\in \Omega_{N,-}
\end{eqnarray*}
and 
\begin{eqnarray*}
D_{N,-}=\{(x_1,x_2)\in \Omega_{N,-}: \varphi_1(x_2)<\psi_N(x_1,x_2)\},
\end{eqnarray*}
where
\begin{equation}
    \Omega_{N,-}=\{(x_1,x_2): (x_1,x_2)\in \Omega_N, x_2<1/2\}.
\end{equation}
Hence one also has $\tilde\psi_N(x_1,x_2) =Q-\tilde\psi_N(x_1, 1-x_2)$.

The straightforward computations yield
\begin{eqnarray}\label{energy5.42}
{\mathcal{E}}_N(\tilde{\psi}_N)&=&2\int_{\Omega_{N,-}} \frac{|\nabla \tilde\psi_N|^2}{2} + F(\tilde\psi_N)dx\\ \nonumber  &=&2\int_{D_{N,-}}\frac{|\varphi'_1|^2}{2} + F(\varphi_1)dx+2\int_{\Omega_{N,-}\setminus D_{N,-}}  \frac{|\nabla \psi_N|^2}{2} + F(\psi_N)dx\\ \nonumber
                          &\leq & 2\int_{-N}^N\int_{D_{N,-,x_1}} \frac{|\partial_{x_2}\psi_N|^2}{2} + F(\psi_N)dx_2dx_1 +2\int_{\Omega_{N,-}\setminus D_{N,-}}  \frac{|\nabla \psi_N|^2}{2} + F(\psi_N)dx\\ \nonumber
                           &\leq& 2\int_{\Omega_{N,-}}  \frac{|\nabla \psi_N|^2}{2} + F(\psi_N)dx={\mathcal{E}}_N({\psi}_N),
\end{eqnarray}
where $D_{N, -, x_1}=\{x_2:(x_1,x_2)\in D_{N, -}\}$ and Corollary \ref{cor3.9} has been used to get the first inequality.
This implies $\tilde{\psi}_N(x_1,x_2)=\psi_N(x_1,x_2)$ in $\Omega_{N,-}$.
Therefore, one deduces that
\begin{eqnarray*}
\psi_N(x_1,x_2)\leq \varphi_1(x_2) \ \ \ \text{in} \ \Omega_{N,-}.
\end{eqnarray*}
It follows from $h_0(\bar{x}_1)<0$ and the definition \eqref{defvarphi1} that
\begin{eqnarray}
\psi_N(\bar{x}_1,x_2)=0 \ \ \text{for} \ \ h_0(\bar{x}_1)\leq x_2\leq 0.
\end{eqnarray}
This implies $\tilde{h}_{0,N}(\bar{x}_1)\geq 0$.

{\it Case (ii).} Suppose that $h_1 \equiv 1$. Denote 
\begin{eqnarray*}
\tilde{\psi}_N(x_1,x_2)=\min\{\varphi_1(x_2), \psi_N(x_1,x_2)\} \ \ \text{for}\ \ (x_1, x_2)\in \Omega_{N}
\end{eqnarray*}
and 
\begin{eqnarray*}
D_{N}=\{(x_1,x_2)\in \Omega_{N}: \varphi_1(x_2)<\psi_N(x_1,x_2)\}.
\end{eqnarray*}
As same as the estimate \eqref{energy5.42},
one has
\[
\mathcal{E}_N(\tilde{\psi}_N )=\int_{\Omega_{N}} \frac{|\nabla \tilde\psi_N|^2}{2} + F(\tilde\psi_N)dx \leq \int_{\Omega_{N}}  \frac{|\nabla \psi_N|^2}{2} + F(\psi_N)dx={\mathcal{E}}_N({\psi}_N).
\]
Thus we have $\tilde{\psi}_N=\psi_N$ in $\Omega_N$, which implies
\[
\psi_N(x_1,x_2)\leq \varphi_1(x_2) \quad \text{in}\,\, \Omega_N.
\]
Therefore, it holds that 
\begin{eqnarray}
\psi_N(\bar{x}_1,x_2)=0 \ \ \text{for} \ \ h_0(\bar{x}_1)\leq x_2\leq 0.
\end{eqnarray}
This yields $\tilde{h}_{0,N}(\bar{x}_1)\geq 0$ and  completes the proof of the proposition.
\end{proof}

\begin{remark}
 Similarly,  if $h_0(x_1)\equiv 0$ for $x_1\in \mathbb{R}$ and $h_1(\bar{x}_1)>1$ for some $\bar{x}_1\in (-N, N)$, then $\tilde{h}_{1,N}(\bar{x}_1)\leq 1$.
\end{remark}


		\subsection{Existence of solutions in the whole nozzle and asymptotic behavior}
		In this subsection, we establish the existence of the solutions in the whole nozzle 
 $\Omega$. The key point is to study the monotonicity property of the normalized energy for approximate solutions.
		
		\begin{lemma}\label{lem6.9}
			Suppose that $\psi_{N}$ is an energy minimizer for ${\mathcal{E}}_N$ over ${\mathcal{K}}_N$. Then
			\begin{equation}\label{defzeta}
				\zeta(N)=	\int_{\Omega_{ N}}\frac{|\nabla\psi_{N}|^2}{2}+F(\psi_{N})dx  -N\bar{\mathcal{J}}_1-N\bar{\mathcal{J}}_{b-a}
			\end{equation}	is a decreasing function of $N$ as long as $N\geq L_0:= \text{max}\{\bar{L},-\underline{L}\}$, where $\mathcal{J}_d$ is defined in \eqref{1Denergy}. 		\end{lemma}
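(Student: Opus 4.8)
The plan is to prove monotonicity by a direct comparison of the normalized energies at two scales, using the minimality of the larger minimizer against an explicit competitor built from the smaller one. Fix $L_0 \le N_1 < N_2$. Since $N_1 \ge L_0 = \max\{\bar{L}, -\underline{L}\}$, we have $-N_1 \le \underline{L}$ and $N_1 \ge \bar{L}$, so by the flatness condition \eqref{flatcondition} the two regions added when passing from $\Omega_{N_1}$ to $\Omega_{N_2}$ are flat strips: the upstream piece $(-N_2,-N_1)\times(0,1)$ and the downstream piece $(N_1,N_2)\times(a,b)$. This is the geometric fact that makes the whole argument work.

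Next I would define a competitor $\Psi \in H^1(\Omega_{N_2})$ by gluing: set $\Psi = \psi_{N_1}$ on $\Omega_{N_1}$, set $\Psi(x) = \bar{\varphi}_1(x_2)$ on the upstream flat piece, and set $\Psi(x) = \bar{\varphi}_{b-a}(x_2-a)$ on the downstream flat piece. The crucial verification is that $\Psi \in \mathcal{K}_{N_2}$. On $\Gamma_0$ and $\Gamma_1$ within the added regions this holds because $\bar{\varphi}_1(0) = \bar{\varphi}_{b-a}(0) = 0$ and $\bar{\varphi}_1(1) = \bar{\varphi}_{b-a}(b-a) = Q$; on the outer cross-sections $\{x_1 = \pm N_2\}$ it holds because $g_{N_2}$ is by definition exactly these shear profiles. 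The only gluing-sensitive point is $H^1$-regularity across the interfaces $\{x_1 = \pm N_1\}$: there the trace of $\psi_{N_1}$ equals $g_{N_1}$, which by the definition of $g_{N_1}$ together with the flatness at $\pm N_1$ is precisely $\bar{\varphi}_1(x_2)$ upstream (resp.\ $\bar{\varphi}_{b-a}(x_2-a)$ downstream); thus the one-sided traces agree and $\Psi$ has no jump.

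Then I would compute $\mathcal{E}_{N_2}(\Psi)$. On each added flat piece the integrand $\tfrac{|\nabla\Psi|^2}{2}+F(\Psi)$ depends only on $x_2$, so integrating in $x_1$ produces exactly $(N_2-N_1)\bar{\mathcal{J}}_1$ upstream and $(N_2-N_1)\bar{\mathcal{J}}_{b-a}$ downstream, where $\bar{\mathcal{J}}_1 = \mathcal{J}_1(\bar{\varphi}_1)$ and $\bar{\mathcal{J}}_{b-a} = \mathcal{J}_{b-a}(\bar{\varphi}_{b-a})$ as in Lemma \ref{lem3.6}. Hence $\mathcal{E}_{N_2}(\Psi) = \mathcal{E}_{N_1}(\psi_{N_1}) + (N_2-N_1)(\bar{\mathcal{J}}_1 + \bar{\mathcal{J}}_{b-a})$. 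The minimality of $\psi_{N_2}$ gives $\mathcal{E}_{N_2}(\psi_{N_2}) \le \mathcal{E}_{N_2}(\Psi)$, and subtracting $N_2(\bar{\mathcal{J}}_1 + \bar{\mathcal{J}}_{b-a})$ from both sides collapses the inequality precisely to $\zeta(N_2) \le \zeta(N_1)$, which is the assertion.

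The argument carries no deep analytic obstacle; the step requiring the most care is the admissibility of $\Psi$, namely confirming that the interior boundary datum $g_{N_1}$ of the smaller minimizer coincides with the shear profiles $\bar{\varphi}_1$ and $\bar{\varphi}_{b-a}$ pasted on the outside, so that the glued function lies in $H^1(\Omega_{N_2})$ with no spurious interface energy. This matching is exactly what the flatness hypothesis \eqref{flatcondition} combined with $N_1 \ge L_0$ guarantees, and it is the only place where the threshold $L_0$ enters.
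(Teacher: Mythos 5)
Your proposal is correct and follows essentially the same route as the paper: the paper defines the identical glued competitor $\tilde{\psi}_{N_2}$ (equal to $\psi_{N_1}$ on $\Omega_{N_1}$ and to the shear profiles $\bar{\varphi}_1$, $\bar{\varphi}_{b-a}(\cdot-a)$ on the added flat strips) and concludes by minimality of $\psi_{N_2}$. The only difference is that you spell out the trace-matching and admissibility verification that the paper dismisses with ``Clearly,'' which is a welcome amount of extra care but not a different argument.
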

		\begin{proof}
			Given any $N_2>N_1\geq L_0$, define
			\[
			\tilde{\psi}_{N_2}=\left\{
			\begin{aligned}
				&\psi_{N_1}, \quad \ \ &&x\in \Omega_{N_1},\\
				&\bar\varphi_{b-a}(x_2-a), \quad &&x_1\in [N_1, N_2],\\
				&\bar\varphi_1, \quad \ \ &&x_1\in [-N_2, -N_1].
			\end{aligned}
			\right.
			\]
			Clearly, $\tilde{\psi}_{N_2}\Big|_{{\Omega}_{N_2}}\in {\mathcal{K}}_{N_2}$. Since $\psi_{N_2}\Big|_{{\Omega}_{N_2}}$ is an energy minimizer in ${\Omega}_{N_2}$, one has
			\begin{equation}
				\begin{aligned}
					&\int_{\Omega_{N_2}}\frac{|\nabla \psi_{N_2}|^2}{2}+F(\psi_{N_2}) dx\\
					\leq &
					\int_{\Omega_{N_2}}\frac{|\nabla \tilde{\psi}_{N_2}|^2}{2}+F(\tilde{\psi}_{N_2}) dx\\
					= & \int_{\Omega_{N_1}}\frac{|\nabla {\psi}_{N_1}|^2}{2}+F({\psi}_{N_1}) dx +(N_2-N_1)\bar{\mathcal{J}}_{b-a}+(N_2-N_1)\bar{\mathcal{J}}_1,
				\end{aligned}
			\end{equation}
			where $\bar{\mathcal{J}}_{b-a}$ is defined in \eqref{defbarJd}.
			Thus one has
			\begin{equation}
				\begin{aligned}
					\zeta(N_2)=&	\int_{\Omega_{N_2}}\frac{|\nabla\psi_{N_2}|^2}{2}+F(\psi_{N_2})dx  -N_2\bar{\mathcal{J}}_1-N_2\bar{\mathcal{J}}_{b-a} \\
					\leq&
					\int_{\Omega_{N_1}}\frac{|\nabla\psi_{N_1}|^2}{2}+F(\psi_{N_1})dx  -N_1\bar{\mathcal{J}}_1-N_1\bar{\mathcal{J}}_{b-a}=\zeta(N_1).
				\end{aligned}
			\end{equation}
			Therefore, $\zeta(N)$ is a decreasing function and the proof of the lemma is completed.
		\end{proof}
		
		In the following, we first establish the uniform estimate for the energy minimizers and then study the normalized energy for the limit of the approximate solutions. The asymptotic behavior is proved with the aid of Lemma \ref{lem6.9}.
		\begin{lemma}
			Let $\psi_N$ be an energy minimizer on the domain $\Omega_{N}$. Then
			\begin{equation}\label{lim}
				\psi_N\to \psi\quad \text{in}\,\, C^{1,\beta}_{\text{loc}}(\Omega) \ \text{         for some }\beta\in (0,1).
			\end{equation}
			{The limit function $\psi$ satisfies
				\begin{equation}\label{equ}
					\left\{
					\begin{aligned}
						&	\Delta\psi =f(\psi)\quad \text{in}\,\, \Omega\cap \{x: 0<\psi(x)<Q\},\\
						&	\psi=0\,\,\ \text{on}\,\,\ \Gamma_0,\quad \ \psi=Q\,\ \,\text{on}\,\ \, \Gamma_1,
					\end{aligned}\right.
			\end{equation}}
			and
			\begin{equation}\label{psiasymp}
				\psi(x_1, x_2)\to \left\{
				\begin{aligned}
					&\bar\varphi_{b-a}(x_2-a)\quad \ && \text{as}\,\, x_1\to \infty,\\
					&\bar\varphi_1(x_2) \quad \  && \text{as}\,\, x_1\to -\infty.	
				\end{aligned}
				\right.
			\end{equation}
			Furthermore, if one denotes
   \begin{equation}
       \tilde{h}_0(x_1)=\sup\{x_2: \psi(x_1, x_2)=0, h_0(x_1)\leq x_2\leq h_1(x_1)\}
       \end{equation}
       {and}
       \begin{equation}
       \tilde{h}_1(x_1)=\inf\{x_2: \psi(x_1, x_2)=Q, h_0(x_1)\leq x_2\leq h_1(x_1)\},
   \end{equation}
then  $\tilde{h}_i\in C^1(\mathbb{R})$ ($i=0$, $1$) and $\tilde{h}_i\in C^{1,\alpha}$ in  $\{x_1: \tilde{h}_i(x_1)\neq h_i(x_1)\}$. 
Finally, $\psi$ satisfies
			\begin{equation}
				\partial_{x_2}\psi(x_1, x_2)>0\,\, \ \text{in}\,\, \Omega \cap \{(x_1,x_2): \tilde{h}_0(x_1)<x_2< \tilde{h}_1(x_1)\}.
			\end{equation}
		\end{lemma}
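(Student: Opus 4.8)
The plan is to obtain the solution $\psi$ in the full nozzle $\Omega$ as a limit of the truncated energy minimizers $\psi_N$, and then transfer every relevant property proved in the truncated setting to the limit. First I would establish the convergence \eqref{lim}: by Proposition \ref{Thm:C11} the Hessians $|D^2\psi_N|$ are uniformly bounded on every compact set $K\Subset\Omega$ (independently of $N$), so together with the uniform $C^{1,\alpha}$ bound coming from the elliptic regularity theory the family $\{\psi_N\}$ is precompact in $C^{1,\beta}_{loc}(\Omega)$ for any $\beta<1$. A diagonal/Arzel\`a--Ascoli extraction then produces a subsequence converging in $C^{1,\beta}_{loc}$ to a limit $\psi$. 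Passing to the limit in the Euler--Lagrange equation \eqref{ELeq} on the open set $\{0<\psi<Q\}$ (where $f$ is smooth) and in the boundary data $g_N$ yields \eqref{equ}; the monotonicity $\partial_{x_2}\psi_N\ge 0$ from Lemma \ref{mon} passes to the limit to give $\partial_{x_2}\psi\ge 0$ in $\Omega$.

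Next I would prove the asymptotic behavior \eqref{psiasymp}. The key device is the normalized energy $\zeta(N)$ of Lemma \ref{lem6.9}, which is decreasing in $N$ and bounded below (since $\mathcal{E}_N\ge0$ and the comparison functions built from $\bar\varphi_1$, $\bar\varphi_{b-a}$ give a matching upper bound), hence convergent. The decay of the increments of $\zeta$ forces, for any fixed-length window $[m,m+1]$ translated to $x_1\to\pm\infty$, that the energy of $\psi$ on that window converges to the one-dimensional minimal energy $\bar{\mathcal J}_1$ (resp.\ $\bar{\mathcal J}_{b-a}$). Invoking the quantitative gap estimate of Lemma \ref{lem3.9} — namely that a function whose energy is within $\sigma$ of the minimum must be $L^\infty$-close to $\bar\varphi_d$ — I can then shift $\psi$ by $x_1\to-\infty$ and extract a limit solving \eqref{psi*pb}; by Proposition \ref{prop2} and Lemma \ref{lem3.4} this limit is exactly $\bar\varphi_1(x_2)$, giving the upstream behavior, and the analogous argument with $d=b-a$ gives the downstream behavior. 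This is essentially the shifting argument already used in Lemma \ref{lemmaasymptotic}, now applied to the constructed minimizer.

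For the free-boundary regularity of $\tilde h_i$ in the full nozzle, I would exploit the uniformity in $N$ of all the truncated estimates. The functions $\tilde h_{i,N}$ are continuous (proved in the continuity lemma), and are $C^1$ on $[-N+1,N-1]$ by Theorem \ref{regularity}, with $C^1$-modulus controlled by the universal constants of Lemma \ref{lem5.7} and Proposition \ref{Thm:C11}, which are independent of $N$. Because $\psi_N\to\psi$ in $C^{1,\beta}_{loc}$, the non-stagnant regions converge and $\tilde h_{i,N}\to\tilde h_i$ locally uniformly; the uniform $C^1$-bounds then pass to the limit to give $\tilde h_i\in C^1(\mathbb{R})$. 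On the open set $\{x_1:\tilde h_i(x_1)\neq h_i(x_1)\}$, where the free boundary stays away from the fixed wall, Lemma \ref{Lip-f} shows $f(\psi(\cdot))$ is Lipschitz near the free boundary, so the Kinderlehrer--Nirenberg result cited via \cite{KN} upgrades the regularity to $C^{1,\alpha}$.

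Finally, the strict positivity $\partial_{x_2}\psi>0$ in $\Omega\cap\{\tilde h_0(x_1)<x_2<\tilde h_1(x_1)\}$ follows exactly as in Lemma \ref{lemmapositive}, Step 2: on this open set $0<\psi<Q$, so $f$ is smooth there and $\psi$ is smooth; the function $w:=\partial_{x_2}\psi$ satisfies the linear equation $\Delta w - f'(\psi)w=0$ with $w\ge 0$, and if $w$ vanished at an interior point the strong maximum principle would force $w\equiv0$ on a neighborhood and hence throughout, contradicting the nontrivial boundary values. The main obstacle in the whole argument is the asymptotic step: one must be careful that the energy-gap mechanism of Lemma \ref{lem3.9} genuinely controls the two-dimensional windows uniformly, so that the shifted limits are forced into the one-dimensional minimizers rather than merely having convergent energy; correctly chaining the monotone convergence of $\zeta(N)$ to the pointwise far-field convergence is the delicate point.
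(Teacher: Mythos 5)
Your proposal follows essentially the same route as the paper: uniform local estimates plus a diagonal argument for the $C^{1,\beta}_{\text{loc}}$ convergence, the monotone normalized energy $\zeta(N)$ of Lemma \ref{lem6.9} combined with the quantitative energy gap of Lemma \ref{lem3.9} for the far-field behavior, uniform free-boundary estimates passing to the limit for $\tilde h_i$, and the strong maximum principle for $\partial_{x_2}\psi>0$. One imprecision: the lower bound on $\zeta(N)$ does not follow from $\mathcal{E}_N\ge 0$ alone (that only gives $\zeta(N)\ge -N(\bar{\mathcal{J}}_1+\bar{\mathcal{J}}_{b-a})$); as in the paper's Step 2 one must use that each vertical slice in the flat ends carries at least the one-dimensional minimal energy, so that the windowed normalized energy $\Upsilon_n(l,m)$ is nondecreasing in $l$ and $m$ and hence bounded below by its value at $l=m=L_0$. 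Also, your appeal to Proposition \ref{prop2} to identify the shifted limits is redundant once Lemma \ref{lem3.9} is in play, and could not stand alone since the limit of shifts a priori only solves the obstacle problem rather than the clean semilinear equation that Proposition \ref{prop2} requires.
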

		\begin{proof}
			The proof is divided into three steps.
			
			{\it Step 1. Convergence and monotonicity.} Given any $k>0$, one has
			\[
			\|\psi_N\|_{C^{1,\alpha}(\Omega_k)}\leq C(k) \ \text{  for some }\alpha\in(0,1),
			\]
			where $C(k)$ is a constant depending on $k$, but independent of $N$. Therefore, by a diagonal procedure, there exists a subsequence which is still labeled by $\{\psi_N\}$ such that
			\[
			\psi_N \to \psi \quad \text{in}\,\, C^{1, \beta}(\Omega_k)\,\, \text{for any}\,\, k>0, \ \beta\in(0,\alpha).
			\]
			It follows from Lemma \ref{mon} that $\partial_{x_2}\psi\geq 0$ in $\Omega$.
   Note that $\|\tilde{h}_{i,N}\|_{C^1(-N+1, N-1)}$ is independent of $N$. Hence for any $k>0$, there exists a subsequence which is still relabelled by $\{\tilde{h}_{i,N}\}$ such that $\tilde{h}_{i,N}(x_1) \to \tilde{h}_i(x_1)$ for $x_1\in [-k, k]$. Denote
   \[
\tilde{\Omega}=\{(x_1,x_2): \tilde{h}_0(x_1)<x_2<\tilde{h}_1(x_1), x_1\in \mathbb{R}\}
\]
and
\[
 \tilde{\Gamma}_i=\{(x_1,x_2)|x_2=\tilde{h}_i(x_1), x_1\in \mathbb{R}\}, \,\, i=0, 1.
   \]
   Clearly $\psi(x)=0$ for $x\in \tilde{\Gamma}_0$ and $\psi(x)=Q$ for $x\in \tilde{\Gamma}_1$. Note that $\partial_{x_2}\psi(x)\geq 0$ and $0\leq \psi(x)\leq Q$ for all $x\in \Omega$. Hence one has one has 
   \[
   \psi(x)\equiv 0\,\,\text{in}\,\,  \Omega\cap 
   \{x_2<\tilde{h}_0(x_1)\} \,\, \text{and}\,\, \psi(x)\equiv Q\,\,\text{in}\,\,  \Omega\cap 
   \{x_2>\tilde{h}_1(x_1)\}
   \]
   Note that for any $x\in \tilde{\Omega}$,  if $N$ is sufficiently large, then one has $x\in \tilde{\Omega}_N$  
\[
\frac{1}{2}\text{dist}(x, \tilde{\Gamma}_{i,N}) \leq \text{dist}(x, \tilde{\Gamma}_i)\leq 2\text{dist}(x, \tilde{\Gamma}_{i,N}). 
\]   
 It follows from Lemma \ref{lemnew} that one has
 \begin{equation}
     \psi_N(x)\geq \frac{Q}{400}  \text{dist}(x, \tilde{\Gamma}_0)^2>0.
 \end{equation}
 Similarly, one has $\psi(x)<Q$ for all $x\in 
 \tilde{\Omega}$. 
			Furthermore, one has
			\begin{equation}\label{equ1}
				\Delta\psi =f(\psi)\quad \text{in}\,\, \ \  \tilde\Omega,
			\end{equation}
			and $\partial_{x_2}\psi(x_1,x_2)\geq  0$ in $\tilde\Omega$. It follows from Lemma \ref{lemmapositive} that $\partial_{x_2}\psi>0$ in $\tilde{\Omega}$. ${\psi}(x_1,x_2)=0$ in $\Omega \cap \{(x_1,x_2): {h}_0(x_1)<x_2\leq \tilde{h}_0(x_1)\}$, and $\psi(x_1,x_2)=Q$ in $\Omega\cap  \{(x_1,x_2): \tilde{h}_1(x_1)\leq x_2< h_1(x_1)\}$.

			
			{\it Step 2. Boundedness of normalized energy.}  Define
			\[
			\Upsilon(N):=	\int_{\Omega_{N}}\frac{|\nabla\psi|^2}{2}+F(\psi)dx  -N\bar{\mathcal{J}}_1-N\bar{\mathcal{J}}_{b-a}.
			\]
			Since $\bar{\varphi}_{b-a}(x_2-a)$ and $\bar{\varphi}_1$ are energy minimizers of ${\mathcal{J}}_{b-a}$ and ${\mathcal{J}}_1$ over ${\mathcal{U}}_{b-a}$ and ${\mathcal{U}}_1$, respectively, one has
			\begin{equation}
				\begin{aligned}
					\Upsilon'(N)=&\int_a^b \left[\frac{|\nabla\psi|^2}{2}+F(\psi)\right](N,x_2)dx_2-\bar{\mathcal{J}}_{b-a}\\
					&+ \int_0^1 \left[ \frac{|\nabla\psi|^2}{2}+F(\psi)\right](-N,x_2)dx_2-\bar{\mathcal{J}}_1\geq 0.
				\end{aligned}
			\end{equation}
			Therefore, $\Upsilon$ is an increasing function as long as $N\geq L_0$. For any given constants $l,m>L_0$, denote\[\Omega_{-l, m}=\{(x_1,x_2): -l\leq x_1\leq m, x_2\in(h_0(x_1), h_1(x_1))\}.\]
			Similarly, for any $n>\text{max}\{l,m\}$, one can prove 
			\[
			\Upsilon_{n}(l, m):	=\int_{\Omega_{-l, m}}\frac{|\nabla\psi_n|^2}{2}+F(\psi_n)dx  -l\bar{\mathcal{J}}_1-m\bar{\mathcal{J}}_{b-a}
			\]
			is increasing with respect to both $l$ and $m$, and hence
			\[
			\Upsilon_n(l, m)\leq \Upsilon_n(n, n)=\zeta(n),
			\]
   where the function $\zeta$ is defined in \eqref{defzeta}.
			Since $\psi_n\to \psi$ in $C^{1, \beta}_{loc}(\Omega)$, for any $l$, $m<n$, it holds that
			\[
			\Upsilon(l, m)=\lim_{n\to \infty} \Upsilon_n(l, m)\leq \liminf_{n\to \infty} \Upsilon_n(n, n)=\liminf_{n\to \infty} \zeta(n).
			\]
   Note that $\zeta(n)$ is a decreasing function with a lower bound. Hence there exists a constant $C$ such that
			\begin{equation}\label{boundedenergy}
				\begin{aligned}
					-C\leq 	\int_{\Omega_{-l, {m}}}\frac{|\nabla\psi|^2}{2}+F(\psi)dx  -l \bar{\mathcal{J}}_1-m\bar{\mathcal{J}}_{b-a}\leq C\quad \text{for any}\, \, l, m>0.
				\end{aligned}
			\end{equation}
						
			{\it Step 3. Asymptotic behavior.}  Suppose that  \eqref{psiasymp} is not true. Without loss of generality, assume that $\psi$ does not converge to $\bar{\varphi}_1$ in the upstream. Hence there exists an $\epsilon_0>0$ such that there exists a sequence $\{x_1^{(n)}\}$ satisfying
			$x_1^{(n+1)}<x_1^{(n)}-1$ and
			\[
			|\psi(x_1^{(n)},x_2^{(n)})-\bar\varphi_1(x_2^{(n)})|\geq \epsilon_0.
			\]
			Since $\psi$ is uniformly bounded in $C^{1,\alpha}(\Omega)$, one can deduce that there exists a $\delta>0$ such that 			\begin{equation}\label{ee1}
			|\psi(x_1,x_2)-\bar\varphi_1(x_2)|\geq \epsilon_0/2\quad \text{for }\,\, (x_1,x_2)\in \Omega\cap[(x_1^{(n)}-\delta, x_1^{(n)}+\delta)\times (x_2^{(n)}-\delta, x_2^{(n)}+\delta)].
			\end{equation}
It follows from Lemma \ref{lem3.9} that there exists a $\sigma>0$ such that for sufficiently large $n$, one has
			\begin{equation*}
				{\mathcal{J}}_1(\psi(x_1, \cdot)) \geq {\mathcal{J}}_1(\bar{\varphi}_1)+\sigma\quad \text{for}\,\, x_1\in [x_1^{(n)}-\delta, x_1^{(n)}+\delta].
			\end{equation*}	and hence
			\begin{eqnarray}\label{geq}
		\int_{x_1^{(n)}-\delta}^{x_1^{(n)}+\delta}\int_{0}^ 1\frac{|\nabla\psi|^2}{2}+F(\psi)dx \geq 2\delta\int_{0}^1\frac{|\bar\varphi'_{1}|^2}{2}+F(\bar\varphi_{1})dx_2 +\sigma.
			\end{eqnarray}
   On the other hand, one has
	\begin{equation}
				\begin{aligned}
					& \int_{\Omega_{N}}\frac{|\nabla \psi|^2}{2}+F(\psi)dx -N\bar{\mathcal{J}}_1-N\bar{\mathcal{J}}_{b-a}\\
					=& \int_{\Omega_{-N_0,N}}\frac{|\nabla \psi|^2}{2}+F(\psi)dx -N_0\bar{\mathcal{J}}_1-N\bar{\mathcal{J}}_{b-a}\\
					&+\sum_{i=1}^n \left( \int_{x_1^{(i)}-\delta}^{x_1^{(i)}+\delta}\int_{0}^ 1\frac{|\nabla\psi|^2}{2}+F(\psi)dx - 2\delta\int_{0}^ 1\frac{|\bar\varphi'_{1}|^2}{2}+F(\bar\varphi_{1})dx_2\right)\\
					& +\int_{\Omega_{N}\setminus  (\Omega_{-N_0,N}\cup (\cup_{i=1}^n (x_1^{(i)}-\delta, x_1^{(i)}+\delta)\times (0,1)))}\frac{|\nabla\psi|^2}{2}+F(\psi) - \left[\frac{|\bar\varphi'_{1}|^2}{2}+F(\bar\varphi_{1})\right]dx.
				\end{aligned}
			\end{equation}
			
It follows from Lemma \ref{lem3.9} and the estimate \eqref{boundedenergy} that
\begin{eqnarray*}
\sum_{i=1}^n \left( \int_{x_1^{(i)}-\delta}^{x_1^{(i)}+\delta}\int_{0}^ 1\frac{|\nabla\psi|^2}{2}+F(\psi)dx - 2\delta\int_{0}^ 1\frac{|\bar\varphi'_{1}|^2}{2}+F(\bar\varphi_{1})dx_2\right)\leq 2C,
\end{eqnarray*}
		which contradicts \eqref{geq} as $N\to \infty$.
This finishes the proof of the lemma.
		\end{proof}
		

		Combining the lemmas and propositions in this section gives the existence of solutions in Theorem \ref{thm2}.


		\medskip
		
		{\bf Acknowledgement.}
		The research of Li was partially supported by NSFC grants 12031012 and 11831003. The research of Shahgholian was supported in part by Swedish Research Council (grant no. ~2021-03700).
		The research of  Xie was partially supported by  NSFC grants 11971307, 1221101620, and  12161141004, the Fundamental Research Funds for the Central Universities, Natural Science Foundation of Shanghai 21ZR1433300, and Program of Shanghai Academic Research Leader 22XD1421400.

\section*{Declarations}

\noindent {\bf  Data availability statement:} All data needed are contained in the manuscript.

\medskip
\noindent {\bf  Funding and/or Conflicts of interests/Competing interests:} The authors declare that there are no financial, competing or conflict of interests.

		\bibliographystyle{plain}

	\end{document}